\def\forprint{1}
\def\dark{0}
\newcommand{%
	\begingroup
	\fontsize{7pt}{12pt}
	\def\svgwidth{0.8\columnwidth}
	\import{./figures/}{.pdf_tex}
	\endgroup
}[2][0.8]{%
	\begingroup
	\fontsize{7pt}{12pt}
	\def\svgwidth{#1\columnwidth}
	\import{./figures/}{#2.pdf_tex}
	\endgroup
}
\newtheorem{theorem}{Theorem}[section]
\newtheorem{proposition}[theorem]{Proposition}
\newtheorem{corollary}[theorem]{Corollary}
\newtheorem{hypothesis}[theorem]{Hypothesis}
\newtheorem{lemma}[theorem]{Lemma}
\newtheorem{remark}[theorem]{Remark}
\numberwithin{equation}{section}
\theoremstyle{definition} 
\newtheorem{definition}[theorem]{Definition}
\newtheorem{example}[theorem]{Example}
\begin{document}
\if\forprint0
\if\dark1
	\definecolor{background}{RGB}{40,44,52}
	\definecolor{normal_color}{RGB}{171,178,191}
\else
	\definecolor{background}{RGB}{251,241,199}
	\definecolor{normal_color}{RGB}{40,40,40}
\fi
\else
	\definecolor{background}{RGB}{255,255,255}
	\definecolor{normal_color}{RGB}{0,0,0}
\fi
\title{Allard-Type Regularity for Varifolds with Prescribed Contact Angle}
\author{Gaoming Wang}
\date{\today}
\address{Yau Mathematical Sciences Center, Tsinghua University, Beijing, China}
\email{gmwang@tsinghua.edu.cn}
\maketitle
\pagecolor{background}
\color{normal_color}
\begin{abstract}
Given a bounded $C^2$ domain in $\mathbb{R}^{n+1}$ and an integral $n$-rectifiable varifold $V$ with bounded first variation and bounded generalized mean curvature.
	Given a $C^1$ function $\theta$ defined on the boundary of the domain with range $(0,\pi)$, we assume $V$ has prescribed contact angle $\theta$ with $\partial \Omega$ and the tangent cone of $V$ at a point $X \in \partial \Omega$ is a half-hyperplane of density one.
Then we can show that the support of $V$ is a $C^{1,\gamma}$ hypersurface with boundary near $X$ for some $\gamma \in (0,1)$.
\end{abstract}

\section{Introduction}%
\label{sec:introduction}



In the field of fluid mechanics, capillary surfaces serve as mathematical models to describe the interfaces between two immiscible fluids.

Consider a container, denoted by $\Omega$, with its boundary $\partial \Omega$, containing two immiscible fluids. The equilibrium configuration of the interface separating these fluids can be depicted as a capillary surface. Drawing from Young's work, the mean curvature of this interface, along with the contact angle it forms with the container's boundary, is determined by the surface energy, wetting energy, potential energy, and a Lagrange multiplier which comes from the volume of fluids.

For clarity, let us assume that $\Omega$ represents a smoothly bounded, open domain within $\mathbb{R}^{n+1}$, with $E\subset \Omega$ indicating the region occupied by one of the fluids. We analyze the following free energy expression,
\[
	\mathcal{F}(E):=\int_{ \partial E \cap \Omega} d \mathcal{H}^n-\int_{ \partial E \cap \partial \Omega}\vartheta(x)d\mathcal{H}^n+\int_{ E} g(x)dx, 
\]
where $\mathcal{H}^{n}$ denotes the $n$-dimensional Hausdorff measure, $\sigma$ is a smooth function on $\partial \Omega$ with a range of $[-1,1]$, and $g$ is a smooth potential function over $\Omega$. 
If $E$ reaches a critical point for the above energy with the condition $\mathcal{H}^{n+1}(E)$ fixed, and $\Sigma:=\partial E \cap \Omega$ is a smooth hypersurface with boundary, then the following conditions are satisfied,
\[
	\begin{cases}
		\boldsymbol{H}_\Sigma=g(x)+\lambda,& \quad x \in \Sigma,\\
		\cos \theta(x)=\vartheta(x), &\quad x \in \partial \Sigma,
	\end{cases}
\]
where $\boldsymbol{H}_\Sigma$ represents the mean curvature of $\Sigma$, $\theta(x)$ the contact angle between $\Sigma$ and $\partial \Omega$, and $\lambda$ a constant determined by the volume constraint. 
The study of such capillary surfaces, regarding their existence and regularity, has been pursued by J. Taylor \cite{Taylor1977regularity} for $n=3$, and by De Phillippis, Maggi \cite{DePhilippisMaggi2014RegularityAnisotropic} for the anisotropic case across general $n$.
If we only focus on the case $\varphi\equiv 0$ (the free boundary case), there were numerous works like \cite{Gruter1986allard, Gruter1987optimalRegularityFB, Gruter1987regularityCurrentFB}, and \cite{Lewy1951minimalPartialFB,Hildebrandt1969boundaryFB,Hildebrandt1979minimalFB} and so on.
The min-max theory, recently receiving significant attention, presents a powerful method to affirm the existence of minimal hypersurfaces, as demonstrated by Li-Zhou-Zhu \cite{Li2021Capillary}, and De Masi-De Philippis \cite{DeMasi2021CapillaryMinMax} for capillary minimal hypersurfaces, and \cite{Li2021min_max} for the free boundary minimal hypersurface.

This paper concentrates on the Allard-type regularity of capillary surfaces. Prior to detailing our main theorem, it is necessary to define the concept of $n$-varifold with a specified contact angle, a notion previously introduced by Kagaya and Tonegawa \cite{Tonegawa2017fixAngle} for general varifolds, and further extended by De Masi-De Philippis \cite{DeMasi2021CapillaryMinMax}.
Our approach aligns with that of Kagaya and Tonegawa, as it sufficiently captures the intended contact angle upon achieving regularity under some natural conditions. (See Corollary \ref{cor_contactAngleEuclidean}.)

Although our main results are applicable in general Riemannian manifolds, for simplicity, we only discuss definitions and results in the context of Euclidean space in this section, directing readers to Section \ref{sec:main_theorem} for a comprehensive treatment.

Let's consider $\Omega$ as a bounded closed set in $\mathbb{R}^{n+1}$ with a $C^2$ boundary $\partial \Omega$, and $\theta$ as a $C^1$ function on $\partial \Omega$ ranging between $(0,\pi)$.
We will use the notations for $n$-varifolds in\cite{Allard1972} and \cite{simon1983lectures} (see \ref{sub:notations} for the list of notations we used in this paper).
Given an integral $n$-rectifiable varifold $V$ defined on $\mathbb{R}^{n+1}$ such that it supports in the closure of $\Omega$ and has bounded first variation and bounded generalized mean curvature.
We say that $V$ has \textit{prescribed contact angle $\theta$} if there exists a $\mathcal{H}^n$-measurable subset $U \subset \partial \Omega$ such that following condition holds for any compactly supported $C^1$ vector field $\varphi$ tangential to $\partial \Omega$ when restricted to $\partial \Omega$,
\[
	\delta V(\varphi)-\delta |U|(\cos \theta \varphi)=\int_{ } \left< \boldsymbol{H}, \varphi \right> d\|V\|,
\]
where $\delta V$ represents the first variation of $V$, $|U|$ the $n$-varifold associated with $U$, and $\boldsymbol{H}$ the generalized mean curvature of $V$.

\begin{theorem}
	Suppose $V$ is the integral $n$-rectifiable varifold described above with prescribed contact angle $\theta$, and suppose $\boldsymbol{H}$ is uniformly bounded.
	If at a point $X \in \mathrm{spt}\|V\| \cap \partial \Omega$, $V$ has a multiplicity-one tangent half-hyperplane at $X$, then the support of $\|V\|$ near $X$ is a $C^{1,\gamma}$ hypersurface with boundary for some $\gamma \in (0,1)$.
	\label{thm_mainEuclidean}
\end{theorem}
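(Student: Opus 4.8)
The plan is to follow the classical strategy of Allard's boundary regularity theorem, adapted to the contact-angle setting where the ``obstacle'' $\partial\Omega$ is curved and the angle $\theta$ varies. First I would normalize: by scaling and rotating, assume $X=0$, that $\partial\Omega$ is nearly flat near $0$ (say $\partial\Omega$ is the graph of a $C^2$ function with small $C^2$ norm after dilation), and that the tangent half-hyperplane at $0$ is $\{x_{n+1}=0,\ x_n\ge 0\}$. The generalized mean curvature being uniformly bounded means that after rescaling to unit scale its contribution is as small as we like; similarly the oscillation of $\theta$ and the curvature of $\partial\Omega$ become small at small scales. So the first real step is a \emph{decay / tilt-excess estimate}: defining an appropriate height excess $E(r)$ measuring the $L^2$-distance of $\mathrm{spt}\|V\|$ (and of its approximate tangent planes) from the model half-hyperplane at scale $r$, together with the deviation of the contact-angle data from the constant model, I want to prove $E(\lambda r)\le \tfrac12 E(r)+C(\text{lower-order terms})$ for a fixed $\lambda\in(0,1)$, once $E(r)$ is below a threshold $\varepsilon_0$.

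The core of that decay estimate is the usual dichotomy. One establishes (i) a monotonicity formula at the boundary point — here the first-variation identity with the $\delta|U|(\cos\theta\,\varphi)$ term must be handled; since $\varphi$ must be tangential to $\partial\Omega$, I would use a reflection or a suitable vector field adapted to $\partial\Omega$ and absorb the $\cos\theta$ and curvature terms as errors, yielding an almost-monotone quantity that forces density $1$ on a neighborhood in $\partial\Omega$ of $X$ and controls $\|V\|(B_r)\le(\tfrac12\omega_n+\text{error})r^n$. Then (ii) a Lipschitz approximation: away from a small ``bad'' set, $\mathrm{spt}\|V\|$ is the graph of a Lipschitz function $u$ over the model half-plane, with the boundary of the graph lying on $\partial\Omega$ and meeting it at angle close to $\theta$; the graph error is controlled by the excess. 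Then (iii) the harmonic/linear comparison: the excess-minimizing behavior forces $u$ to be close to a solution of the linearized problem, which is the Laplace equation on the half-plane with an \emph{oblique} (Robin/Neumann-type) boundary condition coming from the linearization of ``contact angle $=\theta$''. The decay for that linear problem (Schauder / reflection for oblique-derivative problems on a half-space) gives the improvement at scale $\lambda r$, modulo the lower-order error terms produced by $\boldsymbol H$, by the curvature of $\partial\Omega$, and by $D\theta$.

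Iterating the decay estimate gives, for each $\xi\in\mathrm{spt}\|V\|$ near $X$, a power-rate decay $E_\xi(r)\le C r^{2\gamma}$ of the tilt-excess, hence a unique approximate tangent half-hyperplane $T_\xi$ depending Hölder-continuously on $\xi$; by Allard-type arguments this upgrades to $\mathrm{spt}\|V\|$ being locally a $C^{1,\gamma}$-graph over $T_0$ with its edge a $C^{1,\gamma}$ curve lying on $\partial\Omega$, i.e.\ a $C^{1,\gamma}$ hypersurface-with-boundary; the stated contact-angle attainment then follows (this is presumably the content of the corollary referenced in the introduction). I expect the main obstacle to be step (ii)–(iii) at the boundary: producing the Lipschitz graph approximation \emph{uniformly up to the free boundary edge} and identifying precisely the linearized oblique boundary condition with the correct error bookkeeping — in particular controlling how the variation of $\theta$ and the second fundamental form of $\partial\Omega$ enter, and ensuring the ``one-sided'' nature of the half-plane model (so that pieces of $V$ do not escape onto the far side) is enforced by the monotonicity/density bound. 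Handling the constraint that test vector fields be tangential to $\partial\Omega$ — which breaks the naive translation/dilation invariance used in the interior theory — is the technical heart, and I would resolve it by working in Fermi-type coordinates around $\partial\Omega$ so that $\partial\Omega$ becomes a coordinate hyperplane and the tangency constraint becomes the vanishing of the last component.
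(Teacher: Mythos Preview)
Your outline captures the coarse architecture (normalize, monotonicity, graph approximation, linearized problem, iterate), but two structural choices diverge from what the paper actually does, and the second one is a genuine gap.

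First, the model and the monotone quantity: the paper does \emph{not} work with $V$ alone against a half-hyperplane. It packages $V$ together with the wetted part $W=|U|$ of $\partial\Omega$ into the signed object $V^\theta:=V-\cos\theta\,W$ and measures excess against the cone $\boldsymbol{C}_H=|H\cup\{x_1=0,\,x_2\le 0\}|$, i.e.\ the half-hyperplane \emph{together with} a half of the boundary plane. The boundary monotonicity formula is for $\|V^\theta\|$, not $\|V\|$, and it is precisely this combination that makes the reflection-type argument and the density bounds work. This is the ``Simon cylindrical cone'' set-up alluded to in the introduction, rather than an Allard boundary-regularity set-up.

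Second, and more seriously, your step~(ii)--(iii) assumes you can produce a Lipschitz graph \emph{up to an identified boundary edge} on $\partial\Omega$ and then linearize to an oblique/Robin problem on the half-plane. But the edge $\partial\Sigma$ is a \emph{free boundary}: a priori you do not know where it sits, so there is no function $u$ with a clean boundary condition to linearize. The paper never writes down an oblique condition. Instead it (a)~shows via a small-density theorem that away from the ``good'' boundary points (those with $\Theta(\mathcal V,Z)\ge 1-\cos\theta(Z)$) the varifold is actually free-boundary stationary and $W$ is locally constant; (b)~proves a ``no holes'' lemma guaranteeing that such good points are $\delta$-dense along the axis $\{r=0\}$; (c)~obtains the $L^2$-estimates $\int |X^{\perp_S}|^2/|X|^{n+2}$ and $\int \mathrm{dist}^2(X,\boldsymbol{C})/|X|^{n+2-\omega}$ for $V^\theta$; and (d)~uses the \emph{positions} of the good boundary points (not a boundary condition) to define the boundary values of the blow-up $v$, whose $C^{0,\alpha}$ and then $C^2$ regularity up to $\{r=0\}$ is obtained by a hole-filling/difference-quotient argument, not by Schauder for an oblique problem. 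The excess improvement then involves both tilting $H$ \emph{and} a rotation $\Gamma\in SO(n)$ inside $\partial\mathbb{H}^{n+1}$ chosen to align the good points, after which the iteration yields the Hölder edge and the $C^{1,\gamma}$ graph. Your Fermi-coordinates remark is exactly right and is used; the case split $\theta$ bounded away from $\tfrac{\pi}{2}$ versus $\theta$ near $\tfrac{\pi}{2}$ is also essential and handled separately.
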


As a corollary, we can obtain the following result. (See Corollary \ref{cor_contactAngleG} for more precise statement.)
\begin{corollary}
	If $V$ is the varifold that meets the conditions of Theorem \ref{thm_mainEuclidean} at a point $X \in \mathrm{spt}\|V\|\cap \partial \Omega$, then for any $Y \in \mathrm{spt}\|V\|\cap \partial \Omega$ near $X$, the contact angle between $\mathrm{spt}\|V\|$ and $\partial \Omega$ is either $\theta(Y)$ or $\frac{\pi}{2}$ under the suitable choice of the normal direction of $\partial M$ and the orientation of $\mathrm{spt}\|V\|$ near $X$.

	Furthermore, if $\theta\neq \frac{\pi}{2}$ on the support of $\|V\|\cap \partial \Omega$ and the tangent space of $\|V\|$ at $X$ is not orthogonal to $\partial \Omega$, then the contact angle between $\Sigma$ and $\partial \Omega$ is exactly $\theta$.
	\label{cor_contactAngleEuclidean}
\end{corollary}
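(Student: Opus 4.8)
The plan is to feed the conclusion of Theorem~\ref{thm_mainEuclidean} back into the identity defining the prescribed contact angle. After shrinking the neighborhood of $X$, the theorem tells us that $M:=\mathrm{spt}\|V\|$ is a $C^{1,\gamma}$ hypersurface with boundary $\partial M\subset\partial\Omega$; and, since the tangent cone at $X$ is a multiplicity-one half-hyperplane, $V$ agrees near $X$ with the unit-density varifold carried by $M$. For such an $M$ the divergence theorem on $M$ gives, for every $C^1$ vector field $\varphi$ supported near $X$,
\[
\delta V(\varphi)=\int_M\langle\boldsymbol{H},\varphi\rangle\,d\mathcal H^n+\int_{\partial M}\langle\varphi,\nu_M\rangle\,d\mathcal H^{n-1},
\]
where $\nu_M$ is the outward unit conormal of $\partial M$ in $M$ (the bulk term matches the paper's convention $\delta V(\varphi)=\int\langle\boldsymbol{H},\varphi\rangle\,d\|V\|$ on interior test fields). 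Likewise, for $\varphi$ tangential to $\partial\Omega$ its flow preserves $\partial\Omega$, so $\delta|U|(\cos\theta\,\varphi)=\int_U\operatorname{div}_{\partial\Omega}(\cos\theta\,\varphi)\,d\mathcal H^n$.

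Substituting both into $\delta V(\varphi)-\delta|U|(\cos\theta\,\varphi)=\int\langle\boldsymbol{H},\varphi\rangle\,d\|V\|$, the bulk curvature terms cancel and, for every $\varphi$ tangential to $\partial\Omega$ supported near $X$, one is left (using $\langle\varphi,\nu_M\rangle=\langle\varphi,\pi_{T\partial\Omega}\nu_M\rangle$) with
\[
\int_{\partial\Omega}\cos\theta\,\mathbf 1_U\,\operatorname{div}_{\partial\Omega}\varphi\,d\mathcal H^n=\int_{\partial M}\langle\varphi,\pi_{T\partial\Omega}\nu_M\rangle\,d\mathcal H^{n-1}-\int_{\partial\Omega}\mathbf 1_U\,\langle\nabla_{\partial\Omega}\cos\theta,\varphi\rangle\,d\mathcal H^n .
\]
The right-hand side is a finite vector measure in $\varphi$, so $w:=\cos\theta\cdot\mathbf 1_U\in BV_{\mathrm{loc}}$ near $X$ with
\[
Dw=-\,\pi_{T\partial\Omega}\nu_M\;\mathcal H^{n-1}\llcorner\partial M\;+\;\mathbf 1_U\,\nabla_{\partial\Omega}\cos\theta\;\mathcal H^n .
\]
This is already the Radon--Nikodym splitting of $Dw$: the first term lives on the $\mathcal H^n$-null, $(n-1)$-rectifiable set $\partial M$, the second is absolutely continuous. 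Being $\mathcal H^{n-1}$-$\sigma$-finite, the singular part has no Cantor component, hence is the jump part of $Dw$; comparing densities on $\partial M$ gives, for $\mathcal H^{n-1}$-a.e.\ $Y\in\partial M$, that $\pi_{T\partial\Omega}\nu_M(Y)=\bigl(w^-(Y)-w^+(Y)\bigr)\nu_{\mathrm{jump}}(Y)$, with $\nu_{\mathrm{jump}}$ the jump direction. Since $\cos\theta$ is continuous, $w$ jumps across $\partial M$ precisely where $\mathbf 1_U$ does and $\cos\theta\neq 0$, with $w^\pm\in\{0,\cos\theta(Y)\}$; at every other point of $\partial M$ the jump vanishes and $\pi_{T\partial\Omega}\nu_M(Y)=0$.

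This produces the dichotomy. After choosing the side of $\partial M$ inside $\partial\Omega$ and the co-orientation of $M$ appropriately, at a.e.\ $Y\in\partial M$ either $Y\in\partial^*U$ and $\pi_{T\partial\Omega}\nu_M(Y)=\cos\theta(Y)\,\bar\nu(Y)$ for the corresponding unit conormal $\bar\nu$ of $\partial M$ in $\partial\Omega$, so that $\langle\nu_M(Y),\bar\nu(Y)\rangle=\cos\theta(Y)$ and the contact angle of $M$ with $\partial\Omega$ at $Y$ is $\theta(Y)$; or $\pi_{T\partial\Omega}\nu_M(Y)=0$, i.e.\ $\nu_M(Y)$ is the ambient unit normal to $\partial\Omega$, $T_YM$ is orthogonal to $T_Y\partial\Omega$, and the contact angle is $\tfrac{\pi}{2}$. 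The a.e.\ statement $|\pi_{T\partial\Omega}\nu_M(Y)|^2\in\{\cos^2\theta(Y),0\}$ then upgrades to hold at \emph{every} $Y\in\partial M$ near $X$, since its failure set is open and $\mathcal H^{n-1}$-null, and $\sin^2(\text{contact angle at }Y)=\langle\nu_M(Y),N_{\partial\Omega}(Y)\rangle^2$ depends continuously on $Y$ (as $\nu_M\in C^{0,\gamma}$, $N_{\partial\Omega}\in C^1$). For the final assertion: if $T_XV$ is not orthogonal to $\partial\Omega$, the contact angle at $X$ is $\neq\tfrac{\pi}{2}$, hence $\neq\tfrac{\pi}{2}$ on a whole neighborhood by continuity; together with the dichotomy and the hypothesis $\theta\neq\tfrac{\pi}{2}$ on $\mathrm{spt}\|V\|\cap\partial\Omega$, the contact angle there must equal $\theta$.

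The main obstacle is the $BV$ step: one knows a priori only that $w$ has locally bounded variation, and must extract that its singular part is a pure jump concentrated on $\partial M$ and read off the two-valued structure of $w=\cos\theta\,\mathbf 1_U$ against it --- being careful at points where $\cos\theta$ vanishes (so that one cannot simply divide to deduce $\mathbf 1_U\in BV$) and scrupulous with conormal orientations, so that the inner product obtained is genuinely the correctly-normalized cosine of the contact angle. The regularity input, the two first-variation computations, and the concluding continuity argument are otherwise routine.
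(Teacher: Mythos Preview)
Your approach is correct but takes a genuinely different route from the paper. The paper's argument (given briefly for Corollary~\ref{cor_contactAngleG} and really carried out inside the proofs of Theorems~\ref{thm_l2positiveAngle} and~\ref{thm_l2freeFB}) is a tangent-cone argument: at each boundary point $Y$ one blows up and obtains a stationary quadruple $(V',W',\theta(Y),\delta)$ with $V'-\cos\theta(Y)\,W'$ stationary in the free boundary sense; since regularity forces $V'$ to be a single multiplicity-one half-hyperplane, the stationarity constraint on the conormals leaves only the possibilities $\measuredangle=\theta(Y)$ (when $W'$ is the appropriate half of $\partial\mathbb{H}^{n+1}$) or $\measuredangle=\tfrac{\pi}{2}$ (when the half-hyperplane is orthogonal and the boundary force vanishes). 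Your route instead reinserts the $C^{1,\gamma}$ regularity into the defining identity and reads it as a $BV$ statement for $w=\cos\theta\,\mathbf 1_U$ on $\partial\Omega$, identifying the jump part of $Dw$ with $-\pi_{T\partial\Omega}\nu_M\,\mathcal H^{n-1}\llcorner\partial M$ and extracting the two-valued behaviour of the conormal from the two-valued approximate limits of $\mathbf 1_U$.

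Both arguments need exactly the same regularity input, and both ultimately reduce the question to a pointwise algebraic balance; the paper's version exploits the tangent-cone machinery already built, while yours is closer in spirit to the classical derivation of the capillary boundary condition from the Euler--Lagrange equation and is self-contained once $C^{1,\gamma}$ regularity and unit multiplicity are known. The only places you should be careful (as you yourself flag) are the orientation bookkeeping---so that the inner product you obtain is genuinely $\cos\theta(Y)$ and not $-\cos\theta(Y)$, which matters for matching the statement ``angle $=\theta(Y)$'' rather than ``$\pi-\theta(Y)$''---and the a.e.-to-everywhere upgrade, which is fine as written since $|\pi_{T\partial\Omega}\nu_M|^2$ and $\cos^2\theta$ are continuous on the $C^{1,\gamma}$ boundary.
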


When considering $\theta$ approaching $\frac{\pi}{2}$ or $n=2$, the regularity of the varifold can be inferred solely from the density condition.
\begin{theorem}
	\label{thm_densityEuclidean}
	Given $\Lambda_0 \in (0,1)$, suppose $V$ is the integral $n$-rectifiable varifold described above with prescribed contact angle $\theta$ with $\theta \in [\Lambda_0,\pi-\Lambda_0]$, and suppose $\boldsymbol{H}$ is uniformly bounded.
	Then there exists a positive constant $\varepsilon=\varepsilon(n,\Lambda_0)>0$, such that the following conclusions hold,
	\begin{enumerate}[\normalfont(a)]
		\item The dimension $n$ equals 2, or the range of $\theta$ is within $(\frac{\pi}{2}-\varepsilon,\frac{\pi}{2}+\varepsilon)$.
		\item At a point $X \in \mathrm{spt}\|V\|\cap \partial \Omega$, we have
	\[
		\liminf_{\rho \rightarrow 0^+} \frac{\|V\|(B_\rho(X))}{\omega_n\rho^n}\le \frac{1}{2}+\varepsilon.
	\]
	\end{enumerate}
	Then, the support of $\|V\|$ near $X$ is a $C^{1,\gamma}$ hypersurface with boundary for some $\gamma \in (0,1)$.
Here, $\omega_n$ is the volume of $n$-dimensional unit ball in Euclidean space.
\end{theorem}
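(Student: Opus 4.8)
The strategy is to reduce Theorem~\ref{thm_densityEuclidean} to Theorem~\ref{thm_mainEuclidean} by showing that, under the stated hypotheses, every tangent cone of $V$ at $X$ is a multiplicity-one half-hyperplane. First, since $\boldsymbol{H}$ is uniformly bounded and $V$ has prescribed contact angle, the monotonicity formula underlying Theorem~\ref{thm_mainEuclidean} (with its exponential correction and the boundary term carrying the factor $\cos\theta$) holds at $X$; it gives that $\Theta(\|V\|,X):=\lim_{\rho\to0^+}\|V\|(B_\rho(X))/(\omega_n\rho^n)$ exists, so hypothesis~(b) becomes $\Theta(\|V\|,X)\le\tfrac12+\varepsilon$. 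After flattening $\partial\Omega$ near $X$ to a hyperplane $P$ (with $\Omega$ a half-space; the $C^2$ and $C^1$ regularity of $\partial\Omega$ and $\theta$ renders the induced errors lower order and invisible in the blow-up, exactly as in the proof of Theorem~\ref{thm_mainEuclidean}), compactness for varifolds of bounded first variation yields tangent cones $C$ at $X$: each is an integral $n$-cone supported in the half-space, with $\boldsymbol{H}_C\equiv0$, prescribed \emph{constant} contact angle $\theta_0:=\theta(X)\in[\Lambda_0,\pi-\Lambda_0]$ with $P$, and $\Theta(\|C\|,0)=\Theta(\|V\|,X)\le\tfrac12+\varepsilon$.

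The core of the argument is the cone classification: for $\varepsilon=\varepsilon(n,\Lambda_0)$ small enough, such a $C$ must be a multiplicity-one half-hyperplane meeting $P$ at angle $\theta_0$; I split according to hypothesis~(a). When $n=2$, the link $\Gamma:=\mathrm{spt}\|C\|\cap S^2_+$ is a stationary integral $1$-varifold in the hemisphere satisfying the $\theta_0$-condition on $\partial S^2_+=S^1$, hence a finite union (with integer multiplicities) of great-circle arcs meeting only at balanced interior junctions. Since every great circle other than $S^1$ meets $S^1$ in an antipodal pair, any arc of $\Gamma$ that reaches $S^1$ is a half-great-circle of length $\pi$; and if $\Gamma$ avoided $S^1$ entirely then $C$ would be a stationary cone in $\mathbb{R}^3$ of vertex density $<1$, which by interior monotonicity forces $\Theta(\|C\|,\cdot)<1$ everywhere, impossible on the support of an integral varifold. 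Hence $\mathcal{H}^1(\Gamma)\ge\pi$, while $\Theta(\|C\|,0)=\mathcal{H}^1(\Gamma)/(2\pi)\le\tfrac12+\varepsilon$; combined with a uniform length gap --- any genuine interior junction, extra arc, or higher multiplicity increases $\mathcal{H}^1(\Gamma)$ by a definite amount $\delta_0(\Lambda_0)>0$, which is where $\theta_0\in[\Lambda_0,\pi-\Lambda_0]$ enters --- this forces, for $\varepsilon<\delta_0(\Lambda_0)/(2\pi)$, that $\Gamma$ is a single multiplicity-one half-great-circle, i.e.\ $C$ is a multiplicity-one half-plane, whose contact angle with $P$ is $\theta_0$ by the capillary identity. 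When instead $|\theta_0-\tfrac\pi2|<\varepsilon$, one reflects $C$ across $P$: for $\theta_0=\tfrac\pi2$ exactly, $\widetilde C:=C+\sigma_\#C$ is genuinely stationary in $\mathbb{R}^{n+1}$ of vertex density $2\Theta(\|C\|,0)\le1+2\varepsilon$, so Allard's interior theorem makes it a multiplicity-one hyperplane, which is $\sigma$-invariant and (by an integrality count) unequal to $P$, hence orthogonal to $P$, so $C$ is the corresponding half-hyperplane; for $\theta_0$ merely close to $\tfrac\pi2$, $\widetilde C$ is no longer stationary but acquires a generalized boundary on $\partial C\cap P$ of size $O(|\cos\theta_0|)=O(\varepsilon)$, and one concludes either by a perturbative Allard-type estimate tolerating this error or by a compactness argument $\theta_0\to\tfrac\pi2$ reducing to the free-boundary case together with an excess-decay/tangent-cone-uniqueness statement. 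Finally, since $V$ then has a multiplicity-one half-hyperplane tangent cone at $X$, Theorem~\ref{thm_mainEuclidean} applies and gives that $\mathrm{spt}\|V\|$ is a $C^{1,\gamma}$ hypersurface with boundary near $X$.

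The main obstacle is the case $\theta_0\approx\tfrac\pi2$: the reflected cone carries a small but genuinely codimension-two boundary term on $\partial C\cap P\ni0$, which cannot be absorbed into a generalized mean curvature, so Allard's theorem does not apply off the shelf; one must either establish a perturbative Allard-type regularity robust to such a term of size $O(\varepsilon)$, or run the compactness-plus-excess-decay route above, in either case verifying that the smallness threshold $\varepsilon$ depends only on $n$ and $\Lambda_0$. The remaining ingredients --- the length gap $\delta_0(\Lambda_0)$ in the $n=2$ link analysis, the flattening of $\partial\Omega$, and the passage of the contact-angle condition to the blow-up --- are either an elementary classification of balanced geodesic networks in $S^2_+$ or run in parallel with steps already present in the proof of Theorem~\ref{thm_mainEuclidean}.
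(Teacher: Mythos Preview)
Your overall strategy---classify the limiting object as a multiplicity-one half-hyperplane and then invoke a regularity theorem---matches the paper's, and your $n=2$ link analysis is essentially the paper's Appendix~B argument. There are, however, two points where your write-up diverges from (or slightly misstates) what the paper does.

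First, your claim that the monotonicity formula gives existence of $\Theta(\|V\|,X)=\lim_{\rho\to0}\|V\|(B_\rho(X))/(\omega_n\rho^n)$ at a boundary point is not correct: the paper explicitly notes (just after Theorem~\ref{thm_densityEuclidean}) that no such monotonicity is available for the bare ratio $\|V\|(B_\rho(X))/(\omega_n\rho^n)$; only the \emph{capillary} density $\Theta(\mathcal{V},X)$ (which includes the $\cos\theta\,W$ term) is known to exist. Hypothesis~(b) is genuinely a $\liminf$, and in the proof one works at a scale where the ratio is close to $\tfrac12$.

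Second, and more substantively, the paper resolves the obstacle you flag in the $\theta_0\approx\tfrac\pi2$ case not by a perturbative Allard estimate for a reflected cone with small boundary, but by reframing the whole argument as a contradiction with $\varepsilon_k\to0$. One supposes regularity fails for a sequence of varifolds satisfying the hypotheses with $\varepsilon_k\to0$, rescales to VPCA-quadruples $\mathcal{V}_k\in\mathcal{RIV}(\varepsilon_k)$ with $\|V_k\|(\mathcal{B}_1)\le\omega_n(\tfrac12+2\varepsilon_k)$, and passes to a limit $\mathcal{V}=(V,W,\theta,\delta)$ via Theorem~\ref{thm_compactnessRIV}. In the ``$\theta$ near $\tfrac\pi2$'' branch the limit has $\theta\equiv\tfrac\pi2$ \emph{exactly}, so $V$ is genuinely free-boundary stationary; reflection gives a stationary integral varifold in $B_1$ with $0$ in its support and mass $\le\omega_n$, hence a multiplicity-one hyperplane orthogonal to $\partial\mathbb{H}^{n+1}$, so $V=|H^{\pi/2}\cap\mathcal{B}_1|$. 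One then checks that for large $k$ the pair $(\mathcal{V}_k,H^{\pi/2})$ satisfies the $\varepsilon$-Hypothesis of Theorem~\ref{thm_l2freeFB} (respectively Theorem~\ref{thm_l2positiveAngle} in the $n=2$, $\theta\ge\tfrac\pi2+\Lambda_0$ branch), yielding regularity and a contradiction. This is precisely your option~(ii), but executed at the level of the theorem rather than inside a tangent-cone classification; the point is that one never needs to classify capillary cones at a fixed angle $\theta_0\neq\tfrac\pi2$, and hence the codimension-two boundary term never arises. For the $n=2$ branch, note also that the paper first uses the density bound at interior points of the link (via $\Theta(\|V\|,\cdot)\le\Theta(\mathcal{V},0)<2$ after reflection) to force all interior junctions to be triple before invoking the hemisphere length bound; your sketch should include this step.
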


Due to the lack of a monotonicity formula for $\frac{\|V\|(B_\rho(X))}{\omega_n \rho^n}$ for $X \in \partial \Omega$, we are unable to determine whether the density of $\|V\|$ exists in general.
However, a similar monotonicity formula, which incorporates a boundary term, has been previously established in \cite{Tonegawa2017fixAngle}.
We reproof under the $C^1$-metric setting, and it plays a crucial role in our proof of the main result.

The core of our proof strategy is inspired by the regularity work of L. Simon \cite{Simon1993cylindrical}.
To illustrate, consider a scenario where $\theta$ equals $\frac{\pi}{3}$ and $\Omega$ is a half-space in $\mathbb{R}^{n+1}$.
By reflecting the varifold $V$ across $\Omega$'s boundary, we obtain a new varifold $V'$. 
Then, we can find an integral $n$-rectifiable varifold $W$ supported on $\Omega$'s boundary such that varifold $V+V'+W$ is a stationary varifold in $\mathbb{R}^{n+1}$.
The application of Simon's regularity theory tells us that near $X$, $V$ exhibits regularity if it has a multiplicity one tangent half-hyperplane at $X$.

\begin{figure}[ht]
    \centering
	\begingroup
	\fontsize{7pt}{12pt}
	\def\svgwidth{0.4\columnwidth}
\begingroup%
  \makeatletter%
  \providecommand\color[2][]{%
    \errmessage{(Inkscape) Color is used for the text in Inkscape, but the package 'color.sty' is not loaded}%
    \renewcommand\color[2][]{}%
  }%
  \providecommand\transparent[1]{%
    \errmessage{(Inkscape) Transparency is used (non-zero) for the text in Inkscape, but the package 'transparent.sty' is not loaded}%
    \renewcommand\transparent[1]{}%
  }%
  \providecommand\rotatebox[2]{#2}%
  \newcommand*\fsize{\dimexpr\f@size pt\relax}%
  \newcommand*\lineheight[1]{\fontsize{\fsize}{#1\fsize}\selectfont}%
  \ifx\svgwidth\undefined%
    \setlength{\unitlength}{680.31496063bp}%
    \ifx\svgscale\undefined%
      \relax%
    \else%
      \setlength{\unitlength}{\unitlength * \real{\svgscale}}%
    \fi%
  \else%
    \setlength{\unitlength}{\svgwidth}%
  \fi%
  \global\let\svgwidth\undefined%
  \global\let\svgscale\undefined%
  \makeatother%
  \begin{picture}(1,0.5)%
    \lineheight{1}%
    \setlength\tabcolsep{0pt}%
    \put(0,0){\includegraphics[width=\unitlength,page=1]{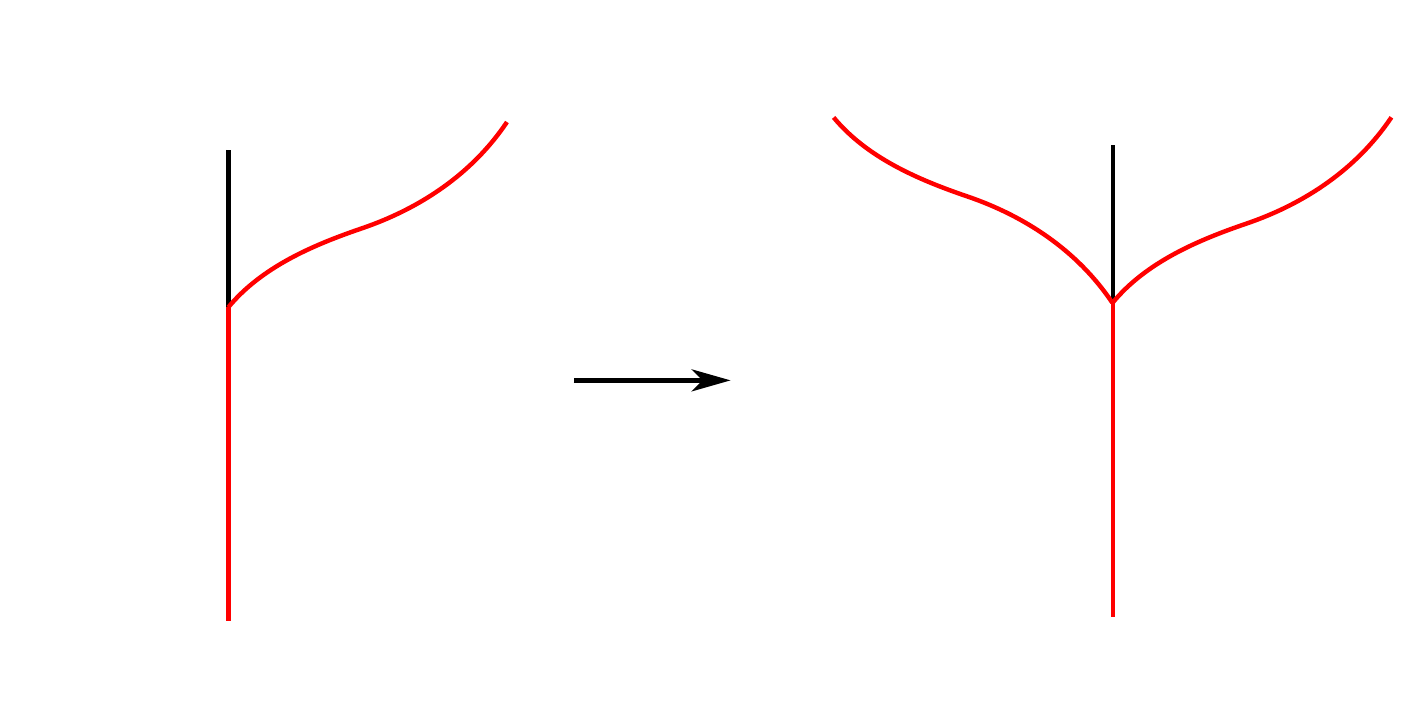}}%
    \put(0.2426391,0.29051453){\color[rgb]{0,0,0}\makebox(0,0)[lt]{\lineheight{0}\smash{\begin{tabular}[t]{l}$V$\end{tabular}}}}%
    \put(0.16664031,0.17208189){\color[rgb]{0,0,0}\makebox(0,0)[lt]{\lineheight{0}\smash{\begin{tabular}[t]{l}$W$\end{tabular}}}}%
    \put(0.87421351,0.29152974){\color[rgb]{0,0,0}\makebox(0,0)[lt]{\lineheight{0}\smash{\begin{tabular}[t]{l}$\Sigma$\end{tabular}}}}%
    \put(0.7906162,0.17208189){\color[rgb]{0,0,0}\makebox(0,0)[lt]{\lineheight{0}\smash{\begin{tabular}[t]{l}$W$\end{tabular}}}}%
    \put(0.63285003,0.30826532){\color[rgb]{0,0,0}\makebox(0,0)[lt]{\lineheight{0}\smash{\begin{tabular}[t]{l}$V'$\end{tabular}}}}%
  \end{picture}%
\endgroup%

	\endgroup

    \caption{The stationary varifold after reflection}
    \label{fig:reflection}
\end{figure}

In general, when we want to prove the Allard-type regularity in a general Riemannian manifold $M$, we may consider isometrically embedding $M$ into a higher dimensional Euclidean space by Nash embedding theorem.
However, we need to work on the codimension one to ensure the contact angle condition is well-defined.
Our approach involves the development of a varifold theory under a $C^1$ metric, focusing particularly on the standard half-space $\mathbb{H}^{n+1}$ equipped with this metric.
Note that the metric can be chosen to be sufficiently close to the Euclidean metric, after an appropriate scaling.
This approach mirrors the method employed by Schoen-Simon \cite{SchoenSimon1981Regularity} to prove the regularity of the stable minimal hypersurfaces. In particular, the monotonicity formula remains valid under the $C^1$ metric.
This formula leads to the derivation of a crucial $L^2$-estimate (Theorem \ref{thm__l_2_estimate}) for the varifold $V$ with a prescribed contact angle, thereby enabling us to achieve the decay of the $L^2$-excess (Proposition \ref{prop_ImproveExcessRho}) and, consequently, to finish the proof of our main theorem.

The paper is organized as follows.
Section \ref{sec:main_theorem} sets the stage by defining the necessary notation and presenting preliminary results concerning varifolds under the $C^1$ metric.
Following this, we state our main theorem in the context of a general metric $g$.
The notion of a $\mu$-stationary quadruple, introduced in Section \ref{sec:_mu_stationary_quadruple}, assumes the metric $g$ is close to the Euclidean metric and the contact angle $\theta$ is close to a constant.
We prove that various fundamental results applicable to stationary $n$-varifolds also pertain to the $\mu$-stationary quadruple, including the monotonicity formula and compactness theorem.
The subsequent sections, from Section \ref{sec:l2_estimate} through Section \ref{sec:iterations}, concentrate on scenarios where $\theta$ is greater than $\frac{\pi}{2}+\Lambda_0$ for a certain $\Lambda_0$ in the interval (0,1).
To make our proof clear, we follow the strategy of Wickramasekera \cite{Wickramasekera2014Regularity} and divide the proof into several steps.
At first, we prove the validity of the $L^2$-estimate for the $\mu$-stationary quadruple in Section \ref{sec:l2_estimate}, which then allows us to define a blow-up sequence and show that the associated blow-up is at least a $C^2$ function in Section \ref{sec:blow_up_argument}.
This will lead to the $L^2$-excess decay in Section \ref{sec:iterations} and conclusively prove the theorem for cases where $\theta$ is greater than $\frac{\pi}{2}+\Lambda_0$.
Section \ref{sec:free_boundary_case} addresses situations where $\theta$ closely to $\frac{\pi}{2}$, highlighting that only minor adjustments are necessary compared to the previous sections.
Lastly, we collect some basic facts about the varifold theory under $C^1$ metric in Appendix \ref{sec:varifolds} and an elementary result in spherical geometry in Appendix \ref{sec:append_stationaryNetwork} in order to prove Theorem \ref{thm_densityEuclidean}.

\section{Main Theorem}%
\label{sec:main_theorem}
\subsection{Basic Notation}%
\label{sub:notations}

It is necessary to define some notations for our discussion,

\begin{itemize}
	\item We denote points in $\mathbb{R}^{n+1}$ using capital letters $X, Y, Z$. Specifically, we write $X = (x_1, x_2, \ldots, x_{n+1})$. For each $i = 1, 2, \ldots, n+1$, we represent $e_i = \frac{\partial}{\partial x_i}$. Commonly, we express $X$ as $(x, y)$ or $(\xi, \eta)$, where $x, \xi \in \mathbb{R}^2$ and $y, \eta \in \mathbb{R}^{n-1}$. Additionally, $Y = (0, y)$ for $y \in \mathbb{R}^{n-1}$, and $Z = (0, \zeta, \eta)$ for $\zeta \in \mathbb{R}$, $\eta \in \mathbb{R}^{n-1}$.
	\item For any point $X = (\xi, \eta) \in \mathbb{R}^{n+1}$, we define $r(X) = |\xi|$.
	\item $B^N_r(X)$ denotes the standard Euclidean ball in $\mathbb{R}^N$. When $N = n+1$, we simplify this to $B_r(X)$ instead of $B_r^{n+1}(X)$. If $X = 0$, we further abbreviate it to $B_r^N$ instead of $B_r^N(0)$.
	\item We define $\mathbb{H}^{n+1}$ as the set $\{ X = (x_1, x_2, y) \in \mathbb{R}^2 \times \mathbb{R}^{n-1} : x_1 \ge 0 \}$.
		\item $\omega_n$ represents the volume of the $n$-dimensional unit ball.

	\item $\mathcal{B}_r(X)$ refers to the intersection of $B_r(X)$ with $\mathbb{H}^{n+1}$, representing an open Euclidean ball in $\mathbb{H}^{n+1}$. Here, $X$ can be any point in $\mathbb{R}^{n+1}$.
		
	\item For any point (or vector) $X \in \mathbb{R}^{n+1}$, with $X = (x_1, x_2, y)$, we define $\tilde{X} = (-x_1, x_2, y)$ as the reflection point (or vector) across the boundary plane $\partial \mathbb{H}^{n+1}$.
	\item For any (relatively) open subset $U \subset \mathbb{R}^{n+1}$ or $\mathbb{H}^{n+1}$, we introduce,
	\begin{itemize}
		\item $\mathfrak{X}^1(U)$ as the set of $C^1$ class vector fields on $U$, 
		\item $\mathfrak{X}^1_c(U)$ as the subset of $\mathfrak{X}^1(U)$ with vector fields that have compact support within $U$.
	\end{itemize}
	When $U$ is an open set within $\mathbb{H}^{n+1}$, we specifically define $\mathfrak{X}^1_{\tan}(U)$ and $\mathfrak{X}^1_{c,\tan}(U)$ to account for vector fields tangent to the boundary of $\mathbb{H}^{n+1}$.

	\item If $H$ is a half-hyperplane in $\mathbb{H}^{n+1}$, $\vec{n}_H$ denotes the unit normal vector to $H$ pointing upward (i.e., $\vec{n}_H \cdot e_2\ge 0$).
		Note that $\vec{n}_H$ is unique if $e_2$ is not parallel to $H$.
	\item For any two $n$-dimensional subspace $P,P'$ of $\mathbb{R}^{n+1}$, we define $|P-P'|$ as the Hausdorff distance between $P \cap B_1$ and $P' \cap B_1$.
		Similarly, for any two $n$-dimensional half-space $H, H'$ in $\mathbb{H}^{n+1}$ (the half-hyperplanes containing the origin), we define $|H - H'|$ as the Hausdorff distance between $H\cap \mathcal{B}_1$ and $H'\cap \mathcal{B}_1$.
	\item For any $n$-plane $P$ in $\mathbb{R}^{n+1}$ and a point $X \in \mathbb{R}^{n+1}$, we write $X^{\top_S}$ as the orthogonal projection of $X$ onto $P$ and $X^{\bot_S}$ as the orthogonal projection of $X$ onto $P^{\bot}$.
	\item $\eta_{X,r}(Y)=\frac{Y-X}{r}$, $\tau_X(Y)=Y-X$.
	\item We define $H^\theta=\left\{ (r \cos \theta,r \sin \theta,y): r \in \mathbb{R}^+, y \in \mathbb{R}^{n-1}\right\}$ for $\theta \in (0,\pi)$.
		In particular, $H^0=\left\{ x_2<0,x_1=0 \right\}$.
	\item 
		$N(H):=\left\{ X=(r\cos \theta',r\sin \theta',y) \in \mathbb{H}^{n+1}: r \in \mathbb{R}^+,|\theta'-\theta|<\frac{\pi}{8}  \right\}$ if $H=H^\theta$ for some $\theta \in (0,\pi)$.
	\item For any metric $g$ (or a general symmetric 2-tensor) defined on $U\subset \mathbb{R}^{n+1}$ or $\mathbb{H}^{n+1}$, and for vectors $\varphi,\varphi' \in T_XU$ at a point $X \in U$, we define $\left< \varphi,\varphi' \right> _g$ as the inner product of these two vectors under metric $g$.
		Sometimes, this is alternatively expressed as $\left< \varphi,\varphi' \right> _{g(X)}$ to emphasize the evaluation of the inner product at the specific point $X$.
		On local coordinate $\left\{ e_i \right\}_{i=1}^{n+1}$, we write 
		\[
			g_{ij}(X)=\left< e_i,e_j \right>_{g(X)}.
		\]
		In particular, we denote by $\delta = (\delta_{ij})_{i,j=1}^{n+1}$ the standard Euclidean metric on $\mathbb{R}^{n+1}$, and we use the notation $\varphi \cdot \varphi' := \langle \varphi, \varphi' \rangle_{\delta}$.
		The norm of a vector $\varphi$ under metric $g$ is represented as $|\varphi|_g = \sqrt{\langle \varphi, \varphi \rangle_g}$.
	\item For any $N$ by $N$ matrix $L=(l_{ij})_{i=1}^N$, we denote by $|L|=\sqrt{\sum_{i,j=1}^N l _{ij}^2}$ the $L^2$ norm of $L$.
		Hence, for any symmetric two-tensor $h$ defined on $U\subset \mathbb{R}^{n+1}$ or $\mathbb{H}^{n+1}$, we define the norm of $|h|$ at $X$ as $|h|(X)=\sqrt{\sum_{i,j=1}^{n+1} h_{ij}^2(X)}$.
\end{itemize}

\subsection{Measures and Varifolds}%
\label{sub:measures_and_varifolds}

In this subsection, we revisit some basic facts of measures and varifolds, directing readers to \cite{Allard1972} and \cite{simon1983lectures} for comprehensive details.

For any Radon measure $\mu$ defined on a metric space $M$, its support, denoted $\mathrm{spt}\mu$, is defined as
\[
	\mathrm{spt}\mu=\left\{ X \in M:\mu(U)>0 \text{ for any open neighborhood }U \text{ of }X \right\}.
\]
When $\mu$ is a (signed) Radon measure on $\mathbb{R}^{n+1}$, the $k$-dimensional density of $\mu$ at a point $X$, represented as $\Theta^k(\mu,X)$, is defined by
\[
	\Theta^k(\mu,X):=\lim_{\rho \rightarrow 0^+} 
	\frac{\mu(B_\rho(X))}{\omega_k\rho^k}
\]
provided the limit exists. For simplicity, we use $\Theta(\mu,X) = \Theta^n(\mu,X)$ in this paper.

Let $G(n+1,n)$ represent the space of all $n$-dimensional linear subspaces in $\mathbb{R}^{n+1}$ (the Grassmann Manifold). For an open subset $U$ of $\mathbb{R}^{n+1}$ or $\mathbb{H}^{n+1}$, we denote $G(n,U) = U \times G(n+1,n)$.
We say $V$ is an $n$-varifold on $U$ if $V$ is a Radon measure on $G(n,U)$.
The weight measure of $V$, denoted $\|V\|$, is defined by
\[
	\|V\|(K):=\int_{K} dV(X,S),
\]
for any Borel subset $K\subset U$.
For an $\mathcal{H}^n$-measurable, countably $n$-rectifiable set $M \subset U$, and a positive, locally $\mathcal{H}^n$-integrable function $\vartheta$ on $M$, there exists a natural $n$-varifold $|(M,\vartheta)|$ defined by
\[
	|(M,\vartheta)|(K):=\int_{(X,S) \in K, x \in M, S=T_XM} \vartheta(X)dV(X,S)
\]
where $T_XM$ is the approximate tangent space of $M$ at point $X$.
We say $V$ is rectifiable if it can be expressed as $V = |(M,\vartheta)|$ for the above-defined $M$ and $\vartheta$, and integral rectifiable if, in addition, $\vartheta$ is integer-valued $\mathcal{H}^n$-a.e.
Typically, $|M|$ denotes $|(M,1)|$.

Given an $n$-varifold $V$ on $U$, the tangent cone space of $V$ at a point $X \in \mathrm{spt}\|V\|$ is defined by
\[
	\mathrm{VarTan}(V,X):=\{ V' \in G(n,\mathbb{R}^{n+1}): V'=\lim_{i \rightarrow +\infty} (\eta_{X,\rho_i})_{\#}(V) \text{ for some sequence }\rho_i \rightarrow 0^+ \}.
\]
Here, $f_{\#}(V)$ denotes the pushforward of the varifold $V$ under a mapping $f$.
We say that $V$ has a tangent cone $V'$ at $X$ if $V' \in \mathrm{VarTan}(V,X)$.
Specifically, we say that $V$ has a multiplicity-one tangent hyperplane (half-hyperplane, respectively) at $X$ if $V'=|L(P)|$ for a certain $L \in \mathrm{SO}(n+1)$ and $P=\left\{ x_1=0 \right\}$ ($P=\left\{ x_1=0,x_2\ge 0 \right\}$, respectively).
The first variation $\delta V$ of $V$ is defined by
\[
	\delta V(\varphi):= \int_{ } \mathrm{div}_S \varphi(X) dV(X,S),\quad \forall \varphi \in \mathfrak{X}^1_c(U).
\]

\subsection{Main Results}%
\label{sub:main_results}
We consider a bounded closed subset $\Omega \subset \mathbb{R}^{n+1}$ with a $C^2$ boundary, indicating that $\partial \Omega$ is an $C^2$ hypersurface in $\mathbb{R}^{n+1}$.
Let $g$ be a $C^1$ metric on $\Omega$ and $\theta$ a $C^1$ function defined on $\partial \Omega$ with values in the range $(0,\pi)$.
We extend $g$ to ensure it remains a $C^1$ metric in a neighborhood of $\Omega$.
Note that the extensions of $g$ do not affect the concepts defined subsequently.

In the context of varifold theory under the metric $g$, we outline some fundamental results and direct the reader to Appendix \ref{sub:theory_of_varifolds_under_lipschitz_metrics}, as these are analogous to established varifold theory.

We say $V$ is an $n$-varifold defined on $\Omega$ if $V$ is an $n$-varifold in $\mathbb{R}^{n+1}$ with $\mathrm{spt}\|V\|\subset \Omega$.

Given an $n$-varifold defined on $\Omega$, the weight measure $\|V\|_g$ of $V$ under metric $g$ defined by
\[
	\|V\|_g(K):=\int_{ K} \sqrt{\mathrm{det}g_S(X)}dV(X,S),
\]
where $\mathrm{det}g_S(X):=\mathrm{det}(\left< \tau_i,\tau_j \right>_{g(X)} )_{i,j=1}^n$ and $\left\{ \tau_i \right\}_{i=1}^n$ forms an orthonormal basis of $S$ under the Euclidean metric.

For any $C^1$ vector field $\varphi$ on $\Omega$, the first variation $\delta^gV$ of $V$ under $g$ is defined by
\[
	\delta^gV(\varphi):=\int_{ } \mathrm{div}_S^g \varphi(X) \sqrt{\mathrm{det}g_S(X)} dV(X,S).
\]
Here, $\mathrm{div}_S^g \varphi(X)$ is the divergence of $\varphi$ under metric $g$ at $X$.

We say $V$ has \textit{bounded first variation} under the metric $g$ if there exists a constant $C$ such that
\begin{equation}
	\left| \delta^gV(\varphi) \right| \le C\sup _{}|\varphi|_g,\quad \text{ for any vector field } \varphi \text{ supported in the neighborhood of }\Omega.
	\label{eq:defBounded1stVar}
\end{equation}
With $V$ having bounded first variation under metric $g$, we can define the \textit{generalized mean curvature} $\boldsymbol{H}_g$, the \textit{generalized boundary measure} $\sigma_V^g$, and the \textit{generalized unit conormal} $\nu_V^g$ of $V$ under metric $g$ satisfying
\[
	\delta^g V(\varphi)= \int_{ } \left< \boldsymbol{H}_g,\varphi \right> _g d\|V\|_g + \int_{ } \left< \nu_V^g,\varphi \right> _g d\sigma_V^g,
\]
where $\nu_V^g$ satisfies $|\nu_V^g|_g=1$ for $\sigma_V^g$-a.e. $X \in \mathbb{R}^{n+1}$.

We now introduce the concept of a prescribed contact angle varifold on $\Omega$.

\begin{definition}
	\label{def_Prescribed}
	An $n$-varifold $V$ defined on $\Omega$ is said to have \textit{prescribed contact angle $\theta$ under metric $g$} if there exists an $\mathcal{H}^n$-measurable subset $U \subset \partial \Omega$ such that, for any $\varphi \in \mathfrak{X}^1(\mathbb{R}^{n+1})$ with compact support and tangent to $\partial \Omega$ at any point $X \in \partial \Omega$, the following holds,
			\begin{equation}
				\delta^gV(\varphi)-\delta^g |U|(\cos \theta \varphi)=\int_{ } \left< \boldsymbol{H}_g, \varphi \right> _g d\|V\|_g,
				\label{eq:defPrescribedAngle}
			\end{equation}
		where $\boldsymbol{H}_g$ denotes the generalized mean curvature vector of $V$ under metric $g$, and $\sigma_V^g$ is singular with respect to $\|V\|_g$.
\end{definition}

The following example demonstrates that if there exists a $C^2$ hypersurface with a prescribed contact angle $\theta$ under metric $g$ in the classical sense, then the corresponding varifold also has a prescribed contact angle $\theta$ under metric $g$.
\begin{example}
Consider $\Sigma$, a $C^2$ hypersurface with boundary in $\Omega$, and let $U \subset \partial \Omega$ be an open subset with a $C^2$ boundary such that $\partial U = \partial \Sigma$. 
We define the contact angle $\measuredangle_g(\Sigma,U)$ at any point $X \in \partial \Sigma$ in the traditional manner.
Specifically, let $\tau_{\Sigma}^g$ (respectively, $\tau_U^g$) denote the unit conormal vector of $\partial \Sigma$ in $\Sigma$ (respectively, $\partial U$ in $\partial \Omega$), pointing into $\Sigma$ (respectively, $U$) at $X$ under metric $g$.
The contact angle $\measuredangle_g(\Sigma,U)$ is defined as the angle between $\nu_{\Sigma}^g$ and $\nu_U^g$ under metric $g$.
If $\measuredangle_g(\Sigma,U) = \theta(X)$ for any $X \in \partial \Sigma$, then we can varify that the varifold $V = |\Sigma|$ has the prescribed contact angle $\theta$ under metric $g$.
\end{example}

\begin{theorem}
	\label{thm_mainG}
	Suppose $V$ is an integral $n$-rectifiable varifold with prescribed contact angle $\theta$ under metric $g$, having bounded first variation, and the mean curvature vector $\boldsymbol{H}_g$ is uniformly bounded.
	If at a point $X \in \mathrm{spt}\|V\| \cap \partial \Omega$, $V$ has a multiplicity-one tangent half-hyperplane at $X$.
	Then, $\mathrm{spt}\|V\|$ is a $C^{1,\gamma}$ hypersurface with boundary near $X$ for some $\gamma \in (0,1)$.
\end{theorem}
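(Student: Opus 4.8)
The plan is to reduce the statement, after rescaling and a change of coordinates, to an Allard-type regularity statement for a ``$\mu$-stationary quadruple'' on the half-space $\mathbb{H}^{n+1}$ equipped with a $C^1$ metric close to Euclidean, and then to run a Simon/Wickramasekera-style blow-up scheme. First I would localize near $X$: translate $X$ to the origin, flatten $\partial\Omega$ by a $C^2$ diffeomorphism (so $\Omega$ becomes $\mathbb{H}^{n+1}$ near $0$ at the cost of replacing $g$ by a new $C^1$ metric, still $C^1$-close to $\delta$ after parabolic rescaling $\eta_{0,\rho}$), and use the hypothesis that $\mathrm{VarTan}(V,0)$ contains a multiplicity-one half-hyperplane $H$. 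By the monotonicity formula under the $C^1$ metric (which the paper reproves, and which holds with a controllable boundary error term as in \cite{Tonegawa2017fixAngle}), the density $\Theta(\|V\|_g,0)=\tfrac12$, and one can choose the tangent half-hyperplane; after a rotation fixing $\partial\mathbb{H}^{n+1}$ we may assume $H=H^{\theta_0}$ with $\theta_0=\theta(X)$ (so the contact-angle condition is, at scale one, close to the constant $\theta_0$). This is exactly the setup of a $\mu$-stationary quadruple with $\theta$ near a constant, so from here on I work in that framework.

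The heart of the argument is the decay-of-excess scheme. The key inputs, all established earlier in the paper, are: (i) the $L^2$-estimate of Theorem \ref{thm__l_2_estimate}, which bounds the tilt-excess of $V$ over a ball by the height-excess relative to the best-fitting half-hyperplane plus lower-order terms coming from $\|\boldsymbol{H}_g\|$ and from $\|g-\delta\|_{C^1}$; (ii) the compactness theorem for $\mu$-stationary quadruples. The scheme runs as follows. Suppose for contradiction that the excess does not decay; take a sequence $V_j$ of $\mu$-stationary quadruples with excess $E_j\to0$ over $\mathcal{B}_1$, with metrics $g_j\to\delta$ in $C^1$ and contact angles $\theta_j\to\theta_0$, for which the excess fails to improve at some fixed smaller scale. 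Normalizing the height functions by $E_j$ and passing to a limit (using the $L^2$-estimate for the needed $W^{1,2}$-compactness and the structure theory to rule out concentration), the blow-ups $u$ are functions on the half-hyperplane $H^{\theta_0}\cap\mathcal{B}_1$. Using the first-variation identity \eqref{eq:defPrescribedAngle} together with the $C^1$-closeness of the metrics, one shows $u$ is weakly harmonic in the interior and satisfies a Neumann-type condition on $\partial H^{\theta_0}$ encoding the linearized contact-angle equation; hence $u$ is smooth (at least $C^2$) up to the boundary, as in Section \ref{sec:blow_up_argument}. Standard estimates for harmonic functions with this boundary condition then give the excess decay $E(\rho)\le C\rho^{2\alpha}E(1)$ at the blow-up level, and an iteration (Proposition \ref{prop_ImproveExcessRho}, Section \ref{sec:iterations}) transfers this to $V$ itself. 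Iterating the decay yields $C^{1,\gamma}$ regularity of $\mathrm{spt}\|V\|$ up to $\partial\Omega$ near $X$, i.e. a $C^{1,\gamma}$ hypersurface-with-boundary.

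Two cases have to be handled separately, as the introduction indicates. When $\theta_0$ is bounded away from $\tfrac\pi2$ — say $\theta_0>\tfrac\pi2+\Lambda_0$, the reflection heuristic of Figure \ref{fig:reflection} is genuinely available and one works with the reflected varifold plus a boundary piece; this is the content of Sections \ref{sec:l2_estimate}--\ref{sec:iterations}. When $\theta_0$ is close to $\tfrac\pi2$ the reflection degenerates, and one instead treats $V$ together with its reflection as a two-sided object near a hyperplane (the free-boundary-type picture), with only minor modifications, as in Section \ref{sec:free_boundary_case}. In either case, once $\theta_0$ is fixed the argument is the blow-up scheme above. The main obstacle is step (i)–(ii) at the blow-up stage: establishing the $L^2$/tilt estimate with the correct boundary term under only a $C^1$ metric and a $C^1$ contact angle, and showing that blow-up sequences do not lose mass at the boundary $\partial\mathbb{H}^{n+1}$, so that the blow-up limit is a genuine (single-valued, since the tangent cone is multiplicity one) solution of the linearized problem rather than something with a boundary singularity. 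This is precisely why the monotonicity formula under the $C^1$ metric — with its boundary contribution controlled — is invoked as the crucial ingredient.
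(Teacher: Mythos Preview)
Your proposal is correct and follows essentially the same route as the paper: reduce to a VPCA-quadruple on $\mathbb{H}^{n+1}$ with a $C^1$ metric close to Euclidean (Subsection~\ref{sub:VPA}), establish the $L^2$-estimate (Theorem~\ref{thm__l_2_estimate}), show blow-ups are $C^2$ up to the edge (Propositions~\ref{prop_HolderBlowUp}--\ref{prop_C2estimate}), iterate the excess decay (Proposition~\ref{prop_ImproveExcessRho}), and split into the $(\Lambda_0,\varepsilon)$-case (Sections~\ref{sec:l2_estimate}--\ref{sec:iterations}) versus the free-boundary case (Section~\ref{sec:free_boundary_case}). One small correction: the case split in the paper's final assembly is governed by whether the tangent half-hyperplane itself is $H^{\pi/2}$ (which can occur with $W'=0$ even when $\theta(X)\neq\tfrac\pi2$) versus $H^{\theta(X)}$, not purely by the value of $\theta(X)$; and the reflection picture of Figure~\ref{fig:reflection} is only heuristic---the actual argument in Sections~\ref{sec:l2_estimate}--\ref{sec:iterations} works directly with the first-variation identity rather than a reflected varifold.
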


Now, we restate Corollary \ref{cor_contactAngleEuclidean} and Theorem \ref{thm_densityEuclidean} in a general metric context.

\begin{corollary}
	If $V$ is a varifold satisfying the conditions of Theorem \ref{sub:main_results} at a point $X \in \mathrm{spt}\|V\| \cap \Omega$, and $\Sigma$ denotes the support of $\|V\| \cap B_\rho(X)$ for $\rho$ such that $\Sigma$ is a $C^{1,\gamma}$ hypersurface, then we can find an open subset $U \subset \partial \Omega$ such that $\partial \Sigma = \partial U$ in $B_\rho(X)$ and the contact angle $\measuredangle _g(\Sigma,U)=\theta(Y)$ or $\frac{\pi}{2}$ for any $Y \in \partial \Sigma \cap B_\rho(X)$.

	Furthermore, if $\theta\neq \frac{\pi}{2}$ on $\mathrm{spt}\|V\|\cap \partial \Omega$ and the tangent cone of $\|V\|$ at $X$ is not orthogonal to $\partial \Omega$, then we actually have $\measuredangle _g(\Sigma,U)=\theta$.
	\label{cor_contactAngleG}
\end{corollary}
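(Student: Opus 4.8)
The plan is to turn the $C^{1,\gamma}$ conclusion of Theorem \ref{thm_mainG} into an honest first--variation computation on a now-smooth piece of surface, and then to read the contact angle off the defining identity \eqref{eq:defPrescribedAngle}. First I would fix $\rho>0$ so small that $\Sigma:=\mathrm{spt}\|V\|\cap B_\rho(X)$ is a connected $C^{1,\gamma}$ hypersurface with boundary $\partial\Sigma\subset\partial\Omega$, the latter a $C^{1,\gamma}$ hypersurface of $\partial\Omega$ through $X$, and with $V=|\Sigma|$ there --- the multiplicity being $1$ by the constancy theorem, since the density of $\|V\|$ at $X$ is $\tfrac12$. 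As $\boldsymbol H_g\in L^\infty$ and $g\in C^1$, elliptic estimates bootstrap $\Sigma$ to $W^{2,p}$ for every $p<\infty$, which is enough to run the divergence theorem on $\Sigma$ under the metric $g$: for $\varphi\in\mathfrak X^1_c(B_\rho(X))$,
\[
	\delta^gV(\varphi)=\int_\Sigma\mathrm{div}^g_\Sigma\varphi\,d\mathcal{H}^n_g=\int_\Sigma\langle\boldsymbol H_g,\varphi\rangle_g\,d\mathcal{H}^n_g+\int_{\partial\Sigma}\langle\nu_\Sigma^g,\varphi\rangle_g\,d\mathcal{H}^{n-1}_g,
\]
where $\nu_\Sigma^g$ is the outward unit conormal of $\partial\Sigma$ in $\Sigma$ under $g$; equivalently, this identifies the generalized boundary data of $V$ with $\sigma_V^g=\mathcal{H}^{n-1}_g\llcorner\partial\Sigma$ and $\nu_V^g=\nu_\Sigma^g$. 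After shrinking $\rho$, $\partial\Sigma$ cuts $\partial\Omega\cap B_\rho(X)$ into two connected open pieces $U^+$ and $U^-$.

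Next I would substitute this formula into \eqref{eq:defPrescribedAngle}: the $\|V\|_g$--terms cancel, leaving, for every compactly supported $\varphi$ tangent to $\partial\Omega$,
\[
	\int_{\partial\Sigma}\langle\nu_\Sigma^g,\varphi\rangle_g\,d\mathcal{H}^{n-1}_g=\delta^g|U|(\cos\theta\,\varphi).
\]
Only the tangential part $(\nu_\Sigma^g)^{\top}$ of $\nu_\Sigma^g$ along $\partial\Omega$ is tested on the left, and $|(\nu_\Sigma^g)^{\top}|_g\le1$, so $U$ has locally finite perimeter in $\partial\Omega$ near $X$; testing with $\varphi$ supported off $\partial\Sigma$ shows $\chi_U$ is weakly locally constant on $U^+$ and on $U^-$ wherever $\cos\theta\neq0$. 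Since modifying $U$ on $\{\cos\theta=0\}$ leaves $\delta^g|U|(\cos\theta\,\cdot)$ untouched, I may assume $U$ agrees $\mathcal{H}^n$--a.e.\ in $B_\rho(X)$ with one of $\varnothing$, $U^+$, $U^-$, $U^+\cup U^-$. On $U^\pm$ a second divergence theorem --- in which the mean curvature of $\partial\Omega$ disappears because $\varphi$ is tangent to $\partial\Omega$ --- gives $\delta^g|U|(\cos\theta\,\varphi)=\int_{\partial\Sigma}\cos\theta\,\langle\nu_U^g,\varphi\rangle_g\,d\mathcal{H}^{n-1}_g$, with $\nu_U^g\in T\partial\Omega$ the outward conormal of $U$ in $\partial\Omega$.

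Now the dichotomy falls out. If $U$ is essentially $U^+$ or $U^-$, I keep that as the required $U$, so $\partial U=\partial\Sigma$ in $B_\rho(X)$, and comparing the two displays over all tangent $\varphi$ forces $(\nu_\Sigma^g)^{\top}=\cos\theta\,\nu_U^g$ on $\partial\Sigma$; pairing with the unit vector $\nu_U^g$ yields $\cos\measuredangle_g(\Sigma,U)=\langle\nu_\Sigma^g,\nu_U^g\rangle_g=\cos\theta$, hence $\measuredangle_g(\Sigma,U)=\theta(Y)$ for every $Y\in\partial\Sigma\cap B_\rho(X)$. If instead $U\in\{\varnothing,\,U^+\cup U^-\}$, the right-hand side of the previous display is $0$, so $(\nu_\Sigma^g)^{\top}\equiv0$, i.e.\ $\Sigma$ meets $\partial\Omega$ orthogonally along $\partial\Sigma$; then setting $U:=U^+$ (still $\partial U=\partial\Sigma$) gives $\nu_\Sigma^g\perp_g T\partial\Omega\ni\nu_U^g$ and $\measuredangle_g(\Sigma,U)\equiv\tfrac\pi2$. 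Either way the first assertion holds. For the ``furthermore'': if the tangent cone of $\|V\|$ at $X$ is not orthogonal to $\partial\Omega$ then $(\nu_\Sigma^g)^{\top}(X)\neq0$, which excludes the orthogonal alternative, so $U$ is a genuine half and $\measuredangle_g(\Sigma,U)=\theta$ on all of $\partial\Sigma\cap B_\rho(X)$; the hypothesis $\theta\neq\tfrac\pi2$ on $\mathrm{spt}\|V\|\cap\partial\Omega$ then guarantees this is the nontrivial branch, so the contact angle is exactly $\theta$.

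The step I expect to be most delicate is the one hidden in the first two paragraphs: once $\mathrm{spt}\|V\|$ is $C^{1,\gamma}$, one must verify that the generalized boundary measure and conormal of $V$ under the merely $C^1$ metric reduce to the topological boundary $\partial\Sigma$ and its classical conormal, so that both divergence theorems are legitimate, and one must carry out the constancy/structure argument that pins $U$ down up to its (irrelevant) behavior on $\{\cos\theta=0\}$. Everything after that is elementary linear algebra with conormals.
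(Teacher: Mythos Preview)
Your argument is correct in spirit and in most of the details, but it is genuinely different from what the paper does. The paper's proof is a one-liner: once Theorem~\ref{thm_mainG} gives $C^{1,\gamma}$ regularity, the contact angle at each $Y\in\partial\Sigma$ is read off from the \emph{tangent cone} of the quadruple $\mathcal V$ at $Y$ --- the blow-up $\mathcal V'=(V',W',\theta(Y),\delta)$ has $V'$ a half-plane and $V'-\cos\theta(Y)W'$ stationary in the free-boundary sense, and the very short classification of such configurations (already carried out at the end of the proofs of Theorems~\ref{thm_l2positiveAngle} and~\ref{thm_l2freeFB}) forces the angle to be $\theta(Y)$ or $\tfrac\pi2$. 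So the paper simply recycles machinery it has already built.

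Your route is more self-contained: you never blow up again but instead run two honest divergence theorems (one on $\Sigma$, one on $U\subset\partial\Omega$) and equate their boundary contributions through the defining identity~\eqref{eq:defPrescribedAngle}. This is a perfectly good alternative and arguably more transparent once $\Sigma$ is known to be smooth. Two small points are worth tightening. First, your global claim that the original $U$ agrees a.e.\ with one of $\varnothing$, $U^\pm$, $U^+\cup U^-$ uses that $\chi_U$ is locally constant on $U^\pm\cap\{\cos\theta\neq0\}$; to pass from ``locally constant on each component'' to a single global value on $U^\pm$ you are implicitly using that a neighborhood of $\partial\Sigma$ in $\partial\Omega$ lies in $\{\cos\theta\neq0\}$ --- this is automatic in your ``furthermore'' paragraph (where $\theta\neq\tfrac\pi2$ on $\partial\Sigma$) but in the general first assertion you only need the \emph{pointwise} dichotomy along $\partial\Sigma$, so you can sidestep the issue by arguing locally near each $Y$. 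Second, the $W^{2,p}$ bootstrap you mention is fine, but strictly speaking the divergence theorem on a $C^{1,\gamma}$ hypersurface with $L^\infty$ mean curvature already gives the boundary-term identity you need without upgrading regularity, so that step can be trimmed.
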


\begin{theorem}
	\label{thm_densityG}
	Given $\Lambda_0 \in (0,1)$, suppose $V$ is an integral $n$-rectifiable varifold described above with prescribed contact angle $\theta$ with $\theta \in [\Lambda_0,\pi-\Lambda_0]$ and suppose $\boldsymbol{H}_g$ is uniformly bounded.
	Then there exists a positive constant $\varepsilon>0$, dependent on $n$ and $\Lambda_0$, such that the following conclusions hold,
	\begin{enumerate}[\normalfont(a)]
		\item The dimension $n$ equals 2, or the range of $\theta$ is within $(\frac{\pi}{2}-\varepsilon,\frac{\pi}{2}+\varepsilon)$.
		\item At a point $X \in \mathrm{spt}\|V\|\cap \partial \Omega$, we have
	\[
		\liminf_{\rho \rightarrow 0^+} \frac{\|V\|(B_\rho(X))}{\omega_n\rho^n}\le \frac{1}{2}+\varepsilon.
	\]
	\end{enumerate}
	Then, the support of $\|V\|$ near $X$ is a $C^{1,\gamma}$ hypersurface with boundary for some $\gamma \in (0,1)$.
\end{theorem}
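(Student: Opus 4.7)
The strategy is to reduce the statement to Theorem \ref{thm_mainG} by producing a multiplicity-one half-hyperplane in $\mathrm{VarTan}(V,X)$. Along a sequence $\rho_i \to 0^+$ realizing the $\liminf$ in (b), rescale $V$ at $X$ and invoke the monotonicity formula plus compactness for varifolds with prescribed contact angle under a $C^1$ metric (as developed in the earlier sections) to extract a subsequential limit $C$. After a linear change of coordinates identifying $g(X)$ with the Euclidean metric, $C$ is a cone from the origin in the tangent half-space $\mathbb{H}^{n+1}$, has vanishing generalized mean curvature, has constant prescribed contact angle $\theta_0 := \theta(X) \in [\Lambda_0,\pi-\Lambda_0]$, and satisfies $\Theta(\|C\|,0) \le \tfrac{1}{2} + \varepsilon$.

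Next apply the reflection trick described in the introduction: let $\tilde C$ denote the reflection of $C$ across $\partial \mathbb{H}^{n+1}$, and let $U_0 \subset \partial \mathbb{H}^{n+1}$ be the (half-hyperplane) blow-up of the wetted set $U$ at $X$. The identity \eqref{eq:defPrescribedAngle}, restricted to tangential vector fields and combined with reflection symmetry, implies that
\[
    W := C + \tilde C + 2\cos\theta_0 \,|U_0|
\]
is a stationary $n$-varifold in $\mathbb{R}^{n+1}$ which is a cone from $0$, with
\[
    \Theta(\|W\|,0) \;=\; 2\,\Theta(\|C\|,0) + \cos\theta_0 \;\le\; 1 + 2\varepsilon + \cos\theta_0,
\]
the lower bound $1+\cos\theta_0$ being attained exactly when $C$ is a single multiplicity-one half-hyperplane meeting $\partial\mathbb{H}^{n+1}$ at angle $\theta_0$.

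Now split according to hypothesis (a). If $n=2$, then $W$ is a stationary $2$-cone in $\mathbb{R}^3$, so its link on $S^2$ is a stationary geodesic network $\mathcal{N}$ of length $2\pi\,\Theta(\|W\|,0)$, symmetric under equatorial reflection and carrying the equatorial arc corresponding to $U_0$ with multiplicity $2\cos\theta_0$. The classification in Appendix \ref{sec:append_stationaryNetwork} yields a length gap: either $\mathcal{N}$ is the three-arc configuration (two antipodal great semicircles plus the equatorial chord), in which case $C$ is a multiplicity-one half-hyperplane, or $\mathrm{length}(\mathcal{N}) \ge 2\pi(1+\cos\theta_0) + c$ for an explicit $c = c(\Lambda_0) > 0$; choosing $\varepsilon < c/(4\pi)$ excludes the latter. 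If instead $n\ge 3$, hypothesis (a) gives $|\cos\theta_0| < \varepsilon$, so $C+\tilde C$ is an integral cone whose first variation has total mass bounded by $O(\varepsilon)$ and whose density at the origin is at most $1+2\varepsilon$; Allard's density-gap rigidity for cones forces $C+\tilde C$ to be a multiplicity-one hyperplane, necessarily orthogonal to $\partial\mathbb{H}^{n+1}$, so that $C$ is a multiplicity-one half-hyperplane (consistent with $\theta_0$ near $\pi/2$). In either case Theorem \ref{thm_mainG} applies and delivers the $C^{1,\gamma}$ conclusion.

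The principal obstacle is the quantitative rigidity in the $n=2$ case: one must rule out every reflection-symmetric stationary spherical geodesic network carrying the forced equatorial arc of multiplicity $2\cos\theta_0$ whose length falls within $4\pi\varepsilon$ of $2\pi(1+\cos\theta_0)$, which is precisely what Appendix \ref{sec:append_stationaryNetwork} provides by analyzing the balance condition at each equatorial junction (where the admissible angles are pinned down by $\theta_0$). The higher-dimensional step, once $\cos\theta_0$ has been shown to be $\varepsilon$-small by (a), is a routine application of Allard's rigidity to $W$ with the $\varepsilon$-small boundary contribution absorbed as a first-variation error.
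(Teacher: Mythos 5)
Your overall strategy—blow up at $X$, show that some tangent cone is a multiplicity-one half-hyperplane, then invoke Theorem \ref{thm_mainG}—is a legitimate reduction and is close in spirit to what the paper does (the paper classifies the limit of a contradiction sequence and then applies the $\varepsilon$-regularity theorems \ref{thm_l2positiveAngle} and \ref{thm_l2freeFB}). The gaps are in your two classification steps. In the case $n\ge 3$, hypothesis (a) only gives $|\theta(X)-\frac{\pi}{2}|<\varepsilon$ with $\varepsilon$ a \emph{fixed} positive constant, so the blow-up retains the angle $\theta_0=\theta(X)\neq\frac{\pi}{2}$ in general. The reflected cone $C+\tilde C$ then has a nonzero first variation concentrated on the $(n-1)$-dimensional edge $\{x_1=x_2=0\}$ — a singular, boundary-type term of size $\sim|\cos\theta_0|$, not a small mean curvature — and there is no Allard-type ``density-gap rigidity'' that forces such a cone to be exactly a multiplicity-one hyperplane. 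In fact the expected good cone $C=|H^{\theta_0}|$ produces $C+\tilde C$ equal to two half-hyperplanes meeting along the edge at angle $2\theta_0\neq\pi$, which is not a hyperplane; so the rigidity statement you appeal to is false as stated and would wrongly exclude precisely the case you must keep. What is actually needed here is a quantitative $\varepsilon$-regularity statement for varifolds with nearly-right contact angle; the paper obtains this from Theorem \ref{thm_l2freeFB}, and in its proof of Theorem \ref{thm_densityG} it only ever classifies the \emph{limit} of a contradiction sequence in which $\theta_i\to\frac{\pi}{2}$ exactly and the mass bound becomes exactly $\frac{1}{2}\omega_n$, where constancy/reflection arguments suffice.

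The $n=2$ case has the same structural problem. Appendix \ref{sec:append_stationaryNetwork} proves only the soft sharp bound: a stationary triple-junction geodesic network on the \emph{half}-sphere $S^2_+$ (junctions meeting at angle $\frac{\pi}{3}$, endpoints allowed on the equator) has length at least $\pi$, with equality iff it is a single half great circle. It contains no weighted equatorial arc of multiplicity $2\cos\theta_0$, no classification of reflection-symmetric full-sphere networks, and — crucially — no quantitative length gap $c(\Lambda_0)$ above the minimum. Since your blow-up at $X$ only satisfies $\Theta(\|C\|,0)\le\frac{1}{2}+\varepsilon$ with $\varepsilon$ fixed, its link has length in $[\pi,(1+2\varepsilon)\pi]$, and without a gap you cannot conclude it is exactly a half great circle; you identify this as ``the principal obstacle'' but the appendix does not supply it. The paper again avoids the issue by compactness: along the contradiction sequence the limit has mass exactly $\le\frac{1}{2}\omega_2$, the appendix's rigidity applies to the limit, and quantitative closeness at a fixed scale is then delegated to Theorems \ref{thm_l2positiveAngle}/\ref{thm_l2freeFB}. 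A further (minor) point: the stationary reflected object is $C+\tilde C-2\cos\theta_0|U_0|$ (compare $V^\theta=V-\cos\theta W$ and Remarks \ref{rmk_stat}, \ref{rmk_reflectionFB}), not $C+\tilde C+2\cos\theta_0|U_0|$; for obtuse $\theta_0$ your $W$ is a signed measure, so tools for nonnegative integral stationary varifolds do not apply to it verbatim.
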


\section{$\mu$-stationary quadruple}%
\label{sec:_mu_stationary_quadruple}

\subsection{$\mu$-flat metric}%
\label{sub:_mu_flat_metric}

Upto a linear transformation and scaling, we can always assume $g$ is sufficiently close to the Euclidean metric in a neighborhood of $X$.

In this subsection, we introduce the concept of a $\mu$-flat metric and discuss some of its fundamental properties. 
We fix an open convex domain $U\subset \mathbb{R}^{n+1}$ or $\mathbb{H}^{n+1}$.

\begin{definition}
	\label{def_muFlat}
	A metric $g$ on $U$ is said \textit{$\mu$-flat} if for any $X \in U$,
	\[
		\left|g_{ij}(X)-\delta_{ij}\right|\le \mu,\quad \text{ and }|Dg(X)|\le \mu \text{ for any }X \in U.
	\]
	Here, $Dg$ denotes the derivative of $g$ under the Euclidean metric and $|Dg(X)|$ is defined as 
	\[
		|Dg(X)|=\sqrt{\sum_{i,j,k=1}^{n+1} \left( \frac{\partial g_{ij}}{\partial x_k}(X) \right)^2}.
	\]
	
\end{definition}

Utilizing the properties of Cholesky decomposition, we can establish the following proposition.
\begin{proposition}
	\label{prop_Pi2Delta}
	There exists $\mu_0=\mu_0(n)\in (0,1)$ sufficiently small such that for any $\mu$-flat metric with $\mu \in (0,\mu_0)$ and $X \in U$, we can find a unique upper triangular matrix $L^{g}_X \in \mathrm{GL}(n+1)$ such that
	\begin{equation}
		|L^g_{X} - \mathrm{Id}|\le C\mu,\quad 
		\left|(L_X^g)^{-1}-\mathrm{Id}\right|\le C\mu,
		\label{eq:propLcloseG}
	\end{equation}
	and $(L^g_X)_{*} g=\delta$ at point $L_X^g(X)$.
	The constant $C=C(n)$ is independent of $\mu$, and $(L^g_X)_*g$ denotes the pushforward of $g$ by $L^g_X$.

	Further more, the map $X\rightarrow L_X^g$ is a $C^{1}$ map.
\end{proposition}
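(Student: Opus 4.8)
The plan is to reduce the statement to a pointwise linear-algebra fact about positive-definite symmetric matrices and then upgrade regularity in $X$ via the smooth dependence of the Cholesky factorization. Fix $X \in U$ and consider the matrix $G = (g_{ij}(X))$, which by $\mu$-flatness satisfies $|G - \mathrm{Id}| \le C\mu$ and is therefore symmetric positive definite once $\mu < \mu_0(n)$ is small enough. The key observation is that if $L$ is a matrix with $L_* g = \delta$ at the point $L(X)$, then, writing the pushforward of a metric under a \emph{linear} map in coordinates, this is equivalent to the algebraic identity $L^{-\mathsf{T}} G L^{-1} = \mathrm{Id}$, i.e. $G = L^{\mathsf{T}} L$. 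Thus I am exactly asking for a factorization $G = L^{\mathsf{T}} L$ with $L$ upper triangular, which is precisely the (transposed) Cholesky decomposition; its existence and uniqueness for symmetric positive definite $G$ (with the normalization that the diagonal entries of $L$ are positive) is classical. This gives both existence and the uniqueness claim.

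Next I would establish the quantitative bounds \eqref{eq:propLcloseG}. Since $G = \mathrm{Id} + E$ with $|E| \le C\mu$, and the Cholesky map $G \mapsto L(G)$ is real-analytic on the open set of positive definite symmetric matrices with $L(\mathrm{Id}) = \mathrm{Id}$, a first-order Taylor expansion (or the inverse/implicit function theorem applied to the polynomial system $L^{\mathsf{T}}L = G$) yields $|L^g_X - \mathrm{Id}| \le C\mu$ for $\mu$ in a fixed small range $(0,\mu_0(n))$, with $C = C(n)$. The bound on $(L^g_X)^{-1} - \mathrm{Id}$ then follows from the Neumann series $(\mathrm{Id} + A)^{-1} = \mathrm{Id} - A + A^2 - \cdots$ with $A = L^g_X - \mathrm{Id}$, whose norm is at most $C\mu < 1/2$, giving $|(L^g_X)^{-1} - \mathrm{Id}| \le 2|A| \le C\mu$ after adjusting the constant.

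Finally, for the regularity of $X \mapsto L^g_X$: the entries of $L^g_X$ are obtained from the entries of $G(X) = (g_{ij}(X))$ by the explicit Cholesky recursion, which involves only additions, multiplications, divisions by diagonal entries (bounded away from $0$ by the previous step), and a single square root of a quantity bounded away from $0$; hence $L(\cdot)$ is a real-analytic — in particular $C^\infty$ — function of the matrix $G$ on a neighborhood of $\mathrm{Id}$. Since $g$ is a $C^1$ metric, $X \mapsto G(X)$ is $C^1$, and composing with the smooth map $G \mapsto L(G)$ shows $X \mapsto L^g_X$ is $C^1$, as claimed. I do not expect any serious obstacle here; the only points requiring a little care are checking that the normalization "upper triangular with positive diagonal" is the right one to force uniqueness, and making sure the constants in \eqref{eq:propLcloseG} depend only on $n$ (which follows because the Cholesky map and its derivatives are controlled uniformly on a fixed neighborhood of $\mathrm{Id}$ in the $(n+1)\times(n+1)$ symmetric matrices). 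The mildly delicate conceptual step is the very first one — correctly identifying "$(L^g_X)_* g = \delta$ at $L^g_X(X)$" with the matrix equation $G = L^{\mathsf{T}}L$ — after which everything is standard.
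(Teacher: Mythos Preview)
Your proposal is correct and follows exactly the approach the paper indicates: the paper does not give a detailed proof but simply prefaces the proposition with ``Utilizing the properties of Cholesky decomposition, we can establish the following proposition,'' and your argument spells out precisely this---identifying $(L^g_X)_*g=\delta$ at $L^g_X(X)$ with $G=L^{\mathsf T}L$, invoking existence/uniqueness of the Cholesky factor (with positive diagonal), and reading off the $C\mu$ bounds and $C^1$ dependence from the smoothness of the Cholesky map near $\mathrm{Id}$.
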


\begin{remark}
	\label{rmk:LXg}
	Since $L_X^g$ is a non-singular upper triangular matrix, we have
	\[
		L_X^g(\mathrm{span}(e_k,e_{k+1},\cdots ,e_{n+1}))=\mathrm{span}(e_k,e_{k+1},\cdots ,e_{n+1}), \forall k=1,2,\cdots ,n+1,
	\]
	where $\mathrm{span}(e_k,e_{k+1},\cdots ,e_{n+1})$ denotes the subspace spanned by $\left\{ e_k,e_{k+1},\cdots ,e_{n+1} \right\}$.

	In particular, $L_X^g(\mathbb{H}^{n+1})=\mathbb{H}^{n+1}$.
\end{remark}

\begin{remark}
	Proposition \ref{prop_Pi2Delta} implies
	\begin{equation}
		B_{(1-C\mu)\rho}(X)\subset L_X^g(B_\rho(X))\subset B_{(1+C\mu)\rho}(X),
		\label{eq:ballCompLXg}
	\end{equation}
	for some constant $C=C(n)$ if $\mu$ is small enough.
\end{remark}

\begin{definition}
	\label{def_Gball}
	For any $X,Y \in U$, the \textit{distance function} $\mathrm{dist}_g(X,Y)$ between $X$ and $Y$ is defined as the infimum length of any $C^1$ curve joining $X$ and $Y$ in $U$.

	Accordingly, the geodesic ball $B^g_r(X)$ is defined as
	\[
		B^g_r(X):=\left\{ X' \in U:\mathrm{dist}^g(X,X')<r \right\}.
	\]
\end{definition}

When assuming $g$ is sufficiently close to the Euclidean metric, we expect that the geodesic ball $B^g_r(X)$ should be close to $B_r(X)$.
The detailed relation is given in the following proposition.

\begin{proposition}
	\label{prop_compBall}
	There exist $\mu_0=\mu_0(n,U)\in (0,1)$ such that for any $\mu$-flat metric $g$ with $\mu \in (0,\mu_0)$,
	\begin{equation}
		B_{(1-C\mu)\rho}(X)\subset B^g_\rho(X)\subset B_{(1+C\mu)\rho}(X).
		\label{eq:ballComp}
	\end{equation}
	and if $g(X)=\delta$, a more precise relation is
	\begin{align}
		B_{(1-C\mu \rho)\rho}(X)\subset B^g_\rho(X)\subset B_{(1+C\mu\rho)\rho}(X),
		\label{eq:ballCompFine}
	\end{align}
	where $C=C(n,U)$.
\end{proposition}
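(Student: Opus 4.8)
Proposition \ref{prop_compBall} — comparing geodesic balls $B^g_\rho(X)$ with Euclidean balls under a $\mu$-flat metric.

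My plan is to bound the $g$-length of curves against their Euclidean length. For a $C^1$ curve $\gamma:[0,1]\to U$ joining $X$ to $X'$, the $g$-length is $\mathrm{length}_g(\gamma)=\int_0^1 |\gamma'(t)|_{g(\gamma(t))}\,dt$. Since $g$ is $\mu$-flat, at every point $\big||\gamma'|_g^2 - |\gamma'|^2\big| = \big|\sum_{ij}(g_{ij}-\delta_{ij})\gamma'_i\gamma'_j\big| \le (n+1)\mu|\gamma'|^2$, so $(1-C\mu)|\gamma'| \le |\gamma'|_g \le (1+C\mu)|\gamma'|$ pointwise for a dimensional constant $C$ (after taking square roots and adjusting $C$, valid once $\mu<\mu_0(n)$). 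Integrating, $(1-C\mu)\,\mathrm{length}(\gamma)\le \mathrm{length}_g(\gamma)\le (1+C\mu)\,\mathrm{length}(\gamma)$, hence $(1-C\mu)\,\mathrm{dist}_\delta(X,X')\le \mathrm{dist}_g(X,X')\le(1+C\mu)\,\mathrm{dist}_\delta(X,X')$, where $\mathrm{dist}_\delta$ is the intrinsic Euclidean distance in $U$. Because $U$ is convex, $\mathrm{dist}_\delta(X,X')=|X-X'|$, the straight-line distance, so the comparison is directly with Euclidean balls. From $\mathrm{dist}_g(X,X')<\rho$ we get $|X-X'|<\rho/(1-C\mu)\le(1+C'\mu)\rho$, giving $B^g_\rho(X)\subset B_{(1+C\mu)\rho}(X)$ after relabeling the constant; and $|X-X'|<(1-C\mu)\rho$ forces $\mathrm{dist}_g(X,X')\le(1+C\mu)|X-X'|<\rho$ once we further shrink, giving $B_{(1-C\mu)\rho}(X)\subset B^g_\rho(X)$. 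This proves \eqref{eq:ballComp}. (One should take $\mu_0$ small enough that $C\mu_0<1/2$, say, so all the relabelings are legitimate; the dependence on $U$ is only through needing $U$ convex to identify $\mathrm{dist}_\delta$ with $|\cdot|$, plus ensuring the relevant balls stay inside $U$ — though for the statement as used one works in a fixed small ball, so this is harmless.)

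For the refined estimate \eqref{eq:ballCompFine} under the additional hypothesis $g(X)=\delta$, the point is that the pointwise distortion $|g_{ij}(Y)-\delta_{ij}|$ is no longer merely $\le\mu$ but, by the mean value theorem along the segment from $X$ to $Y$ and the bound $|Dg|\le\mu$, satisfies $|g_{ij}(Y)-\delta_{ij}| = |g_{ij}(Y)-g_{ij}(X)| \le (n+1)^{1/2}\mu\,|Y-X|$. On a ball of radius $\approx\rho$ around $X$ this is $\le C\mu\rho$, so repeating the length computation for curves $\gamma$ that stay within $B_{(1+C\mu)\rho}(X)$ (which, by the crude estimate already proved, is where any near-length-minimizing $g$-curve of $g$-length $<\rho$ must live, provided $\mu\rho$ is small) yields $(1-C\mu\rho)|\gamma'|\le|\gamma'|_g\le(1+C\mu\rho)|\gamma'|$ pointwise, hence $(1-C\mu\rho)|X-X'|\le\mathrm{dist}_g(X,X')\le(1+C\mu\rho)|X-X'|$ by convexity of $U$, and \eqref{eq:ballCompFine} follows exactly as before.

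The only genuine subtlety — and the step I would be most careful about — is confinement: to upgrade the pointwise metric comparison to a distance comparison one must know that the competitor curves realizing (or nearly realizing) $\mathrm{dist}_g$ do not wander far outside the Euclidean ball where the bound on $g_{ij}-\delta_{ij}$ is good. For \eqref{eq:ballComp} this is automatic because the $\mu$-flat bound $|g_{ij}-\delta_{ij}|\le\mu$ holds on all of $U$, so no confinement is needed there; it is only for the refined \eqref{eq:ballCompFine} that one invokes the already-established crude comparison to trap the curves in $B_{(1+C\mu)\rho}(X)$ first, then re-runs the estimate with the sharper, $|Y-X|$-weighted bound. A clean way to package this is: first prove \eqref{eq:ballComp}; then, for any $X'$ with $\mathrm{dist}_g(X,X')<\rho$, note every curve $\gamma$ from $X$ to $X'$ with $\mathrm{length}_g(\gamma)<\rho$ has $\mathrm{length}_\delta(\gamma)<(1-C\mu)^{-1}\rho$, so $\gamma\subset B_{(1+2C\mu)\rho}(X)\subset U$ (assuming this ball lies in $U$), on which the sharp bound applies, and conclude. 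No exotic input is required — just the length-functional definition of $\mathrm{dist}_g$, the mean value theorem, and convexity of $U$.
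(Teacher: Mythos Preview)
Your proof is correct and follows essentially the same approach as the paper: compare $|\gamma'|_g$ with $|\gamma'|$ pointwise using $|g-\delta|\le\mu$, integrate to compare lengths, and invoke convexity of $U$; then for the refined estimate use $|Dg|\le\mu$ together with $g(X)=\delta$ to get $|g(X')-\delta|\le C\mu|X-X'|$. Your treatment of the confinement issue for \eqref{eq:ballCompFine} is in fact more careful than the paper's, which simply asserts the conclusion.
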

\begin{proof}
	Comparing the lengths of any $C^1$ curve $\gamma$ in $U$ under different metrics and integrating along $\gamma$ yields
	\[
		(1-C\mu)L^\delta(\gamma)\le L^g(\gamma)\le (1+C\mu)L^\delta(\gamma),
	\]
	where $L^g(\gamma)$ denotes the length of $\gamma$ under metric $g$.
	Then, we can obtain \eqref{eq:ballComp} by the definition of geodesic balls and convexity of $U$.

	Moreover, if $g(X)=\delta$, since $|Dg|(X)\le \mu$, we have $|g(X')-\delta|\le C \mu|X-X'|$.
	This will lead to \eqref{eq:ballCompFine}.
\end{proof}

\subsection{Varifolds with prescribed contact angle}%
\label{sub:VPA}

Note that Theorem \ref{thm_mainG} is purely a local result.
By using Fermi coordinates, this theorem can be reduced to the scenario where $X=0$, $g=\delta$ at $0$, and $\Omega\cap B_\rho$ is a half-ball $B_\rho \cap \left\{ x_1\ge 0 \right\}$ for some $\rho>0$.
Up to a scaling, we set $\rho=1$.
We denote by $U$ the $\mathcal{H}^n$-measurable subset of $\partial\Omega$ such that $V$ and $U$ satisfies the conditions specified in Definition \ref{def_Prescribed} and represent $W=|U\cap B_1|$.

Consequently, the quadruple $(V,W,\theta,g)$ fulfills the following conditions.
\begin{enumerate}[\normalfont(a)]
	\item $\theta$ is a $C^1$ function defined on $\mathcal{B}_1\cap \partial \mathbb{H}^{n+1}$ with values in $(0,\pi)$, $g$ is a $C^1$ metric defined on $\mathcal{B}_1\cap \mathbb{H}^{n+1}$.
		\label{it:thetaG}
	\item 
		$V$ is an integral $n$-rectifiable varifold on $\mathcal{B}_1$, and $W=|\tilde{U}|$ for an $\mathcal{H}^n$-measurable subset of $\mathcal{B}_1\cap \partial\mathbb{H}^{n+1}$. \label{it:varifold}
\item $V$ has bounded first variation under metric $g$.
		\label{it:1stBounded}
	
	\item The generalized mean curvature of $V$ under metric $g$ in $\mathcal{B}_1$, denoted by $\boldsymbol{H}_g$, satisfies for any $\varphi \in \mathfrak{X}^1_{c,\tan}(\mathcal{B}_1)$,
		\begin{equation}
			\delta^gV(\varphi)-\delta^gW(\cos \theta \varphi)=\int_{ } \left< \boldsymbol{H}_g,\varphi \right> _g d\|V\|_g.
			\label{eq:1stVarG}
		\end{equation}
		\label{it:PrescribedAngle}
	\item $\boldsymbol{H}_g$ satisfies $\sup_{X \in \mathrm{spt}\|V\|}|\boldsymbol{H}_g(X)|_g<\infty$.
		\label{it:HBounded}
	\item $V$ has a multiplicity-one tangent half-hyperplane at $0$.
		\label{it:TangentHalfPlane}
\end{enumerate}
	
\begin{definition}
	\label{def_VPA}
	We define a \textit{varifold with prescribed contact angle quadruple}, denoted by $(V,W,\theta,g)$, to be a quadruple satisfies \ref{it:thetaG}, \ref{it:varifold}, and \ref{it:PrescribedAngle} as outlined above.

	For convenience, we represent this quadruple as $\mathcal{V}$ and refer to it as a \textit{VPCA-quadruple} when it exemplifies a \underline{v}arifold with a \underline{p}rescribed \underline{c}ontact \underline{a}ngle quadruple.

	We say a sequence of VPCA-quadruples $\left\{ \mathcal{V}_i \right\}_{i=1}^\infty$ converges to a VPCA-quadruple $\mathcal{V}$ if $V_i\rightarrow V,W_i\rightarrow W$ in the sense of varifolds, $\theta_i \rightarrow \theta$, $g_i\rightarrow g$ in $C^1$ sense.
\end{definition}

From now on, we will assume $\theta$ is a $C^1$ function defined on $\mathcal{B}_1\cap \partial \mathbb{H}^{n+1}$ with values in $(0,\pi)$, and $g$ is a $C^1$ metric defined on $\mathcal{B}_1$.

Now, let us give the precise definitions of $\mu$-stationary VPCA-quadruples.

\begin{definition}
	\label{def_muStat}
	We say a VPCA-quadruple $\mathcal{V}$ is a \textit{$\mu$-stationary quadruple} if $\mathcal{V}$ satisfies the following conditions,
	\begin{enumerate}[\normalfont(a)]
		
	\item The generalized mean curvature $|\boldsymbol{H}_g|_g\le \mu$.
		\label{it:muMeanCur}
	\item The angle function $\theta$ satisfies $|D\theta|\le \mu$.
		\label{it:muAngle}
	\item The metric $g$ satisfies $|g-\delta|\le \mu,|Dg|\le \mu$.
		\label{it:muMetric}
\end{enumerate}
Here, $|D \theta|(X):=\sqrt{\sum_{i=2}^{n+1} \left( \frac{\partial \theta(X)}{\partial x_i} \right)^2}$ since $\theta$ is a function defined on $\mathcal{B}_1\cap \partial \mathbb{H}^{n+1}$.
In particular, we say $\mathcal{V}$ is \textit{stationary} if it is $0$-stationary.
We say $\mathcal{V}$ is \textit{integral rectifiable} if each $V,W$ is an integral rectifiable varifold.
Moreover, we say $\mathcal{V}$ is \textit{restricted} if $V$ and $W$ all have bounded first variation under metric $g$, and
$W=|U|$ for some $\mathcal{H}^n$-measurable set $U\subset \mathcal{B}_1\cap \partial \mathbb{H}^{n+1}$.
We denote the class of all \textit{restricted integral rectifiable $\mu$-stationary quadruples} by $\mathcal{RIV}(\mu)$.

Additionally, we say $\mathcal{V}$ is \textit{$(\Lambda_0,\mu)$-stationary} if $\theta$ also satisfies $\theta \in [\frac{\pi}{2}+\Lambda_0,\pi-\Lambda_0]$, and $\mathcal{V}$ is \textit{$\mu$-stationary} in free boundary sense if $\theta$ also satisfies $\theta \in [\frac{\pi}{2}-\mu,\frac{\pi}{2}+\mu]$.

Specifically, we say an $n$-varifold $V$ defined on $\mathcal{B}_1$ is \textit{$\mu$-stationary under metric $g$ in free boundary sense} if $(V,0,\frac{\pi}{2},g)$ is a $\mu$-stationary quadruple and $V$ is a \textit{stationary varifold in free boundary sense} if $(V,0,\frac{\pi}{2},\delta)$ is a stationary quadruple.
\end{definition}

\begin{remark}
	\label{rmk_stat}
	If $\mathcal{V}=(V,W,\theta,g)$ is stationary, then we directly know $\theta$ is a constant, $g=\delta$, and $V-\cos \theta W$ is a stationary varifold in free boundary sense.
\end{remark}

\begin{remark}
	\label{rmk_changeAngle}
	For any $\mathcal{V}=(V,W,\theta,g) \in \mathcal{RIV}(\mu)$, we can check that $(V,|\mathcal{B}_1\cap \mathbb{H}^{n+1}|-W,\frac{\pi}{2}-\theta,g) \in \mathcal{RIV}(\mu)$, according to the definition of $\mathcal{RIV}(\mu)$.
\end{remark}

\begin{remark}
	If $V$ is a stationary varifold in free boundary sense in $\mathcal{B}_1$, then we can do a reflection across $\left\{ x_1=0 \right\}$ to obtain a stationary varifold in $\mathcal{B}_1$.
	\label{rmk_reflectionFB}
\end{remark}

For simplicity, given a VPCA-quadruple $\mathcal{V}$, we introduce the following notations,
\[
	V^\theta := V-\cos \theta W
\]
to be the signed Radon measure on $\mathcal{B}_1 \times G(n+1,n)$ and define
\[
	\|V^\theta\|_g:=\|V\|_g(U)-\int_{ U} \cos \theta d\|W\|_g.
\]
to be the signed weight measure for $V^\theta$.

Given $\mu$ is sufficiently small, we can rewrite condition \eqref{eq:1stVarG} under Euclidean metric.
\begin{proposition}
	\label{prop_1stVar}
	There exists $\mu_0=\mu_0(n)$ such that the following holds.
	Suppose $\mathcal{V}$ is a $\mu$-stationary quadruple with $\mu \in (0,\mu_0)$ and $X_0 \in \mathcal{B}_1$, then for any vector field $\varphi \in \mathfrak{X}^1_{c,\tan}(\mathcal{B}_1)$, the following inequality holds,
	\begin{equation}
	\left|\int_{ } \mathrm{div}_S(\varphi)dV(X,S)-\int_{ }  \cos \theta \mathrm{div}_S(\varphi)dW(X,S)\right|	\le
	C\mu \int_{ } |\varphi|+|D\varphi|d\|V+W\|
	\label{eq:1stVarFormulaGeneral}
	\end{equation}
	Moreover, if $g(X_0)=\delta$, the above inequality can be improved by
	\begin{equation}
	\left|\int_{ } \mathrm{div}_S(\varphi)dV(X,S)-\int_{ }  \cos \theta \mathrm{div}_S(\varphi)dW(X,S)\right|	\le
	C\mu \int_{ } |\varphi|+|X-X_0||D\varphi|d\|V+W\|
	\label{eq:1stVarFormula}
	\end{equation}
\end{proposition}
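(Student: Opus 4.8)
\textit{Proof plan.} The plan is to replace, pointwise in $X$, each $g$-dependent quantity appearing in \eqref{eq:1stVarG} by its Euclidean counterpart, bookkeeping the errors in terms of $|g(X)-\delta|$ and $|Dg(X)|$, and then to feed in the smallness hypotheses of Definition \ref{def_muStat}. First I would record the three elementary comparison estimates from Appendix \ref{sec:varifolds}, valid once $\mu_0=\mu_0(n)$ is small: for every $n$-plane $S$, point $X$, and $C^1$ vector field $\varphi$,
\[
\bigl|\sqrt{\det g_S(X)}-1\bigr|\le C|g(X)-\delta|,\qquad \bigl|\langle\varphi,\varphi\rangle_{g(X)}-|\varphi(X)|^2\bigr|\le C|g(X)-\delta|\,|\varphi(X)|^2,
\]
\[
\bigl|\mathrm{div}_S^g\varphi(X)-\mathrm{div}_S\varphi(X)\bigr|\le C|g(X)-\delta|\,|D\varphi(X)|+C|Dg(X)|\,|\varphi(X)|,
\]
with $C=C(n)$. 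The decisive structural point, which is why I would isolate the last estimate, is that the only term carrying $|\varphi|$ rather than $|D\varphi|$ is the Christoffel correction, hence is controlled by $|Dg|$ alone and not by $|g-\delta|$.

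Using these together with $|g-\delta|\le\mu$, $|Dg|\le\mu$ and $\|V\|_g\le(1+C\mu)\|V\|$, and expanding $\mathrm{div}_S^g\varphi\sqrt{\det g_S}-\mathrm{div}_S\varphi=(\mathrm{div}_S^g\varphi-\mathrm{div}_S\varphi)\sqrt{\det g_S}+\mathrm{div}_S\varphi\,(\sqrt{\det g_S}-1)$, I obtain
\[
\Bigl|\,\delta^g V(\varphi)-\int\mathrm{div}_S\varphi\,dV(X,S)\,\Bigr|\le C\mu\int\bigl(|\varphi|+|D\varphi|\bigr)\,d\|V\|.
\]
For the $W$-term I would argue in the same way, with the extra remark that $\|W\|$ is concentrated on $\partial\mathbb{H}^{n+1}$ and $T_X\,\mathrm{spt}\|W\|\subset\partial\mathbb{H}^{n+1}$ for $\|W\|$-a.e.\ $X$, so $\mathrm{div}_S^g(\cos\theta\,\varphi)$ and $\mathrm{div}_S(\cos\theta\,\varphi)$ only involve tangential derivatives of $\theta$, bounded by $|D\theta|\le\mu$ (Definition \ref{def_muStat}(b)); then the product rule $\mathrm{div}_S(\cos\theta\,\varphi)=\cos\theta\,\mathrm{div}_S\varphi+(\text{tangential gradient of }\cos\theta)\cdot\varphi$, with the gradient term bounded by $|D\theta|\,|\varphi|\le\mu|\varphi|$, yields
\[
\Bigl|\,\delta^g W(\cos\theta\,\varphi)-\int\cos\theta\,\mathrm{div}_S\varphi\,dW(X,S)\,\Bigr|\le C\mu\int\bigl(|\varphi|+|D\varphi|\bigr)\,d\|W\|.
\]
Finally $\bigl|\int\langle\boldsymbol H_g,\varphi\rangle_g\,d\|V\|_g\bigr|\le C\mu\int|\varphi|\,d\|V\|$ follows from $|\boldsymbol H_g|_g\le\mu$ and the weight and norm comparisons. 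Combining the three displays above with \eqref{eq:1stVarG} through the triangle inequality gives \eqref{eq:1stVarFormulaGeneral}.

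For the sharpened bound \eqref{eq:1stVarFormula} I would use that when $g(X_0)=\delta$, integrating $|Dg|\le\mu$ along the segment from $X_0$ to $X$ — which remains in the convex set $\mathcal{B}_1$ — gives $|g(X)-\delta|\le C\mu|X-X_0|$ with $C=C(n)$. Substituting this sharper bound into all the estimates above, every error term previously multiplied by $|D\varphi|$ now acquires the factor $|X-X_0|$, whereas the terms multiplied by $|\varphi|$ (those originating from $|Dg|$, from $|D\theta|$, and from $|\boldsymbol H_g|_g$) retain only the factor $\mu$; this is precisely the shape $|\varphi|+|X-X_0|\,|D\varphi|$ of \eqref{eq:1stVarFormula}. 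I expect the only genuine obstacle to be verifying the divergence comparison estimate with the $|\varphi|$-coefficient genuinely of order $|Dg|$ (rather than $|g-\delta|$); granting that (it is carried out in the appendix), the rest is routine bookkeeping with the comparison inequalities.
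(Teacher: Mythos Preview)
Your proposal is correct and follows essentially the same approach as the paper's proof: compare $\mathrm{div}_S^g\varphi\sqrt{\det g_S}$ with $\mathrm{div}_S\varphi$ pointwise using the smallness of $|g-\delta|$ and $|Dg|$, absorb the $\cos\theta$ factor via $|D\theta|\le\mu$, bound the mean curvature term by $|\boldsymbol H_g|_g\le\mu$, and for \eqref{eq:1stVarFormula} replace $|g-\delta|\le\mu$ by the sharper $|g(X)-\delta|\le\mu|X-X_0|$ coming from $g(X_0)=\delta$ and $|Dg|\le\mu$. Your version is in fact more explicit than the paper's about separating the $|Dg|$-contribution (hitting $|\varphi|$) from the $|g-\delta|$-contribution (hitting $|D\varphi|$), which is precisely what explains the asymmetric shape of \eqref{eq:1stVarFormula}.
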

Here, $|D\varphi|(X)=\sqrt{\sum_{i,j=1}^{n+1} \left( \frac{\partial \varphi^i}{\partial x_j}(X) \right)^2}$ with $\varphi$ expressed as $\sum_{i =1}^{n+1}\varphi^ie_i$.
\begin{proof}
	Note that $|g(X)-\delta|\le \mu$ from \ref{it:muMetric} in Definition \ref{def_muStat}, we can establish the following inequality,
	\begin{equation}
		\left|\mathrm{div}_S^g \varphi \sqrt{\mathrm{det} g_S}-\mathrm{div}_S \varphi\right|\le
		C \mu ( |\varphi|+|D\varphi|)
		\label{eq:pf1stVarCompG}
	\end{equation}
	for any $X \in \mathcal{B}_1$ and $n$-plane $S \in G(n+1,n)$, provided $\mu\in (0,\mu_0)$ for $\mu_0=\mu_0(n)$ small enough.
	Then, using \eqref{eq:pf1stVarCompG} and the condition \ref{it:muMeanCur} in Definition \ref{def_muStat}, we can find
	\[
		\left|\int_{ } \mathrm{div}_S\varphi dV(X,S)-\int_{ } \mathrm{div}_S(\cos \theta \varphi)dW(X,S)\right|\le
		C\mu \int_{ } |\varphi|+|\cos \theta \varphi|+|D\varphi|+|D(\cos \theta \varphi)|d\|V+W\|.
	\]
	Now, with condition \ref{it:muAngle}, we can verify that \eqref{prop_compBall} holds.
	
	To derive the second inequality, we note that $|Dg|\le \mu$ and $g(X_0)=\delta$ imply $|g(X)-\delta|\le \mu |X-X_0|$, and then we can apply the same argument as above.
\end{proof}

\begin{remark}
	\label{rmk_generalDomain}
	While our discussions are based on the domain $\mathcal{B}_1$, it is feasible to consider a general domain $U\subset \mathbb{H}^{n+1}$, where the constant $C$ may depend on the diameter of $U$.
\end{remark}

The key ingredient in proving Allard regularity is the application of the monotonicity formula.

In our context, we can establish the following monotonicity formula.

\begin{theorem}
	[Monotonicity Formula]
	There exists $\mu_0=\mu_0(n)$ such that the following holds.
	Suppose $\mathcal{V}$ is a $\mu$-stationary quadruple with $\mu \in (0,\mu_0)$ and we also assume $\mathcal{V}$ is restricted. 
	Then, the following monotonicity holds for any point $X_0 \in \mathcal{B}_1$,
	\begin{enumerate}[\normalfont(a)]
		
\item When $X_0 \in \partial \mathbb{H}^{n+1}\cap B_1$, and $g=\delta$ at $X_0$, we can establish following inequality for any $\sigma,\rho$ with $0<\sigma<\rho<1-|X_0|$,
	\begin{align}
		\frac{2\|V^\theta\|(\mathcal{B}_\sigma(X_0))}{\sigma^n}+2\int_{ \mathcal{B}_\rho(X_0)\backslash \mathcal{B}_\sigma(X_0)} \frac{|(X-X_0)^{\bot _S}|^2}{|X-X_0|^{n+2}}dV(X,S)&\nonumber \\
		\le (1+C\mu\rho)\frac{2\|V^\theta\|(\mathcal{B}_\rho(X_0))}{\rho^n}+{}&{}C\mu \rho,
		\label{eq:MonotonicityFormulaBoundary}
	\end{align}
	for some $C=C(n)$.
\item When $\theta \ge \frac{\pi}{2}-\mu$, then for any $\sigma,\rho$ with $0<\sigma<\rho<1-|X_0|$,  we have a weak version of \eqref{eq:MonotonicityFormulaBoundary} by
		\begin{align}
		&\frac{ \|V^\theta\|(\mathcal{B}_\sigma(X_0))+\|V^\theta\|(\mathcal{B}_\sigma(\tilde{X}_0))}{\sigma^n}\le (1+C\mu)\frac{\|V^\theta\|(\mathcal{B}_\rho(X_0))+\|V^\theta\|(\mathcal{B}_\rho(\tilde{X}_0))}{\rho^n}+C\mu,
		\label{eq:MonotonicityFormulaInteriorG}
	\end{align}
	for some $C=C(n)$.
	\end{enumerate}

	\label{thm_monotonicityFormula}
\end{theorem}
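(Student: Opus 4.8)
The plan is to establish the monotonicity formula by testing the first variation identity \eqref{eq:1stVarFormula} against a suitable radial vector field and carefully tracking the error terms coming from the mean curvature, the angle function, and the metric. For part (a), fix $X_0 \in \partial\mathbb{H}^{n+1}\cap B_1$ with $g(X_0)=\delta$, and for $0<s<1-|X_0|$ consider the vector field $\varphi(X) = \gamma(|X-X_0|/s)(X-X_0)$, where $\gamma$ is a smooth nonincreasing cutoff approximating $\mathbf{1}_{[0,1)}$. The crucial point is that $\varphi$ is tangential to $\partial\mathbb{H}^{n+1}$: since $X_0 \in \partial\mathbb{H}^{n+1}$, the vector $X-X_0$ has vanishing $e_1$-component whenever $X \in \partial\mathbb{H}^{n+1}$, so $\varphi \in \mathfrak{X}^1_{c,\tan}(\mathcal{B}_1)$ and Proposition~\ref{prop_1stVar} applies. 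Plugging $\varphi$ into \eqref{eq:1stVarFormula}, the main term $\int \mathrm{div}_S\varphi\, d(V-\cos\theta W)$ produces, in the usual way (as in the stationary Allard computation, cf. \cite{simon1983lectures}), the combination
\[
	-\frac{d}{ds}\left(\frac{\|V^\theta\|(\mathcal{B}_s(X_0))}{s^n}\right) \quad\text{plus the squared-normal term}\quad \frac{d}{ds}\int \gamma\frac{|(X-X_0)^{\bot_S}|^2}{|X-X_0|^{n+2}}\,dV,
\]
while the right-hand side of \eqref{eq:1stVarFormula} is bounded by $C\mu\int(|\varphi|+|X-X_0||D\varphi|)\,d\|V+W\|$, and on the support of $\varphi$ one has $|\varphi|\le s$ and $|X-X_0||D\varphi|\le C|X-X_0| \le Cs$, so the error is controlled by $C\mu s\|V+W\|(\mathcal{B}_s(X_0))$. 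One then needs an a priori bound $\|V+W\|(\mathcal{B}_s(X_0))\le C s^n$ for $s$ bounded away from $1-|X_0|$; this follows from a first (crude) application of the same argument together with a covering/absorption, exactly as in the standard derivation of density bounds from bounded first variation. Dividing by $s^{n}$ turns the error into $C\mu$, and integrating the resulting differential inequality from $\sigma$ to $\rho$ and exponentiating the $(1+C\mu\rho)$ factor yields \eqref{eq:MonotonicityFormulaBoundary}.

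For part (b), where $\theta\ge\frac{\pi}{2}-\mu$ (so $\cos\theta\le\mu$, i.e. the wetting term is small), I would instead reflect: work with the symmetrized measure $\|V^\theta\|(\mathcal{B}_s(X_0))+\|V^\theta\|(\mathcal{B}_s(\tilde X_0))$ and use the vector field obtained by reflecting $X-X_0$ and $X-\tilde X_0$ appropriately across $\partial\mathbb{H}^{n+1}$ so that the composite field is again tangential. The point of the symmetrization is that, for an arbitrary $X_0$ not necessarily on the boundary and for $g$ not exactly $\delta$ at $X_0$, the naive radial field is not admissible, but the reflected pair is; moreover since $|\cos\theta|\le\mu$ the boundary varifold $W$ contributes only an $O(\mu)$ error rather than an $O(1)$ term, which is why only the weak multiplicative form $(1+C\mu)$ with additive $C\mu$ is obtained rather than the sharp monotonicity. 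The error from $|g-\delta|\le\mu$ (without the improved $|g(X)-\delta|\le C\mu|X-X_0|$) costs a multiplicative $(1+C\mu)$ uniformly in scale, which is acceptable for the stated conclusion.

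The main obstacle is bookkeeping the several distinct error sources and making sure the radial test field is genuinely admissible — i.e. tangential to $\partial\mathbb{H}^{n+1}$ and compactly supported in $\mathcal{B}_1$ — which is precisely why part (a) requires $X_0$ on the boundary (so that the radial field is automatically tangential) and part (b) requires the reflection trick together with the smallness $\cos\theta\le\mu$. A secondary technical point is the a priori density upper bound needed to convert the error term $C\mu s\|V+W\|(\mathcal{B}_s)$ into $C\mu s^{n+1}$; I would bootstrap this from a rough version of the same inequality, as is standard. Once these are in place, the remaining computation is the classical Allard-type manipulation: differentiate $s \mapsto s^{-n}\|V^\theta\|(\mathcal{B}_s(X_0))$, identify the squared-normal integrand as an exact derivative, absorb the $C\mu$ terms, and integrate.
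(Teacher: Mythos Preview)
Your proposal is correct and follows essentially the same approach as the paper: the reflected radial vector field $\varphi = \varphi_{X_0} + \varphi_{\tilde X_0}$ (which for $X_0 \in \partial\mathbb{H}^{n+1}$ collapses to twice your single radial field), plugged into Proposition~\ref{prop_1stVar}, with the same bootstrap for the density error. The one point you gloss over is the claim in part~(b) that the $W$-contribution to the squared-normal term is $O(\mu)$: since $\cos\theta$ is only bounded \emph{above} by $\mu$ (not $|\cos\theta|\le\mu$), the relevant inequality is $\mathrm{I}\ge -C\mu$, and this requires the explicit computation $\int_{\partial\mathbb{H}^{n+1}\cap B_\rho(X_0)} \frac{l^2}{|X-X_0|^{n+2}}\,d\mathcal{H}^n \le \omega_n$ (with $l=\mathrm{dist}(X_0,\partial\mathbb{H}^{n+1})$), which the paper carries out and which is not entirely automatic.
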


\begin{proof}
	For any $X_0 \in B_1$, we introduce the vector field $\varphi_{X_0}$ and $\varphi$ as follows,
	\begin{align}
		\varphi_{X_0}(X):={} & 
		\begin{cases}
		\left( \frac{1}{\max\left\{ |X-X_0|,\sigma \right\}^n}-\frac{1}{\rho^n} \right)(X-X_0), & \text{ if }X \in \mathcal{B}_\rho(X_0)\\
		0, & \text{ otherwise.}
		\end{cases}
		\label{eq:pfDefPhiX0}\\
		\varphi(X)={}&\varphi_{X_0}(X)+\varphi_{\tilde{X} _0}(X).
		\label{eq:defPhiTangential}
	\end{align}
	This definition ensures that $\varphi(X)$ is tangential to $\partial \mathbb{H}^{n+1}$ for any points $X \in \partial \mathbb{H}^{n+1}\cap B_1$.
	Let us denote by $h(\rho)$ the expression
	\[
		h(\rho)=\frac{1}{\rho^n}\left( \|V\|(\mathcal{B}_\rho(X_0))+\|V\|(\mathcal{B}_\rho(\tilde{X} )) \right)-\frac{2}{\rho^n}\int_{ \mathcal{B}_\rho(X_0)}\cos \theta d\|W\|.
	\]
	By applying inequality \eqref{eq:1stVarFormula}, alongside a similar argument as for \eqref{eq:pfMonoWithMu}, we obtain
	\begin{align}
		(1-C\mu \sigma) h(\sigma)-(1+C\mu\rho)h(\rho)+\int_{ \mathcal{B}_\rho(X_0)\backslash \mathcal{B}_\sigma(X_0)} \frac{|(X-X_0)^{\bot _S}|^2}{|X-X_0|^{n+2}}dV^\theta(X,S)\nonumber \\
		+\int_{ \mathcal{B}_\rho(\tilde{X}_0)\backslash \mathcal{B}_\sigma(\tilde{X}_0)} \frac{|(X-\tilde{X}_0)^{\bot _S}|^2}{|X-\tilde{X}_0|^{n+2}}dV^\theta(X,S)\le{} C\mu \int_{\sigma}^{\rho} h(\tau)d\tau+{}&{}C\mu \rho,
		\label{eq:pfMonCapillary}
	\end{align}
	for some $\mu_0$ small enough.
	Here, we used the fact $\|W\|(\mathcal{B}_\rho)\le \omega_n\rho^n$ to estimate the right hand side of \eqref{eq:1stVarFormula}.
	(Note that we can take $\Lambda=0$ in \eqref{eq:pfMonoWithMu} and here, we assume $\mu_0$ small enough instead of $\rho$.)

	We also need to estimate term $\mathrm{I}$ defined by
	\begin{align*}
		\mathrm{I}:={}
		-\int_{ \mathcal{B}_\rho(X_0)\backslash \mathcal{B}_\sigma(X_0)}&\frac{|(X-X_0)^{\bot _S}|^2\cos \theta}{|X-X_0|^{n+2}}  dW(X,S)\\
					  &-\int_{ \mathcal{B}_\rho(\tilde{X}_0)\backslash \mathcal{B}_\sigma(\tilde{X}_0)} \frac{|(X-\tilde{X}_0)^{\bot _S}|^2\cos \theta}{|X-\tilde{X}_0|^{n+2}}dW(X,S)
	\end{align*}
	since $-\cos \theta$ could be negative.
	For the case $X_0 \in \partial \mathbb{H}^{n+1}\cap \mathcal{B}_1$, we have $(X-X_0)^{\bot _S}=0$ and hence $\mathrm{I}=0$.
	Then, similar with the argument leading to \eqref{eq:pfMonoLocallyBoundedFirst}, we have
	\[
		h(\sigma)\le (1+C\mu \rho)h(\rho)+C\mu\rho,
	\]
	which, in turn, implies \eqref{eq:MonotonicityFormulaBoundary} by inequality \eqref{eq:pfMonCapillary}.

	For the second case, we use \eqref{eq:1stVarFormulaGeneral} instead of \eqref{eq:1stVarFormula} to deduce
	\[
		(1-C\mu) h(\sigma)-(1+C\mu)h(\rho)+\mathrm{I}\le C\mu \int_{\sigma}^{\rho} h(\tau)d\tau+C\mu.
	\]
	
	To estimate $\mathrm{I}$, we use the fact that $|(X-\tilde{X} _0)^{\bot _S}|=|(X-X_0)^{\bot _S}|=\mathrm{dist}(X_0,\partial \mathbb{H}^{n+1})=:l$ for any $X \in \mathrm{spt}\|W\|$ and $S=\left\{ x_1=0 \right\}$.
	Consequently, with $\theta \ge \frac{\pi}{2}-\mu$ implying $\cos \theta\le \mu$, we can obtain
	\begin{align*}
		{} & \int_{ \mathcal{B}_\rho(X_0)\backslash \mathcal{B}_\sigma(X_0)} \frac{|(X-X_0)^{\bot _S}|^2 \cos \theta}{|X-X_0|^{n+2}}dW(X,S) \\
		\le{} & \mu l^2\int_{ B^n_{\sqrt{\rho^2-l^2}}(0)} \frac{1}{(|X|^2+l^2)^{\frac{n+2}{2}}}d\mathcal{H}^n(X)\\
		={}& \mu l^2 \int_{0}^{\sqrt{\rho^2-l^2}} \frac{n \omega_n\tau ^{n-1}}{(\tau^2+l^2)^{\frac{n+2}{2}}}d\tau 
		= \mu \frac{\omega_n (\rho^2-l^2)^{\frac{n}{2}}}{\rho^{n}}\le \mu \omega_n,
	\end{align*}
	indicating $\mathrm{I}\ge -C\mu$.
	This analysis establishes
	\[
		(1-C\mu \sigma)h(\sigma)-(1+C\mu \rho)h(\rho)\le C\mu \int_{\sigma}^{\rho} h(\tau)d\tau+C\mu,
	\]
	which implies \eqref{eq:MonotonicityFormulaInteriorG}.
\end{proof}
\begin{remark} 
	The only place we use the restriction condition is to control the term $\mathrm{I}$.
	Notably, \eqref{eq:MonotonicityFormulaBoundary} remains valid with $\theta\ge 0$ in place of the restriction condition.
\end{remark}

We introduce the density ratio $\mathcal{I}_{\mathcal{V}}(X,\rho)$ defined by,
\[
	\mathcal{I}_{\mathcal{V}}(X,\rho):=
	\frac{2}{\omega_n\rho^n} \|V\|_g(B_\rho^g(X)) -\frac{2}{\omega_n\rho^n}\int_{ B_\rho^g(X)} 
	\cos \theta d\|W\|_g,
\]
for any $X \in \mathcal{B}_1\cap \partial \mathbb{H}^{n+1}$.
Then the density ratio satisfies the following monotonicity formula.

\begin{corollary}
	\label{cor_monoG}
	Suppose $\mathcal{V} \in \mathcal{RIV}(\mu)$ for some $\mu=\mu(n)$ small enough.
	Then, for any $X_0 \in \mathcal{B}_1\cap \partial \mathbb{H}^{n+1}$, and $0<2\sigma\le \rho <\mathrm{dist}_g(X_0,\partial B_1\cap \mathbb{H}^{n+1})$, we have
	\begin{equation}
		\mathcal{I}_{\mathcal{V}}(X_0,\sigma)\le (1+C\mu \rho)\mathcal{I}_{\mathcal{V}}(X_0,\rho)+C\mu \rho.
		\label{eq:MonotonicityFormulaG}
	\end{equation}
	for some $C=C(n)$.

	Consequently, the limit $\lim_{\rho \rightarrow 0^+} \mathcal{I}_{\mathcal{V}}(X_0,\rho)$ exists for any $X \in \mathcal{B}_1\cap \partial \mathbb{H}^{n+1}$.
\end{corollary}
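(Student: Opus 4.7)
The plan is to reduce \eqref{eq:MonotonicityFormulaG}, stated in terms of geodesic balls and the $g$-weight $\|V\|_g$, to the Euclidean-ball monotonicity \eqref{eq:MonotonicityFormulaBoundary} from Theorem \ref{thm_monotonicityFormula}(a). First I would push forward $\mathcal{V}$ by the upper-triangular linear map $L := L_{X_0}^g$ provided by Proposition \ref{prop_Pi2Delta}, obtaining $\tilde{\mathcal{V}} = (L_\# V, L_\# W, \theta\circ L^{-1}, L_* g)$. By Remark \ref{rmk:LXg} the map $L$ preserves $\mathbb{H}^{n+1}$; by Proposition \ref{prop_Pi2Delta} it is $C(n)\mu$-close to the identity in $C^1$ and $(L_* g)(L(X_0)) = \delta$. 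A routine verification (using naturality of the first variation, divergence and angle under the affine change of coordinates $L$) shows $\tilde{\mathcal{V}}\in\mathcal{RIV}(C\mu)$, and because $L$ is an isometry between $(\mathcal{B}_1,g)$ and $(L(\mathcal{B}_1),L_* g)$ the density ratio is preserved: $\mathcal{I}_{\tilde{\mathcal{V}}}(L(X_0),\rho)=\mathcal{I}_{\mathcal{V}}(X_0,\rho)$. It therefore suffices to prove \eqref{eq:MonotonicityFormulaG} under the additional hypothesis $g(X_0)=\delta$.

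Assuming $g(X_0)=\delta$, define $F(\tau) := \|V^\theta\|(\mathcal{B}_\tau(X_0))$ and $\tilde F(\tau) := \tfrac{\omega_n\tau^n}{2}\mathcal{I}_{\mathcal{V}}(X_0,\tau)$. Theorem \ref{thm_monotonicityFormula}(a) supplies
\[
F(\sigma)/\sigma^n \le (1+C\mu\rho)\,F(\rho)/\rho^n + C\mu\rho.
\]
To transfer this to $\tilde F$ I would combine two ingredients specific to the normalization $g(X_0)=\delta$: the pointwise estimate $|\sqrt{\det g_S(X)}-1|\le C\mu|X-X_0|$, coming from $|Dg|\le\mu$, and the refined ball comparison $\mathcal{B}_{(1-C\mu\tau)\tau}(X_0)\subset B^g_\tau(X_0)\subset\mathcal{B}_{(1+C\mu\tau)\tau}(X_0)$ from \eqref{eq:ballCompFine}. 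Using $\|W\|(\mathcal{B}_\tau)\le\omega_n\tau^n$ together with the Euclidean monotonicity itself, applied between the nearby radii $(1\pm C\mu\tau)\tau$ to control the $\|V\|$-mass on the thin annulus $\mathcal{B}_{(1+C\mu\tau)\tau}\setminus\mathcal{B}_{(1-C\mu\tau)\tau}$ by $C\mu\tau(F(\rho)+\rho^n)$, these yield
\[
|\tilde F(\tau)-F(\tau)|\le C\mu\tau\,(F(\rho)+\rho^n) \quad \text{for } \tau\in\{\sigma,\rho\}.
\]
Substituting into the displayed Euclidean inequality and invoking the hypothesis $2\sigma\le\rho$ to absorb the resulting cross-terms into the stated $(1+C\mu\rho)$ and $C\mu\rho$ error form produces \eqref{eq:MonotonicityFormulaG}.

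The main technical obstacle is obtaining the bound $|\tilde F-F|=O(\mu\rho)\cdot(F(\rho)+\rho^n)$ with the correct dependence on $\mu\rho$ rather than $\mu$; the refined comparison \eqref{eq:ballCompFine}---and hence the linear normalization of the first stage---is essential here, since the coarser \eqref{eq:ballComp} would only yield a multiplicative error of order $(1+C\mu)$, too large to be absorbed. For the final assertion, note that $\mathcal{I}_{\mathcal{V}}(X_0,\rho)\ge -C$ uniformly because $\|W\|_g(B_\rho^g)\le C\rho^n$, while \eqref{eq:MonotonicityFormulaG} applied with a fixed reference radius bounds $\mathcal{I}_{\mathcal{V}}(X_0,\cdot)$ from above on small scales. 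Taking $\limsup_{\sigma\to 0}$ in \eqref{eq:MonotonicityFormulaG} and then letting $\rho\to 0$ along a sequence realizing $\liminf_{\rho\to 0}\mathcal{I}_{\mathcal{V}}(X_0,\rho)$ forces $\limsup\le\liminf$, so the limit $\lim_{\rho\to 0^+}\mathcal{I}_{\mathcal{V}}(X_0,\rho)$ exists.
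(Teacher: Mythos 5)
Your route is exactly the paper's: normalize with the linear map $L_{X_0}^g$ of Proposition \ref{prop_Pi2Delta} so that the metric is Euclidean at the center (noting $L_{X_0}^g$ preserves $\mathbb{H}^{n+1}$ and the density ratio, and that the transformed quadruple lies in $\mathcal{RIV}(C\mu)$), then invoke the Euclidean boundary monotonicity \eqref{eq:MonotonicityFormulaBoundary} and translate between geodesic and Euclidean balls and between $\|\cdot\|_g$ and $\|\cdot\|$ via \eqref{eq:ballCompFine} and $|\sqrt{\det g_S}-1|\le C\mu|X-X_0|$; this is precisely the sketch the paper gives, modeled on Corollary \ref{cor_MonoFormula}, and your argument for the existence of the limit is also fine.

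There is, however, one quantitative misstatement that breaks the final step as written. Your comparison bound $|\tilde F(\tau)-F(\tau)|\le C\mu\tau\,(F(\rho)+\rho^n)$ is mis-scaled: at $\tau=\sigma$ you must divide by $\sigma^n$, and $C\mu\sigma\,(F(\rho)+\rho^n)/\sigma^n=C\mu\sigma^{1-n}(F(\rho)+\rho^n)$ blows up as $\sigma\to0$ for $n\ge 2$, so it cannot be absorbed into $(1+C\mu\rho)\mathcal{I}_{\mathcal{V}}(X_0,\rho)+C\mu\rho$, no matter how $2\sigma\le\rho$ is used. What your own ingredients actually deliver is the scale-localized bound $|\tilde F(\tau)-F(\tau)|\le C\mu\tau\bigl(\tau^n+\|V^\theta\|(\mathcal{B}_{(1+C\mu\tau)\tau}(X_0))\bigr)\le C\mu\,\tau^{n+1}\bigl(1+F(\rho)/\rho^n\bigr)$: the $\sqrt{\det g_S}$ correction is $O(\mu\tau)$ times the mass in a ball of radius comparable to $\tau$ (not $\rho$), the $W$-contribution in such a ball is at most $\omega_n(1+C\mu\tau)^n\tau^n$, and the annulus between radii $(1\pm C\mu\tau)\tau$ carries mass $O(\mu\tau)\cdot\tau^n\bigl(1+F(\rho)/\rho^n\bigr)$ by applying \eqref{eq:MonotonicityFormulaBoundary} between those comparable radii and then up to $\rho$. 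With the corrected bound, dividing by $\tau^n$ produces an error $C\mu\tau\bigl(1+F(\rho)/\rho^n\bigr)\le C\mu\rho\bigl(1+F(\rho)/\rho^n\bigr)$ at both scales, which is absorbed into the stated form after converting $F(\rho)$ back to $\tilde F(\rho)$ at scale $\rho$ (and one should note, as in Remark \ref{rmk_generalDomain}, that after the linear change of variables the monotonicity is applied on the slightly distorted domain and at the slightly enlarged radii $(1+C\mu\tau)\tau$). So the gap is a fixable scaling slip rather than a wrong approach, but as recorded the absorption step does not follow; restate the comparison with the factor $\tau^n$ and the proof closes, in agreement with the paper's intended argument.
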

\begin{proof}
	This result directly follows from Theorem \ref{thm_monotonicity_formula} by applying a linear transformation and together with \eqref{eq:ballCompFine}.
	(See the proof of Corollary \ref{cor_MonoFormula} for a similar proof.)
	Moreover, the existence of the limit readily follows from \eqref{eq:MonotonicityFormulaG}.
\end{proof}

From Corollary \ref{cor_monoG}, we can define the density of a $\mu$-stationary quadruple $\mathcal{V}$ at $X_0 \in \mathcal{B}_1\cap \partial \mathbb{H}^{n+1}$ by
\[
	\Theta(\mathcal{V},X_0):=\lim_{\rho \rightarrow 0^+} \mathcal{I}_{\mathcal{V}}(X_0,\rho).
\]

\begin{remark}
	\label{rmk_negativeDensity}
	It should be noted that $\Theta(\mathcal{V},X_0)$ could be a negative number.
	For instance, we consider setting $\theta$ as a constant in $(0,\frac{\pi}{2})$, and choosing $V=0$, $W=\partial \mathbb{H}^{n+1}\cap \mathcal{B}_1$, $g=\delta$. Then $\Theta(\mathcal{V},0)=-2\cos \theta<0$.
\end{remark}

We now establish the upper semi-continuity of density.

\begin{theorem}

	Consider a sequences $\left\{ \mathcal{V}_i=(V_i,W_i,\theta_i,g_i) \right\}\subset \mathcal{RIV}(\mu_i)$, each a $\mu_i$-stationary quadruples with $\mu_i \in (0,\mu_0)$ for some $\mu_0=\mu_0(n)$ small enough.
	We assume $\mathcal{V}_i \rightarrow \mathcal{V}$ for some VPCA-quadruple $\mathcal{V}=(V,W,\theta,g)$.
	Then $\mathcal{V}$ is $\mu$-stationary with $\mu=\liminf_{i\rightarrow +\infty} \mu_i$ and
\[
	\Theta(\mathcal{V},X)\ge \limsup_{i\rightarrow +\infty} \Theta(\mathcal{V}_i,X_i).
\]
where $\left\{ X_i \right\}\subset \partial \mathbb{H}^{n+1}\cap \mathcal{B}_1$ is a sequence of points such that $\lim_{i\rightarrow +\infty} X_i=X$ and $X \in \partial\mathbb{H}^{n+1}\cap \mathcal{B}_1$
	\label{thm_theta__v_w_theta_g_x_ge__theta__v_i_w_i_theta_i_g_i_x}
\end{theorem}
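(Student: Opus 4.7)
The plan is to verify each of the three defining properties of a $\mu$-stationary VPCA-quadruple for the limit $\mathcal{V}$ and then deduce the density bound by pushing the monotonicity formula (Corollary~\ref{cor_monoG}) to the limit. The argument is modeled on the standard upper semi-continuity proof for stationary varifolds, with the added twist that both the metric and the basepoint are moving.

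\smallskip

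First, I would verify $\mathcal{V}$ is a $\mu$-stationary quadruple. Items \ref{it:muAngle} and \ref{it:muMetric} of Definition~\ref{def_muStat} are immediate from $C^1$ convergence of $\theta_i \to \theta$ and $g_i \to g$, since $|D\theta| \leq \liminf_i |D\theta_i| \leq \liminf_i \mu_i = \mu$ and analogously for the metric. For item \ref{it:muMeanCur}, I would test equation \eqref{eq:1stVarG} for $\mathcal{V}_i$ against an arbitrary $\varphi \in \mathfrak{X}^1_{c,\tan}(\mathcal{B}_1)$. Varifold convergence $V_i \to V$ together with $C^1$ convergence $g_i \to g$ implies $\delta^{g_i} V_i(\varphi) \to \delta^g V(\varphi)$ (the weight $\sqrt{\det g_{i,S}}$ and the divergence $\mathrm{div}_S^{g_i}\varphi$ both converge uniformly on compact sets), and the same applies to $\delta^{g_i} W_i(\cos\theta_i \varphi) \to \delta^g W(\cos\theta\varphi)$ since $\cos\theta_i \to \cos\theta$ in $C^1$. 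On the mean-curvature side, $|\langle \boldsymbol{H}_{g_i},\varphi\rangle_{g_i}|_{g_i} \leq \mu_i |\varphi|_{g_i}$, so the right-hand side is uniformly controlled and the limit of $\int \langle \boldsymbol{H}_{g_i},\varphi\rangle_{g_i}\, d\|V_i\|_{g_i}$ (taken along a subsequence realizing $\liminf_i \mu_i$) defines a bounded linear functional on $\mathfrak{X}^1_c(\mathcal{B}_1)$ of norm at most $\mu$; by the Riesz representation this functional equals $\int\langle \boldsymbol{H}_g,\varphi\rangle_g \,d\|V\|_g$ for some $\boldsymbol{H}_g$ with $|\boldsymbol{H}_g|_g \leq \mu$ $\|V\|_g$-a.e.

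\smallskip

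For the density estimate, fix $X \in \partial \mathbb{H}^{n+1} \cap \mathcal{B}_1$ and $X_i \to X$. Apply \eqref{eq:MonotonicityFormulaG} at $X_i$ with radius $\rho$ small enough that $2\rho < \mathrm{dist}_{g_i}(X_i,\partial B_1\cap \mathbb{H}^{n+1})$ for all large $i$: letting $\sigma \to 0$ one obtains
\[
\Theta(\mathcal{V}_i,X_i) \;\leq\; (1+C\mu_i \rho)\,\mathcal{I}_{\mathcal{V}_i}(X_i,\rho) + C\mu_i\rho.
\]
Now I would pass to the limit $i \to \infty$ for fixed $\rho$. Using Proposition~\ref{prop_compBall} the geodesic balls $B^{g_i}_\rho(X_i)$ Hausdorff-converge to $B^g_\rho(X)$; combined with the varifold convergences $V_i \to V$, $W_i \to W$ and the $C^1$ convergence of $g_i$ and $\theta_i$, this yields $\mathcal{I}_{\mathcal{V}_i}(X_i,\rho) \to \mathcal{I}_{\mathcal{V}}(X,\rho)$ for every $\rho$ outside the at-most-countable set where $\|V\|_g$ or $\|W\|_g$ charges the geodesic sphere $\partial B^g_\rho(X)$. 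For such $\rho$ we then get
\[
\limsup_{i\to\infty}\Theta(\mathcal{V}_i,X_i) \;\leq\; (1+C\mu\rho)\,\mathcal{I}_{\mathcal{V}}(X,\rho) + C\mu\rho,
\]
and letting $\rho \to 0^+$ through this full-measure set concludes $\limsup_i \Theta(\mathcal{V}_i,X_i) \leq \Theta(\mathcal{V},X)$.

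\smallskip

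The main technical obstacle is the joint passage to the limit in $\mathcal{I}_{\mathcal{V}_i}(X_i,\rho)$: the weak-$*$ convergence of varifolds only gives convergence of mass on sets whose boundary is negligible for the limit, and here both the center $X_i$ and the reference metric $g_i$ vary with $i$. The resolution is to combine Hausdorff convergence of the geodesic balls from Proposition~\ref{prop_compBall} with the standard fact that the collection of ``bad" radii for which $\|V\|_g(\partial B^g_\rho(X)) + \|W\|_g(\partial B^g_\rho(X)) > 0$ is at most countable, so one can always shrink $\rho$ to $0$ along good radii. Once this is handled, the rest is essentially a bookkeeping exercise with the monotonicity inequality.
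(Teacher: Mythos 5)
Your proposal is correct in substance and follows essentially the same strategy as the paper: the stationarity of the limit is treated as routine, the monotonicity formula is applied to the approximating quadruples $\mathcal{V}_i$ (which \emph{are} in $\mathcal{RIV}(\mu_i)$, so Corollary \ref{cor_monoG} is legitimately available for them), the mass ratios are passed to the limit using varifold convergence, and then the radius is sent to zero. The implementation differs only in bookkeeping: you keep geodesic balls, moving centers and the countable-bad-radii trick (which works, since $\|V_i\|_{g_i}\to\|V\|_g$ and $\cos\theta_i\,\|W_i\|_{g_i}\to\cos\theta\,\|W\|_g$ as Radon measures and the balls $B^{g_i}_\rho(X_i)$ can be sandwiched between $B^g_{\rho\mp\varepsilon}(X)$), whereas the paper normalizes $g_i=\delta$ at the center by an affine map, works with Euclidean balls, and replaces the bad-radii argument by a continuous cutoff $f$, absorbing the annular error through the a priori bound $\|W_i\|(\mathcal{B}_\rho\setminus\mathcal{B}_{\rho-\varepsilon})\le\omega_n(\rho^n-(\rho-\varepsilon)^n)$, which is available precisely because $W_i=|U_i|$.

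The one point you gloss over is the final step ``letting $\rho\to0^+$ concludes'': this tacitly assumes that $\Theta(\mathcal{V},X)=\lim_{\rho\to0^+}\mathcal{I}_{\mathcal{V}}(X,\rho)$ exists for the \emph{limit} quadruple. You cannot get this from Corollary \ref{cor_monoG}, because $\mathcal{V}$ is not known to belong to $\mathcal{RIV}(\mu)$ — the limit $W$ need not be of the form $|U|$ nor have bounded first variation (the paper flags exactly this in the remark following the theorem). The paper resolves it by observing that the proof of the boundary monotonicity inequality \eqref{eq:MonotonicityFormulaBoundary} in Theorem \ref{thm_monotonicityFormula} still applies to the limit: since each $W_i$ has all its Grassmannian mass on the plane $\{x_1=0\}$, so does the limit $W$, hence the term $\mathrm{I}$ (the only place the restriction hypothesis is used) vanishes and the one-sided monotonicity survives, giving existence of the limit density. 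Strictly speaking your chain of inequalities bounds $\limsup_i\Theta(\mathcal{V}_i,X_i)$ by $\liminf_{\rho\to0^+}\mathcal{I}_{\mathcal{V}}(X,\rho)$, which is enough once this existence is secured, but the existence itself requires the extra observation; add it and the argument is complete.
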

\begin{remark}
	Note that we do not know whether $\mathcal{V} \in \mathcal{RIV}(\mu)$ here.
\end{remark}

\begin{proof}
	[Proof of Theorem \ref{thm_theta__v_w_theta_g_x_ge__theta__v_i_w_i_theta_i_g_i_x}]
	First, it's straightforward to establish that $\mathcal{V}$ is a $\mu$-stationary quadruple.

	WLOG, we assume $X=0$ and $g_i=\delta$ at 0 (after a possible affine transformation).
	We introduce the following density ratio for $\mathcal{V}_i$,
	\begin{align*}
		h_i(\rho)={}&\frac{2}{\rho^n}
		 \|V_i\|(\mathcal{B}_\rho(0)) -\frac{2}{\rho^n}\int_{ \mathcal{B}_\rho(0)} \cos \theta_i d\|W_i\|,\\
		h(\rho)={}&\frac{2}{\rho^n}\|V\|(\mathcal{B}_\rho(0))-\frac{2}{\rho^n}\int_{ \mathcal{B}_\rho(0)}\cos \theta d\|W\|.
	\end{align*}
	Note that $\lim_{\rho\rightarrow 0^+} h(\rho)$ still exists in view of the proof of Theorem \ref{thm_monotonicityFormula}, despite the possibility that $W$ might not correspond to a Caccioppoli set.

	From this, it follows that
	\[
		\Theta(\mathcal{V}_i,0)=\lim_{\rho\rightarrow 0^+} \omega_n^{-1}h_i(\rho),\quad 
		\Theta(\mathcal{V},0)=\lim_{\rho\rightarrow 0^+} \omega_n^{-1}h(\rho).
	\]

	For any $\varepsilon>0$, we choose a continuous function $f$ supported on $\mathcal{B}_\rho$ and $f=1$ on $B_{\rho-\varepsilon}$ with $0\le f\le 1$ in $\mathcal{B}_\rho$. 
	Using $V_i \rightarrow V$ in the sense of varifolds, we obtain
	\begin{align*}
		\frac{\rho^n}{2}h(\rho)\ge{} & \int_{ } f d\|V\|-\int_{ } f \cos \theta d\|W\|-\int_{ \mathcal{B}_\rho\backslash B_{\rho-\varepsilon}} |\cos \theta| (1-f)d\|W\| \\
		\ge{} & \limsup_{i\rightarrow +\infty} \left( \int_{ } f d\|V_i\|-\int_{ } f \cos \theta_i d\|W_i\|-\|W_i\|(\mathcal{B}_\rho\backslash \mathcal{B}_{\rho-\varepsilon}) \right) \\
		\ge{}& \limsup_{i\rightarrow +\infty} \left( \|V_i\|(\mathcal{B}_{\rho-\varepsilon})-\int_{ \mathcal{B}_{\rho-\varepsilon}} \theta_i d\|W_i\| -2\|W_i\|(\mathcal{B}_{\rho}\backslash \mathcal{B}_{\rho-\varepsilon})\right) \\
		\ge{}& \limsup_{i\rightarrow +\infty} \frac{(\rho-\varepsilon)^n}{2}h_i(\rho-\varepsilon)-2\omega_n(\rho^n-(\rho-\varepsilon)^n).
	\end{align*}

	Recall that the monotonicity formula \eqref{eq:MonotonicityFormulaBoundary} implies
	\[
		h_i(\rho-\varepsilon)\ge
		\frac{ \omega_n\Theta(\mathcal{V}_i,0)-C\mu (\rho-\varepsilon) }{1+C\mu(\rho-\varepsilon)}.
	\]

	Define $\Theta_0=\limsup_{i\rightarrow +\infty} \Theta(\mathcal{V}_i,0)$.
	Then, we have
	\[
		h(\rho)\ge \frac{(\rho-\varepsilon)^n}{\rho^n}\left( \frac{\omega_n\Theta_0-C\mu(\rho-\varepsilon)}{1+C\mu(\rho-\varepsilon)} \right)-4\frac{\rho^n -(\rho-\varepsilon)^n}{\rho^n}\omega_n.
	\]

	As $\varepsilon \rightarrow 0^+$, it follows that
	\[
		h(\rho)\ge \frac{\omega_n\Theta_0-C\mu\rho}{1+C\mu \rho}.
	\]

	Proceeding with $\rho \rightarrow 0^+$ yields
	\[
		\Theta(\mathcal{V},0)\ge \Theta_0.
	\]

	This is what we want.
\end{proof}

To describe the concept of the tangent cone of a quadruple, we need to discuss the rescaling of a quadruple first.
Given $\mathcal{V} \in \mathcal{RIV}(\mu)$, for any $X_0 \in \partial \mathbb{H}^{n+1}\cap \mathcal{B}_1$ and $\rho \in (0,1-|X_0|)$, we define the rescaled quadruple $\mathcal{V}_{X,\rho}$ of $\mathcal{V}$ by
\[
	\mathcal{V}_{X,\rho}=(\Pi_{X_0,\rho}^g)_{\#}\mathcal{V}:=\left( (\Pi^g_{X_0,\rho})_{\#}V,(\Pi^g_{X_0,\rho})_{\#}W,\theta \circ \Pi^g_{X_0,\rho}, \frac{1}{\rho^2}(\Pi^g_{X_0,\rho})_*g \right).
\]
This definition presumes that $\mathcal{B}_1 \subset \Pi^g_{X_0,\rho}(\mathcal{B}_1)$.
However, given $\gamma \in (0,1)$, by selecting $\mu=\mu(n, \gamma)$ small enough, the rescaled quadruple $\mathcal{V}_{X_0,\rho}$ is guaranteed to be well-defined for any $\rho \in (0,\gamma(1-|X_0|))$.

\begin{remark}
	For each $R>1$, we can choose $\rho$ small enough to ensure $\mathcal{V}_{X,\rho}$ is well-defined on $\mathcal{B}_R\cap \partial \mathbb{H}^{n+1}$.
	Typically, we only require they are well-defined for $X \in \mathcal{B}_1\cap \partial \mathbb{H}^{n+1}$.
	\label{rmk_extendQuad}
\end{remark}

Based on the definition of $\mu$-stationary quadruple, the following rescaling property is straightforward.
\begin{proposition}
	
	Suppose $\mathcal{V} \in \mathcal{RIV}(\mu)$ for some $\mu=\mu(n)$ small enough.
	Then, for any $X_0 \in \partial \mathbb{H}^{n+1}\cap \mathcal{B}_1$ and $\rho \in (0,\frac{15}{16}(1-|X_0|))$, we have $\mathcal{V}_{X,\rho} \in \mathcal{RIV}(C\mu \rho)$ for some $C=C(n)$.
	\label{prop_rescaling}
\end{proposition}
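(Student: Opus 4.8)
The plan is to verify each of the three defining conditions of the class $\mathcal{RIV}(\cdot)$ for the rescaled quadruple $\mathcal{V}_{X_0,\rho}$ directly from their scaling behavior, tracking how the bounds deteriorate. First I would recall that $\mathcal{V} \in \mathcal{RIV}(\mu)$ means $\mathcal{V}$ is restricted (so $V$ and $W$ have bounded first variation under $g$ and $W = |U|$ for an $\mathcal{H}^n$-measurable $U \subset \mathcal{B}_1 \cap \partial \mathbb{H}^{n+1}$), integral rectifiable, and $\mu$-stationary. Since $\Pi^g_{X_0,\rho}$ acts by translation in $\partial\mathbb{H}^{n+1}$ followed by dilation by $1/\rho$ (up to the minor $C^1$ correction built into the $g$-superscript), pushforward preserves integrality and rectifiability of $V$ and $W$, and preserves the property that $W$ is the varifold of an $\mathcal{H}^n$-measurable subset of $\partial\mathbb{H}^{n+1}$; the remark following Remark~\ref{rmk_extendQuad} and Remark~\ref{rmk_extendQuad} itself guarantee that for $\rho$ in the stated range the pushforward lands inside $\mathcal{B}_1$, so $\mathcal{V}_{X_0,\rho}$ is a well-defined VPCA-quadruple with finite mass. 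The prescribed-contact-angle identity \eqref{eq:1stVarG} is invariant under pushforward by a diffeomorphism (this is just the change-of-variables/naturality of the first variation), so condition \ref{it:PrescribedAngle} transfers; bounded first variation likewise transfers, giving that $\mathcal{V}_{X_0,\rho}$ is restricted.

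The substance of the proposition is the rate $C\mu\rho$, so the main step is tracking the three $\mu$-stationarity bounds of Definition~\ref{def_muStat} under the rescaling $\Pi^g_{X_0,\rho}$. For the mean curvature: if $\boldsymbol{H}_g$ is the generalized mean curvature of $V$ under $g$ with $|\boldsymbol{H}_g|_g \le \mu$, then under dilation by $1/\rho$ the mean curvature of the rescaled varifold (with the rescaled metric $\frac{1}{\rho^2}(\Pi^g_{X_0,\rho})_*g$) scales like $\rho \cdot \boldsymbol{H}_g$, so its pointwise norm is $\le \mu\rho \le C\mu\rho$. For the angle function $\theta \circ \Pi^g_{X_0,\rho}$: the chain rule gives $|D(\theta\circ\Pi^g_{X_0,\rho})| = \frac{1}{\rho}|D\theta| \cdot (\text{factor} \le 1+C\mu) \cdots$ — wait, one must be careful: $\Pi^g_{X_0,\rho}$ contracts space by $1/\rho$ so the differential picks up a factor $\rho$ (the inverse map expands), hence $|D(\theta\circ\Pi^g_{X_0,\rho})| \le \rho \cdot \sup|D\theta|(1+C\mu) \le C\mu\rho$. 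For the metric: $\frac{1}{\rho^2}(\Pi^g_{X_0,\rho})_*g$ evaluated at a point is $g$ evaluated at the corresponding preimage point (metrics scale out the $1/\rho^2$ against the $d\Pi \otimes d\Pi$), so $|\frac{1}{\rho^2}(\Pi^g_{X_0,\rho})_*g - \delta| \le \mu \le C\mu\rho$ — actually this only gives $\le\mu$, not $\le C\mu\rho$, unless one additionally exploits that after the normalization one may assume $g(X_0) = \delta$; but note that $\mathcal{RIV}(C\mu\rho)$ only requires $|g - \delta| \le C\mu\rho$ when $\rho \le 1$ forces $C\mu\rho$ possibly smaller than $\mu$, so I would instead observe that the relevant improvement comes from centering: translating to $X_0$ and using $|Dg| \le \mu$ gives $|g(X) - g(X_0)| \le \mu|X - X_0|$, and on $\mathcal{B}_\rho(X_0)$ this is $\le \mu\rho$; combined with a preliminary affine normalization making $g(X_0) = \delta$ (legitimate since $\mathcal{RIV}$ is stable under linear transformations, as used repeatedly in the excerpt, e.g. proof of Corollary~\ref{cor_monoG}) one gets $|g - \delta| \le \mu\rho$ on the ball, and $|Dg|$ transforms with the dilation factor $\rho$ to give $\le \mu\rho$. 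Thus all three bounds become $\le C\mu\rho$.

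The step I expect to be the genuine obstacle is not any single computation but keeping the bookkeeping of the $g$-twisted dilation $\Pi^g_{X_0,\rho}$ honest: it is not literally $\eta_{X_0,\rho}$ but incorporates the Cholesky correction $L^g_{X_0}$ from Proposition~\ref{prop_Pi2Delta}, which itself is only $C\mu$-close to the identity and varies in $X_0$. One must check that composing with $L^g_{X_0}$ (i) keeps $\mathbb{H}^{n+1}$ invariant (this is exactly Remark~\ref{rmk:LXg}, since $L^g_{X_0}$ is upper triangular), (ii) contributes only multiplicative errors of size $1 + C\mu$ to each of the three quantities above, hence is absorbed into the constant $C$, and (iii) is compatible with the normalization $g(X_0) = \delta$ so the sharper "$|X - X_0|$" estimates are available. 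Once these three points are in hand, the estimates of the previous paragraph combine to give $\mathcal{V}_{X_0,\rho} \in \mathcal{RIV}(C\mu\rho)$, and the restriction $\rho < \frac{15}{16}(1 - |X_0|)$ is precisely what makes the domain inclusion $\mathcal{B}_1 \subset \Pi^g_{X_0,\rho}(\mathcal{B}_1)$ hold with room to spare for the $C\mu$ distortions of balls from \eqref{eq:ballCompLXg} and \eqref{eq:ballComp}. I would present this as a short verification, citing Proposition~\ref{prop_Pi2Delta}, Remark~\ref{rmk:LXg}, and the ball-comparison inequalities, and relegating the elementary scaling identities for $\boldsymbol{H}$, $D\theta$, and $Dg$ to a one-line remark each.
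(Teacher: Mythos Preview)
Your proposal is correct and matches the paper's approach: the paper states this proposition as ``straightforward'' from the definition of $\mu$-stationary quadruple and gives no proof, so a direct verification of the three bounds in Definition~\ref{def_muStat} under the rescaling $\Pi^g_{X_0,\rho}=L^g_{X_0}\circ\eta_{X_0,\rho}$ is exactly what is intended. One small clean-up: the ``preliminary affine normalization making $g(X_0)=\delta$'' you contemplate is not an additional step but is precisely the role of the factor $L^g_{X_0}$ already built into $\Pi^g_{X_0,\rho}$ (by Proposition~\ref{prop_Pi2Delta} the pushforward metric satisfies $g'(0)=\delta$), so your final paragraph is the right formulation and the hesitation in the middle can be excised.
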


\begin{definition}
	\label{def_tangent_cone}
	We denote the class of \textit{tangent cone} of a $\mu$-stationary quadruple $\mathcal{V}=(V,W,\theta,g)$ at $X_0 \in \partial \mathbb{H}^{n+1}\cap \mathcal{B}_1$ by 
	\[
		\mathrm{VarTan}(\mathcal{V},X_0):=\{ \mathcal{V}':\mathcal{V}'=\lim_{i\rightarrow +\infty}\mathcal{V}_{X_0,\rho_i} \text{ for some }\rho_i \rightarrow 0^+  \}.
	\]
\end{definition}

Before discussing the property of tangent cone, we present a compactness theorem as follows.

\begin{theorem}
	\label{thm_compactnessRIV}
	Suppose $\left\{ \mathcal{V}_i \right\}$ is a sequence of VPCA-quadruples with $\mathcal{V}_i \in \mathcal{RIV}(\mu_i)$ and $\mu_i \rightarrow 0^+$.
	Assuming that $\sup_{i}\|V_i\|(\mathcal{B}_1)<+\infty$, and $\theta_i \in [\Lambda_0,\pi-\Lambda_0]$ for some $\Lambda_0 \in (0,1)$,
	then, after extracting a subsequence, we have $\mathcal{V}_i \rightarrow \mathcal{V}=(V,W,\theta,\delta)$ for some stationary quadruple $\mathcal{V}$.
	Additionally, $V^\theta$ is a stationary rectifiable varifold in free boundary sense in $\mathcal{B}_1(0)$ and its density satisfies
	\begin{equation}
		\Theta(\mathcal{V},X)\ge \limsup_{i\rightarrow +\infty} \Theta(\mathcal{V}_i,X_i)
		\label{eq:thmUpperDensityStat}
	\end{equation}
	for any $X_i \rightarrow X$ where $\left\{ X_i \right\} \subset \partial \mathbb{H}^{n+1}\cap \mathcal{B}_1$ and $X \in \partial\mathbb{H}^{n+1}\cap \mathcal{B}_1$.

	Furthermore, we can establish
	\begin{equation}
		\Theta(\|V^\theta\|,X) \in \bigcup_{k=0}^{+\infty}\left\{ k,k-\cos \theta \right\}\quad \text{for $\mathcal{H}^n$-a.e. $X \in \mathcal{B}_1 \cap \partial \mathbb{H}^{n+1}$}.
		\label{eq:thmDensityRange}
	\end{equation}
\end{theorem}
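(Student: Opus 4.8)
The plan is to run the standard Allard-type compactness machinery, now adapted to the quadruple setting, in four stages. First, the convergence $\mathcal{V}_i \to \mathcal{V} = (V,W,\theta,\delta)$: since $\sup_i \|V_i\|(\mathcal{B}_1) < \infty$ and $W_i = |U_i|$ with $U_i \subset \partial\mathbb{H}^{n+1}$ so that $\|W_i\|(\mathcal{B}_1) \le \omega_n$, compactness for Radon measures gives (after passing to a subsequence) varifold limits $V_i \to V$, $W_i \to W$; the $C^1$ bounds $|g_i - \delta| \le \mu_i$, $|Dg_i| \le \mu_i$, $|D\theta_i| \le \mu_i$ with $\mu_i \to 0$ force $g_i \to \delta$ and $\theta_i \to \theta$ in $C^0$ along a subsequence, with $\theta$ a constant in $[\Lambda_0, \pi - \Lambda_0]$ (here I would either invoke Arzelà--Ascoli on $\theta_i$ or simply note that $|D\theta_i| \le \mu_i \to 0$ already pins down the limit as locally constant, hence constant on the connected set $\mathcal{B}_1 \cap \partial\mathbb{H}^{n+1}$). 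Passing to the limit in the first-variation identity \eqref{eq:1stVarFormulaGeneral} (equivalently \eqref{eq:defPrescribedAngle} with $g_i \to \delta$, $\boldsymbol{H}_{g_i} \to 0$) shows $\mathcal{V}$ is $0$-stationary, i.e.\ by Remark \ref{rmk_stat} that $V^\theta = V - \cos\theta\, W$ is stationary in the free boundary sense.

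Second, integral rectifiability of the limit. The quadruples $\mathcal{V}_i$ are restricted integral rectifiable with generalized mean curvature bounded by $\mu_i$ and bounded first variation; reflecting $V_i^{\theta_i}$ across $\{x_1 = 0\}$ (Remark \ref{rmk_reflectionFB}) produces varifolds in $\mathcal{B}_1$ whose first variations are controlled, to which Allard's integral compactness theorem applies, yielding that the reflected limit — hence $V^\theta$ — is integral rectifiable. One must be slightly careful that $V^\theta$ is a \emph{signed} object; the point is that $V = V^\theta{}^+ $ plus a piece supported on $\partial\mathbb{H}^{n+1}$ with the correct multiplicities, and the reflection symmetrizes it into a genuine stationary integral varifold, so rectifiability of $V^\theta$, $V$, $W$ all follow. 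The density inequality \eqref{eq:thmUpperDensityStat} is exactly Theorem \ref{thm_theta__v_w_theta_g_x_ge__theta__v_i_w_i_theta_i_g_i_x} applied with the present hypotheses (the proof there only used the boundary monotonicity \eqref{eq:MonotonicityFormulaBoundary} and varifold convergence), so I would just cite it.

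Third — and this is the substantive part — the density quantization \eqref{eq:thmDensityRange}. Fix a point $X \in \mathcal{B}_1 \cap \partial\mathbb{H}^{n+1}$ where the approximate tangent plane of $V^\theta$ exists and the density $\Theta(\|V^\theta\|, X)$ exists (this is $\mathcal{H}^n$-a.e.\ $X$, by rectifiability plus the monotonicity formula giving existence of the density); blow up $\mathcal{V}$ at $X$ using the rescaling $\Pi^g_{X,\rho}$ and Proposition \ref{prop_rescaling}, so that along a subsequence $\mathcal{V}_{X,\rho_j} \to \mathcal{C} = (C, C_W, \theta, \delta)$, a \emph{stationary} quadruple that is dilation-invariant, i.e.\ a cone. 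By the monotonicity formula (Corollary \ref{cor_monoG}) the density of $\mathcal{C}$ at $0$ equals $\Theta(\|V^\theta\|, X)$, and the reflected cone $\hat{C} := C^\theta + \text{reflection}$ is a stationary integral cone in $\mathbb{R}^{n+1}$ which, because it is a limit of (reflections of) rectifiable varifolds whose excess at scale $\rho_j \to 0$ tends to zero, splits off the $(n-1)$ directions of $\partial\mathbb{H}^{n+1}$ — so it is a stationary integral cone in the $x_1 x_2$-plane, i.e.\ a finite union of rays through the origin with integer multiplicities. Analyzing the half-line configuration on $\partial\mathbb{H}^{n+1}$: $C_W$ is an integer multiple $k_0 \in \{0,1\}$ of $\partial\mathbb{H}^{n+1}$ (as a limit of sets), and stationarity of $C - \cos\theta\, C_W$ in the free-boundary sense forces the rays of $C$ meeting $\partial\mathbb{H}^{n+1}$ to balance against the boundary conormal with the weight $\cos\theta$; counting multiplicities, the total mass density at $0$ of $\|C^\theta\|$ comes out as a nonnegative integer $k$ minus either $0$ or $\cos\theta$, depending on whether the boundary piece is present — this is where I would invoke the elementary spherical/stationary-network lemma of Appendix \ref{sec:append_stationaryNetwork}. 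Hence $\Theta(\|V^\theta\|, X) \in \bigcup_{k \ge 0}\{k, k - \cos\theta\}$.

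The main obstacle is the last step: controlling the blow-up cone $\mathcal{C}$ well enough to conclude it is one-dimensional (a network of rays in $\mathbb{R}^2$) rather than a general stationary cone. The subtlety is that $V^\theta$ is only a \emph{signed} stationary varifold, so the usual ``monotonicity $\Rightarrow$ the tangent cone at an a.e.\ point is a multiplicity-$m$ plane'' argument does not apply verbatim; one has to separate the positive part $\|V\|$ and the boundary part $\|W\|$, note that $\|W\|$-a.e.\ the tangent is all of $\partial\mathbb{H}^{n+1}$, and that at $\|V\|$-a.e.\ point of $\partial\mathbb{H}^{n+1}$ the tangent of $\|V\|$ is a half-hyperplane, then combine these with the stationarity balancing condition — which is precisely the content of the stationary-network computation — to read off the admissible density values. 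I expect the proof to handle this by reducing, via the translation-invariance in the $\mathbb{R}^{n-1}$ factor, to the two-dimensional picture and quoting Appendix \ref{sec:append_stationaryNetwork}.
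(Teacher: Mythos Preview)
Your first two stages are essentially fine and match the paper. Convergence to a stationary quadruple and the upper semicontinuity of density are indeed just compactness of Radon measures together with Theorem~\ref{thm_theta__v_w_theta_g_x_ge__theta__v_i_w_i_theta_i_g_i_x}. For rectifiability of $V^\theta$ the paper does not use reflection plus Allard integral compactness (note $V_i^{\theta_i}$ can be a \emph{signed} measure when $\theta_i<\tfrac{\pi}{2}$, so the reflected object is not a varifold); instead it invokes the density lower bounds of Lemma~\ref{lem_lowerBoundDensity} together with upper semicontinuity to get $\Theta(\|V^\theta\|,\cdot)>0$ a.e., and then the Rectifiability Theorem. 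Your reflection sketch is not quite right here, but the conclusion is correct and easily fixed.

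The genuine gap is in your third stage, the density quantization. You propose to blow up the \emph{limit} $\mathcal{V}$ at $X$, obtain a cone $\mathcal{C}=(C,C_W,\theta,\delta)$, and then read off the density by classifying the one-dimensional configuration. This cannot work as stated, for two reasons. First, as the paper explicitly remarks, one does not know that $\mathcal{V}\in\mathcal{RIV}(0)$, nor that $\mathrm{VarTan}(\mathcal{V},X)\subset\mathcal{RIV}(0)$: the limit $W$ need not be of the form $|U|$, and the tangent cone $(C,C_W)$ need not have $C$ integral or $C_W$ equal to $0$ or $|\partial\mathbb{H}^{n+1}|$. Without those structural facts your balancing/counting argument has nothing to bite on. Second, at $\mathcal{H}^n$-a.e.\ boundary point the rectifiable varifold $V^\theta$ already has a flat tangent $\Theta(\|V^\theta\|,X)\,|P_0|$ with $P_0=\{x_1=0\}$; there is no nontrivial ``network'' to classify, and Appendix~\ref{sec:append_stationaryNetwork} (which concerns length lower bounds for triple-junction networks and is used only in the proof of Theorem~\ref{thm_densityG}) is irrelevant here.

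The paper's route is different and is really the point of the theorem: one must go back to the \emph{approximating sequence} $\mathcal{V}_i$, where the quantization $\Theta(\|V_i^{\theta_i}\|,\cdot)\in Q_{\theta_i(\cdot)}$ is already known a.e.\ by Lemma~\ref{lem_lowerBoundDensity}. One then takes a diagonal sequence of rescalings $\mathcal{V}_i'=(\mathcal{V}_i)_{X,\rho_i}$ with $\rho_i\to 0$ so that $\overline{V}_i:=V_i'-\cos\theta_i'W_i'\to \Theta(\|V^\theta\|,X)\,|P_0\cap\mathcal{B}_1|$, and adapts the Allard integral-compactness argument (Lemma~\ref{lem_closeToPlane}, the cylindrical monotonicity estimates \eqref{eq:pfMonCylinderDensity}--\eqref{eq:pfMonCylinder}) to bound, for $\mathcal{H}^n$-a.e.\ fiber $P_0^{-1}(Z)$, the sum of densities of $\overline{V}_i$ over the finitely many sheets by the mass ratio at scale $\tfrac13$. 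Since each summand lies in $Q_{\theta_i}$ and $\theta_i\to\theta$, the limit density is forced into $Q_\theta$. The key idea you are missing is this diagonal/fiberwise counting argument that transports the quantization from $\mathcal{V}_i$ to $\mathcal{V}$; blowing up the limit alone discards exactly the information you need.
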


For convenience, given a constant $\theta$, we abbreviate $Q_\theta=\cup _{k=0}^{+\infty}\left\{ k,k-\cos \theta \right\}$.
Prior to proving Theorem \ref{thm_compactnessRIV}, we need to establish the following lemma to estimate the lower bounds of density.
\begin{lemma}
		\label{lem_lowerBoundDensity}
		For any VPCA-quadruple $\mathcal{V}=(V,W,\theta,g) \in \mathcal{RIV}(\mu)$, it holds that
		\begin{align}
			\frac{1}{2}\Theta(\mathcal{V},X) \in{}& Q_{\theta(X)} \quad \text{for $\mathcal{H}^n$-a.e. $X \in \mathcal{B}_1$},\label{eq:lemRangeDensity}\\
			\frac{1}{2}\Theta(\mathcal{V},X)\ge{}& \min \left\{ 1-\cos \theta(X),1 \right\} \quad \text{for $\|V\|$-a.e. $X \in \mathcal{B}_1$}\cap \partial \mathbb{H}^{n+1}.
			\label{eq:lemDensityLower}
		\end{align}
		If $\theta(X)> \frac{\pi}{2}$ for every $X \in \mathcal{B}_1\cap \partial \mathbb{H}^{n+1}$, then we also have
		\begin{equation}
			\frac{1}{2}\Theta(\mathcal{V},X)\ge \cos(X) \quad \text{for $\|V+W\|$-a.e. $X \in \mathcal{B}_1\cap \partial \mathbb{H}^{n+1}$}.
			\label{eq:lemDensityLowerW}
		\end{equation}
\end{lemma}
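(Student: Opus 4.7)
The idea is to decompose the density limit into contributions from $V$ and from $W$ separately, then classify each piece using standard rectifiability and Lebesgue density arguments. Since $\theta$ is $C^1$ (hence $\cos\theta$ is continuous) and the one-sided densities $\Theta(\|V\|_g,X)$ and $\Theta(\|W\|_g,X)$ exist at $\mathcal{H}^n$-a.e.\ $X$, we may write
\[
	\tfrac{1}{2}\Theta(\mathcal{V},X)=\Theta(\|V\|_g,X)-\cos\theta(X)\,\Theta(\|W\|_g,X)
\]
at every such point. A direct computation under the $C^1$ metric — where the Jacobian factor $\sqrt{\det g_S(X)}$ in the definition of $\|\cdot\|_g$ cancels the area distortion of $B_\rho^g(X)\cap T_XV$ against the Euclidean normalization $\omega_n\rho^n$ — gives $\Theta(\|V\|_g,X)=\vartheta_V(X)\in\mathbb{Z}_{\ge 0}$ at $\|V\|$-a.e.\ $X$ (by integral rectifiability), and $\Theta(\|W\|_g,X)=\chi_U(X)$ at $\mathcal{H}^n$-a.e.\ $X\in\partial\mathbb{H}^{n+1}$ (by the Lebesgue density theorem on the hyperplane).

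With these identifications, claim \eqref{eq:lemRangeDensity} reduces to a case analysis on whether $X\in\partial\mathbb{H}^{n+1}$ and whether $X$ is a density point of $U$: in every case the value is $k$ or $k-\cos\theta(X)$ for some integer $k\ge 0$, hence lies in $Q_{\theta(X)}$. For \eqref{eq:lemDensityLower}, I restrict to $\|V\|$-a.e.\ $X\in\mathcal{B}_1\cap\partial\mathbb{H}^{n+1}$, where $\vartheta_V(X)\ge 1$; splitting into the subcases $X\in U$ and $X\notin U$ yields $\tfrac{1}{2}\Theta(\mathcal{V},X)\ge 1-\cos\theta(X)$ and $\tfrac{1}{2}\Theta(\mathcal{V},X)\ge 1$ respectively, which combine to $\ge\min\{1-\cos\theta(X),1\}$. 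For \eqref{eq:lemDensityLowerW}, under the assumption $\theta>\pi/2$ (so $-\cos\theta>0$): on $\|V\|$-a.e.\ boundary points the bound follows from \eqref{eq:lemDensityLower} since both $1$ and $1-\cos\theta$ exceed $-\cos\theta$, while on $\|W\|$-a.e.\ points of $U$ we have $\tfrac{1}{2}\Theta(\mathcal{V},X)=\vartheta_V(X)-\cos\theta(X)\ge -\cos\theta(X)$ from the trivial bound $\vartheta_V(X)\ge 0$; together these cover $\|V+W\|$-a.e.\ $X\in\mathcal{B}_1\cap\partial\mathbb{H}^{n+1}$.

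The only genuine technical obstacle lies in the first step: verifying that the $g$-weighted density of an integral rectifiable varifold coincides with its Euclidean multiplicity. Without this cancellation one would obtain values of the form $\sqrt{\det g_{T_XV}(X)}\,k$, which would generically fall outside $Q_{\theta(X)}$, so the integrality assertion hinges crucially on the correct normalization of $\mathcal{I}_{\mathcal{V}}$; this is a standard fact from varifold theory under $C^1$ metrics and is collected in the appendix.
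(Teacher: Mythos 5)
Your proposal is correct and follows essentially the same route as the paper: decompose $\tfrac12\Theta(\mathcal{V},X)$ into the $V$-contribution (an integer $\ge 1$ at $\|V\|$-a.e.\ point, by integral rectifiability) and the $W$-contribution ($\cos\theta(X)$ times the Lebesgue density of $U$, which is $0$ or $1$ a.e.), then run the obvious case analysis; you also correctly read \eqref{eq:lemDensityLowerW} as the bound $\ge -\cos\theta(X)$ despite the typo in the statement. The one point you flag as a "technical obstacle" — that the $g$-weighted density with geodesic balls agrees with the Euclidean multiplicity for rectifiable varifolds — is exactly what the paper delegates to Appendix \ref{sec:varifolds} (Proposition \ref{prop_Density} and \eqref{eq:rmkSameDensity}), so no new argument is needed there.
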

\begin{proof}
	The validity of \eqref{eq:lemRangeDensity} is trivial since $\Theta(\|V\|,X)$ is a integer for $\mathcal{H}^n$-a.e. $X \in \mathcal{B}_1$ and limit
	\[
		\lim_{\rho\rightarrow 0^+} \frac{1}{\omega_n \rho^n}\int_{ \mathcal{B}_\rho(X)} \cos \theta d\|W\|(X)=\cos \theta(X)
	\]
	holds for $\mathcal{H}^n$-a.e. $X \in \mathcal{B}_1\cap \partial\mathbb{H}^{n+1}$.
	
		Since $W=|U|$ for some $\mathcal{H}^n$-measurable set $U$ in $\mathcal{B}_1\cap \partial \mathbb{H}^{n+1}$, it follows that
		\[
			\limsup_{\rho\rightarrow 0^+} \frac{1}{\omega_n \rho^n}\int_{ \mathcal{B}_\rho(X)} \cos \theta d\|W\|
			=\limsup_{\rho\rightarrow 0^+}\frac{\cos \theta(X) }{\omega_n \rho^n}\|W\|(\mathcal{B}_\rho(X))\le \max \left\{ 0, \cos \theta(X) \right\},
		\]
		for every $X \in \partial \mathbb{H}^{n+1}\cap \mathcal{B}_1$.
		Together with the fact $\Theta(\|V\|,X)\ge 1$ for $\|V\|$-a.e. $X \in \mathcal{B}_1$ and the definition of $\Theta(\mathcal{V},X)$, we obtain \eqref{eq:lemDensityLower}.
		
		In particular, if $\theta(X)>\frac{\pi}{2}$ for every $X \in \mathcal{B}_1\cap \partial \mathbb{H}^{n+1}$, then \eqref{eq:lemDensityLower} directly implies $\Theta(\mathcal{V},X)\ge 2$ for $\|V\|$-a.e. $X \in \mathcal{B}_1\cap \partial \mathbb{H}^{n+1}$.
		Additionally,
		\[
			\lim_{\rho\rightarrow 0^+} \frac{1}{\omega_n\rho^n}\int_{ B_\rho(X)} \cos \theta d\|W\|
			= \cos(X) \Theta(\|W\|,X)=1\quad \text{ for $\|W\|$-a.e., $X \in \mathcal{B}_1\cap \partial \mathbb{H}^{n+1}$}. 
		\]
		Together with the definition of $\Theta(\mathcal{V},X)$, we have \eqref{eq:lemDensityLowerW}.
\end{proof}
\begin{proof}[Proof of Theorem \ref{thm_compactnessRIV}]
	The existence of $\mathcal{V}$ is deduced from the compactness theorem for varifolds.
	Consequently, Theorem \ref{thm_theta__v_w_theta_g_x_ge__theta__v_i_w_i_theta_i_g_i_x} implies $\mathcal{V}$ is stationary and the upper semi-continuity of density \eqref{eq:thmUpperDensityStat}.

	Now, we employ Lemma \ref{lem_lowerBoundDensity} to show the rectifiability of $V^\theta$.
	
	For the case $\theta>\frac{\pi}{2}$, we can see that $\Theta(\|V^\theta\|,X)>0$ for $\|V+W\|$-a.e. $X \in \mathcal{B}_1$ by \eqref{eq:thmUpperDensityStat} and Lemma \ref{lem_lowerBoundDensity}.
	Hence, $V^\theta$ is rectifiable by the Rectifiability Theorem (i.e., Theorem \ref{thm_rectifiability}) together with a reflection argument.

	For $\theta <\frac{\pi}{2}$, Remark \ref{rmk_changeAngle} provides a method to adjust the angle to $\pi-\theta$, after which the preceding argument is applied.

	For $\theta=\frac{\pi}{2}$, we know $V$ is a stationary varifold in free boundary sense in $B_1(0)\cap \mathbb{H}^{n+1}$.
	Then Lemma \ref{lem_lowerBoundDensity} and \eqref{eq:thmUpperDensityStat} imply $\Theta(\|V\|,X)>0$ for $\|V\|$-a.e. $X \in \mathcal{B}_1$.
	Again, $V$ is rectifiable by the Rectifiability Theorem.

	Lastly, we aim to establish \eqref{eq:thmDensityRange}.
	This effort primarily concerns cases where $\theta\ge \frac{\pi}{2}$ by Remark \ref{rmk_changeAngle}.

	This part is an adaptation of the argument used in proving the integral compactness theorem in \cite[Theorem 42.7]{simon1983lectures}.
	Thus, we only provide an outline of the proof.

	Given that $V^\theta$ is rectifiable, we know tangent space of $V^\theta$ exists for $\mathcal{H}^n$-a.e. $X \in \mathrm{spt}\|V^\theta\|\cap \mathcal{B}_1\cap \partial \mathbb{H}^{n+1}$.
	Moreover, the support of such tangent space has to be $P_0$, which we define as $P_0:=\left\{ x_1=0 \right\}$.
	Let $X$ be such a point.
	This implies that $(\eta_{X,\rho})_{\#}V^\theta \rightarrow \Theta(\|V^\theta\|,X)|P_0|$ as $\rho \rightarrow 0^+$ in the sense of varifolds.

	Since $\mathcal{V}_i \rightarrow \mathcal{V}$, we can find a sequence $\rho_i\rightarrow 0^+$ such that
	\begin{equation}
		V_i'-\cos \theta_i'W_i'\rightarrow \Theta(\|V^\theta\|,X)|P_0 \cap \mathcal{B}_1|
		\label{eq:pfCovergeToPlane}
	\end{equation}
	as Radon measures.
	Here, $\mathcal{V}'_i=(V'_i,W_i',\theta_i',g_i')=\mathcal{V}_{X,\rho_i}$.
	We denote $\overline{V}_i:=V_i'-\cos \theta_i'W_i'$.
	If we do an orthogonal projection from $B_1(0)$ to $P_0$, we obtain
	\begin{equation}
		(P_0)_{\#}(\overline{V}_i \lfloor(\mathcal{B}_1\cap \left\{ x_1<\varepsilon \right\}))\rightarrow \Theta(\|V^\theta\|,X)|P_0\cap B_1|
		\label{eq:pfProjConvergence}
	\end{equation}
	for any $\varepsilon>0$.
	Here, we regard $P_0$ as an orthogonal projection from $\mathbb{R}^{n+1}$ to $P_0$.
	To proceed with our argument, we need the following lemma.
	\begin{lemma}
		\label{lem_closeToPlane}
		Given $\delta>0$ and a positive integer $N$, there exists an $\varepsilon=\varepsilon(\delta,N,n) \in (0,\delta)$ such that following statement holds.

		For each $i$, we can find a subset $A_i \subset \mathcal{B}_{\frac{2}{3}}$ such that $\overline{V}_i \lfloor(\mathcal{B}_{\frac{2}{3}}\backslash A_i)\rightarrow 0^+$ and for each $Z \in \mathcal{B}_{\frac{1}{3}}\cap \partial \mathbb{H}^{n+1}$, consider $\mathcal{X}$ as a subset of $P_0^{-1}(Z)\cap \left\{ x_1<\varepsilon \right\}\cap \mathrm{spt}\|\overline{V}_i\|\cap A_i$ with cardinality $\# \mathcal{X}\le N$.
		Then,
		\[
			\sum_{X \in \mathcal{X}}\Theta(\|\overline{V}_i\|,X)\le
			(1+\delta) \frac{\|\overline{V}_i\|(B_{\frac{1}{3}}(Z))}{\omega_n (\frac{1}{3})^n}+\delta,
		\]
		for $i$ large enough.
	\end{lemma}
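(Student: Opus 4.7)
The lemma is the technical heart of an adaptation of Simon's integer compactness argument \cite[\S42]{simon1983lectures} to the signed setting $\overline{V}_i = V_i' - \cos\theta_i' W_i'$. My plan is to exploit three ingredients: the convergence $\overline{V}_i \to \Theta(\|V^\theta\|, X) |P_0 \cap \mathcal{B}_1|$ established in \eqref{eq:pfCovergeToPlane} together with the rectifiability of the limit, the boundary monotonicity formula \eqref{eq:MonotonicityFormulaBoundary} which bounds pointwise densities by density ratios at small scales, and a disjoint-ball covering to convert a sum of densities into a single density ratio.

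First I would define $A_i$ to be the set of $X \in \mathrm{spt}\|\overline{V}_i\| \cap \mathcal{B}_{2/3}$ at which (a) the approximate tangent plane of the rectifiable part of $\overline{V}_i$ exists and lies within some small angle $\varepsilon_1(\delta, N)$ of $P_0 = \{x_1 = 0\}$, and (b) the density ratio $\|\overline{V}_i\|(\mathcal{B}_\sigma(X))/(\omega_n\sigma^n)$ stays within $\delta/(8N)$ of $\Theta(\mathcal{V}_i', X)/2$ for every $\sigma$ in a suitable dyadic window depending on $\delta$ and $N$. Since every point of the rectifiable limit $\Theta|P_0|$ is a density-and-tangent-plane regular point, the convergence $\overline{V}_i \to \Theta|P_0|$ together with the monotonicity formula \eqref{eq:MonotonicityFormulaBoundary} transfers this regularity to $\overline{V}_i$ modulo an exceptional set of $\overline{V}_i$-mass tending to zero, yielding $\overline{V}_i \lfloor (\mathcal{B}_{2/3} \setminus A_i) \to 0$. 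A further refinement of $A_i$ ensures that any two points of $A_i$ lying in the same $p$-fiber are separated by at least a fixed distance $d_0 = d_0(\delta, N)$: this uses that near each $X \in A_i$, the multiplicity-graphical structure inherited from the closeness of $V_i'$ to multiple copies of $P_0$ forces distinct sheets through the fiber to be quantitatively separated in the $x_1$-direction.

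Next, given $Z \in \mathcal{B}_{1/3} \cap \partial\mathbb{H}^{n+1}$ and $\mathcal{X} = \{X_1, \ldots, X_k\} \subset p^{-1}(Z) \cap \{x_1 < \varepsilon\} \cap A_i$ with $k \le N$, I pick a common scale $\sigma$ in the interval $(\varepsilon/\delta,\, 1/3 - \varepsilon)$ and smaller than $d_0/2$, which is possible provided $\varepsilon = \varepsilon(\delta, N, n)$ is chosen sufficiently small. This makes the half-balls $\mathcal{B}_\sigma(X_j)$ pairwise disjoint and all contained in $\mathcal{B}_{1/3}(Z)$. Applying the boundary monotonicity formula \eqref{eq:MonotonicityFormulaBoundary} at each $X_j \in \partial\mathbb{H}^{n+1}$,
\[
\tfrac{1}{2}\Theta(\mathcal{V}_i', X_j) \le (1 + C\mu_i \sigma)\frac{\|\overline{V}_i\|(\mathcal{B}_\sigma(X_j))}{\omega_n \sigma^n} + C\mu_i \sigma,
\]
and summing over $j$ using disjointness yields $\sum_j \tfrac{1}{2}\Theta(\mathcal{V}_i', X_j) \le (1+C\mu_i)\|\overline{V}_i\|(\mathcal{B}_{1/3}(Z))/(\omega_n \sigma^n) + NC\mu_i$. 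A final application of monotonicity at $Z$ to compare $\|\overline{V}_i\|(\mathcal{B}_{1/3}(Z))/(\omega_n (1/3)^n)$ with $\|\overline{V}_i\|(\mathcal{B}_{\sigma+\varepsilon}(Z))/(\omega_n(\sigma+\varepsilon)^n)$, together with the ratio $(1/3)^n/\sigma^n = (1 + O(\varepsilon/\sigma))^n \le 1 + \delta/2$ enforced by $\varepsilon \ll \delta \sigma$, converts the denominator $\sigma^n$ into $(1/3)^n$ with only an $(1+\delta)$ loss.

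The main technical obstacle is the quantitative fiber-separation $d_0(\delta, N)$: one must rule out the pathological scenario in which $N$ points pile up on top of each other at a single location in $A_i$, since then no common scale $\sigma$ can simultaneously be disjoint and large enough relative to $\varepsilon$. Ensuring this separation is where the rate of convergence $\overline{V}_i \to \Theta|P_0|$, together with the multiplicity-graphical decomposition of $V_i'$ near density points in $A_i$, plays a decisive role. The signed contribution from $\cos\theta_i'W_i'$ poses no additional difficulty: it is supported on $\{x_1 = 0\}$, and by the remark following Theorem \ref{thm_monotonicityFormula} the boundary monotonicity formula \eqref{eq:MonotonicityFormulaBoundary} holds for $\theta \ge 0$ without the restriction hypothesis, so all estimates above accommodate the signed weight $V_i^{\theta_i'}$ directly.
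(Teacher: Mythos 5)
There is a genuine gap at the step you yourself flag as ``the main technical obstacle'': the claimed quantitative fiber-separation $d_0(\delta,N)$ between distinct points of $A_i$ lying over the same point $Z$. No such separation holds, and no refinement of $A_i$ by tilt-excess or density-ratio conditions can produce it. The lemma must handle exactly the regime in which $\overline V_i$ consists of several sheets collapsing onto the multiplicity-$\Theta$ plane $P_0$: for instance two nearly parallel sheets at mutual distance $1/i$, or two sheets crossing at an angle tending to $0$. In such examples every point of either sheet has tangent plane arbitrarily close to $P_0$ and perfectly well-behaved density ratios at all relevant scales, so both points of a fiber belong to any reasonable ``good set'' $A_i$, yet their separation tends to $0$. ``Closeness of $V_i'$ to multiple copies of $P_0$'' gives no multiplicity-graphical decomposition with quantitatively separated sheets --- that would require an Allard-type theorem with multiplicity, which is precisely what is unavailable and what this lemma is a substitute for. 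Once the separation fails, there is no common scale $\sigma$ with $\varepsilon \ll \delta\sigma$ and $\sigma < d_0/2$ making the balls $\mathcal{B}_\sigma(X_j)$ disjoint, and the disjoint-ball summation collapses.

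The paper's proof avoids this entirely by following Simon's Lemma 42.9: it defines $A_i$ by a scale-uniform tilt-excess bound $\int_{B_\rho(X)}\|P_0-S\|\,d\overline V_i \le \varepsilon\rho^n$ for all $\rho<\tfrac13$ (the complement has small mass by Besicovitch covering plus \eqref{eq:pfCovergeToPlane}), and then proves slab monotonicity inequalities \eqref{eq:pfMonCylinderDensity}--\eqref{eq:pfMonCylinder} for the truncated regions $U^\tau_\rho(X_0)=\mathcal{B}_\rho(X_0)\cap\{|(X-X_0)\cdot e_1|<\tau\}$, with error terms controlled by the tilt excess via $|D_Sf|\le C\|P_0-S\|/\tau$. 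An induction over slab thicknesses then accounts for arbitrarily clustered points in a fiber: points clustered within a thin slab are absorbed into a single slab density ratio, and the contributions at different slab scales are added without ever requiring pairwise disjoint balls. If you want to salvage your write-up, you need to replace the ``separated disjoint balls at a common scale'' step by such a slab/cylinder argument (or give an actual proof of the separation claim, which as noted is false in general).
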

	\begin{proof}
		We use a similar argument as in \cite[Lemma 42.9]{simon1983lectures}.
		Hence, we only give a sketch of the proof here.
		Define $A_i\subset B_{\frac{2}{3}}\cap \left\{ x_1<\varepsilon \right\}$ be the set for which
		\begin{equation}
			\int_{ B_\rho(X)} \|P_0-S\|d\overline{V}_i(X,S)\le \varepsilon \rho^n\quad \forall \rho \in (0,\frac{1}{3})
			\label{eq:pfTiltExcess}
		\end{equation}
		where the $\varepsilon>0$ will be chosen later.
		Then, the Besicovitch Covering Theorem and monotonicity formula yield
		\[
			\|\overline{V}_i\|(\mathcal{B}_{\frac{2}{3}}\backslash A_i)\le \frac{C}{\varepsilon} \int_{ B_1} \|P_0-S\|d\overline{V}_i(X,S).
		\]
		Further, \eqref{eq:pfCovergeToPlane} implies $\|\mathcal{V}_i\|(\mathcal{B}_{\frac{2}{3}}\backslash A_i)\rightarrow 0^+$ as $i\rightarrow +\infty$,
		taking into account that $\left|\cos \theta_i \right| \rightarrow \cos \theta$ in $C^0$ sense.

		The next key ingredient is the following two inequalities for any $X_0 \in \mathcal{B}_{\frac{1}{3}}\cap \left\{ x_1<\varepsilon \right\}\cap  A_i$ with $X_0 \in \partial \mathbb{H}^{n+1}$ or $2\tau\ge \mathrm{dist}(X_0,\partial \mathbb{H}^{n+1})$,
		\begin{align}
			\Theta(\|\overline{V}_i\|,X_0)\le{} & (1+C\mu_i) \frac{\|\overline{V}_i\|(U^{2\tau}_\rho(X_0))}{\omega_n \rho^n}+ \frac{C \varepsilon \rho}{\tau}+C\mu_i,
			\quad \forall 0<\rho\le \frac{1}{3}
			\label{eq:pfMonCylinderDensity}\\
			\frac{\|\overline{V}_i\|(U^{\tau}_{\sigma}(X_0))}{\omega_n \sigma^n}\le{} & (1+C\mu_i)
			\frac{\|\overline{V}_i\|(U^{2\tau}_\rho(X_0))}{\omega_n \rho^n}+\frac{C\varepsilon \rho}{\tau}+C\mu_i,\quad \forall 0<\sigma<\rho\le \frac{1}{3},
			\label{eq:pfMonCylinder}
		\end{align}
		for $i$ large enough.
		Here, $U^{\tau}_\rho(X_0)=\mathcal{B}_\rho(X_0)\cap \left\{ X:|(X-X_0)\cdot e_1|<\tau \right\}$.

		Let $f_{X_0}(X):=\eta(|(X-X_0)\cdot e_1|)$ and $\eta(r)$ a $C^1$ function such that $\eta(r)=1$ when $r<\tau$, $\eta(r)=0$ when $r>2\tau$, and $|\eta'|\le \frac{2}{\tau}$.
		To establish the above inequalities, we choose $\varphi_{X_0}$ to be the vector field defined in \eqref{eq:pfDefPhiX0} and set $\varphi=\varphi_{X_0}f_{X_0}+\varphi_{\tilde{X} _0}f_{\tilde{X} _0}$.
		Applying \eqref{eq:1stVarFormulaGeneral} with $\mathcal{V}_i$ in place of $\mathcal{V}$, and after standard computations, we find
		\begin{align*}
			&\int_{ \mathcal{B}_\sigma(X_0)} f d\|\overline{V}_i\|+\int_{ \mathcal{B}_{\sigma}(\tilde{X} _0)} f d\|\overline{V}_i\|\\
			\le{}& (1+C\mu_i)\left(  \int_{ \mathcal{B}_{\rho}(X_0)} f d\|\overline{V}_i\|+ \int_{ \mathcal{B}_{\rho}(\tilde{X} _0)} f d\|\overline{V}_i\|\right)+C\frac{\varepsilon \rho}{\tau}+C\mu_i.
		\end{align*}
		for $i$ large enough.
		Here, we have used the fact that $|D_Sf|\le \frac{C\|P_0-S\|}{\tau}$ and \eqref{eq:pfTiltExcess}.
		Note that
		\[
			\int_{ \mathcal{B}_{\rho}(\tilde{X} _0)}fd\|\overline{V}_i\|=
			\begin{cases}
			0, & \mathrm{dist}(X_0,\partial \mathbb{H}^{n+1})\le 2\tau,\\
			\displaystyle\int_{ B_\rho(X_0)}f d\|\overline{V}_i\| , & X_0 \in \partial \mathbb{H}^{n+1}.
			\end{cases}
		\]
		Hence, \eqref{eq:pfMonCylinder} follows by the definition of $f$ and $U^{\tau}_\rho$, and \eqref{eq:pfMonCylinderDensity} follows by taking $\sigma \rightarrow 0^+$.

		Now, similar to the argument in \cite[Lemma 42.9]{simon1983lectures}, an inductive argument allows us to obtain,
		\[
			\sum_{X \in \mathcal{X} }^{}\Theta(\|\overline{V}_i\|,X)\le (1+C\mu) \frac{\|\overline{V}_i\|(\mathcal{B}_{\frac{1}{3}}(Z))}{\omega_n (\frac{1}{3})^n}+C\varepsilon+C\mu,
		\]
		where the constant $C=C(n,N)$ is independent of $\varepsilon$.
		The only difference is, we need to ensure the center of $U^\tau_{\rho}(X)$, which is $X$, should satisfy $X \in \partial \mathbb{H}^{n+1}$ or $\mathrm{dist}(X,\partial \mathbb{H}^{n+1})\ge \tau$ in the proof.
		We select $\varepsilon$ small enough such that $C\mu+C\varepsilon<\delta$ to conclude the proof.
		\end{proof}
	
	Returning to the proof of Theorem \ref{thm_compactnessRIV}.
	Arguing by contradiction, we assume $\Theta(\|V^\theta\|,X) \notin Q_\theta$.

	We define the following three constants by
	\begin{align*}
		\overline{\theta}_1={} & \max \left\{ a  \in Q_\theta: a\le \Theta(\|V^\theta\|,X) \right\},\\
		\overline{\theta}_2={} & \min \left\{ a \in Q_\theta:a>\Theta(\|V^\theta\|,X) \right\},\\
		N={}& \min \left\{ a \in \mathbb{Z}:a>\overline{\theta}_2+2 \right\}.
	\end{align*}
	It is clear that $\overline{\theta}_1<\Theta(\|V^\theta\|,X)$ by our assumption.
	
	Based on Lemma \ref{lem_closeToPlane} and \eqref{eq:pfCovergeToPlane}, we can find a constant $\varepsilon$ small enough such that the following holds.

	For each $i$, there exists a subset $A_i \subset \mathcal{B}_{\frac{2}{3}}$ such that $\|\overline{V}_i\| \lfloor(\mathcal{B}_{\frac{2}{3}}\backslash A_i)\rightarrow 0^+$ and for each $Z \in \mathcal{B}_{\frac{1}{3}}\cap \partial \mathbb{H}^{n+1}$, let $\mathcal{X}$ be a subset of
	\begin{equation}
		\mathcal{X}_{Z}^i:=P_0^{-1}(Z)\cap \left\{ x_1<\varepsilon \right\}\cap \mathrm{spt}\|\overline{V}_i\|\cap A_i
		\label{eq:pfDefXi}
	\end{equation}
	with $\# \mathcal{X}\le N$.
	Then, we find
	\begin{equation}
		\sum_{X \in \mathcal{X}}\Theta(\|\overline{V}_i\|,X)<
		\overline{\theta}_2,
		\label{eq:pfSumDensity}
	\end{equation}
	for $i$ large enough.
	Notably, for $\mathcal{H}^n$-a.e. $Z \in \mathcal{B}_{\frac{1}{3}}\cap \partial \mathbb{H}^{n+1}$ and $X \in \mathcal{X}_Z$, it holds that
	\begin{equation}
		\Theta(\|\overline{V}_i\|,X)\ge
		\begin{cases}
		-\cos \theta_i'(X), & X \in \partial \mathbb{H}^{n+1}\cap \mathrm{spt}\|\overline{V}_i\|,\\
		1, & X \in \mathrm{spt}\|\overline{V}_i\|\backslash \partial \mathbb{H}^{n+1},
		\end{cases}
		\label{eq:pfLowerBoundDensity}
	\end{equation}
	by Lemma \ref{lem_lowerBoundDensity}.
	Together with \eqref{eq:pfSumDensity} and $\overline{\theta}_2<N-2$, we have $\# \mathcal{X}\le N-1$.
	Therefore, we have $\# \mathcal{X}_{Z}^i\le N-1$ for $\mathcal{H}^n$-a.e. $Z \in \mathcal{B}_{\frac{1}{3}}\cap \partial \mathbb{H}^{n+1}$ for $i$ large enough.

	Define $\psi_i$ as
	\begin{equation}
		\psi_i(Z)=\sum_{X \in \mathcal{X}_Z^i}\Theta(\|\overline{V}_i\|,X).
		\label{eq:pfDefPsi_i}
	\end{equation}
	Since $\Theta(\|\overline{V}_i\|,X) \in Q_{\theta_i(X)}$ for $\|\overline{V}_i\|$-a.e. $X \in \mathcal{B}_1$, along with \eqref{eq:pfSumDensity} and \eqref{eq:pfLowerBoundDensity}, then, for any fixed $\varepsilon'>0$, we have
	\begin{equation}
		\psi_{i}(Z)\le \overline{\theta}_1+\varepsilon', \text{ for }\mathcal{H}^n \text{-a.e. }Z \in \mathcal{B}_{\frac{1}{3}}\cap \partial \mathbb{H}^{n+1},
		\label{eq:pfPsiUpper}
	\end{equation}
	for $i$ large enough.
	Here, we also use the fact $\theta_i' \rightarrow \theta$.

	On the other hand, by \eqref{eq:pfProjConvergence} and $\|\overline{V}_i\|(\mathcal{B}_{\frac{2}{3}}\backslash A_i)\rightarrow 0$, we obtain
	\begin{equation}
		\int_{ } f \psi_i d \mathcal{H}^n \rightarrow \Theta(\|V^\theta\|,X) \int_{ } f d \mathcal{H}^n, \forall f \in C_c^0(B_{\frac{2}{3}}^{n}(0)).
		\label{eq:pfIntPsiConvergence}
	\end{equation}
	resulting a contradiction with \eqref{eq:pfPsiUpper} if we choose $\varepsilon'<\Theta(\|V^\theta\|,X)-\overline{\theta}_1$.
	Therefore, this establishes \eqref{eq:thmDensityRange}
\end{proof}

\begin{proposition}
	\label{prop_tangentCone}
	Suppose $\mathcal{V} \in \mathcal{RIV}(\mu)$ for some $\mu=\mu(n)$ small enough and $X_0 \in \mathcal{B}_1 \cap \partial \mathbb{H}^{n+1}$.
	For any $\mathcal{V}'=(V',W',\theta',\delta) \in \mathrm{VarTan}(\mathcal{V},X_0)$, we know $\mathcal{V}'$ is a stationary quadruple.
	Furthermore, we can obtain $V'-\cos \theta'W'$ is a rectifiable stationary cone in free boundary sense in $\mathcal{B}_1\cap \mathbb{H}^{n+1}$ together with following density estimate
	\begin{equation}
		\frac{1}{\omega_n}\|V'\|(\mathcal{B}_1)-\frac{\cos \theta'}{\omega_n}\|W'\|(\mathcal{B}_1)=2\Theta(\|V'-\cos \theta' W'\|,X)= \Theta(\mathcal{V},X_0)
		\label{eq:propConeDensity}
	\end{equation}
\end{proposition}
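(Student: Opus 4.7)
The plan is to extract the tangent cone as a subsequential limit of rescaled quadruples, apply the compactness theorem to obtain stationarity and rectifiability, then invoke the equality case of the monotonicity formula to deduce the cone structure, and finally read off the density identity by comparing scales.

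First, by Definition \ref{def_tangent_cone}, fix a sequence $\rho_i \to 0^+$ with $\mathcal{V}_{X_0,\rho_i} \to \mathcal{V}' = (V',W',\theta',\delta)$. Proposition \ref{prop_rescaling} gives $\mathcal{V}_{X_0,\rho_i} \in \mathcal{RIV}(C\mu\rho_i)$, so the $\mu$-parameters tend to $0$; Corollary \ref{cor_monoG} then provides uniform mass bounds $\|V_i\|(\mathcal{B}_R) \le C(R)$ on any fixed radius (using Remark \ref{rmk_extendQuad} to enlarge the domain of definition of $\mathcal{V}_{X_0,\rho_i}$ as needed). Theorem \ref{thm_compactnessRIV} then yields that $\mathcal{V}'$ is a stationary quadruple with $V' - \cos\theta' W'$ rectifiable and stationary in the free boundary sense in $\mathcal{B}_1 \cap \mathbb{H}^{n+1}$; by Remark \ref{rmk_stat}, $\theta'$ is automatically a constant and $g' = \delta$.

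The key to promoting this to a cone is to prove that $\mathcal{I}_{\mathcal{V}'}(0,s)$ is independent of $s \in (0,1)$. By the rescaling $\Pi^g_{X_0,\rho_i}$ and the definition of $\mathcal{I}_{\mathcal{V}}$, one has
\[
    \mathcal{I}_{\mathcal{V}_{X_0,\rho_i}}(0,s) = \mathcal{I}_{\mathcal{V}}(X_0, s\rho_i) + O(\mu\rho_i),
\]
whose right hand side tends to $\Theta(\mathcal{V}, X_0)$ as $i \to \infty$ for every fixed $s$. Passing the varifold convergence $\mathcal{V}_i \to \mathcal{V}'$ through the density ratio on $\mathcal{B}_s$ then gives $\mathcal{I}_{\mathcal{V}'}(0,s) = \Theta(\mathcal{V}, X_0)$ for all $s \in (0,1)$. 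Feeding this constancy into the stationary monotonicity formula \eqref{eq:MonotonicityFormulaBoundary} (with $\mu = 0$) forces $\int \frac{|X^{\perp_S}|^2}{|X|^{n+2}} dV'(X,S) \equiv 0$ on every annulus, so $V'$ is a cone. Since $W'$ is supported on $\partial \mathbb{H}^{n+1}$ and $\theta'$ is a constant, $V' - \cos\theta' W'$ is therefore a signed stationary cone.

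Finally, for the density identity \eqref{eq:propConeDensity}: evaluating the constant value of $\mathcal{I}_{\mathcal{V}'}(0,\cdot)$ at $s = 1$ gives $\frac{2}{\omega_n}\|V'\|(\mathcal{B}_1) - \frac{2\cos\theta'}{\omega_n}\|W'\|(\mathcal{B}_1) = \Theta(\mathcal{V}, X_0)$, while the cone structure together with the definition of density at the vertex identifies this same quantity with $2\Theta(\|V' - \cos\theta' W'\|, 0)$, yielding \eqref{eq:propConeDensity}. The main obstacle in this plan is the upgrade from weak varifold convergence $\mathcal{V}_i \to \mathcal{V}'$ to genuine equality (rather than merely upper semicontinuity, which is all Theorem \ref{thm_theta__v_w_theta_g_x_ge__theta__v_i_w_i_theta_i_g_i_x} supplies) of density ratios on the open ball $\mathcal{B}_s$; I would handle this by combining the uniform monotonicity bound for $\mathcal{V}_i$ on slightly larger balls $\mathcal{B}_{s+\varepsilon}$ to rule out mass concentration on $\partial \mathcal{B}_s$ with the continuous scaling of density ratios for the limit cone $\mathcal{V}'$, and then sending $\varepsilon \to 0^+$.
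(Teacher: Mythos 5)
Your argument is correct and follows essentially the same route as the paper: compactness for stationarity and rectifiability, the monotonicity formula plus convergence of the rescaled density ratios for the density identity, and the equality case of monotonicity for the cone property. The only real difference is presentational: you establish constancy of $\mathcal{I}_{\mathcal{V}'}(0,s)$ at every scale by a two-sided mass-convergence argument (correctly handling possible concentration on $\partial\mathcal{B}_s$ via the uniform near-constancy of the ratios for $\mathcal{V}_i$ and the bound $\|W_i\|\le \mathcal{H}^n\lfloor\partial\mathbb{H}^{n+1}$), whereas the paper squeezes the value at $s=1$ between the upper semicontinuity of density at the vertex and the lower semicontinuity of mass together with convergence of the $\cos\theta_i\,\|W_i\|$-term.
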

Here, we say a varifold $V$ defined on $\mathcal{B}_1$ is a cone if $(\eta_{0,\rho})_{\#}V\lfloor(\mathcal{B}_1)=V$ for any $\rho \in (0,1)$.


\begin{proof}
	Suppose $\mathcal{V}_i=(V_i,W_i,\theta_i,g_i)=\mathcal{V}_{X_0,\rho_i}$ such that $\mathcal{V}'=\lim_{i\rightarrow +\infty} \mathcal{V}_i$ with $\rho_i\rightarrow 0^+$.
	From Theorem \ref{thm_compactnessRIV}, it is clear that $\mathcal{V}'$ is a stationary quadruple and $V'-\cos \theta' W'$ is rectifiable.

	For the density estimate, we have $2\Theta(\|V'-\cos \theta' W'\|,0)\ge \Theta(\mathcal{V},X_0)$ easily.
	On the other hand, lower semi-continuity of mass implies
	$\|V'\|(\mathcal{B}_1)\le \liminf_{i\rightarrow +\infty} \|V_i\|(\mathcal{B}_1)$.
	But note that for $n$-varifolds corresponding to $\mathcal{H}^n$-measurable sets, $W_i \rightarrow W'$ in the sense of varifolds implies
	\[
		\lim_{i\rightarrow +\infty} \int_{ \mathcal{B}_1} \cos \theta_i d\|W_i\|=\int_{\mathcal{B}_1 } \cos \theta' d\|W'\|.
	\]
	Therefore, for any $\varepsilon$ small enough, we can show that
	\[
		\|V'\|(\mathcal{B}_1)- \int_{ \mathcal{B}_1} \cos \theta' d\|W'\|\le \Theta(\mathcal{V},X_0)+\varepsilon.
	\]
	By choosing $\varepsilon \rightarrow 0^+$, we obtain \eqref{eq:propConeDensity}.
	Furthermore, it implies $V'-\cos \theta' W'$ is a cone from the standard result in geometric measure theory.
\end{proof}
\begin{remark}
	\label{rmk_maxDensity}
	For any $\mathcal{V}' \in \mathrm{VarTan}(\mathcal{V},X_0)$, it holds that
	\[
		\Theta(\mathcal{V}',X)\le \Theta(\mathcal{V}',0)
	\]
	for any $X \in \mathcal{B}_1\cap \partial \mathbb{H}^{n+1}$ by the property of tangent cone and the upper semi-continuity of density.
\end{remark}

\begin{remark}
	In general, we do not know whether $\mathrm{VarTan}(\mathcal{V},X_0) \subset \mathcal{RIV}(0)$.
\end{remark}

\section{Estimation of $L^2$ excess}%
\label{sec:l2_estimate}

From this section to Section \ref{sec:iterations}, we fix a constant $\Lambda_0 \in (0,\frac{1}{2})$ and concentrate on the regularity theory for $\theta \in [\frac{\pi}{2}+\Lambda_0,\pi-\Lambda_0]$ (See Theorem \ref{thm_l2positiveAngle}).

\begin{theorem}
	\label{thm:smallDensity}
	Suppose $\mathcal{V}=(V,W,\theta,g) \in \mathcal{RIV}(\mu)$ with $\mu$ small enough.
	If $\Theta(\mathcal{V},X)<1-\cos \theta(X)$ for every $X \in \mathcal{B}_1\cap \partial \mathbb{H}^{n+1}$, then $V$ is a $\mu$-stationary varifold under metric $g$ in free boundary sense and for any connected component $U_0$ of $\mathcal{B}_1 \cap \partial \mathbb{H}^{n+1}\backslash \{ \theta \neq \frac{\pi}{2} \}$, we have $W\lfloor(U_0)=m|U_0|$ for $m=0$ or 1, and $\mathrm{spt}\|V\|\cap \partial\mathbb{H}^{n+1}\cap \mathcal{B}_1\cap \{ \theta=\frac{\pi}{2} \}=\emptyset$.
\end{theorem}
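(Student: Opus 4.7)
The plan is to carry out a tangent-cone classification at every boundary point of $\mathcal{B}_1\cap\partial\mathbb{H}^{n+1}$. The key observation is that the strict density bound $\Theta(\mathcal{V},X)<1-\cos\theta(X)$ excludes the classical capillary tangent cone (a half-hyperplane meeting $\partial\mathbb{H}^{n+1}$ at angle $\theta(X_0)$ together with a half of the boundary plane, which realizes density exactly $1-\cos\theta(X_0)$) and every heavier configuration. From this the three conclusions of the theorem fall out in the order (2), (1), (3).

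I would first apply Lemma~\ref{lem_lowerBoundDensity}: the lower bound $\tfrac{1}{2}\Theta(\mathcal{V},X)\ge\min\{1-\cos\theta(X),1\}$ at $\|V\|$-a.e.\ $X\in\partial\mathbb{H}^{n+1}\cap\mathcal{B}_1$, combined with the hypothesis and $\cos\theta\in(-1,1)$, gives an arithmetic contradiction in both the regimes $\theta\le\pi/2$ and $\theta>\pi/2$, so $\|V\|(\partial\mathbb{H}^{n+1}\cap\mathcal{B}_1)=0$. Next, fix any $Y_0\in\partial\mathbb{H}^{n+1}\cap\mathcal{B}_1$ with $\theta(Y_0)\ne\pi/2$ and choose a tangent cone $\mathcal{V}'=(V',W',\theta(Y_0),\delta)$ via Proposition~\ref{prop_tangentCone}. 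Because $W=|U|$ has locally finite perimeter by the restricted assumption of $\mathcal{RIV}(\mu)$, at $\mathcal{H}^{n-1}$-a.e.\ point $Y_0\in\partial^{\circ}U\cap\{\theta\ne\pi/2\}$ the blow-up $W'$ is the varifold of a half-$n$-plane with edge $\partial H$ through $0$. The free-boundary stationarity relation $\delta V'(\varphi)=\cos\theta(Y_0)\,\delta W'(\varphi)$ (for tangential test fields) forces $V'\ne 0$ (otherwise $\delta W'(\varphi)=0$, impossible for a half-plane with non-trivial boundary contribution), and reflecting across $\partial\mathbb{H}^{n+1}$ produces an $R$-symmetric integer-rectifiable varifold-with-boundary $V'+\tilde V'$ in $\mathbb{R}^{n+1}$ whose first variation is concentrated on $\partial H$. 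By classification of $R$-symmetric integer-rectifiable cones with boundary on an $(n-1)$-subspace, the origin density is $\ge 1$, whence $\Theta(\|V'\|,0)\ge 1/2$ and therefore $\Theta(\mathcal{V},Y_0)\ge 1-\cos\theta(Y_0)$---contradicting the hypothesis. Thus $\partial^{\circ}U\cap U_0=\emptyset$ for every connected component $U_0$ of $\{\theta\ne\pi/2\}$, so by De Giorgi's structure theorem $W\lfloor U_0=m|U_0|$ with $m\in\{0,1\}$, which is conclusion~(2).

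For conclusion~(1), a divergence-theorem computation on the Caccioppoli set $U$ (in $\partial\mathbb{H}^{n+1}$ with the induced metric $g$) yields, for every $\varphi\in\mathfrak{X}^1_{c,\tan}(\mathcal{B}_1)$,
\[
\delta^gW(\cos\theta\,\varphi)= -\int_{\partial^{\circ}U}\cos\theta\,\varphi\cdot\nu\,\sqrt{\det g_{T\partial\mathbb{H}^{n+1}}}\,d\mathcal{H}^{n-1}=0,
\]
since~(2) places $\partial^{\circ}U\subset\{\theta=\pi/2\}$ and $\cos\theta=0$ there; the prescribed-contact-angle identity then reduces to $\delta^gV(\varphi)=\int\langle H_g,\varphi\rangle_g\,d\|V\|_g$, showing that $V$ is $\mu$-stationary under $g$ in the free-boundary sense. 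For conclusion~(3), the free-boundary $\mu$-stationarity of $V$ combined with $\|V\|(\partial\mathbb{H}^{n+1})=0$ allows reflection: $V+\tilde V$ is an integer-rectifiable varifold in $\mathbb{R}^{n+1}$ with generalized mean curvature of size $O(\mu)$ under the symmetrized metric, so Allard's density estimate gives $\Theta(\|V+\tilde V\|,X)\ge 1$ at every $X\in\mathrm{spt}\|V+\tilde V\|$ when $\mu$ is small enough. Hence $\Theta(\|V\|,X_0)\ge 1/2$ at every $X_0\in\mathrm{spt}\|V\|\cap\partial\mathbb{H}^{n+1}$, and if $\theta(X_0)=\pi/2$ this gives $\Theta(\mathcal{V},X_0)=2\Theta(\|V\|,X_0)\ge 1=1-\cos(\pi/2)$, contradicting the hypothesis.

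The principal obstacle is the tangent-cone ruling-out at reduced-boundary points: since $V'$ alone is not free-boundary stationary (only $V'-\cos\theta' W'$ is), $V'+\tilde V'$ is not stationary in $\mathbb{R}^{n+1}$ but carries a concentrated first variation on the $(n-1)$-dimensional edge $\partial H$. The crux is to upgrade the generic boundary-density bound $\tfrac{1}{2}$ for integer-rectifiable varifolds-with-boundary to the interior bound $\ge 1$ via reflection symmetry; this classification of $R$-symmetric integer-rectifiable cones-with-boundary on an $(n-1)$-subspace is the technical content that makes the strict density inequality tight.
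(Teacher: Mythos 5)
Your overall architecture for conclusion (2) is the same as the paper's (blow up at a point of the reduced boundary of $U$ inside $\{\theta\neq\frac{\pi}{2}\}$, note the blown-up $W'$ is a boundary half-plane, and derive a contradiction with $\Theta(\mathcal{V}',0)<1-\cos\theta'$), but the step that actually produces the contradiction is missing. You assert that ``by classification of $R$-symmetric integer-rectifiable cones with boundary on an $(n-1)$-subspace, the origin density is $\ge 1$,'' hence $\Theta(\|V'\|,0)\ge\frac{1}{2}$. This is precisely the content of the paper's Lemma \ref{lem_dimReduction}, and it cannot be cited as a known classification in the form you need, for two reasons. First, $V'$ is only known to have integer multiplicity \emph{away from} the half-plane $\{x_1=0,\,x_2\le 0\}$ (this is condition \ref{it:VdensityInteger} of Lemma \ref{lem_dimReduction}); a priori $V'$, and hence $V'+\tilde V'$, may carry a non-integer density piece on that half-plane, so ``integer-rectifiable cone with boundary'' overstates what is available, and ruling such pieces out already requires the balancing/stationarity analysis. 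Second, the naive quantitative route that your reflection picture suggests does not close: the genuinely stationary reflected cone is $V'+\tilde V'-2\cos\theta'\,W'$ (not $V'+\tilde V'$), and combining interior integrality with vertex-maximality of density for that cone only yields $\Theta(\|V'\|,0)\ge\frac{1-|\cos\theta'|}{2}$, which is strictly weaker than the required bound $\frac{1}{2}$ whenever $\theta'\neq\frac{\pi}{2}$. Getting the full $\frac{1}{2}$ is exactly what the paper's induction on dimension does, reducing to the $k=1$ case where the admissible weighted-ray configurations are enumerated and one sees that the edge term cannot be balanced unless $V'$ contains at least one ray of integer weight off the negative axis. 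Without supplying this lemma (or an equivalent argument), your proof of (2) — and hence of (1) and (3), which you derive from it — is incomplete.

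The remaining parts are essentially sound and in places take a different, arguably cleaner route than the paper: you obtain conclusion (1) directly from (2) by a divergence-theorem computation on the Caccioppoli set $U$ (using $\partial^*U\subset\{\theta=\frac{\pi}{2}\}$ and $\cos\theta=0$ there), whereas the paper first proves (3) and then uses a cutoff; and you prove (3) by reflecting $V$ itself under a symmetrized metric rather than passing to a tangent cone. For the latter, note that the paper deliberately blows up first so that the reflection is performed for an exactly free-boundary stationary varifold under the Euclidean metric; reflecting $V$ directly forces you to work with a metric that is only Lipschitz across $\partial\mathbb{H}^{n+1}$ and with an $O(\mu)$ error in the stationarity identity, which is manageable (Appendix \ref{sec:varifolds} covers the monotonicity in this generality) but should be addressed rather than attributed to ``Allard's density estimate.'' Your opening observation that $\|V\|(\partial\mathbb{H}^{n+1}\cap\mathcal{B}_1)=0$ via Lemma \ref{lem_lowerBoundDensity} is correct, though it is not needed for the main line of argument.
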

Before the proof of this theorem, we need to establish the following lemma to estimate the lower bound of the density for stationary varifolds in free boundary sense by dimension reduction argument.
\begin{lemma}
	\label{lem_dimReduction}
	Given a real number $\vartheta \in (0,1), \Lambda \in (0,+\infty)$, suppose $V$ is an $k$-rectifiable varifolds in $\mathbb{H}^{k+1}$ satisfies following conditions,
	\begin{enumerate}[\normalfont(a)]
		\item $V$ is a cone. \label{it:Vcone}
		\item The mass $\|V\|(B^k_1)\le \Lambda$.
			\label{it:VvolumeBound}
		\item $V+\vartheta |\{ (x_1,\cdots ,x_{k+1}) \in \mathbb{H}^{k+1}:x_1=0,x_2\le 0 \}|$ is stationary in free boundary sense.
			\label{it:statFB}
		\item $V$ has integer multiplicity $\|V\|$-a.e. on $\mathbb{H}^{k+1}\backslash \left\{ x_1=0,x_2\le 0 \right\}$. \label{it:VdensityInteger}
	\end{enumerate}
	Then, the density satisfies $\Theta^k(\|V\|,0)\ge \frac{1}{2}$.

Here, a varifold $V$ is said stationary in free boundary sense in $U\subset \mathbb{H}^{k+1}$ if for any $\varphi \in \mathfrak{X}^1_{c,\tan}(U)$, we have $\delta V(\varphi)=0$.
\end{lemma}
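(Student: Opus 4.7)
I would prove the result by a Federer-style dimension reduction, inducting on $k$.

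For the base case $k=1$, a $1$-rectifiable cone $V$ in $\mathbb{H}^2$ decomposes as $V = m_0|W_0| + \sum_i m_i|R_i|$, where $R_i$ are rays from the origin distinct from $W_0$, $m_0 \ge 0$, and each $m_i \ge 1$ is an integer by the multiplicity hypothesis off $W_0$. Computing $\delta(V + \vartheta W_0)(\varphi)$ on compactly supported tangential $\varphi$ reduces free-boundary stationarity to the scalar tangential balance $\sum_i m_i(\nu_i \cdot e_2) = m_0 + \vartheta$, where $\nu_i \in S^1 \cap \mathbb{H}^2$ is the direction of $R_i$. Since $\nu_i \cdot e_2 \le 1$ and the right-hand side is strictly positive, at least one $m_i \ge 1$, hence $\Theta^1(\|V\|,0) = \tfrac{1}{2}(m_0 + \sum_i m_i) \ge \tfrac{1}{2}$. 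The bound is sharp, achieved by a single ray at angle $\sin\alpha = \vartheta$.

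For the inductive step, assume the statement holds in dimension $k-1$ and suppose for contradiction $\Theta^k(\|V\|, 0) < \tfrac{1}{2}$. First, the case $\mathrm{spt}\|V\|\subset W_0$ is excluded: writing $V = m_0|W_0|$, a direct computation gives $\delta(V+\vartheta W_0)(\varphi) = (m_0 + \vartheta)\int_{\partial W_0}\varphi_2 \,d\mathcal{H}^{k-1}$, which cannot vanish on every tangential $\varphi$ unless $m_0 + \vartheta = 0$, impossible since $\vartheta > 0$. So pick $Y \in \mathrm{spt}\|V\|\setminus W_0$ with $|Y|=1$ and extract a tangent varifold $V_Y$ at $Y$. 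By the cone property of $V$ at $0$, $V_Y$ is translation invariant along $Y$ and splits as $V_Y = \mathbb{R}Y \times V_Y'$ for some $(k-1)$-rectifiable cone $V_Y'$ in the orthogonal slice. The integer rectifiability and free-boundary stationarity of $V$ near $Y$ — where $W_0$ is locally absent — transfer to $V_Y'$ as a stationary cone either in $\mathbb{R}^k$ (if $Y \in \{x_1 > 0\}$) or in a half-space $\mathbb{H}^k$ (if $Y \in \{x_1=0, x_2 > 0\}$). By the induction hypothesis, or in the interior case by the classical fact that integer-rectifiable stationary cones in $\mathbb{R}^k$ have density $\ge 1$, we get $\Theta^{k-1}(\|V_Y'\|, 0) \ge \tfrac{1}{2}$, whence $\Theta^k(\|V\|, Y) \ge \tfrac{1}{2}$.

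Finally, to transfer this lower bound to the origin, I pass to the reflected doubled varifold $\tilde V := V + \tau V + 2\vartheta W_0$, which is a stationary cone in $\mathbb{R}^{k+1}$ by free-boundary stationarity of $V+\vartheta W_0$ together with $\tau$-invariance of $W_0$. The classical monotonicity formula yields $\Theta^k(\|\tilde V\|, 0) \ge \Theta^k(\|\tilde V\|, Y)$, and combining this with $\Theta^k(\|\tilde V\|, 0) = 2\Theta^k(\|V\|, 0) + \vartheta$ and the case-specific identities for $\Theta^k(\|\tilde V\|, Y)$ closes the contradiction. The main obstacle is obtaining the sharp constant $\tfrac{1}{2}$: naive reflection-doubling at an interior $Y$ returns only $\Theta^k(\|V\|, 0) \ge (1-\vartheta)/2$, so one must either select $Y$ in the boundary stratum $\{x_1=0, x_2 > 0\}$ whenever possible (exploiting the factor-of-two density identity $\Theta^k(\|\tilde V\|, Y) = 2\Theta^k(\|V\|, Y)$ there), or iterate the dimension reduction all the way down to the base case and read the sharp bound directly from the explicit tangential balance.
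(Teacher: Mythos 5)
Your base case is fine and is essentially the paper's argument (the scalar balance at the vertex forces at least one ray besides $\{x_1=0,x_2\le 0\}$), but the inductive step has a genuine gap, and it is exactly the one you flag at the end without actually resolving. Writing $W=|\{x_1=0,x_2\le 0\}|$, the loss occurs in the transfer from your chosen point $Y\in\mathrm{spt}\|V\|\setminus W$ back to the origin: the reflected cone $\tilde V=V+\tau V+2\vartheta W$ has $\Theta(\|\tilde V\|,0)=2\Theta(\|V\|,0)+\vartheta$, while at $Y$ (whether $Y$ lies in $\{x_1>0\}$ or in the stratum $\{x_1=0,x_2>0\}$) one only gets $\Theta(\|\tilde V\|,Y)\ge 1$, since a free-boundary stationary integral cone can have density exactly $\tfrac12$ at a stratum point (e.g.\ a half-plane orthogonal to the wall). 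So monotonicity yields only $\Theta(\|V\|,0)\ge\frac{1-\vartheta}{2}<\tfrac12$ in \emph{both} of your cases; choosing $Y$ in the stratum does not improve this, because the deficit $\vartheta/2$ comes from the $W$-sheet contributing at $0$ but not at $Y$, not from the bound at $Y$. Your second remedy ("iterate the reduction down to the base case") also cannot recover the constant: once you blow up at a point off $W$, the $\vartheta$-weighted sheet disappears from the reduced problem, so the sharp tangential balance involving $\vartheta$ is no longer available, and the loss still happens in the comparison between $Y$ and $0$. (Two smaller points: in the stratum case the induction hypothesis is not literally applicable, since the reduced cone is free-boundary stationary with no $\vartheta W$ term -- the bound $\ge\tfrac12$ there should instead come from reflection; and excluding $\mathrm{spt}\|V\|\subset W$ by "writing $V=m_0|W|$" presupposes constant multiplicity, which needs the constancy theorem applied to the reflected varifold.)

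The paper avoids all of this by performing the dimension reduction at a point of the \emph{edge} $\{x_1=x_2=0\}$, namely $Y=(0,\dots,0,1)$, rather than at a point of $\mathrm{spt}\|V\|\setminus W$. At such a point the blow-up of $W$ is $W$ itself, so the tangent cone $V_1=V_2\times\mathbb{R}$ satisfies hypothesis (c) at level $k-1$ \emph{with the same} $\vartheta$; the induction hypothesis then forces $\Theta^{k-1}(\|V_2\|,0)\ge\tfrac12$ (in particular $V_2\neq 0$, even though $Y$ is not assumed to lie in $\mathrm{spt}\|V\|$), hence $\Theta^k(\|V\|,Y)\ge\tfrac12$. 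Finally, since $0$ and $Y$ both lie on the edge of $W$, the sheet $\vartheta W$ contributes the same amount $\vartheta/2$ to the density at both points, so the cone-vertex maximality for the free-boundary stationary cone $V+\vartheta W$ gives $\Theta^k(\|V\|,0)\ge\Theta^k(\|V\|,Y)\ge\tfrac12$ with no $\vartheta$-loss. This choice of reduction point is the key idea missing from your argument; without it the method caps out at $\frac{1-\vartheta}{2}$.
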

\begin{remark}
	 The density $\Theta^k(\|V\|,X)$ exists for every $X \in \mathbb{H}^{k+1}$ by \ref{it:statFB}.
\end{remark}

\begin{proof}
	[Proof of Lemma \ref{lem_dimReduction}]
	For simplicity, we write $W_k=|\{ (x_1,\cdots ,x_{k+1}) \in \mathbb{H}^{k+1}:x_1=0,x_2\le 0 \}|$.
	The proof proceeds by induction on $k$.
	For $k=1$, $V$ can be written as a sum of weighted rays,
	\[
		V=\sum_{i =0}^{m} a_i|\left\{ (r \sin\theta_i, -r\cos\theta_i):r\ge 0 \right\}|
	\]
	with $m \in \mathbb{N}$, $a_0\ge 0$, $\theta_0=0$, $a_i \in \mathbb{N}^+$, and $\theta_i \in (0,\pi]$ for $1\le i\le m$.
	Given that $V+\vartheta |\left\{ x_1=0,x_2\le 0 \right\}|$ is stationary in free boundary sense, it implies $m$ can not be 0.
	Hence, $\Theta^1(\|V\|,0)\ge \frac{1}{2}$ is clear.

	For the induction step, we assume the lemma holds for $k-1$.
	We choose a point $Y=(0,0,\cdots ,0,1) \in \partial\mathbb{H}^{k+1}$ and consider the tangent cone $V_1$ of $V$ at $Y$.
	By \ref{it:statFB}, we know $V_1$ exists and $V_1+\vartheta|W_k|$ is a stationary rectifiable varifold in free boundary sense.
	Moreover, since $\Theta^k(\|V\|,rY)=\Theta^k(\|V\|,Y)$ for each $r>0$ by \ref{it:Vcone}, we have $\Theta^k(\|V_1\|, rY)=\Theta^k(\|V_1\|,0)$ for every $r \in \mathbb{R}$.
	Therefore, $V_1$ is translation invariant along direction $Y$ by monotonicity formula (e.g., Theorem \ref{thm_monotonicityFormula} with $\mu=0$).
	Then, we can write $V_1=V_2\times \mathbb{R}$ where $V_2$ is an $(k-1)$-dimensional rectifiable varifold in $\mathbb{H}^{k}$ such that \ref{it:Vcone}, \ref{it:statFB} are satisfies with $V_2$ in place of $V$, $k-1$ in place of $k$.
	It is straightforward to check \ref{it:VvolumeBound} holds with $V_2$ in place of $V$, $k-1$ in place of $k$.

	At last, we suppose $(\eta_{Y,\rho_i})_{\#}V\rightarrow V_1$ for some $\rho_i\rightarrow 0^+$.
	Since each $(\eta_{Y,\rho_i})_{\#}V$ is stationary in free boundary sense in $\mathbb{H}^{k+1}\backslash \left\{ x_1=0,x_2\le 0 \right\}$, we know $V_2$ is integral rectifiable on $\mathbb{H}^{k}\backslash \left\{ x_1=0,x_2\le 0 \right\}$, which shows \ref{it:VdensityInteger} holds with $V_2$ in place of $V$, $k-1$ in place of $k$.

	By the induction hypothesis, we have $\Theta^{k-1}(\|V_2\|,0)\ge \frac{1}{2}$.
	Again, monotonicity formula implies $\Theta^k(\|V\|,0)\ge \Theta^k(\|V\|,Y)=\Theta ^{k-1}(\|V_2\|,0)\ge \frac{1}{2}$, thus concluding the proof.
\end{proof}

\begin{proof}
	[Proof of Theorem \ref{thm:smallDensity}]
	We write $W=|U|$ for some $\mathcal{H}^n$-measurable set $U \subset \mathcal{B}_1\cap \partial \mathbb{H}^{n+1}$.
	We claim that $\delta W=0$ in $\mathcal{B}_1\cap \partial \mathbb{H}^{n+1}\cap \{ \theta\neq \frac{\pi}{2} \}$, which equivalently means $\delta^g W(\varphi)=0$ for any $\varphi \in \mathfrak{X}^1_c(\mathcal{B}_1\cap \partial \mathbb{H}^{n+1}\cap \{ \theta\neq\frac{\pi}{2} \})$ by Proposition \ref{prop_equivCacci}.

	Indeed, we only need to show $\delta W=0$ in $\mathcal{B}_1 \cap \partial \mathbb{H}^{n+1}\cap \left\{ \theta>\frac{\pi}{2} \right\}$.
	This is because, by Remark \ref{rmk_changeAngle}, we know $(V,|\mathcal{B}_1\cap \partial \mathbb{H}^{n+1}|-W,\frac{\pi}{2}-\theta,g)$ satisfies the same theorem assumption as $\mathcal{V}$ and $\delta W= - \delta(|\mathcal{B}_1\cap \partial \mathbb{H}^{n+1}|-W)$.

	Suppose there exists a point $X \in \mathcal{B}_1\cap \partial \mathbb{H}^{n+1}$ and a positive number $\rho \in (0,1-|X|)$ such that $\theta>\frac{\pi}{2}$ on $\mathcal{B}_\rho(X)\cap \partial \mathbb{H}^{n+1}$, and $\delta W\neq 0$ in $\mathcal{B}_\rho(X)\cap \partial\mathbb{H}^{n+1}$
	We assume $|\cos \theta|\ge \delta$ on $\mathcal{B}_\rho(X)\cap \partial \mathbb{H}^{n+1}$ for some $\delta>0$ small enough.

	Now, we proceed to show that $U\cap B_\rho(X)$ is a Caccioppoli set in $\mathcal{B}_\rho(X)\cap \partial \mathbb{H}^{n+1}$.

	Let $\varphi \in \mathfrak{X}^1_{c}(\mathcal{B}_\rho(X)\cap \partial \mathbb{H}^{n+1})$ and extend it to be a vector field in $\mathfrak{X}^1_{c,\tan}(\mathcal{B}_1)$ with $\sup_{X \in \mathcal{B}_1}|\varphi|_g\le 2 \sup_{X \in \mathcal{B}_1\cap \partial \mathbb{H}^{n+1}}|\varphi|_g$.

	Then by \eqref{eq:defPrescribedAngle} and the definition of bounded first variation \eqref{eq:defBounded1stVar}, we have
	\begin{equation}
		|\delta^gW(\cos \theta\varphi)|\le \mu \int_{} |\varphi|_gd\|V\|_g+|\delta^gV(\varphi)|\le C \sup _{X \in \mathcal{B}_1\cap \partial \mathbb{H}^{n+1}}|\varphi|_g
		\label{eq:pf1stVarW}
	\end{equation}
	for any $\varphi \in \mathfrak{X}^1_{c,\tan}(\mathcal{B}_1)$.
	Additionally, for $S=\left\{ x_1=0 \right\}$, we find that $\mathrm{div}_S^g(\cos \theta \varphi)\ge |\cos \theta|\mathrm{div}_S^g(\varphi)-C\sin \theta |D \theta| |\varphi|_g$.
	Together with $|D \theta|\le \mu$, $|g-\delta|\le \mu$, and $|\cos \theta|\ge \delta$ in $\mathcal{B}_\rho(X)\cap \partial \mathbb{H}^{n+1}$, it follows that
	\[
		|\delta^g W(\varphi)|\le
		C \sup_{X \in \mathcal{B}_\rho(X)\cap \partial \mathbb{H}^{n+1}}|\varphi|_g,
	\]
	for some $C(n,\delta)$.
	Then, Proposition \ref{prop_equivCacci} implies that $U$ is a Caccioppoli set in $\mathcal{B}_\rho(X)\cap \partial \mathbb{H}^{n+1}$.
	Next, we choose a point $Z \in \mathcal{B}_\rho(Z)\cap \partial \mathbb{H}^{n+1}$ such that the approximate tangent space of $\partial^*U$ exists.
	Then, it follows that the tangent cone of $W$ at $Z$ is a half-space of $\mathbb{R}^n=\partial \mathbb{H}^{n+1}$.
	WLOG, we assume the tangent cone of $W$ at $Z$ to be $|\left\{ x_2\le 0 ,x_1=0\right\}|$.
	Considering a sequence of $\mathcal{V}_i=(V_i,W_i,\theta_i,g_i):=\mathcal{V}_{Z,\rho_i}$ for some $\rho_i\rightarrow 0^+$, we assume $\mathcal{V}'=(V',W',\theta',\delta)=\lim_{i\rightarrow +\infty} \mathcal{V}_i$ where $\theta'=\theta(Z)$.
	It implies $V'-\cos \theta' W'$ is a rectifiable stationary cone in free boundary sense in $\mathcal{B}_1$, and
	\begin{equation}
		\Theta(\mathcal{V}',0)=\Theta(\mathcal{V},Z)<1-\cos \theta'.
		\label{eq:pfDensityUpper}
	\end{equation}
	Note that since $W'=|\left\{ x_2\le 0,x_1=0 \right\}\cap \mathcal{B}_1|$, we have $\Theta(V',0)<\frac{1}{2}$.

	Now, our aim is to apply Lemma \ref{lem_dimReduction} to $V'$ and get a contradiction.
	We extend $V'$ to be a rectifiable varifold defined on $\mathbb{H}^{n+1}$ while ensuring it is a cone.
	We already know \ref{it:Vcone}, \ref{it:VvolumeBound}, and \ref{it:statFB} hold with $V'$ in place of $V$, $-\cos\theta'$ in place of $\vartheta$, and $n$ in place of $k$.
	Let $U\subset \mathcal{B}_1$ be any (relatively) open set in $\mathcal{B}_1$ such that $\overline{U}\cap \left\{ x_1=0,x_2\le 0 \right\}=\emptyset$.
	Then, we find $\mathrm{spt}\|W_i\|\cap U=\emptyset$ for $i$ large enough since $\lim_{i\rightarrow +\infty}W_i=|\left\{ x_2\le 0, x_1=0 \right\}\cap \mathcal{B}_1|$.
	Consequently, $V_i$ is a stationary varifold in free boundary sense in $U$ for $i$ large enough, which leads that $V'$ is integral rectifiable in $U$ by compactness theorem (e.g., Theorem \ref{thm_compactnessRIV} and Theorem \ref{thm_compactness_theorem_for_n_varifolds_under_lipschitz_metric}).
	By the choice of $U$, we can conclude that $V'$ has integer multiplicity $\|V'\|$-a.e. on $\mathbb{H}^{n+1}\backslash \left\{ x_1=0,x_2\le 0 \right\}$.
	Thus, Lemma \ref{lem_dimReduction} implies $\Theta(V',0)\ge \frac{1}{2}$, which contradicts \eqref{eq:pfDensityUpper}.

	Therefore, $\delta W=0$ in $\mathcal{B}_1\cap \partial \mathbb{H}^{n+1}\cap \{ \theta\neq \frac{\pi}{2} \}$.
	In particular, by Constancy Theorem, we know $W\lfloor(U_0)=m|U_0|$ for some $m=0$ or 1 if $U_0$ is a connected component of $\mathcal{B}_1\cap \partial \mathbb{H}^{n+1}\backslash \{ \theta \neq \frac{\pi}{2}\}$.

	To prove that $\mathrm{spt}\|V\|\cap \mathcal{B}_1\cap \partial \mathbb{H}^{n+1}\cap \{ \theta=\frac{\pi}{2} \}=\emptyset$, we assume the contrary that
	there exists $X \in\mathrm{spt}\|V\|\cap \mathcal{B}_1\cap \partial \mathbb{H}^{n+1}\cap \{ \theta=\frac{\pi}{2} \}$.
	Let $\mathcal{V}'=(V',W',\frac{\pi}{2},\delta) \in \mathrm{VarTan}(\mathcal{V},X)$ and then, $V'$ is a stationary varifold in free boundary sense in $\mathcal{B}_1$.
	By compactness theorem (Theorem \ref{thm_compactnessRIV}), $V'$ is integral rectifiable in $\mathcal{B}_1$.
	A reflection argument then leads to $\Theta(\|V\|,X)\ge \frac{1}{2}$, which contradict our assumption $\Theta(\mathcal{V},X)<1-\cos \theta(X)=1$.
	This contradiction proves $\mathrm{spt}\|V\|\cap \mathcal{B}_1\cap \partial \mathbb{H}^{n+1}\cap \{ \theta=\frac{\pi}{2} \}=\emptyset$.

	Finally, we need to show $V$ is a $\mu$-stationary varifold under metric $g$ in free boundary sense.
	For any $\varphi \in \mathfrak{X}^1_{c,\tan}(\mathcal{B}_1)$, we choose another $\varphi' \in \mathfrak{X}^1_{c,\tan}(\mathcal{B}_1)$ such that $\varphi=\varphi'$ in a neighborhood of $\mathrm{spt}\|V\|$ and $\varphi'=0$ in a neighborhood of $\mathcal{B}_1\cap \partial \mathbb{H}^{n+1}\cap \{ \theta=\frac{\pi}{2} \}$.
	Then, it follows that
	\[
		\delta^gV(\varphi)=
		\delta^g V(\varphi')=
		\delta^g V(\varphi')-\delta^gW(\cos \theta \varphi')=
		\int_{ } \left< \boldsymbol{H}_g,\varphi' \right> _g d\|V\|_g=
		\int_{ } \left< \boldsymbol{H}_g,\varphi \right> _g d\|V\|_g
	\]
	demonstrating that $V$ is indeed $\mu$-stationary under metric $g$ in free boundary sense.
\end{proof}

For convenience, for any $\mathcal{V} \in \mathcal{RIV}(\mu)$, we define
\[
	\mu(\mathcal{V})=\mu(V,W,\theta,g):=\inf\left\{ \mu\ge 0: \mathcal{V} \in \mathcal{RIV}(\mu) \right\}.
\]

For any half hyperplane $H=H^{\theta'}$ for some $\theta' \in (0,\pi)$, we define the excess of $\mathcal{V}$ with respect to $H$ as
\[
	\mathcal{E}(\mathcal{V},H):=\sqrt{\int_{ B_1(0)} \mathrm{dist}^2(X,\mathrm{spt}\|\boldsymbol{C}_H\|)d\|V^\theta\|(X)+\mu(\mathcal{V})}.
\]
Here, $\boldsymbol{C}_H:=|\left\{ X \in \mathbb{H}^{n+1}:X \in H \text{ or }x_1=0, x_2\le 0 \right\}|$.
For the sake of simplicity, we may refer to $\boldsymbol{C}$ as $\boldsymbol{C}_H$ when the context clearly identifies $H$.

\begin{hypothesis}
	Given $\varepsilon>0$, a quadruple $\mathcal{V} \in \mathcal{RIV}(\mu)$ for $\mu$ small enough, and a half hyperplane $H$, we say $(\mathcal{V},H)$ satisfies \textit{$(\Lambda_0,\varepsilon)$-Hypothesis} if the following conditions hold.
	\begin{enumerate}[\normalfont(a)]
		\item $\mathcal{V}$ is $(\Lambda_0,\mu)$-stationary with $\mu<\varepsilon$ and $g(0)=\delta$.
		\item $\Theta(\mathcal{V},0)\ge (1-\cos \theta(0))$ and $\|V^\theta\|(B_1(0))<\frac{3}{4}(1-\cos \theta(0))\omega_n$,
			\label{hyp_l2_2}
		\item $H$ can be written as $H=H^{\theta'}$ with $|\theta'-\theta(0)|<\varepsilon$.
			\label{hyp_l2_3}
		\item The half hyperplane $H$ satisfies $\mathcal{E}^2(\mathcal{V},H)<\varepsilon$.
			\label{hyp_l2_4}
	\end{enumerate}
	\label{hyp_l2}
\end{hypothesis}

We now present the main regularity theorem under Hypothesis \ref{hyp_l2}.

\begin{theorem}
	\label{thm_l2positiveAngle}
	There exists a constant $\varepsilon=\varepsilon(n,\Lambda_0)\in (0,1)$ such that if $(\mathcal{V},H)$ satisfies $(\Lambda_0,\varepsilon)$-Hypothesis, then $\mathrm{spt}\|V\|\cap B_{\frac{1}{32}}=\Sigma$ where $\Sigma$ is a $C^{1,\gamma}$-hypersurface with boundary in $B_{\frac{1}{32}}$ such that $\partial \Sigma \cap B_{\frac{1}{32}}\subset \partial\mathbb{H}^{n+1}$ and $\measuredangle _g(\Sigma,U)=\theta$ along $\partial\Sigma$ where $U$ is the connected component of $\mathcal{B}_{\frac{1}{32}}\cap \partial \mathbb{H}^{n+1}\backslash \Sigma$ positioned below the $\partial\Sigma$.
\end{theorem}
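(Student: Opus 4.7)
The plan is to follow the Wickramasekera/Simon-style excess decay scheme, exactly as signposted in the introduction: $L^2$-estimate, blow-up, excess decay, iteration. Since Hypothesis \ref{hyp_l2} guarantees that $\mathcal{V}$ is close to the reference stationary cone $\boldsymbol{C}_H$ with half-hyperplane $H$ close to $H^{\theta(0)}$, we want to show that at a slightly smaller scale $\rho \in (0,1)$ (say $\rho = 1/8$) one can find a new half-hyperplane $H'$ with $\mathcal{E}(\mathcal{V}_{0,\rho}, H')^2 \le \tfrac{1}{2} \mathcal{E}(\mathcal{V},H)^2$, and that $H'$ differs from $H$ by at most $C\,\mathcal{E}(\mathcal{V},H)$. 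Iterating this with geometric-series control produces the tangent half-hyperplane at every nearby boundary point together with Hölder continuity of the tangent map, which standard arguments then upgrade to $C^{1,\gamma}$-regularity of $\mathrm{spt}\|V\|$ as a hypersurface with boundary lying on $\partial\mathbb{H}^{n+1}$, and the contact-angle identity follows from Corollary \ref{cor_contactAngleG}.

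The first step is to invoke the $L^2$-estimate (Theorem \ref{thm__l_2_estimate}) which, under the closeness provided by Hypothesis \ref{hyp_l2} together with the monotonicity formula of Theorem \ref{thm_monotonicityFormula} and the density lower bound from Lemma \ref{lem_lowerBoundDensity}, controls both the tilt-excess of $V$ and the mass of $V$ away from the cone by the height-excess with respect to $\boldsymbol{C}_H$ plus $\mu(\mathcal{V})$. In particular, this should rule out the degenerate regime $\Theta(\mathcal{V},X)<1-\cos\theta(X)$ via Theorem \ref{thm:smallDensity}, forcing the density to be close to $1-\cos\theta(0)$ everywhere the support meets $\partial\mathbb{H}^{n+1}$ in a neighborhood, which ensures that the boundary portion $\partial\Sigma$ actually exists and lies along $\partial\mathbb{H}^{n+1}$.

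Next comes the blow-up argument. Assume for contradiction that no uniform decay holds; pick a sequence $(\mathcal{V}_i,H_i)$ satisfying $(\Lambda_0,\varepsilon_i)$-Hypothesis with $\varepsilon_i \downarrow 0$, set $E_i := \mathcal{E}(\mathcal{V}_i,H_i)$, and consider the rescaled ``graph functions'' of $\mathrm{spt}\|V_i\|$ over $H_i$, normalized by $E_i$. Using the $L^2$-estimate to get compactness in $L^2$, together with the compactness theorem (Theorem \ref{thm_compactnessRIV}) to pass the quadruples to a stationary limit, one identifies a blow-up $u$ defined on a half-hyperplane. The $\mu$-stationary equation linearizes to a Laplace equation for $u$ in the interior, and the prescribed contact angle condition (together with the linearization of the boundary side of \eqref{eq:1stVarG}) yields an oblique/Neumann boundary condition for $u$ along the edge of the half-hyperplane; classical elliptic regularity then gives $u \in C^2$ with Hessian bounded by $\|u\|_{L^2}$, which is the content of Section \ref{sec:blow_up_argument}.

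From the $C^2$-regularity of the blow-up, Taylor expansion at the edge provides a new half-hyperplane approximating $u$ with quadratic error. Translating this back to $\mathcal{V}_i$ yields at scale $\rho$ a new half-hyperplane $H'$ with $\mathcal{E}(\mathcal{V}_{0,\rho},H')^2 \le C\rho^2 E_i^2 + \text{(lower order in }\mu)$, producing the decay estimate (Proposition \ref{prop_ImproveExcessRho}) once $\rho$ is fixed small and $\varepsilon$ is chosen accordingly. Iterating at every boundary point $X \in \mathrm{spt}\|V\|\cap\partial\mathbb{H}^{n+1}$ near $0$ — the hypotheses transfer to $\mathcal{V}_{X,r}$ by Proposition \ref{prop_rescaling} and the monotonicity formula — gives Hölder continuity of the approximating half-hyperplanes, hence $C^{1,\gamma}$-regularity of $\mathrm{spt}\|V\|$ up to the edge. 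I expect the main obstacle to be the blow-up step: handling the boundary term $\delta^g W(\cos\theta\varphi)$ in \eqref{eq:1stVarG} rigorously in the limit (where $W$ may no longer be a Caccioppoli set and $\cos\theta$ can be close to $-1$ when $\theta$ is close to $\pi$) requires Theorem \ref{thm_compactnessRIV} together with the density dichotomy \eqref{eq:thmDensityRange} to show that the rescaled $W_i$'s do converge to the expected half of $\partial\mathbb{H}^{n+1}$ with multiplicity one, so that the linearized boundary condition for $u$ actually encodes the prescribed contact angle rather than a free-boundary-type condition.
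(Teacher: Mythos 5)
The decisive gap is in your excess-decay step. In this paper the comparison half-hyperplanes are all of the form $H^{\theta'}$, i.e.\ they share the \emph{fixed} edge $\left\{ x_1=x_2=0 \right\}$, and the excess is measured against $\boldsymbol{C}_{H'}$ with that same edge. The blow-up $v$ has boundary values $v(0,y)=\kappa(y)\sin\theta\,\vec{n}$ determined by the $x_2$-offsets of the boundary points of density at least $1-\cos\theta$ (cf. \eqref{eq:defKappa}), and generically $D_y\kappa(0)\neq 0$; hence the linear part of $v$ at $0$ contains a term $b\cdot y$ that no choice of $H'=H^{\theta''}$ can absorb: rotating $H$ about the fixed edge only removes the $\frac{\partial v}{\partial r}(0)\,r$ part, while the graph of $b\cdot y$ over $H$ is not an admissible half-plane (its edge leaves $\partial\mathbb{H}^{n+1}$, since $\vec{n}$ has a nonzero $e_1$-component for $\theta\neq\frac{\pi}{2}$). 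Consequently the decay you assert, $\mathcal{E}(\mathcal{V}_{0,\rho},H')\le\frac{1}{2}\mathcal{E}(\mathcal{V},H)$ with only a new half-plane, is false in general: take an exact capillary cone rotated within $\partial\mathbb{H}^{n+1}$ by an angle of order $\mathcal{E}$; its normalized excess with respect to every $H^{\theta''}$ stays of order $\mathcal{E}^2$ at all scales. This is exactly why Proposition \ref{prop_improveExcess} produces, besides $H'$, a rotation $\Gamma\in\mathrm{SO}(n)\subset\mathrm{SO}(n+1)$ with $\left|\Gamma-\mathrm{Id}\right|\le C\mathcal{E}$ (built from the boundary density points spanning the tilted edge), why Proposition \ref{prop_ImproveExcessRho} carries rotations $\Gamma^Z$ at every such point, and why those rotations are what ultimately give the $C^{1,\alpha}$ regularity of the edge function $\varphi$, hence of $\partial\Sigma$, in the paper's final argument. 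Your sketch omits this ingredient entirely.

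Two further points. Invoking Corollary \ref{cor_contactAngleG} for the contact-angle identity is circular: that corollary is deduced from Theorems \ref{thm_mainG} and \ref{thm_densityG}, whose proofs rest on Theorem \ref{thm_l2positiveAngle}, and in any case it only yields the dichotomy $\theta$ or $\frac{\pi}{2}$; the paper instead uses the gradient bound \eqref{eq:pfBoundedGrad} coming from the graph representation to exclude orthogonality and then classifies the tangent cone at each edge point to pin the angle to exactly $\theta$. Also, the $C^2$ regularity of the blow-up is not the output of a classical Neumann problem: the blow-up is a priori defined only away from the edge, and its boundary behavior must first be extracted from the no-hole property (Lemma \ref{lem_noHole}), the non-concentration estimate, and the shifted $L^2$-decay of Corollary \ref{cor_noHole}, before the difference-quotient and even-reflection argument of Proposition \ref{prop_C2estimate} can run; saying ``classical elliptic regularity then gives $C^2$'' hides where the real work lies, although your identification of the convergence of the rescaled $W_i$ as a delicate point is apt.
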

\begin{remark}
	It is noteworthy that $\partial\Sigma$ can be written as a graph of a function defined on a region in $\left\{ x_1=x_2=0 \right\}$ and it makes sense to talk about the region situated below $\partial \Sigma$. See the proof of Theorem \ref{thm_l2positiveAngle} in Chapter \ref{sec:iterations}.
\end{remark}

\begin{lemma}
	\label{lem_graph_over_cone}
	Let $\tau \in(0,\frac{1}{8})$. For any $\delta>0$, there exist constants $\varepsilon=\varepsilon(n,\tau,\delta,\Lambda_0)\in (0,1)$ and $\beta=\beta(n) \in (0,1)$ such that if $(\mathcal{V},H)$ satisfies $(\Lambda_0,\varepsilon)$-Hypothesis,
	then we can find $u \in C^{1,\beta}(B_{\frac{15}{16}}(0)\cap H\backslash \left\{ r<\frac{\tau}{2} \right\},H^\bot)$ such that
	\[
		V\lfloor(B_{\frac{7}{8}}(0)\backslash \left\{ r<\tau \right\})\subset \mathrm{graph}u, \text{ with }\|u\|_{C^{1,\beta}(B_{\frac{15}{16}}(0)\cap H\backslash \{ r<\frac{\tau}{2} \},H\bot )}\le \delta,
	\]
	and
	\[
		W \lfloor(B_{\frac{7}{8}}(0)\backslash \left\{ r<\tau \right\})=|B_{\frac{7}{8}}\cap \left\{ r<\tau \right\}\cap \left\{ x_1<0 \right\}|.
	\]
\end{lemma}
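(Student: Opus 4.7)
I would argue by compactness and contradiction, reducing to the stationary cone $\boldsymbol{C}_{H_\infty}$ in the limit and then invoking Allard's regularity away from the axis $\{r=0\}$. Assume the conclusion fails for some $\tau \in (0,\tfrac{1}{8})$ and $\delta > 0$: extract sequences $\varepsilon_i \downarrow 0$ and counterexamples $(\mathcal{V}_i, H_i)$ satisfying $(\Lambda_0, \varepsilon_i)$-Hypothesis. By the compactness theorem (Theorem~\ref{thm_compactnessRIV}), a subsequence yields $\mathcal{V}_i \to \mathcal{V}_\infty = (V_\infty, W_\infty, \theta_\infty, \delta)$ stationary with $\theta_\infty \in [\tfrac{\pi}{2}+\Lambda_0, \pi-\Lambda_0]$, while $H_i \to H_\infty = H^{\theta_\infty}$. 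The excess bound $\mathcal{E}^2(\mathcal{V}_i, H_i) < \varepsilon_i$ forces $\mathrm{spt}\|V_\infty^{\theta_\infty}\| \subseteq \mathrm{spt}\|\boldsymbol{C}_{H_\infty}\|$. Combined with the density upper bound in item~\ref{hyp_l2_2} of Hypothesis~\ref{hyp_l2}, the stationarity of $V_\infty^{\theta_\infty}$ in free boundary sense, and the angular balance of multiplicities at the edge $\{x_1 = x_2 = 0\}$ forced by Plateau-type stationarity, these constraints pin down $V_\infty = |H_\infty \cap \mathcal{B}_1|$ and $W_\infty = |\{x_1 = 0, x_2 \le 0\} \cap \mathcal{B}_1|$, both with multiplicity one.

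Next I would run Allard's theorem on the region $\{r > \tau/3\}$. Since $\theta'$ is bounded away from $\tfrac{\pi}{2}$ by $\Lambda_0$, the half-hyperplane $H_\infty$ meets $\partial\mathbb{H}^{n+1}$ transversally, so on $\{r > \tau/3\}$ it lies at distance at least $c(\tau, \Lambda_0) > 0$ from $\partial\mathbb{H}^{n+1}$. For $i$ large, $\mathrm{spt}\|V_i\|$ is then contained in a small tubular neighborhood of $H_\infty$ that is strictly inside $\mathbb{H}^{n+1}$ on $\{r > \tau/3\}$; in this region $V_i$ has no boundary interaction, small generalized mean curvature $|\boldsymbol{H}_g|_g \le \mu_i \to 0$, and vanishing $L^2$-excess over $H_\infty$. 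The Allard regularity theorem in the $C^1$-metric setting (Appendix~\ref{sub:theory_of_varifolds_under_lipschitz_metrics}) then yields a function $u_i \in C^{1,\beta}(B_{15/16} \cap H_\infty \setminus \{r < \tau/2\}, H_\infty^\perp)$ with $\mathrm{spt}\|V_i\| \cap (B_{7/8} \setminus \{r<\tau\})$ contained in $\mathrm{graph}\, u_i$ and $\|u_i\|_{C^{1,\beta}} \to 0$, for some universal $\beta = \beta(n) \in (0,1)$.

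Finally I would pin down $W_i$ on the annular region $A := \partial\mathbb{H}^{n+1} \cap B_{7/8} \setminus \{r<\tau\}$, which splits into two connected components $\{x_2 > \tau\} \cap B_{7/8}$ and $\{x_2 < -\tau\} \cap B_{7/8}$. For $i$ large, Step~2 yields $\mathrm{spt}\|V_i\| \cap A = \emptyset$. Testing the prescribed contact angle relation \eqref{eq:1stVarG} against tangential vector fields supported in $A$ and using the lower bound $|\cos\theta_i| \ge \sin\Lambda_0 > 0$ gives (as in the proof of Theorem~\ref{thm:smallDensity}) that $U_i$ is a Caccioppoli set in $A$ with first variation of magnitude $O(\mu_i)$. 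A constancy argument then forces $U_i \cap A$ to be (modulo null sets) a union of connected components of $A$; the varifold convergence $W_i \to W_\infty = |\{x_2 \le 0\} \cap \mathcal{B}_1|$ uniquely selects $U_i \cap A = \{x_2 < -\tau\} \cap B_{7/8}$ for $i$ large, giving the claimed identity for $W_i$ on the annular region. The principal difficulty lies here: passing from the varifold/$L^1$-convergence of the $U_i$ to \emph{exact} equality requires the constancy argument combined with the positive lower bound on $|\cos\theta|$ to rule out small stray boundary pieces of $U_i$. Once obtained, $(u_i, W_i)$ for large $i$ satisfies the lemma's conclusions, contradicting the assumption and closing the proof.
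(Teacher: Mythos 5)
Your Steps 1--2 follow the paper's strategy (compactness plus identification of the limit of $V^{\theta}$, then Allard away from the axis), but there are two genuine gaps, and the second one is exactly the crux of the lemma. First, the claim that the constraints ``pin down $V_\infty=|H_\infty\cap\mathcal{B}_1|$ and $W_\infty=|\{x_1=0,x_2\le 0\}\cap\mathcal{B}_1|$, both with multiplicity one'' is not justified: Theorem \ref{thm_compactnessRIV} only identifies the limit of $V^{\theta}$ (the paper gets $V^\theta=|H|-\cos\theta\,|H^0|$ via the constancy theorem, $a_2=-a_1\cos\theta$ from free-boundary stationarity, $a_1\le 1$ from mass semicontinuity and \ref{hyp_l2_2}, and $a_1\ge 1$ from upper semicontinuity of density). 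The split of the boundary-plane part of $V^\theta_\infty$ into $V_\infty\lfloor\partial\mathbb{H}^{n+1}$ and $-\cos\theta\,W_\infty$ is not determined by this information: the $V_i$ alone do not carry uniform first-variation bounds (only the combination $V_i-\cos\theta_i W_i$ does, tested tangentially), so $V_\infty$ need not be integral, and a diffuse part of $V_\infty$ on $\{x_1=0,x_2\le 0\}$ compensated by $W_\infty$ of density $<1$ is consistent with everything you have used. The paper deliberately never makes this claim and works at the level of the sequence instead.

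Second, and more seriously, the assertion in Step 3 that ``for $i$ large, Step 2 yields $\mathrm{spt}\|V_i\|\cap A=\emptyset$'' does not follow: Allard is applied in a region at distance $\ge c(\tau,\Lambda_0)$ from $\partial\mathbb{H}^{n+1}$ and says nothing about $V_i$ near $H^0=\{x_1=0,x_2\le 0\}$; the Hausdorff convergence of $\mathrm{spt}\|V_i^{\theta_i}\|$ is to $\mathrm{spt}\|\boldsymbol{C}\|$, which \emph{contains} $H^0$, so an extra integer-multiplicity sheet $V_{i,2}$ of $V_i$ sitting on or near $\{x_1=0,x_2<-\tau\}$ is not excluded by anything in your argument — yet ruling it out is precisely what the conclusion $V\lfloor(B_{7/8}\setminus\{r<\tau\})\subset\mathrm{graph}\,u$ requires, and it is where the bulk of the paper's proof goes. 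The paper handles it by first proving the density claim $\Theta(\mathcal{V}_i,X)<1-\cos\theta_i(X)$ on the region (by contradiction, using upper semicontinuity of density against $a_2=-\cos\theta$), which licenses Theorem \ref{thm:smallDensity}: $V_{i,2}$ is $\mu_i$-stationary in the free boundary sense and $W_i$ is a constant integer multiple of the boundary plane there; then free-boundary compactness with integer multiplicity forces $V_{i,2}\to k|\partial\mathbb{H}^{n+1}\cap\cdot|$ with $k\in\mathbb{Z}_{\ge 0}$, $k\ge 1$ being incompatible with $a_2=-\cos\theta$, so $k=0$ (which also yields $k'=1$, i.e.\ the identification of $W_i$ you want); finally a monotonicity argument upgrades $V_{i,2}\to 0$ to $V_{i,2}=0$ on the smaller region for $i$ large. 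Your Caccioppoli/constancy argument for $U_i$ also implicitly needs this density hypothesis (it is the hypothesis of Theorem \ref{thm:smallDensity}), so the same gap propagates into your Step 3; without it the proposal does not close.
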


\begin{proof}
	Suppose we have a sequence of VPCA-quadruples $\left\{ \mathcal{V}_i \right\}$, half planes $H_i$, and $\varepsilon_i\rightarrow 0^+$ such that $(\mathcal{V}_i,H_i)$ satisfies $(\Lambda_0,\varepsilon_i)$-hypothesis.

	By Theorem \ref{thm_compactnessRIV}, up to a subsequence, we suppose $\mathcal{V}_i \rightarrow \mathcal{V}=(V,W,\theta,\delta)$ with $V_\theta$ being a stationary varifold in free boundary sense.
	Note that $\theta$ is a constant in $[\frac{\pi}{2}+\Lambda_0,\pi-\Lambda_0]$.
	We also have $|H_i-H|\rightarrow 0$ as $i\rightarrow +\infty$ where $H=H^\theta$.

	By \ref{hyp_l2_4} in Hypothesis \ref{hyp_l2}, monotonicity formulas (cf. Corollary \ref{cor_monoG}, Corollary \ref{cor_MonoFormula}), alongside the lower bounds of density (cf. Lemma \ref{lem_lowerBoundDensity}), we obtain
	\begin{equation}
		\sup_{X \in \mathrm{spt}\mathcal{V}_i\cap B_{\frac{31}{32}}(0)}\mathrm{dist}(X,\mathrm{spt}\|\boldsymbol{C}\|\cap B_{\frac{31}{32}}(0))\rightarrow 0
		\text{ as }i\rightarrow +\infty.
		\label{eqLemHauD}
	\end{equation}
	This implies $\mathrm{spt}\|V^\theta\|\subset \mathrm{spt}\|\boldsymbol{C}\|$.

	By the constancy theorem, we have $V^\theta=a_1\left|H\right|+a_2\left|H^0\right|$ where $a_1$ is an non-negative integer by Compactness Theorem \ref{thm_compactness_theorem_for_n_varifolds_under_lipschitz_metric}.
	Recall that $H^0$ is defined as $\left\{ x_1=0,x_2\le 0 \right\}$.
	With $V^\theta$ being stationary in free boundary sense, it follows that $a_2=-a_1 \cos \theta$.
	The lower semi-continuity of mass informs us that $\|V^\theta\|(B_1(0))\le \frac{3}{4}(1-\cos \theta)$, implying $a_1\le 1$.
	On the other hand, the upper-semi continuity of density (Theorem \ref{thm_compactnessRIV}) gives us $\Theta(\|V^\theta\|,0)\ge \frac{1}{2}(1-\cos \theta)$, leading $a_1=1$, and $a_2=-\cos \theta$.

	By \eqref{eqLemHauD}, we can decompose $V_i\lfloor(B_{\frac{{31}}{32}}\backslash \left\{ r<\frac{\tau}{4} \right\})=V_{i,1}+V_{i,2}$ where
	\begin{align*}
		V_{i,1}={} & V_i\lfloor(N(H)\cap B_{\frac{31}{32}}\cap \{ r<\frac{\tau}{4} \}) \\
		V_{i,2}={} & V_i\lfloor(N(H^0)\cap B_{\frac{31}{32}}\cap \{ r<\frac{\tau}{4} \})
	\end{align*}
	Hence, $V_{i,1}$ is a $\mu_i$-stationary integral rectifiable varifold with $V_{i,1}\rightarrow |H\cap B_{\frac{{31}}{32}}\backslash \{ r<\frac{\tau}{4} \}|$ and 
	\[
		\int_{ N(H)} \mathrm{dist}^2(X,H)d\|V_{i,1}\|(X)\rightarrow 0 \text{ as }i\rightarrow +\infty.
	\]

	Applying Theorem \ref{thm_allard} with $\delta=\frac{1}{2}, \gamma =\frac{1}{2}$ allows us to assert
	\[
		V_{i,1} \lfloor(B_{\frac{7}{8}}(0)\backslash \{ r<\tau \})\subset \mathrm{graph}u_i
	\]
	for a sequence of functions $u_i \in C^{1,\beta}(B_{\frac{15}{16}}(0)\cap H\backslash \{ r<\frac{\tau}{2} \},H^\bot )$ with 
	\[
		\|u_i\|_{C^{1,\beta}(B_{\frac{15}{16}}(0)\cap H\backslash \{ r<\frac{\tau}{2} \},H^\bot )}\rightarrow 0 \text{ as }i\rightarrow +\infty.
	\]
	for some $\beta=\beta(n) \in (0,1)$.

	Now, we have to demonstrate that $V_{i,2}\lfloor(B_{\frac{7}{8}}\backslash \{ r<\tau \})=0$ for $i$ large enough.

	We claim that
	\[
		\Theta(\mathcal{V}_i,X)<1-\cos(\theta_i(X))
	\]
	for any $X \in \partial \mathbb{H}^{n+1}\cap B_{\frac{15}{16}}(0) \backslash \{ r<\frac{\tau}{2} \}$ for $i$ sufficiently large.
	Otherwise, up to a subsequence, we can find a sequence of $X_i \in \partial \mathbb{H}^{n+1}\cap B_{\frac{15}{16}}(0)\backslash \{ r<\frac{\tau}{2} \}$ such that $\Theta(\mathcal{V}_i,X_i)\ge 1-\cos\theta_i(X_i)$.
	Again, up to a subsequence, we may assume $X_i \rightarrow X_0 \in \partial \mathbb{H}^{n+1}\cap B_{\frac{31}{32}}(0)\backslash \{ r<\frac{\tau}{4} \}$.
	Then the upper semi-continuity of density implies $2a_2\ge 1-\cos \theta$,
	which contradicts with the fact that $a_2=-\cos \theta$.
	Thus, our claim stands.

	With Theorem \ref{thm:smallDensity}, $V_{i,2}$ is a $\mu_i$-stationary varifold in free boundary sense in $B_{\frac{{31}}{32}}\backslash \{ r<\frac{\tau}{4} \}$,
	and $W_i \lfloor(B_{\frac{15}{16}}\backslash \{ r<\frac{\tau}{2} \})=k'|\partial \mathbb{H}^{n+1}\cap B_{\frac{15}{16}}\backslash \{ r<\frac{\tau}{2} \}|$ for some $k'=0$ or $1$.
	By the compactness of integral varifold in free boundary sense, we find $V_{i,2}\rightarrow k|\partial \mathbb{H}^{n+1}\cap B_{\frac{15}{16}}\backslash \left\{ r<\frac{\tau}{2} \right\}|$ for some non-negative integer $k$.
	If $k\ge 1$, it would, by the definition of $V^\theta$ and the upper semi-continuity of density, suggest $a_2\ge 1$, which contradicts our earlier finding that $a_2 = -\cos \theta$.
	Hence, the only possibility is $k=0$.
	This conclusion also implies $k'=1$.
	Again, by a contradiction argument together with a monotonicity formula, we can get $V_{i,2}\lfloor(B_{\frac{7}{8}}\backslash \{ r<\tau \})=0$ for $i$ large enough.

	This concludes the proof.
\end{proof}

For each $\kappa, \rho \in (0,1], \zeta \in \mathbb{R}^{n-1}$, we define the torus $T_{\rho,\kappa}(\zeta)$ by
	\[
		T_{\rho,\kappa}(\zeta):=\left\{ (x,y) \in (\mathbb{R}^+\times \mathbb{R})\times \mathbb{R}^{n-1}:(\left|x\right|-\rho)^2+\left|y-\zeta\right|^2\le \frac{\kappa^2 \rho^2}{64} \right\}.
	\]

\begin{lemma}
	\label{lem_torus}
	Let $\alpha\in (0,1)$ be a constant. 
	There exist constants $\delta=\delta(n,\Lambda_0,\alpha) \in (0,1)$, $\varepsilon=\varepsilon(n,\Lambda_0,\alpha) \in (0,1)$, and $\beta=\beta(n,\Lambda_0) \in (0,1)$ such that following holds.
	Assume $(\mathcal{V},H)$ satisfies $(\Lambda_0,\varepsilon)$-Hypothesis.
	Then for any $(\xi,\eta) \in \mathrm{spt}\|\boldsymbol{C}\|\cap B_{\frac{7}{8}}(0)\cap \left\{ r< \frac{1}{16} \right\}$, if
	\begin{equation}
		\frac{1}{|\xi|^{n+2}}\int_{ T_{|\xi|,1}(\eta)} \mathrm{dist}^2(X,\mathrm{spt}\|\boldsymbol{C}\|)d\|V^\theta\|(X)+\mu(\mathcal{V})|\xi|<\delta,
		\label{eq:lem_graph_torus}
	\end{equation}
	then there exist a function $u ^{\left|\xi\right|,\eta} \in C^{1,\beta}(T_{|\xi|,\frac{3}{4}}(\eta)\cap H,H^\bot )$ with
	\begin{equation}
		\mathrm{spt}\|V\|\lfloor(T_{\left|\xi\right|,\frac{1}{2}}(\eta))\subset \mathrm{graph} u ^{\left|\xi\right|,\eta}\subset \mathrm{spt}\|V\|
		\label{eq:lemGraphTorus}
	\end{equation}
	and
	\begin{equation}
		\frac{1}{\left|\xi\right|}\sup_{H \cap T_{\left|\xi\right|,\frac{1}{2}}(\eta)}\left|u ^{\left|\xi\right|,\eta}\right|+
		\sup_{H \cap T_{\left|\xi\right|,\frac{1}{2}}}\left|D u ^{\left|\xi\right|,\eta}\right|\le \frac{\alpha}{2}.
		\label{eq:lemGradientGraph}
	\end{equation}
\end{lemma}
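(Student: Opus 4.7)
The plan is a contradiction-by-rescaling argument that mirrors Lemma \ref{lem_graph_over_cone} but localises around a point at distance $|\xi|$ from the edge $\{x_1=x_2=0\}$. Suppose the conclusion fails for the given $\alpha$: then there exist sequences $\mathcal{V}_i$, $H_i$, $(\xi_i, \eta_i)$ satisfying the $(\Lambda_0, 1/i)$-Hypothesis with $(\xi_i, \eta_i) \in \mathrm{spt}\|\boldsymbol{C}_{H_i}\| \cap B_{7/8}(0) \cap \{r < 1/16\}$, meeting (\ref{eq:lem_graph_torus}) with $\delta = 1/i$, yet for which no admissible $u^{|\xi_i|, \eta_i}$ exists. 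Rescale by $\Phi_i(X) := (X - (0, \eta_i))/|\xi_i|$ and set $\tilde{\mathcal{V}}_i := (\Phi_i)_{\#} \mathcal{V}_i$. This affine map fixes the edge and each half-hyperplane $H^{\theta'}$ and $H^0$, and it carries $T_{|\xi_i|, \kappa}(\eta_i)$ onto $T_{1, \kappa}(0)$. By Proposition \ref{prop_rescaling}, $\tilde{\mathcal{V}}_i \in \mathcal{RIV}(|\xi_i|\mu_i)$ with $|\xi_i|\mu_i \to 0$, and the torus-excess hypothesis becomes $\int_{T_{1,1}(0)} \mathrm{dist}^2(\cdot, \mathrm{spt}\|\boldsymbol{C}_{H_i}\|) d\|\tilde{V}_i^{\tilde{\theta}_i}\| + |\xi_i|\mu_i \to 0$.

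By Theorem \ref{thm_compactnessRIV}, applied on compact subsets of the expanding domains, I pass to a subsequence so that $\tilde{\mathcal{V}}_i \to \mathcal{V}^* = (V^*, W^*, \theta^*, \delta)$ is stationary with $\theta^* \in [\pi/2 + \Lambda_0, \pi - \Lambda_0]$, and $H_i \to H^* = H^{\theta^*}$. The vanishing local excess forces $\mathrm{spt}\|V^{*,\theta^*}\| \cap T_{1,1}(0) \subset \mathrm{spt}\|\boldsymbol{C}_{H^*}\|$, which in $T_{1,1}(0)$ consists of two disjoint pieces, one along $H^*$ (in the interior) and one along $H^0$ (on $\partial \mathbb{H}^{n+1}$). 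The constancy theorem applied separately to each piece gives $V^{*,\theta^*} \lfloor T_{1,1}(0) = a_1 |H^* \cap T_{1,1}(0)| + a_2 |H^0 \cap T_{1,1}(0)|$ for some integer $a_1 \ge 0$ and $a_2 \in \mathbb{R}$. To pin down $a_1, a_2$, I would \emph{also} pass the un-rescaled sequence $\mathcal{V}_i \to \mathcal{V}_\infty$ to the limit; Hypothesis \ref{hyp_l2}\ref{hyp_l2_2} together with \ref{hyp_l2_4} and the bookkeeping embedded in the proof of Lemma \ref{lem_graph_over_cone} identifies $V_\infty^{\theta_\infty} = |H_\infty \cap B_1| - \cos\theta_\infty |H^0 \cap B_1|$. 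Since $V_\infty^{\theta_\infty}$ is a cone at the origin and translation invariant along the edge, its tangent cone at $(0, \eta^*)$ is itself (extended globally), and $\mathcal{V}^*$ therefore inherits $V^{*,\theta^*} = |H^*| - \cos\theta^* |H^0|$ on all of $\mathbb{H}^{n+1}$. Hence $a_1 = 1$ and $a_2 = -\cos\theta^*$ inside $T_{1,1}(0)$.

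With the limit identified, the conclusion follows exactly as in the last two steps of Lemma \ref{lem_graph_over_cone}. Decompose $\tilde{V}_i = \tilde{V}_i^{(1)} + \tilde{V}_i^{(2)}$ along $N(H^*)$ and $N(H^0)$ inside $T_{1,1}(0)$. On $N(H^*) \cap T_{1,1}(0)$ the limit is the flat multiplicity-one half-hyperplane $|H^*|$, so Allard's interior regularity (Theorem \ref{thm_allard}) produces, for $i$ large, a $C^{1,\beta}$ function $\tilde{u}_i$ on $H^* \cap T_{1, 3/4}(0)$ whose graph coincides with $\mathrm{spt}\|\tilde{V}_i^{(1)}\| \cap T_{1, 1/2}(0)$ and with $\|\tilde{u}_i\|_{C^1} \to 0$. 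On $N(H^0) \cap T_{1,1}(0)$, the identification $a_2 = -\cos\theta^*$ together with the upper semi-continuity of density (Theorem \ref{thm_theta__v_w_theta_g_x_ge__theta__v_i_w_i_theta_i_g_i_x}) forces $\Theta(\tilde{\mathcal{V}}_i, X) < 1 - \cos\tilde{\theta}_i(X)$ at every $X \in \partial \mathbb{H}^{n+1} \cap T_{1, 1/2}(0)$ for $i$ large; otherwise a convergent subsequence of such points would violate the limit identification. Theorem \ref{thm:smallDensity} then makes $\tilde{V}_i^{(2)}$ a $(|\xi_i|\mu_i)$-stationary varifold in the free boundary sense near $H^0 \cap T_{1,1}(0)$, and the compactness-plus-monotonicity argument concluding the proof of Lemma \ref{lem_graph_over_cone} forces $\tilde{V}_i^{(2)} \lfloor T_{1, 1/2}(0) = 0$ for $i$ large. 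Unscaling via $\Phi_i^{-1}$ produces $u^{|\xi_i|, \eta_i}$ satisfying (\ref{eq:lemGraphTorus}) and (\ref{eq:lemGradientGraph}) for $i$ large, contradicting our assumption.

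The main obstacle is the identification $a_1 = 1$, $a_2 = -\cos\theta^*$ in the rescaled limit. The global density and mass bound in Hypothesis \ref{hyp_l2}\ref{hyp_l2_2} does not survive an isotropic dilation by $1/|\xi_i|$ when $|\xi_i| \to 0$, so the constancy-plus-density bookkeeping from Lemma \ref{lem_graph_over_cone} cannot be applied to $\mathcal{V}^*$ directly. The fix is to perform that bookkeeping on the un-rescaled limit $\mathcal{V}_\infty$, where the Hypothesis is preserved, and then transfer the identification to $\mathcal{V}^*$ using the translation invariance of $\mathcal{V}_\infty$ along the edge and its cone structure at the origin---both of which commute with the rescaling $\Phi_i$.
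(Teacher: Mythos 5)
Your overall architecture (decompose near $H$ and near $H^0$ inside the torus, Allard for the first piece, a contradiction/blow-up to kill the second) is the right shape, and you correctly spotted where the difficulty sits. But the fix you propose for that difficulty does not work, and this is a genuine gap. You want to identify the multiplicities $a_1=1$, $a_2=-\cos\theta^*$ in the limit of the rescaled quadruples $\tilde{\mathcal{V}}_i=(\Phi_i)_{\#}\mathcal{V}_i$ by first passing the \emph{unrescaled} sequence to $\mathcal{V}_\infty$, identifying $V_\infty^{\theta_\infty}=|H_\infty|-\cos\theta_\infty|H^0|$ via Lemma \ref{lem_graph_over_cone}, and then ``transferring'' this to $\mathcal{V}^*$ because $\mathcal{V}_\infty$ is a cone and translation invariant. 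This is an illegitimate interchange of limits: $\lim_i(\Phi_i)_{\#}\mathcal{V}_i$ need not coincide with any rescaling limit of $\lim_i\mathcal{V}_i$ when $|\xi_i|\rightarrow 0$, since weak-$*$ convergence at unit scale gives no control on the behaviour of $\mathcal{V}_i$ at scale $|\xi_i|$ around the moving centers $(0,\eta_i)$. Concretely, nothing in your argument excludes $\mathcal{V}_i$ carrying an extra sheet (say a second copy of a disc of radius $\sim|\xi_i|$ near $H_i$, or extra mass of density $1$ on $H^0$ inside the torus): its total mass is $O(|\xi_i|^n)$, invisible in the unit-scale limit $\mathcal{V}_\infty$ and compatible with the torus excess hypothesis \eqref{eq:lem_graph_torus}, which only measures distance to $\mathrm{spt}\|\boldsymbol{C}\|$, not multiplicity; yet it would give $a_1=2$ or $a_2\ge 1$ in $\mathcal{V}^*$, destroying both the Allard step (which needs the mass ratio $<1+\delta$ at scale $|\xi_i|$) and the argument that $\tilde V_i^{(2)}$ vanishes.

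What is missing is precisely the scale-transfer of the mass bound, and this is what the paper supplies through the almost-monotonicity formula. In the paper's proof one first gets $\|V^\theta\|(\{r<\tau\}\cap B_{\frac{15}{16}})\le C\tau$ by a covering argument, and then applies the reflected monotonicity inequality \eqref{eq:MonotonicityFormulaInteriorG} with $(\sigma,\rho)=(\frac{|\xi|}{4},\frac{1}{16})$ to propagate the unit-scale mass control (coming from Lemma \ref{lem_graph_over_cone}) down to scale $|\xi|$, yielding $\frac{4^n}{\omega_n|\xi|^n}\|V^\theta\|(B_{|\xi|/4}(X_0))\le 1-\cos(\pi-\frac{\Lambda_0}{2})$. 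This bound is what licenses Allard at scale $|\xi|$ for $V_1$, and the analogous bound \eqref{eq:pfVolXiBall} centered on $\{x_1=0,\,0<x_2<\frac{1}{16}\}$ is what forces, in the blow-up $\mathcal{V}_{(\cdot),|\xi_i|/8}$ around a point of $H^0$, a limit $a|\partial\mathbb{H}^{n+1}\cap\mathcal{B}_1|$ with $a<1$, hence $a\in\{0,-\cos\theta'\}$ and eventually $V_{i,2}=0$ near the torus. So to repair your proof you must insert this monotonicity step (Corollary \ref{cor_monoG} together with \eqref{eq:MonotonicityFormulaInteriorG}) to bound the density ratios of $\mathcal{V}_i$ at scale $|\xi_i|$ \emph{before} passing to the rescaled limit; once that is in place, your identification of $a_1,a_2$ and the rest of your outline go through, but the ``cone/translation-invariance commutes with $\Phi_i$'' shortcut must be abandoned.
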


\begin{proof}
	Considering \eqref{eq:lem_graph_torus} alongside lower density bounds \eqref{eq:lemDensityLower}, we find the Hausdorff distance between $\mathrm{spt}\|V^\theta\|\cap T_{\left|\xi\right|,\frac{7}{8}}$ and $\mathrm{spt}\|\boldsymbol{C}\|\cap T_{\left|\xi\right|,\frac{7}{8}}$ can be arbitrary small when we choose $\delta=(n,\Lambda_0)$ small enough.
	Consequently, we decompose $V\lfloor(T_{|\xi|,\frac{7}{8}})=V_1+V_2$ where
	\begin{align*}
		V_1={} & V\lfloor(T_{|\xi|,\frac{7}{8}}\cap N(H)) \\
		V_2={} & V\lfloor(T_{|\xi|,\frac{7}{8}}\cap N(H^0))
	\end{align*}
	with $V_1$ being $\mu$-stationary in $T_{|\xi|,\frac{7}{8}}\cap N(H)$.
	
	By the monotonicity formula \eqref{eq:MonotonicityFormulaG} and employing the covering argument, we can find
	\begin{equation}
		\|V^\theta\|(\left\{ r<\tau \right\}\cap B_{\frac{15}{16}}(0))\le C\tau,
		\label{eq:pfAreaCylinder}
	\end{equation}
	for some constant $C=C(n,\Lambda_0)$.

	For each $X_0=(\xi,\eta) \in H\cap B_{\frac{13}{16}}(0)\cap \left\{ r<\frac{1}{16} \right\}$, we can choose $\tau$ and $\varepsilon$ small enough, use monotonicity formula \eqref{eq:MonotonicityFormulaInteriorG} with $(\frac{|\xi|}{4},\frac{1}{16})$ in place of $(\sigma,\rho)$, we have
	\[
		\frac{4^n}{\omega_n |\xi|^n}
		 \|V^\theta\|(B_{\frac{|\xi|}{4}}(X_0)) \le (1+C\mu) \frac{\|V^\theta\|(\mathcal{B}_{\frac{1}{16}}(X_0))+\|V^\theta\|(\mathcal{B}_{\frac{1}{16}})(\tilde{X} _0)}{\omega_n\left( \frac{1}{16} \right)^n}+C\mu
	\]
	for $\mu=\mu(n)$ small enough and a constant $C=C(n)$.
	Note that for any $\delta'>0$, we can make sure
	\[
		\frac{\|V^\theta\|(\mathcal{B}_{\frac{1}{16}}(X_0))+\|V^\theta\|(\mathcal{B}_{\frac{1}{16}})(\tilde{X} _0)}{\omega_n\left( \frac{1}{16} \right)^n}
	\]
	is smaller than $1-\cos(\pi-\Lambda_0)+\delta'$ provided that  $\varepsilon$ and $\tau$ are chosen to be small enough in view of Lemma \ref{lem_graph_over_cone} and \eqref{eq:pfAreaCylinder}.
	This suggests that 
	\begin{equation}
		\frac{4^n}{\omega_n |\xi|^n}
		 \|V_1\|(B_{\frac{|\xi|}{4}}(X_0))\frac{4^n}{\omega_n |\xi|^n}
		 =\|V^\theta\|(B_{\frac{|\xi|}{4}}(X_0)) \le 1-\cos \left(\pi-\frac{\Lambda_0}{2}\right)
		 \label{eq:pfVolumeUpper}
	\end{equation}
	by choosing $\varepsilon=\varepsilon(n,\Lambda_0)$ small enough.
	
	Now, we apply Theorem \ref{thm_allard} by choosing $\varepsilon=\varepsilon(n,\Lambda_0)$ to ensure the existence of $u ^{|\xi|,\eta} \in C^{1,\beta}(T_{|\xi|,\frac{3}{4}}(\eta)\cap H,H^\bot)$ satisfying \eqref{eq:lemGraphTorus} and \eqref{eq:lemGradientGraph} for some $\beta=\beta(n,\Lambda_0)$.

	We need to demonstrate that $V_2\lfloor(T_{|\xi|,\frac{7}{8}})=0$.
	Following a similar argument to \eqref{eq:pfVolumeUpper}, we establish
	\begin{equation}
		\frac{1}{\omega_n \rho^n} \|V^\theta\|(B_{\rho}(X_0))\le \frac{1}{2}\left( 1- \cos \left( \pi - \frac{\Lambda_0}{2} \right) \right),
		\label{eq:pfVolXiBall}
	\end{equation}
	for any $X_0 \in \left\{ x_1=0,0<x_2<\frac{1}{16} \right\}\cap B_{\frac{13}{16}}$ and $\rho \in (0,\frac{1}{16})$ if we choose $\varepsilon$ small enough.
	
	We proceed with a contradiction argument.
	Suppose we can find a sequence of $\delta_i,\varepsilon_i\rightarrow 0^+$, $\mathcal{V}_i$, and $H_i$ for which $(\mathcal{V}_i,H_i)$ satisfies $(\Lambda_0,\varepsilon_i)$-hypothesis and $\xi_i,\eta_i\in \mathrm{spt}\|\boldsymbol{C}\|\cap B_{\frac{7}{8}}(0)\cap \{ r<\tau \}$ such that \eqref{eq:lem_graph_torus} holds with $\xi_i$ in place of $\xi$, $\eta_i$ in place of $\eta$, $\delta_i$ in place of $\delta$. 
	Additionally, $\mathrm{spt}\|V_{i,2}\|\cap T_{\left|\xi_i\right|,\frac{7}{8}}\neq \emptyset$, where $V_{i,2}:=V_i \lfloor(T_{\left|\xi\right|,\frac{7}{8}}\cap N(H^0))$.
	Now, we consider $\mathcal{V}_i'=(V_i',W_i',\theta_i',g_i'):=\mathcal{V}_{(-|\xi_i|,0,\eta_i),\frac{|\xi_i|}{8}}$.
	We note that \eqref{eq:pfVolXiBall} implies
	\[
		\frac{1}{\omega_n}\|(V'_i)^{\theta'_i}\|(\mathcal{B}_1)\le \frac{1}{2}\left( 1- \cos \left( \pi - \frac{\Lambda_0}{3} \right) \right).
	\]
	for $i$ large enough.
	Hence, up to a subsequence, we may assume $\mathcal{V}_i'\rightarrow \mathcal{V}'$ for some $\mathcal{V}'=(V',W',\theta',\delta)$ with
	\[
		\|(V')^{\theta'}\|(\mathcal{B}_1)\le \frac{1}{2}\left( 1- \cos \left( \pi - \frac{\Lambda_0}{3} \right) \right).
	\]
	Condition \eqref{eq:lem_graph_torus} implies $\mathrm{spt}\|(V')^{\theta'}\|\subset \partial \mathbb{H}^{n+1}\cap \mathcal{B}_1$ and hence $(V')^{\theta'}=a|\partial \mathbb{H}^{n+1}\cap \mathcal{B}_1|$ for some $0\le a \le \frac{1}{2}\left( 1- \cos \left( \pi - \frac{\Lambda_0}{3} \right) \right)$ by constancy theorem.
	Again, by Theorem \ref{thm_compactnessRIV}, we know the only possibility values for $a$ are 0 or $-\cos \theta'$.
		
	By the upper semi-continuity of density \eqref{eq:thmUpperDensityStat}, and Theorem \ref{thm:smallDensity}, we see that $V_i'$ is a $\mu_i'$-stationary varifold in free boundary sense in $B_1(0)$.
	Applying Theorem \ref{thm_compactnessRIV} once more with $(V_i', 0, \frac{\pi}{2},g_i')$ in place of $\mathcal{V}_i$ and considering $(V'_i)^{\theta_i'}\rightarrow a|\partial \mathbb{H}^{n+1}\cap \mathcal{B}_1|$ for $a<1$, we have $V_i'\rightarrow 0$.
	This convergence gives that $\mathrm{spt}\|V_i'\|\cap \mathcal{B}_{\frac{15}{16}}=0$ for $i$ large enough.
	This contradicts with the fact $\mathrm{spt}\|V_{i,2}\|\cap T_{\left|\xi_i\right|,\frac{7}{8}}\neq \emptyset$ and definition of $V_i'$.
\end{proof}

\begin{lemma}
	\label{lem_graph}
	Suppose $\tau \in (0,\frac{1}{80})$ and $\alpha \in (0,1)$, there exists $\varepsilon=\varepsilon(n,\tau,\alpha,\Lambda_0) \in (0,\mu_0)$, $\beta=\beta(n,\Lambda_0) \in (0,1)$ such that, if $(\mathcal{V},H)$ satisfies $(\Lambda_0,\varepsilon)$-Hypothesis,
	then we can find two open subsets $U_V \subset B_1(0)\cap H, U_W \in B_1(0)\cap \partial \mathbb{H}^{n+1}\cap \left\{ x_1<0 \right\}$ and a $C^{1,\beta}$ function $u \in C^{1,\beta}(U_V,H^\bot )$ with the following properties,
	\begin{enumerate}[\normalfont(a)]
		\item $B_{\frac{7}{8}}\cap \left\{ r(X)>\tau \right\}\cap H\subset U_V$, $B_{\frac{7}{8}}\cap \left\{ r(X)>\tau \right\}\cap \partial \mathbb{H}^{n+1}\cap \left\{ x_1<0 \right\}\subset U_W$.
		\item $\mathrm{spt}\|V\|\lfloor(B_{\frac{7}{8}}\cap \left\{ r> \tau \right\})\subset \mathrm{graph}u \subset \mathrm{spt}\|V\|$, $\mathrm{spt}\|W\|\cap B_{\frac{7}{8}}\cap \left\{ r>\tau \right\}\subset U_W$.
		\item $\sup_{U_V}\frac{\left|u\right|^2}{r^2}+\sup_{U_V}\left|D u\right|^2\le \alpha^2$.
		\item $\int_{ \mathrm{spt}\|W\|\backslash U_W}r^2(X)d\|W\| +\int_{ B_{\frac{7}{8}}(0)\backslash \mathrm{graph}u} r^2(X)d\|V\|(X)+\int_{ U_V \cap B_{\frac{7}{8}}(0)} r^2\left|D u\right|^2d\mathcal{H}^n(X)\le C\mathcal{E}^2(\mathcal{V},H)$ where $C=C(n,\alpha,\Lambda_0)$.
	\end{enumerate}
\end{lemma}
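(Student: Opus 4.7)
My plan is to apply Lemma \ref{lem_torus} on a Whitney-style covering of the annular region $\{r > \tau\} \cap B_{7/8}$ by tori, patch the resulting local graphs into a single global graph $u$, and then obtain the weighted $L^2$-estimate (d) from a Caccioppoli-type inequality combined with the covering.

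To start, for every $(\xi, \eta) \in H \cap B_{7/8} \cap \{r > \tau\}$ with $|\xi| < 1/16$, the assumption $|\xi| \ge \tau$ bounds the torus excess in \eqref{eq:lem_graph_torus} by $\tau^{-(n+2)}\mathcal{E}^2(\mathcal{V}, H) + \tau\mu(\mathcal{V})$, which is below the threshold $\delta(n, \Lambda_0, \alpha/2)$ of Lemma \ref{lem_torus} once $\varepsilon = \varepsilon(n, \tau, \alpha, \Lambda_0)$ is chosen sufficiently small. Lemma \ref{lem_torus} then yields a local $C^{1,\beta}$ graph $u^{|\xi|, \eta}$ on $T_{|\xi|, 3/4}(\eta) \cap H$ with sup and gradient both at most $\alpha/2$, contained in $\mathrm{spt}\|V\|$ and containing $\mathrm{spt}\|V\| \cap T_{|\xi|, 1/2}(\eta)$. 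For the outer region $\{r \ge 1/16\} \cap B_{7/8}$, Lemma \ref{lem_graph_over_cone} (with an auxiliary parameter taken as small as needed) supplies an analogous graph. Since each local graph contains $\mathrm{spt}\|V\|$ in its neighbourhood and has small gradient, any two must coincide on their overlap, so the local graphs patch to a globally defined $u \in C^{1,\beta}(U_V, H^\perp)$ on an open $U_V \subset H$ containing $H \cap B_{7/8} \cap \{r > \tau\}$; this establishes (a), (b), and (c). The $W$-component of these two lemmas is derived from Theorem \ref{thm:smallDensity}, which identifies $W$ with $|H^0|$ on $B_{7/8} \cap \{r > \tau\}$; defining $U_W$ to be that region yields (a), (b) for $W$ as well.

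For (d), I argue via a weighted Caccioppoli inequality. On each torus $T_{|\xi|, 1/2}(\eta)$ the varifold $V$ is a multiplicity-one $C^{1,\beta}$ graph with small gradient, so $|Du|^2 \le C |T_X V - H|^2$ pointwise on the graph. Testing the first variation identity \eqref{eq:1stVarFormula} with a cut-off vector field supported in $T_{|\xi|, 1}(\eta)$ and directed normal to $H$ produces the tilt-excess bound
\begin{equation*}
\int_{T_{|\xi|, 1/2}(\eta)} |T_X V - H|^2\, d\|V\| \le \frac{C}{|\xi|^2} \int_{T_{|\xi|, 1}(\eta)} \mathrm{dist}^2(X, H)\, d\|V^\theta\| + C\mu^2 |\xi|^n,
\end{equation*}
where the $|\xi|^{-2}$ arises from differentiating a cut-off at scale $|\xi|$. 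Multiplying by $|\xi|^2 \sim r^2$ and summing over a finite-overlap covering of $\{r > \tau\} \cap B_{7/8}$ by such tori gives the desired bound on $\int r^2|Du|^2\, d\mathcal{H}^n$. The other two integrals in (d) concern the regions where $V$ or $W$ deviates from its cone model; these deviations are confined either to $\{r < \tau\}$ or to points at positive distance from $\mathrm{spt}\|\boldsymbol{C}_H\|$, in each of which $r^2$ is dominated by $\mathrm{dist}^2(\cdot, \mathrm{spt}\|\boldsymbol{C}_H\|)$, so the bound reduces directly to $\mathcal{E}^2 \ge \int \mathrm{dist}^2\, d\|V^\theta\|$.

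The main obstacle is obtaining the correct $|\xi|^{-2}$-scaling in the local Caccioppoli step while keeping the overlap in the torus covering uniformly bounded: only with both in place does multiplication by $r^2$ precisely cancel the $|\xi|^{-2}$ and produce a global estimate proportional to $\mathcal{E}^2$. A secondary subtlety is that the $W$-contribution to (d) must be extracted from the signed measure $V^\theta$ rather than from $W$ alone, which is possible here because $|\cos\theta|$ is bounded below by $\cos(\pi/2 + \Lambda_0) > 0$ under the $(\Lambda_0, \mu)$-stationarity assumption.
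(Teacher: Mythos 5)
Your construction of $U_V$, the patching of the local graphs from Lemma \ref{lem_torus}, and properties (a)--(c) follow the paper's route, and your Caccioppoli/tilt-excess treatment of the term $\int_{U_V} r^2|Du|^2$ is a legitimate variant of the sup-estimate used in Simon's covering argument (provided the covering is genuinely multi-scale over all of $U_V$, down to arbitrarily small $|\xi|$, and not merely of $\{r>\tau\}$, since $U_V$ extends below $r=\tau$ and the constant in (d) must be independent of $\tau$). However, your argument for the remaining two integrals in (d) contains a genuine error.

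You claim that on $B_{\frac{7}{8}}\setminus\mathrm{graph}\,u$ and on $\mathrm{spt}\|W\|\setminus U_W$ one has the pointwise domination $r^2(X)\le C\,\mathrm{dist}^2(X,\mathrm{spt}\|\boldsymbol{C}_H\|)$. This inequality goes the wrong way: since the spine $\{r=0\}=\{x_1=x_2=0\}$ is contained in $\mathrm{spt}\|\boldsymbol{C}_H\|$, one always has $\mathrm{dist}(X,\mathrm{spt}\|\boldsymbol{C}_H\|)\le r(X)$, and for points of $\mathrm{spt}\|V\|$ lying on or very near the cone at small $r$ (which is exactly where the graph representation can fail) the distance to the cone is $0$ while $r$ is not. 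The correct mechanism, which your proposal omits, is the \emph{reverse} of the hypothesis \eqref{eq:lem_graph_torus} at points $(\xi,\eta)\in\partial U_V$: wherever the graph representation breaks down one has
\[
|\xi|^{n+2}\le C\Bigl(\int_{T_{|\xi|,1}(\eta)}\mathrm{dist}^2(X,\mathrm{spt}\|\boldsymbol{C}\|)\,d\|V^\theta\|(X)+\mu(\mathcal{V})|\xi|\Bigr),
\]
so that on each bad torus $\int_{T}r^2\,d\|V\|\le C|\xi|^{n+2}$ is controlled by the local excess, and a finite-overlap (Besicovitch/Vitali) covering of the bad set by such tori yields $\int_{B_{7/8}\setminus\mathrm{graph}\,u}r^2\,d\|V\|\le C\mathcal{E}^2$. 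Similarly, the $W$-term is not handled by a pointwise inequality on all of $\mathrm{spt}\|W\|\setminus U_W$: for $X\in\mathrm{spt}\|W\|\cap\{x_2<0\}$ one has $\mathrm{dist}(X,\mathrm{spt}\|\boldsymbol{C}\|)=0$, so $U_W$ must be \emph{chosen} to exhaust $\mathrm{spt}\|W\|\cap\{x_2<0\}$ up to a set of $\mathcal{H}^n$-measure at most $\mathcal{E}^2(\mathcal{V},H)$ (which is possible by Lemma \ref{lem_graph_over_cone}), while the pointwise bound $r(X)\le C(\Lambda_0)\,\mathrm{dist}(X,\mathrm{spt}\|\boldsymbol{C}\|)$ is valid only on $\mathrm{spt}\|W\|\cap\{x_2\ge0\}$. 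Without the reverse torus inequality and this two-case treatment of $W$, part (d) does not follow.
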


\begin{proof}
Our proof is largely following from \cite[Lemma 2.6]{Simon1993cylindrical}.

	Let $U_V$ be the union of all $T_{\left|\xi\right|,\frac{1}{2}}(\eta)\cap H$ over all $(\xi, \eta) \in H\cap B_{\frac{7}{8}}(0)$, where there exists $u ^{\left|\xi\right|,\eta} \in C^{1,\beta}(T_{\left|\xi\right|,\frac{3}{4}}(\eta),H^\bot )$ with
	\[
		\mathrm{spt}\|V\|\lfloor(T_{\left|\xi\right|,\frac{1}{2}}(\eta))\subset \mathrm{graph} u ^{\left|\xi\right|,\eta}\subset \mathrm{spt}\|V\|
	\]
	and
	\[
		\frac{1}{\left|\xi\right|}\sup_{H \cap T_{\left|\xi\right|,\frac{1}{2}}(\eta)}\left|u ^{\left|\xi\right|,\eta}\right|+
		\sup_{H \cap T_{\left|\xi\right|,\frac{1}{2}}}\left|D u ^{\left|\xi\right|,\eta}\right|\le \frac{\alpha}{2},
	\]
	where $\beta$ is a constant for which Lemma \ref{lem_torus} is applicable.
	
	By the unique continuation of solutions to elliptic operators, we can define $u \in C^{1,\beta}(U_V,H^\bot )$ by
	\[
		u\lfloor(T_{\left|\xi\right|,\frac{1}{2}}\cap H)=
		u ^{\left|\xi\right|,\eta}\lfloor(T_{\left|\xi\right|,\frac{1}{2}}\cap H).
	\]

	Now, according to Theorem \ref{thm_allard}, we claim that if we choose $\varepsilon=\varepsilon(n,\tau,\alpha)$ small enough, we have $(B_{\frac{7}{8}}(0)\backslash \left\{ r(X)<\tau \right\})\subset U_V$ and $\mathrm{spt}\|V\|\lfloor(B_{\frac{7}{8}}\cap\{r>\tau\})\subset \mathrm{graph}u \subset \mathrm{spt}\|V\|$.
	This follows from the compactness theorem, constance theorem, and upper semi-continuity of density function by a contradiction argument.
	
	Moreover, for any $(\xi,\eta) \in \partial U\cap B_{\frac{7}{8}}(0)\cap H$, we have that
	\[
		\left|\xi\right|^{n+2}\le C\left( \int_{T_{\left|\xi\right|,1}(\eta)} \mathrm{dist}^2(X,\mathrm{spt}\|\boldsymbol{C}\|)d\|V^\theta\|(X)+\mu(\mathcal{V})|\xi| 
 \right)
\]
for some $C=C(n,\Lambda_0,\alpha)$ based on Lemma \ref{lem_torus}.

Adopting Simon's covering argument from \cite[Lemma 2.6]{Simon1993cylindrical} we can establish the following estimate.
\[
	\int_{ B_{\frac{7}{8}}(0)\backslash \mathrm{graph}u} r^2(X)d\|V\|(X)+\int_{ U_V \cap B_{\frac{7}{8}}(0)} r^2\left|D u\right|^2d\mathcal{H}^n(X)\le C\mathcal{E}^2(\mathcal{V},H),
\]
for some $C=C(n,\alpha,\Lambda_0)$.

	For the choice of $U_W$, 
	we choose $U_W\subset B_1(0)\cap \partial \mathbb{H}^{n+1}\cap \left\{ x_2<0 \right\}$ such that $B_{\frac{7}{8}}\cap\{r>\tau\}\subset U_W$ and $\mathcal{H}^n(\mathrm{spt}\|W\|\cap \left\{ x_2<0 \right\}\cap \left\{ r\le \tau \right\}\backslash U_W)<\mathcal{E}^2(\mathcal{V},H)$.
	Note that by Lemma \ref{lem_graph_over_cone}, we know $B_{\frac{7}{8}}\cap\{r>\tau\}\cap \left\{ x_2<0 \right\}\subset  \mathrm{spt}\|W\|$ and hence,
	\begin{equation}
		\mathcal{H}^n(\mathrm{spt}\|W\|\cap \left\{ x_2<0 \right\}\backslash U_W)<\mathcal{E}^2(\mathcal{V},H).
		\label{eq:pfHnDiff}
	\end{equation}
	It is clear that for $X \in \mathrm{spt}\|W\|\cap \left\{ x_2\ge 0 \right\}$, we have
	\[
		r(X)\le C\mathrm{dist}(X,\mathrm{spt}\|\boldsymbol{C}\|)
	\]
	for some constant $C=C(\Lambda_0)$. Together with \eqref{eq:pfHnDiff}, we know
	\[
		\int_{ \mathrm{spt}\|W\|\backslash U_W} r^2(X)d\|W\|(X)\le C \mathcal{E}^2(\mathcal{V},H).
	\]
\end{proof}

\begin{lemma}
	\label{lem_change_center}
	Let $r\in (0,\frac{1}{3})$. 
	For any $\varepsilon \in (0,1)$, there exists $\varepsilon_0=\varepsilon_0(\varepsilon,n, r,\Lambda_0) \in (0,1)$ such that if $(\mathcal{V},H)$ satisfies $(\Lambda_0,\varepsilon_0)$-hypothesis, then for any $X \in B_{\frac{5}{8}}(0)\cap \partial \mathbb{H}^{n+1}$ with $\Theta(\|\mathcal{V}\|,X)\ge 1-\cos \theta(X)$,
	$(\mathcal{V}_{X,r}, L_X^g(H))$ satisfies $(\Lambda_0,\varepsilon)$-Hypothesis.
\end{lemma}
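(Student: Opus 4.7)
The proof is by contradiction and compactness. Assume the conclusion fails: there is a fixed $\varepsilon>0$ and sequences $(\mathcal V_i,H_i)$ satisfying $(\Lambda_0,1/i)$-Hypothesis, together with points $X_i\in B_{5/8}(0)\cap\partial\mathbb H^{n+1}$ with $\Theta(\mathcal V_i,X_i)\ge 1-\cos\theta_i(X_i)$, for which $(\mathcal V_{i;X_i,r},L_{X_i}^{g_i}(H_i))$ does not satisfy $(\Lambda_0,\varepsilon)$-Hypothesis. After extracting a subsequence, Theorem \ref{thm_compactnessRIV} provides a limit $\mathcal V^\infty=(V^\infty,W^\infty,\theta^\infty,\delta)$ with constant $\theta^\infty\in[\pi/2+\Lambda_0,\pi-\Lambda_0]$ and $(V^\infty)^{\theta^\infty}$ a rectifiable stationary varifold in free boundary sense, and $H_i\to H^\infty=H^{\theta^\infty}$. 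Because $\mathcal E^2(\mathcal V_i,H_i)\to 0$ forces $\mathrm{spt}\|(V^\infty)^{\theta^\infty}\|\subset\mathrm{spt}\|\boldsymbol C_{H^\infty}\|$, the constancy theorem together with the density bounds of Hypothesis \ref{hyp_l2}\ref{hyp_l2_2} identifies $(V^\infty)^{\theta^\infty}=\boldsymbol C^{\theta^\infty}_{H^\infty}$, exactly as in the opening argument of the proof of Lemma \ref{lem_graph_over_cone}.

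Pass to a further subsequence so that $X_i\to X^\infty\in\overline{B_{5/8}}\cap\partial\mathbb H^{n+1}$. By Theorem \ref{thm_theta__v_w_theta_g_x_ge__theta__v_i_w_i_theta_i_g_i_x}, $\Theta(\mathcal V^\infty,X^\infty)\ge 1-\cos\theta^\infty$. A direct computation on the model cone $\boldsymbol C_{H^\infty}$ shows that the densities attained on $\partial\mathbb H^{n+1}$ are $0$, $-2\cos\theta^\infty$ (on $H^0$), and $1-\cos\theta^\infty$ (on the edge $\partial H^\infty\cap\partial\mathbb H^{n+1}$), and only the last value meets the threshold. Hence $X^\infty$ lies on this edge, and therefore the cone is invariant under both the translation $Y\mapsto Y-X^\infty$ and the dilation by $1/r$. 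Consequently, as varifolds, $\mathcal V_{i;X_i,r}\to\boldsymbol C_{H^\infty}$ (with the corresponding $W$-piece), and $L_{X_i}^{g_i}(H_i)\to H^\infty$ by Proposition \ref{prop_Pi2Delta} since $|L_{X_i}^{g_i}-\mathrm{Id}|\le C/i$.

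It remains to verify the four clauses of Hypothesis \ref{hyp_l2} for $(\mathcal V_{i;X_i,r},L_{X_i}^{g_i}(H_i))$ when $i$ is large. For \ref{hyp_l2}(a): Proposition \ref{prop_rescaling} yields $\mathcal V_{i;X_i,r}\in\mathcal{RIV}(Cr/i)$, and the Cholesky normalization of Proposition \ref{prop_Pi2Delta} gives $g'(0)=\delta$, so $\mu(\mathcal V_{i;X_i,r})<\varepsilon$. For \ref{hyp_l2_2}: the density identity $\Theta(\mathcal V_{i;X_i,r},0)=\Theta(\mathcal V_i,X_i)\ge 1-\cos\theta_i(X_i)$ is immediate, and the upper mass bound follows from varifold convergence, since in the limit $\|\boldsymbol C_{H^\infty}^{\theta^\infty}\|(B_1)=\tfrac{1}{2}(1-\cos\theta^\infty)\omega_n$, which is strictly less than $\tfrac{3}{4}(1-\cos\theta^\infty)\omega_n$. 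For \ref{hyp_l2_3}: writing $H_i=H^{\theta_i'}$ with $|\theta_i'-\theta_i(0)|<1/i$, the closeness $|L_{X_i}^{g_i}-\mathrm{Id}|\le C/i$ together with $|\theta_i(X_i)-\theta_i(0)|\le|X_i|/i\le 1/i$ gives $|\theta_i''-\theta_i(X_i)|<\varepsilon$ where $H^{\theta_i''}:=L_{X_i}^{g_i}(H_i)$. For \ref{hyp_l2_4}: since $V_{i;X_i,r}^\theta$ converges as varifolds to $\boldsymbol C_{H^\infty}^{\theta^\infty}$ and the latter is supported in $\mathrm{spt}\|\boldsymbol C_{H^\infty}\|$, combining this with the monotonicity formula Theorem \ref{thm_monotonicityFormula} (which upgrades weak convergence to $L^2$ mass convergence on $B_1$ once the density limit matches the cone) yields $\mathcal E^2(\mathcal V_{i;X_i,r},L_{X_i}^{g_i}(H_i))\to 0$. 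This contradicts the standing failure assumption. The main obstacle is precisely clause \ref{hyp_l2_4}: one has to argue that not merely the support, but the $L^2$ distance-to-cone functional passes to the limit, which is where the structure of the model cone as the unique stationary minimizer of the relevant mass is essential.
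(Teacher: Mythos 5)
Your argument is sound in outline but proceeds by a genuinely different route from the paper. The paper verifies the four clauses of Hypothesis \ref{hyp_l2} directly and quantitatively: clause (a) from Proposition \ref{prop_rescaling}, clause (c) exactly as you do, clause (b) by the monotonicity argument behind \eqref{eq:pfVolumeUpper}, and, crucially, clause \ref{hyp_l2_4} by an explicit change of variables under $\Pi^g_{X,r}=L^g_X\circ\eta_{X,r}$: it bounds $\mathrm{dist}(\Pi^g_{X,r}(Y),\mathrm{spt}\|\boldsymbol C_H\|)$ by $\tfrac1r\mathrm{dist}(Y,\mathrm{spt}\|\boldsymbol C_H\|)+\tfrac1r r(X)+C\mu/r$, controls the Jacobian by $(1+C\mu)r^{-n}$, and then uses Lemma \ref{lem_graph} to make $r^2(X)$ small. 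This yields (implicitly) a quantitative bound of the rescaled excess in terms of $\mathcal E^2(\mathcal V,H)$, $r(X)^2$ and $\mu$, which is the kind of estimate the paper re-derives and exploits later (e.g.\ in Corollary \ref{cor_noHole}). Your compactness-and-contradiction scheme only delivers the qualitative smallness, but that is all the lemma as stated claims, and your identification of the limit cone, the location of $X^\infty$ on the edge via the density values $\{0,-2\cos\theta^\infty,1-\cos\theta^\infty\}$ together with upper semicontinuity, and the verification of clauses (a)--(c) are all legitimate; note also that since $|X_i|\le\frac58$ and $r<\frac13$, the rescaled quadruples are defined on a ball of radius strictly larger than $1$, which you need below.

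The one step you should repair is your justification of clause \ref{hyp_l2_4}. The parenthetical claim that the monotonicity formula ``upgrades weak convergence to $L^2$ mass convergence'' is not a real mechanism: monotonicity by itself does not convert varifold convergence into convergence of the excess integral. What does work (and is what you are implicitly after) is either of the following. (i) As in the opening of the proof of Lemma \ref{lem_graph_over_cone} (cf.\ \eqref{eqLemHauD}), the lower density bounds of Lemma \ref{lem_lowerBoundDensity} plus the monotonicity formula give Hausdorff convergence of $\mathrm{spt}\|(V_i')^{\theta_i'}\|\cap\overline{B_1}$ to $\mathrm{spt}\|\boldsymbol C_{H^\infty}\|$, so $\mathrm{dist}(\cdot,\mathrm{spt}\|\boldsymbol C_{H_i'}\|)\to0$ uniformly on the supports (using $H_i'\to H^\infty$), and the excess tends to $0$ since the masses are uniformly bounded. (ii) Alternatively, a Portmanteau-type bound: with $f_i=\mathrm{dist}^2(\cdot,\mathrm{spt}\|\boldsymbol C_{H_i'}\|)$ converging uniformly on $\overline{B_1}$ to $f=\mathrm{dist}^2(\cdot,\mathrm{spt}\|\boldsymbol C_{H^\infty}\|)$, one has $\limsup_i\int_{\overline{B_1}}f_i\,d\|(V_i')^{\theta_i'}\|\le\int_{\overline{B_1}}f\,d\|\boldsymbol C^{\theta^\infty}_{H^\infty}\|=0$, which is valid because the measures are defined on a neighborhood of $\overline{B_1}$ and the integrand is nonnegative and continuous; the same remark fixes the strict mass inequality in clause \ref{hyp_l2_2}, where you need upper semicontinuity of mass on the closed ball. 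Together with $\mu(\mathcal V_{i;X_i,r})\le Cr\varepsilon_i\to0$ from Proposition \ref{prop_rescaling}, this closes the contradiction and your proof goes through.
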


\begin{proof}
	To prove the lemma, we examine the four conditions outlined in Hypothesis \ref{hyp_l2}.
	The first one is a direct consequence of Proposition \ref{prop_rescaling}.

	The approach for the second condition is similar to the proof for \eqref{eq:pfVolumeUpper}.

	Regarding the third criterion, we denote $H=H^{\theta'}$ for some $\theta_1' \in (0,\pi)$, and in line with Remark \ref{rmk:LXg}, express $L_X^g(H)=H^{\theta_2'}$.
	Proposition \ref{prop_Pi2Delta} tells us $|L_X^g(H)-H|<C\mu(\mathcal{V})\le C \varepsilon_0$, leading to $\left|\theta_1'-\theta_2'\right|\le C \varepsilon_0$.
	By the definition of $(\Lambda_0,\mu)$-stationary, we deduce $\left|\theta(X)-\theta(0)\right|\le \varepsilon_0$ and $\left|\theta_1'-\theta(0)\right|\le \varepsilon_0$.
	Consequently, $\left|\theta_2'-\theta(X)\right|\le C\varepsilon_0<\varepsilon$, assuming $\varepsilon_0$ is adequately small.

	For the last condition, we write $\mathcal{V}'=(V',W',\theta',g')$ and demonstrate that
	\[
		\int_{ B_1(0)} \mathrm{dist}^2(Y,\mathrm{spt}\|\boldsymbol{C}_H\|)d\|(V')^{\theta'}\|(Y)\le \varepsilon,
	\]
	by choosing $\varepsilon_0$ small enough.

	This is equivalent to
	\[
		\int_{ (\Pi^g_{X,\rho})^{-1}(B_1(0))} \mathrm{dist}^2(\Pi^g_{X,\rho}(Y),\mathrm{spt}\|\boldsymbol{C}_H\|)|J_S\Pi^g_{X,\rho}|d\|V^\theta\|(Y)\le \varepsilon,
	\]
	where $J_S\Pi^g_{X,\rho}$ denotes the Jacobian of the map $\Pi^g_{X,\rho}$ restricted to the $S$-plane.
	
	We observe that
	\begin{align*}
		\mathrm{dist}(\Pi^g_{X,\rho}(Y),\mathrm{spt}\|\boldsymbol{C}_H\|)\le{} & \mathrm{dist}(\eta_{X,\rho}(Y),\mathrm{spt}\|\boldsymbol{C}_H\|)+\mathrm{dist}((L_X^g-\mathrm{Id})\circ \eta_{X,\rho}(Y),\mathrm{spt}\|\boldsymbol{C}_H\|),\\
		\le{} & \frac{1}{\rho}\mathrm{dist}(Y,\mathrm{spt}\|\boldsymbol{C}_H\|)+\frac{1}{\rho}r(X)+C\frac{\mu}{\rho},\\
		|J_S \Pi^g_{X,\rho}|\le{}& \frac{1+C\mu }{\rho^n}.
	\end{align*}

	Consequently, we establish
	\begin{align*}
		{}&\int_{ (\Pi^g_{X,\rho})^{-1}(B_1(0))} \mathrm{dist}^2(\Pi^g_{X,\rho}(Y),\mathrm{spt}\|\boldsymbol{C}_H\|)|J_S\Pi^g_{X,\rho}|d\|V^\theta\|(Y)\\
		\le{}& \frac{1}{\rho^{n+2}}\int_{ B_1(0)} \left( \mathrm{dist}^2(Y,\mathrm{spt}\|\boldsymbol{C}_H\|)+r^2(X) \right)d\|V^\theta\|(Y)+\frac{C\mu}{\rho^{n+2}}.
	\end{align*}
	It can be sufficiently small provided $\varepsilon_0$ is chosen to be small enough and ensure $r^2(X)$ is also small enough based on Lemma \ref{lem_graph}.
	Lastly, we can conclude the proof by considering $|L_X^g(H)-H|\le C\mu$.
\end{proof}

\begin{theorem}
	[$L^2$-estimate]
	For any $\tau \in (0,\frac{1}{8})$, $\omega \in (0,1)$, there exist constants $\varepsilon_0=\varepsilon_0(n,\tau,\Lambda_0) \in (0,\frac{1}{2})$ and $\beta=\beta(n,\Lambda_0) \in (0,1)$ such that if $(\mathcal{V},H)$ satisfies $(\Lambda_0,\varepsilon_0)$-hypothesis. Then, the following conclusions hold,
	\begin{enumerate}[\normalfont(a)]
		\item $V\lfloor(B_{\frac{13}{16}}\backslash \{ r<\tau \})=\left|\mathrm{graph}u \cap B_{\frac{13}{16}}\backslash \left\{ r<\tau \right\}\right|$ where $u \in C^{1,\beta}(B_{\frac{13}{16}}\cap H\backslash \{ r<\frac{\tau}{2} \},H^{\bot} )$, and $\mathrm{dist}(X+u(X),\mathrm{spt}\|\boldsymbol{C}\|)=|u(X)|$ for $X \in B_{\frac{13}{16}}\cap H\backslash \{ r<\tau \}$.
		\label{it:L2Est1}
	\item $W\lfloor(B_{\frac{13}{16}}\backslash \left\{ r<\tau \right\})=|B_{\frac{13}{16}}\cap \left\{ x_1<-\tau \right\}\cap \partial \mathbb{H}^{n+1}|$.
			\label{it:Wsupport}
		\item $\int_{ B_{\frac{3}{4}}} \frac{|X^{\bot_S} |^2}{|X|^{n+2}}dV(X,S)\le C \mathcal{E}^2(\mathcal{V},H)$.
		\item $\sum_{j =3}^{n+1}\int_{ B_{\frac{3}{4}}} |e_j^{\bot_S} |^2dV(X,S)\le C \mathcal{E}^2(\mathcal{V},H)$.
			\label{it:L2EsteiNormal}
		\item $\int_{ B_{\frac{3}{4}}} \frac{\mathrm{dist}^2(X,\mathrm{spt}\|\boldsymbol{C}\|)}{|X|^{n+2-\omega}}d\|V^\theta\|(X)\le C_1 \mathcal{E}^2(\mathcal{V},H)$.
	\end{enumerate}
	Here, $C=C(n,\Lambda_0)$ and $C_1=C_1(n,\Lambda_0,\omega)$.
	\label{thm__l_2_estimate}
\end{theorem}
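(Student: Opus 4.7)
\textbf{Proof plan for Theorem \ref{thm__l_2_estimate}.}

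The strategy is to obtain the graph structure first, then derive the three integral estimates (c), (d), (e) by combining the boundary monotonicity formula (Theorem \ref{thm_monotonicityFormula}) with first-variation identities (Proposition \ref{prop_1stVar}), following the template of Simon \cite{Simon1993cylindrical} adapted to the VPCA setting. For (a) and (b), fix $\alpha=\alpha(n,\Lambda_0)$ small and apply Lemma \ref{lem_graph} with parameter $\tau/2$ in place of $\tau$, taking $\varepsilon_0$ small enough so that its hypothesis is satisfied. The resulting function $u \in C^{1,\beta}$ is defined on a neighborhood of $B_{13/16}\cap H\setminus\{r<\tau\}$ and has $|Du|\le \alpha$, so the normal to $\mathrm{spt}\|\boldsymbol{C}\|$ is realized along $H^\perp$ and the identity $\mathrm{dist}(X+u(X),\mathrm{spt}\|\boldsymbol{C}\|)=|u(X)|$ is immediate. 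The statement (\ref{it:Wsupport}) about $W$ follows by combining Lemma \ref{lem_graph_over_cone} on $B_{7/8}\setminus\{r<\tau\}$ with Theorem \ref{thm:smallDensity}: at interior points of the complement, density upper bounds force $W$ to coincide with $|\{x_1<0\}\cap\partial\mathbb{H}^{n+1}|$ on the specified set.

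For part (c), apply the first case of Theorem \ref{thm_monotonicityFormula} at $X_0=0$ (where $g(0)=\delta$), let $\sigma\to 0^+$, and take $\rho=3/4$ to obtain
\[
 2\int_{\mathcal{B}_{3/4}}\frac{|X^{\perp_S}|^2}{|X|^{n+2}}\,dV(X,S)\le (1+C\mu)\frac{2\|V^\theta\|(\mathcal{B}_{3/4})}{(3/4)^n}-\omega_n\Theta(\mathcal{V},0)+C\mu.
\]
The right-hand side is the density deficit at scale $3/4$. Writing it as $(\mathcal{I}_{\mathcal{V}}(0,3/4)-\mathcal{I}_{\boldsymbol{C}_H}(0,3/4))+(\mathcal{I}_{\boldsymbol{C}_H}(0,3/4)-\Theta(\mathcal{V},0))$, the second bracket is non-positive by Hypothesis \ref{hyp_l2}\ref{hyp_l2_2} (together with the observation that $\mathcal{I}_{\boldsymbol{C}_H}(0,\rho)=1-\cos\theta(0)$ for all $\rho$), while the first bracket equals $(\omega_n(3/4)^n/2)^{-1}(\|V^\theta\|(\mathcal{B}_{3/4})-\|\boldsymbol{C}_H^\theta\|(\mathcal{B}_{3/4}))$ and is bounded by $C\mathcal{E}^2$ using the graph representation from Step~1 (the Taylor expansion of area in $|Du|^2$ plus the $L^2$ bound $\int r^2|Du|^2\le C\mathcal{E}^2$ from Lemma \ref{lem_graph}) together with the small-measure bound on the axis tube $\{r<\tau\}$. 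Part (d) is the tilt-excess estimate. For each $j\in\{3,\dots,n+1\}$, the constant field $e_j$ is tangent to $\partial\mathbb{H}^{n+1}$ and to $\mathrm{spt}\|\boldsymbol{C}_H\|$. Test Proposition \ref{prop_1stVar} against $\varphi=\zeta^2(x_j-c)e_j$ (with a radial cutoff $\zeta$ equal to $1$ on $B_{3/4}$ and supported in $B_{13/16}$, and $c$ chosen so that the $W$-boundary term vanishes at first order); direct computation gives $\mathrm{div}_S\varphi=\zeta^2|e_j^{\top_S}|^2+2\zeta(x_j-c)e_j^{\top_S}\cdot D\zeta$, so $|e_j^{\top_S}|^2=1-|e_j^{\perp_S}|^2$ turns the identity into a Caccioppoli-type estimate. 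Cauchy--Schwarz and Lemma \ref{lem_graph} convert the right-hand side into $C\mathcal{E}^2$.

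Finally, (e) upgrades (c) to the sharper weight $|X|^{n+2-\omega}$. The key is Lemma \ref{lem_change_center}: for any $X_0\in B_{5/8}\cap\partial\mathbb{H}^{n+1}$ with $\Theta(\mathcal{V},X_0)\ge 1-\cos\theta(X_0)$ and any $r\in(0,1/3)$, the rescaled quadruple $(\mathcal{V}_{X_0,r},L^g_{X_0}(H))$ satisfies the hypothesis with any prescribed small parameter, so part (c) applies at $(X_0,r)$ and, after transforming back,
\[
 \int_{\mathcal{B}_{r/2}(X_0)}\frac{|(X-X_0)^{\perp_S}|^2}{|X-X_0|^{n+2}}\,dV\le \frac{C}{r^n}\,\mathcal{E}^2(\mathcal{V}_{X_0,r},L^g_{X_0}(H)).
\]
Decomposing $\mathcal{B}_{3/4}$ into dyadic shells around $0$ and using (c), (d), and the identity $\mathrm{dist}^2(X,\mathrm{spt}\|\boldsymbol{C}\|)\le C(|X^{\perp_S}|^2+|X|^2\sum_{j\ge 3}|e_j^{\perp_S}|^2)$ valid on the graph portion (with the complement handled by the small-measure bound near the axis), each shell contributes at most a fixed multiple of $\mathcal{E}^2$; the geometric factor $|X|^{-n-2+\omega}\sim 2^{k(n+2-\omega)}$ times the shell bound $\sim 2^{-k(n+2)}\mathcal{E}^2$ leaves a convergent geometric sum $\sum_k 2^{-k\omega}$, which is bounded by a constant depending on $\omega$.

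The main obstacle is Step 4 (part (e)): the change-of-center lemma requires the density lower bound $\Theta(\mathcal{V},X_0)\ge 1-\cos\theta(X_0)$ at the base point, which need not hold everywhere in $\mathrm{spt}\|V^\theta\|$, so one must argue that the portion of $\mathrm{spt}\|V^\theta\|$ avoiding such points has measure controlled by $\mathcal{E}^2$ (again via Lemma \ref{lem_graph} and Theorem \ref{thm:smallDensity}). A second subtlety is the interplay with the non-Euclidean metric $g$: the linear map $L^g_{X_0}$ moves the half-plane $H$, but by Proposition \ref{prop_Pi2Delta} this perturbation is $O(\mu)$ and is absorbed into $\mathcal{E}^2(\mathcal{V})$.
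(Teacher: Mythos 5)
Your outline for (a), (b) matches the paper, but the core analytic steps (c)--(e) contain genuine gaps. For (c), you reduce to bounding the density deficit $\|V^\theta\|(\mathcal{B}_{3/4})-\|\boldsymbol{C}\|$-mass by $C\mathcal{E}^2$ "via the Taylor expansion of area plus the small-measure bound on the axis tube." The graph part is indeed second order (since $u\perp H$, the boundary band of $B_{3/4}$ has width $O(|u|^2)$), but the axis tube $\{r<\tau\}$ is not: the only available bound there is $\|V^\theta\|(\{r<\tau\}\cap B_{3/4})\le C\tau$, and nothing at this stage prevents the varifold from carrying excess mass of order $\tau$ (a fixed constant, far larger than $\mathcal{E}^2$) in that tube. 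Ruling this out is precisely the content of the estimate, and the paper gets it not by a direct mass comparison but by coupling (c) with (d): testing the first variation with the planar radial field $\psi^2(|X|)x$, $x=x_1e_1+x_2e_2$, produces the weighted mass difference $\int\psi^2\,d\|V^\theta\|-\int\psi^2\,d\|\boldsymbol{C}\|$ together with the tilt term on the same side of an identity whose right-hand side carries the weight $r^2$ (so the axis region is harmless and the graph comparison is quadratic in $(u,Du)$), using the cone identity $\int\psi^2 d\|\boldsymbol{C}\|=-2\int\frac{r^2}{|X|}\psi\psi'\,d\|\boldsymbol{C}\|$. Your separate Caccioppoli-type test for (d) with $\varphi=\zeta^2(x_j-c)e_j$ runs into the same wall: after using $|e_j^{\top_S}|^2=1-|e_j^{\perp_S}|^2$ the identity contains the order-one term $\int\zeta^2 d\|V^\theta\|$, and extracting an $O(\mathcal{E}^2)$ bound again requires the weighted mass-difference control, so (c) and (d) cannot be decoupled the way you propose.

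For (e), two steps fail. The pointwise inequality $\mathrm{dist}^2(X,\mathrm{spt}\|\boldsymbol{C}\|)\le C\bigl(|X^{\perp_S}|^2+|X|^2\sum_{j\ge 3}|e_j^{\perp_S}|^2\bigr)$ on the graph portion is false: for a half-plane through the axis tilted by a small angle relative to $H$ one has $X^{\perp_S}=0$ and $e_j^{\perp_S}=0$ for $j\ge3$, yet $\mathrm{dist}(X,\mathrm{spt}\|\boldsymbol{C}\|)\sim |X|\cdot(\text{tilt})$; what (c) and (d) control is essentially $|u-r\partial_r u|$ and $|D_yu|$, and degree-one homogeneous graphs lie in the kernel. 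Moreover, your dyadic-shell summation implicitly assumes a per-scale bound $\int_{B_\rho}\mathrm{dist}^2\,d\|V^\theta\|\lesssim \rho^{n+2}\mathcal{E}^2$, i.e.\ scale-invariant excess decay, which is not available at this stage (it is the outcome of the blow-up iteration in Sections 5--6, which itself relies on Theorem \ref{thm__l_2_estimate}). The paper instead obtains (e) in one stroke by inserting the vector field $\zeta^2|X|^{-n+\omega}\mathrm{dist}^2(X,\mathrm{spt}\|\boldsymbol{C}\|)\frac{X}{|X|^2}$ into the first-variation inequality, a Hardy-type integration by parts in which the loss $\omega>0$ makes the radial integration converge and only (c), (d) are needed as input; also note that the change-of-center Lemma \ref{lem_change_center} you invoke is needed for Corollary \ref{cor_noHole}, not for (e) itself, which is centered at the origin.
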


\begin{proof}
	The first two conclusions are the consequences of Lemma \ref{lem_graph}.

	The subsequent analyses rely on the application of the monotonicity formula.
	At first, we claim we can estimate $\|V^\theta\|(\mathcal{B}_\rho)$ by
	\begin{equation}
		\|V^\theta\|(\mathcal{B}_\rho)\le
		\frac{(1+C\mu)\rho}{n}\frac{d}{d\rho}\|V^\theta\|(B_\rho)+C\mu \rho^n,\quad \text{ for a.e. } \rho \in (0,1)
		\label{eq:pfEstVolumeBall}
	\end{equation}
	for some $C=C(n)$ provided $\varepsilon_0$ small enough.
	
	Given $\rho \in (0,1), \delta \in (0,1-\rho)$, let $\phi$ be a cut-off function defined by
	\[
		\phi(t)=
		\begin{cases}
		1, & \text{ if }t< \rho,\\
		\frac{\rho+\delta-t}{\delta}, & \text{ if }\rho \le t < \rho+\delta,\\
		0, & \text{ otherwise.}
		\end{cases}
	\]
	We choose $\varphi = \phi(|X|)X$ in \eqref{eq:1stVarG} and using $\left| \boldsymbol{H}_g\right|_g\le \mu$, $\left|D \theta\right|\le \mu$, we have
	\begin{equation}
		\left|\int_{ } \left[ \left< X, \nabla^g_S \phi \right> _g +\phi \mathrm{div}^g_S X  \right] \sqrt{\mathrm{det}g_S}dV^\theta(X,S)\right|\le C\mu \int_{ } |\phi||X| d\|V+W\|.
		\label{eq:pf1stVarL2}
	\end{equation}

	Note that we have
	\[
		\left< X, \nabla^g_S \phi \right> _g \sqrt{\mathrm{det}g_S}= X \cdot \phi' \frac{X^{\top _S}}{|X|} \sqrt{\mathrm{det}g_S}=-\frac{|X^{\top _S}|^2}{|X|\delta}\sqrt{\mathrm{det}g_S}\ge -(1+C\mu) \frac{|X^{\top _S}|^2}{|X|\delta},
	\]
	for any $|X| \in (\rho,\rho+\delta)$.
	Together with $\left|\mathrm{div}_S^gX \sqrt{\mathrm{det}g_S}-\mathrm{div}_SX\right|\le C\mu$ and \eqref{eq:pf1stVarL2}, we can obtain
	\[
		\int_{ } n\phi d\|V^\theta\|\le \frac{1+C\mu}{\delta} \int_{B_{\rho+\delta}\backslash B_\rho} \frac{|X^{\top _S}|^2}{|X|}dV^\theta(X,S)+C\mu(\|V\|(\mathcal{B}_{\rho+\delta})+\omega_n (\rho+\delta)^n).
	\]
	Thus, together $\|V\|(\mathcal{B}_\rho)\le C \rho^n$ by the monotonicity formula, we can take $\delta \rightarrow 0^+$ to get
	\begin{align*}
		n\|V^\theta\|(\mathcal{B}_\rho)\le{}& (1+C\mu)\rho\frac{d}{d\rho}\int_{B_{\rho}} \frac{|X^{\top _S}|^2}{|X|^2}dV^\theta(X,S)+C\mu(\|V\|(\mathcal{B}_{\rho})+\omega_n \rho^n)\\
		\le{}& (1+C\mu)\rho \frac{d}{d\rho}\|V^\theta\|(B_\rho)+C\mu \rho^n.
	\end{align*}
	for a.e. $\rho \in (0,1)$.
	Therefore, the above claim holds.

	Now, we use the monotonicity formula \eqref{eq:MonotonicityFormulaBoundary} with $\sigma \rightarrow 0^+$ and together with \eqref{eq:pfEstVolumeBall}, we have
	\begin{align}
		\frac{1}{\omega_n}\int_{ B_\rho} \frac{|X^{\bot_S} |^2}{|X|^{n+2}}dV(X,S)
		\le{}&
		(1+C\mu) \frac{1}{n \omega_n\rho^{n-1}}\frac{d}{d\rho}\|V^\theta\|(B_\rho)-\frac{1}{2}\Theta(\mathcal{V},0)+C\mu\nonumber \\
		\le{}&(1+C\mu) \frac{1}{n \omega_n\rho^{n-1}}\frac{d}{d\rho}\|V^\theta\|(B_\rho)-\Theta(\|\boldsymbol{C}\|,0)+C\mu.
		\label{eq:pfMonOtherForm}
	\end{align}
	Here, we use the fact $\frac{1}{2}\Theta(\mathcal{V},0)\ge  \Theta(\|\boldsymbol{C}\|,0)-C\mu$.

	We choose a smooth test function $\psi$ by
	\begin{equation}
		\psi(t)=
		\begin{cases}
		1, & t<\frac{27}{32} \\
		0, & t>\frac{7}{8}.
		\end{cases}
		\label{eq:pfTestPsi}
	\end{equation}
	Multiplying both sides of \eqref{eq:pfMonOtherForm} by $\psi^2(|X|)\rho^{n-1}$ and integrating, we obtain
	\begin{align*}
		&n\int_{ 0} ^1 \psi^2(\tau) \tau^{n-1}\int_{ B_\tau(0)} \frac{|X^{\bot_S} |^2}{|X|^{n+2}}dV(X,S)d\tau\\
		\le{}&
		\int_{ 0}^1 \psi^2(\tau)(1+C\mu) \frac{d}{d\tau}\|V^\theta\|(B_\tau(0))d\tau-\int_{0}^1n\omega_n\rho^{n-1}\psi^2(\tau)\Theta(\|\boldsymbol{C}\|,0)d\tau+C\mu\\
		={}& \int_{ } \psi^2(|X|)(1+C\mu)d\|V^\theta\|(X)-\int_{ } \psi^2(\left|X\right|)d\|\boldsymbol{C}\|(X)+C\mu \\
		\le{}& \int_{ } \psi^2(|X|)d\|V^\theta\|(X)-\int_{ } \psi^2(\left|X\right|)d\|\boldsymbol{C}\|(X)+C\mu.
	\end{align*}

	Consequently, we deduce	
	\begin{equation}
		C \int_{ B_{\frac{13}{16}}} \frac{|X^{\bot_S} |^2}{|X|^{n+2}}dV(X,S)\le
		\int_{ } \psi^2(|X|)d\|V^\theta\|(X)-\int_{ } \psi^2(|X|)d\|\boldsymbol{C}\|(X)+C\mu.
		\label{eq:pfXNormal}
	\end{equation}

	On the other hand, we set $\varphi=\psi^2(|X|)x$ in \eqref{eq:1stVarFormula}, yielding
	\[
		\left|\int_{ } \mathrm{div}_S \varphi dV^\theta(X,S)\right|\le C\mu.
	\]
	Here, the vector field $x=x_1e_1+x_2e_2$ at point $X=(x,y)=(x_1,x_2,y)$.
	
	Upon a simple calculation, we have
	\begin{align*}
		\mathrm{div}_S(x)={}&1+\sum_{j =1}^{n-1}|e_{2+j}^{\bot_S} |^2,\\
		\left|x^{\bot_S}  \cdot D_y \psi^2\right|\le{}& 2\psi|D_y \psi||x^{\bot_S} | \left( \sum_{j =3}^{n+1}|e_j^{{\bot_S}}|^2 \right)^{\frac{1}{2}}.
	\end{align*}

	Therefore, it follows that
	\begin{align*}
{} & \int_{ } \psi^2 \left( 1+ \sum_{j=3}^{n+1} |e_j^{\bot_S} |^2\right)dV^\theta(X,S)\le - \int_{ } D \psi^2 \cdot x^{\top_S} d V^\theta(X,S)+C\mu\\
		={}&-\int_{ } D_x \psi \cdot x^{\top_S}dV^\theta(X,S)+ \int_{ } D_y \psi^2 \cdot x^{\bot_S} dV^\theta(X,S)+C\mu\\
		\le{}& - 2\int_{ } \psi \psi' \frac{|x^{\top_S}|^2}{|X|}dV^\theta(X,S)+2 \int_{ } \psi \psi' |x^{\bot_S} | \left( \sum_{j=3}^{n+1} |e_j^{\bot_S} |^2\right)dV^\theta(X,S)+C\mu.
	\end{align*}
	Applying the Cauchy-Schwarz inequality results in
	\begin{align*}
		{} & \int_{ } \psi^2 \left( 1+\frac{1}{2}\sum_{j=3}^{n+1} |e_j^{\bot_S} |^2 \right)dV^\theta(X,S) \\
		\le{} & -2\int_{ } \psi \psi' \frac{|x^{\top_S}|^2}{|X|}dV^\theta(X,S)+2 \int_{ }  (\psi')^2 |x^{\bot_S} |^2dV^\theta(X,S)+C\mu \\
		\le{}& C \int_{ B_1\backslash B_{\frac{13}{16}}} (\psi^2+(\psi')^2)|x^{\bot_S} |^2 dV(X,S)-2\int_{ } \psi \psi' \frac{|x|^2}{|X|}dV^\theta(X,S)+C\mu.
	\end{align*}
	Here, we have used $\psi'=0$ when $|X|<\frac{13}{16}$.

	Considering \ref{it:L2Est1} from Theorem \ref{thm__l_2_estimate}, it's evident that
	\[
		|x^{\bot_S} |^2\le |u(x',y)|^2+|x'|^2\left|D u(x',y)\right|
	\]
	for a point $(x,y)$ on the graph of $u$, with $S$ being the tangent plane at $(x,y)$, and $(x',y)$ being the projection onto the half-plane $H$. Therefore,
	\begin{align*}
		{} & \int_{ } \psi^2\left( 1+\frac{1}{2}\sum_{j=3 }^{n+1}|e_j^{\bot_S} |^2 \right)dV^\theta(X,S) \\
		\le{} & C \int_{ U \cap B_{\frac{7}{8}}}( |u|^2+r^2\left|D u\right|^2)d \mathcal{H}^n+C \int_{ B_{\frac{7}{8}}\backslash \mathrm{graph}u} r^2d\|V\|(X)-2\int_{ \mathrm{graph}u } \psi \psi'\frac{r^2}{|X|}d\|V\|(X)\\
		{}& -C\int_{ \mathrm{spt}\|W\|\backslash U_W} r^2\cos \theta d\|W\|-2\int_{ U_W}\psi\psi' \frac{r^2}{|X|}\cos \theta d\|W\| +C\mu.
	\end{align*}

	On $\mathrm{graph}u$, we can rewrite the integral as
	\begin{align*}
		&\int_{ \mathrm{graph}u} \psi \psi' \frac{r^2}{|X|}d\|V\|(X)\\
		={}&\int_{ U_V} \psi(|X|+u^2(X)) \psi'(|X|^2+u^2(X))\frac{r^2(X+u(X))}{|X+u(X)|}\sqrt{1+|Du|^2}d\mathcal{H}^n(X)
	\end{align*}
	leading to
	\[
		\left|\int_{ \mathrm{graph}u} \psi \psi' \frac{r^2}{|X|}d\|V\|(X)-
		\int_{U_V} \psi \psi' \frac{r^2}{|X|}d\mathcal{H}^n(X)\right|\le C\int_{ U_V} (u^2+r^2|D u|^2)d \mathcal{H}^n.
	\]
	for some $C=C(n)$.
	Note that,
	\[
		\int_{ B_1} \psi^2 d\|\boldsymbol{C}\|(X)=-2\int_{ B_1} \frac{r^2}{|X|}\psi \psi' d\|\boldsymbol{C}\|(X),
	\]
	and considering $-\psi \psi' \frac{r^2}{|X|}\ge 0$ for any $X\neq 0$, we derive
	\begin{align*}
		{} & \frac{1}{2}\int_{B_{\frac{27}{32}} } \sum_{j=3 }^{n+1}|e_j^{\bot_S} |^2 dV(X,S)+
		\int_{ B_1} \psi^2 d\|V^\theta\|(X)-
		\int_{ B_1} \psi^2 d\|\boldsymbol{C}\|(X)\\
		\le{} &  C \int_{ U \cap B_{\frac{7}{8}}}( |u|^2+r^2\left|D u\right|^2)d \mathcal{H}^n+C \int_{ B_{\frac{7}{8}}\backslash \mathrm{graph}u} r^2d\|V\|(X)\\
			  &
		+\int_{ B_{\frac{7}{8}}\backslash U_W}r^2 d\|W\|(X) +C\mu .
	\end{align*}

	Along with \eqref{eq:pfXNormal} and Lemma \ref{lem_graph}, we establish
	\begin{equation}
		\int_{ B_{\frac{13}{16}}} \sum_{j=3 }^{n+1}|e_j^{\bot_S} |^2 dV(X,S)+
		\int_{ B_{\frac{13}{16}}(0)} \frac{|X^{\bot_S} |^2}{|X|^{n+2}}dV(X,S)\le
		C \mathcal{E}^2(\mathcal{V},H).
		\label{eq:pfENorm}
	\end{equation}

	At last, we choose $\varphi(X)=\zeta^2 |X|^{-n+\omega} \mathrm{dist}^2(X,\mathrm{spt}\|\boldsymbol{C}\|) \frac{X}{|X|^2}$ in \ref{eq:1stVarFormula} where
	$\zeta$ is the cut off function with $\zeta=1$ on $B_{\frac{3}{4}}(0)$ and $\zeta=0$ outside of $B_{\frac{13}{16}}(0)$. (Despite $\phi$ not being a $C^1$ vector field, such $\phi$ can still be utilized through an approximate argument.)

	Consequently, we arrive at
	\begin{align}
		&\int_{B_{\frac{3}{4}}}
		\frac{\mathrm{dist}^2(X,\mathrm{spt}\|\boldsymbol{C}\|)}{|X|^{n+2-\omega}}d\|V\|(X)\nonumber \\
		\le{}&
		C \left( \int_{ B_1} \zeta^2 \frac{|X^{\bot_S} |^2}{|X|^{n+2-\omega}}dV(X,S)+\int_{ } \frac{|D_S \zeta|^2 \mathrm{dist}^2(X,\mathrm{spt}\|\boldsymbol{C}\|)}{|X|^{n-\omega}}d\|V\|(X)+\mu(\mathcal{V}) \right).
	\end{align}

	Given that $D_S\zeta$ is supported in $\overline{B_{\frac{13}{16}}\backslash B_{\frac{3}{4}}}$, along with \eqref{eq:pfENorm}, it follows that
	\[
		\int_{ B_{\frac{3}{4}}(0)} \frac{\mathrm{dist}^2(X,\mathrm{spt}\|\boldsymbol{C}\|)}{|X|^{n+2-\omega}}d\|V^\theta\|(X)\le
		C  \mathcal{E}^2(\mathcal{V},H).
	\]
\end{proof}

\begin{corollary}
	\label{cor_noHole}
	For any $\rho \in (0,\frac{1}{4}]$ and $\omega \in (0,1)$, there exists a constant $\varepsilon=\varepsilon(n,\rho,\Lambda_0)$ such that if $(\mathcal{V},H)$ satisfies $(\Lambda_0,\varepsilon)$-Hypothesis, then for each $Z = (0,\zeta,\eta) \in \mathrm{spt}\|V\|\cap B_{\frac{5}{8}}\cap \partial \mathbb{H}^{n+1}$ with $\Theta(\mathcal{V},Z)\ge 1-\cos \theta(Z)$, we have
	\begin{equation}
		|\zeta|^2\le C\mathcal{E}^2(\mathcal{V},H), 
		\label{eq:corDistSing}
	\end{equation}
	for some $C=C(n,\Lambda_0)$, and
	\begin{equation}
	\int_{ B_{\frac{\rho}{2}}(Z)} \frac{\mathrm{dist}^2(X,\mathrm{spt}\|\boldsymbol{C}_Z\|)}{|X-Z|^{n+2-\omega}}d\|V^\theta\|(X)\le
	\frac{C}{\rho^{n+2-\omega}} \int_{ B_\rho(Z)} \mathrm{dist}^2(X,\mathrm{spt}\|\boldsymbol{C}_Z\|)d\|V^\theta\|(X)+C\mu \rho^{\omega},
	\label{eq:corL2ImproveExcess}
\end{equation}
for some $C=C(n,\Lambda_0,\omega)$.
Here, the cone $\boldsymbol{C}_Z$ defined as
\[
	\boldsymbol{C}_Z:=(\tau_{-Z})_{\#}(\boldsymbol{C}).
\]
\end{corollary}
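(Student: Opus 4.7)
The plan is to establish the two inequalities by different means: \eqref{eq:corDistSing} by combining Lemma \ref{lem_graph} with a compactness/blow-up upgrade, and \eqref{eq:corL2ImproveExcess} by applying Theorem \ref{thm__l_2_estimate}(e) to the rescaled quadruple $\mathcal{V}_{Z,\rho}$.

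For \eqref{eq:corDistSing}, I first extract a qualitative bound $|\zeta|\leq \tau$ (with $\tau$ at our disposal). Choose $\alpha=\alpha(\Lambda_0)$ in Lemma \ref{lem_graph} small enough that $\alpha<\tfrac12|\cot\theta'|$ for every admissible $\theta'$. Writing $H=H^{\theta'}$ and solving $Z=p+u(p)\vec{n}_H$ with $p\in H$ and $Z=(0,\zeta,\eta)\in\partial\mathbb{H}^{n+1}\cap\mathrm{spt}\|V\|$, a direct calculation (using $\vec{n}_H=(-\sin\theta',\cos\theta',0)$) forces $r(p)=\zeta\sin\theta'$ (so necessarily $\zeta>0$) and $u(p)=\zeta\cos\theta'$, whence $|u(p)|/r(p)=|\cot\theta'|>\alpha$; for $\zeta<0$ one obtains the impossible $r(p)<0$. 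In either sign, $|\zeta|>\tau$ would contradict the graph representation $V\lfloor(\{r>\tau\})\subset\mathrm{graph}\,u$ provided by Lemma \ref{lem_graph}, so $|\zeta|\leq\tau$. To upgrade this to the sharp linear estimate $|\zeta|^2\leq C\mathcal{E}^2$, I would argue by contradiction: given a sequence $\mathcal{V}_k,Z_k$ with $|\zeta_k|/\mathcal{E}_k\to\infty$, rescale about $Z_k$ at scale $|\zeta_k|$ (or $\mathcal{E}_k$, whichever is appropriate), apply the compactness Theorem \ref{thm_compactnessRIV} and the upper semi-continuity of density (Theorem \ref{thm_theta__v_w_theta_g_x_ge__theta__v_i_w_i_theta_i_g_i_x}) to extract a limiting stationary quadruple which retains a high-density ($\geq 1-\cos\theta$) boundary point $Z_\infty$ displaced from the expected edge $\{x_1=x_2=0\}$, and show this violates the constancy theorem together with the density classification \eqref{eq:thmDensityRange}.

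For \eqref{eq:corL2ImproveExcess}, the idea is simply to apply Theorem \ref{thm__l_2_estimate}(e) to the rescaled quadruple $\mathcal{V}_{Z,\rho}=(\Pi^g_{Z,\rho})_{\#}\mathcal{V}$, paired with the half-hyperplane $L_Z^g(H)$. By Proposition \ref{prop_rescaling} the rescaled quadruple lies in $\mathcal{RIV}(C\mu\rho)$, and Lemma \ref{lem_change_center}, applied to $Z$ using \eqref{eq:corDistSing} from part (a) to control the shift of the edge, shows that $(\mathcal{V}_{Z,\rho},L_Z^g(H))$ satisfies the $(\Lambda_0,\varepsilon')$-Hypothesis for $\rho$ small. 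Theorem \ref{thm__l_2_estimate}(e) then yields the weighted integral estimate on $\mathcal{V}_{Z,\rho}$ with cone $\boldsymbol{C}_{L_Z^g(H)}$. Pulling back via the change of variables $Y=(X-Z)/\rho$ and using that $\boldsymbol{C}_Z=\tau_{-Z}\boldsymbol{C}$ matches $\boldsymbol{C}_{L_Z^g(H)}$ up to the error controlled by part (a), a direct scaling bookkeeping (mass picks up $\rho^{-n}$, distance picks up $\rho^{-1}$, and the weight $|Y|^{-(n+2-\omega)}$ picks up $\rho^{n+2-\omega}$) produces the factor $\rho^{-(n+2-\omega)}$ on the right-hand side, while $\mu(\mathcal{V}_{Z,\rho})\leq C\mu\rho$ multiplied by $\rho^{\omega}$ (from the same scaling) yields the error term $C\mu\rho^{\omega}$.

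The principal obstacle is the sharp linear bound in \eqref{eq:corDistSing}: a naive coupling of the monotonicity lower bound $\|V^\theta\|(B_\rho(Z))\gtrsim\rho^n$ with the distance-squared estimate of Theorem \ref{thm__l_2_estimate}(e) only delivers $|\zeta|^{n+2}\lesssim\mathcal{E}^2$, since the lower bound $\mathrm{dist}(X,\mathrm{spt}\|\boldsymbol{C}\|)\gtrsim|\zeta|$ holds only on a ball of radius $\sim|\zeta|$ about $Z$. Closing the gap to $|\zeta|^2\lesssim\mathcal{E}^2$ forces the blow-up/rigidity argument sketched above, and care is needed to ensure that the high-density condition $\Theta(\mathcal{V}_k,Z_k)\geq 1-\cos\theta_k(Z_k)$ is inherited by the limit and translates into an edge-type singularity incompatible with a perturbation of the model cone $\boldsymbol{C}_H$.
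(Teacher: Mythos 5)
Your treatment of \eqref{eq:corL2ImproveExcess} is essentially the paper's argument (rescale to $\mathcal{V}_{Z,\rho}$ via Lemma \ref{lem_change_center}, apply Theorem \ref{thm__l_2_estimate}(e) with the cone $\boldsymbol{C}_{L_Z^g(H)}$, and pull back with the scaling bookkeeping), and your qualitative step $|\zeta|\le\tau$ is also in line with what the paper does. The genuine gap is in the sharp bound \eqref{eq:corDistSing}. The compactness/blow-up route you sketch cannot deliver a linear estimate $|\zeta|\le C\mathcal{E}$: if $|\zeta_k|\ge k\,\mathcal{E}_k$, rescaling about $Z_k$ at scale $|\zeta_k|$ gives a rescaled excess of order $\mathcal{E}_k^2/|\zeta_k|^{n+2}$, which (by your own crude bound $|\zeta_k|^{n+2}\lesssim\mathcal{E}_k^2$) is bounded \emph{below}, not small; hence the limit quadruple is not forced to be the model configuration $\boldsymbol{C}_H$, and a high-density boundary point at unit distance from the edge of the limit is not in conflict with the constancy theorem or \eqref{eq:thmDensityRange} (stationary limits with such points exist). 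Rescaling at a fixed scale instead makes $Z_k$ converge to the edge, so the ``displaced'' point disappears in the limit. Soft compactness only reproduces the qualitative statement $|\zeta_k|\to0$ that you already have; it cannot produce the rate.

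The mechanism the paper uses, and which your proposal is missing, is quantitative: (i) a two-cone triangle inequality — for $X$ in the fixed-size region $B_{\rho}(Z)\setminus\{r<\rho/2\}$ close to $H$ one has $\mathrm{dist}(X,\mathrm{spt}\|\boldsymbol{C}_H\|)+\mathrm{dist}(X,\mathrm{spt}\|(\tau_{-Z})_\#\boldsymbol{C}_H\|)\ge c\,|\zeta|$, which together with the mass lower bound $\|V\|(B_\rho(Z)\setminus\{r<\rho/2\})\ge c\rho^n$ gives \eqref{eq:pfEstShiftSing}, i.e. $|\zeta|^2\le C\rho^{-n}\bigl(\mathcal{E}^2+\int_{B_\rho(Z)}\mathrm{dist}^2(X,\mathrm{spt}\|(\tau_{-Z})_\#\boldsymbol{C}_H\|)\,d\|V^\theta\|\bigr)$ — note the lower bound holds on a set of measure $\sim\rho^n$ independent of $|\zeta|$, not just on a $|\zeta|$-ball, because it is the \emph{sum} of the two distances that is bounded below; and (ii) control of the shifted-cone excess by applying Lemma \ref{lem_change_center} and Theorem \ref{thm__l_2_estimate} recentred at $Z$, which yields $\int_{B_\rho(Z)}\mathrm{dist}^2(X,\mathrm{spt}\|(\tau_{-Z})_\#\boldsymbol{C}_H\|)\,d\|V^\theta\|\le C\rho^{n+2-\omega}\bigl(\mathcal{E}^2+\mu+|\zeta|^2\bigr)$. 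Inserting (ii) into (i) produces the $|\zeta|^2$ error with the small prefactor $C\rho^{2-\omega}$, which is absorbed into the left-hand side by fixing $\rho=\rho_0(n,\Lambda_0)$ small, giving \eqref{eq:corDistSing}. Without this absorption step (or some equivalent quantitative estimate), your part (a) — and hence the input you need for part (b) — is not proved.
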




\begin{proof}
	We begin with a straightforward geometric observation.
	For any point $Z=(0,\zeta,\eta)$ with $|\zeta|\le \frac{\rho}{8}$, if a point $X \in \mathbb{H}^{n+1}$ is such that $r(X)\ge \frac{\rho}{2}$ and $\mathrm{dist}(X,H)\le \frac{\rho}{8}$, then
	\[
		\mathrm{dist}(X,\mathrm{spt}\|\boldsymbol{C}_H\|)=\mathrm{dist}(X,H),\quad \mathrm{dist}(X,\|(\tau_{-Z})_\#\boldsymbol{C}_H\|)=\mathrm{dist}(X,(\tau_{-Z})_\#H).
	\]
	Therefore, we have
	\begin{equation}
		\mathrm{dist}(X,\mathrm{spt}\|\boldsymbol{C}_H\|)+\mathrm{dist}(X,\|(\tau_{-Z})_\#\boldsymbol{C}_H\|)\ge |H^\bot (Z)|.
		\label{eq:pfL2Dist}
	\end{equation}

	If we choose $\tau=\frac{\rho}{8}$ in Theorem \ref{thm__l_2_estimate} and choose $\varepsilon=\varepsilon(n,\rho,\Lambda_0)$ small enough, alongside with Theorem \ref{thm_allard}, for any $Z=(0,\zeta,\eta) \in \mathrm{spt}\|V\|\cap B_{\frac{5}{8}}\cap \left\{ \Theta(\mathcal{V},X)\ge 1-\cos \theta(Z) \right\}$, we have $r(Z)\le \frac{\rho}{8}$.
	For any $X \in \mathrm{spt}\|V\|\cap B_\rho(Z)\backslash \left\{ r<\frac{\rho}{2} \right\}$, we get $\mathrm{dist}(X,H)\le \frac{\rho}{8}$.

	From \eqref{eq:pfL2Dist}, noting that $|H^\bot (Z)|\ge C|\zeta|$ for some $C=C(\Lambda_0)$, by choosing $\varepsilon_0=\varepsilon_0(n,\rho,\Lambda_0)$ to be sufficiently small, we deduce
	\[
		\mathrm{dist}(X,\mathrm{spt}\|\boldsymbol{C}_H\|)+\mathrm{dist}(X,\|(\tau_{-Z})_\#\boldsymbol{C}_H\|)\ge C|\zeta|,
	\]
	for any $X \in \mathrm{spt}\|V\|\cap B_\rho(Z) \backslash \left\{ r<\frac{\rho}{2} \right\}$ for some constant $C=C(\Lambda_0)$.

	Note that
	$\|V\|(B_\rho(Z)\backslash \left\{ r<\frac{\rho}{2} \right\})\ge C \rho^n$ as $|\zeta|\le \frac{\rho}{8}$, 
	we find that
	\begin{equation}
		|\zeta|^2\le
		\frac{C}{\rho^n}\left(  \int_{ B_\rho(Z)} \mathrm{dist}^2(X,\mathrm{spt}\|\boldsymbol{C}_H\|)d\|V\|(X)+ \int_{ B_\rho(Z)} \mathrm{dist}^2(X,\mathrm{spt}\|(\tau_{-Z})_\#\boldsymbol{C}_H\|)d\|V\|(X)\right),
		\label{eq:pfEstShiftSing}
	\end{equation}
	for some $C=C(\Lambda_0)$.

	Using Lemma \ref{lem_change_center} and choosing $\varepsilon=\varepsilon(n,\Lambda_0)$ small enough allows us to apply Theorem \ref{thm__l_2_estimate} with $(\mathcal{V}_{Z,\frac{1}{3}},L_X^g(H))$ in place of $(\mathcal{V},H)$, which leads to
	\begin{align}
		&\frac{1}{(4\rho)^{n+2-\omega}}
		\int_{ B_{4\rho}(0)} \mathrm{dist}^2(X,\mathrm{spt}\|\boldsymbol{C}_{L_Z^g(H)}\|)d\|(\Pi^g_{Z,\frac{1}{3}})_{\#}V^\theta\|(X)\nonumber \\
		\le{}&C\left( \int_{ B_{1}(0)} \mathrm{dist}^2(X,\mathrm{spt}\|\boldsymbol{C}_{L_Z^g(H)}\|)d\|(\Pi^g_{Z,\frac{1}{3}})_{\#}V^\theta\|(X) +\mu(\mathcal{V})\right)\nonumber \\
		\le{}& C \left( \int_{ B_{\frac{1}{3}(1+C\mu)}(Z)} \mathrm{dist}^2(\Pi^g_{Z,\frac{1}{3}}(X),\mathrm{spt}\|(L_Z^g)_{\#}\boldsymbol{C}_H\|)|J_S\Pi^g_{Z,\frac{1}{3}}|d\|V^\theta\|(X)+\mu(\mathcal{V}) \right)\nonumber  \\
		\le{}& C \left( \int_{ B_1(0)} (1+C\mu)\mathrm{dist}^2( \eta_{Z,\frac{1}{3}}(X),\mathrm{spt}\|\boldsymbol{C}_H\| )d\|V^\theta\|(X)+\mu(\mathcal{V}) \right) \nonumber \\
		\le{}& C \left( \int_{ B_1(0)} \mathrm{dist}^2(X-Z,\mathrm{spt}\|\boldsymbol{C}_H\|)d\|V^\theta\|(X)+\mu(\mathcal{V}) \right) \nonumber \\
		\le{}& C \int_{ B_1(0)} \mathrm{dist}^2(X,\mathrm{spt}\|\boldsymbol{C}_H\|)d\|V^\theta\|(X)+C\mu(\mathcal{V})+C |\zeta|^2,
		\label{eq:pfLongEstOfL2}
	\end{align}
	for some $C=C(n,\Lambda_0,\omega)$, where we have
	used \eqref{eq:propLcloseG} and \eqref{eq:ballCompLXg} here.

	On the other hand, using a similar computation, if we choose $\varepsilon=\varepsilon(n,\Lambda_0)$ small enough, we can obtain
	\begin{align}
		{} & \int_{ B_{4\rho}(0)} \mathrm{dist}^2(X,\mathrm{spt}\|\boldsymbol{C}_{L_Z^g(H)}\|)d\|(\Pi_{Z,\frac{1}{3}}^g)_{\#}V^\theta\|(X) \nonumber \\
		\ge{} & (1-C\mu)\int_{ B_{\frac{4}{3}\rho(1-C\mu)}(Z)} \mathrm{dist}^2(X-Z,\mathrm{spt}\|\boldsymbol{C}_H\|)d\|V^\theta\|(X)\nonumber \\
		\ge{} & (1-C\mu)\int_{ B_{\frac{4}{3}\rho(1-C\mu)}(Z)} \mathrm{dist}^2(X,\mathrm{spt}\|(\tau_{-Z})_{\#}\boldsymbol{C}_H\|)d\|V^\theta\|(X)\nonumber \\
		\ge{} & \frac{1}{2}\int_{ B_{\rho}(Z)} \mathrm{dist}^2(X,\mathrm{spt}\|(\tau_{-Z})_{\#}\boldsymbol{C}_H\|)d\|V^\theta\|(X).
		\label{eq:pfLongEstLeft}
	\end{align}
	
	Together with \eqref{eq:pfEstShiftSing}, there exists $\varepsilon=\varepsilon(n,\rho,\Lambda_0)$ small enough such that
\begin{align}
	|\zeta|^2\le{}& \frac{C}{\rho^n} \left[ 
	\int_{ B_1(0)} \mathrm{dist}^2(X,\mathrm{spt}\|\boldsymbol{C}_H\|)d\|V^\theta\|(X)\right.\nonumber \\
&\left.+\rho^{n+2-\omega}\int_{ B_1(0)} \mathrm{dist}^2(X,\mathrm{spt}\|\boldsymbol{C}_H\|)d\|V^\theta\|(X)+\rho^{n+2-\omega}\mu(\mathcal{V}) + \rho^{n+2-\omega}|\zeta|^2\right] \nonumber \\
		(1-C\rho^{2-\omega})|\zeta|^2\le{}& 
		\frac{C}{\rho^n} \int_{ B_1(0)} \mathrm{dist}^2(X,\mathrm{spt}\|\boldsymbol{C}_H\|)d\|V^\theta\|(X)+C\rho^{n+2-\omega}\mu(\mathcal{V})
		\label{eq:pfEstZeta}
\end{align}
for some $C=C(n,\Lambda_0,\omega)$.
Notably, the constant $C$ does not depend on $\rho$.
Therefore, by choosing $\omega=\frac{1}{2}$ and a sufficiently small $\rho_0=\rho_0(n,\Lambda_0)$, the coefficient $(1-C\rho_0^{\frac{3}{2}})$ of $|\zeta|^2$ becomes greater than $\frac{1}{2}$.
For such fixed $\rho_0$, we choose $\varepsilon$ small enough such that \eqref{eq:pfEstZeta} holds with $\rho=\rho_0$.
This leads to $|\zeta|^2\le C\mathcal{E}^2(\mathcal{V},H)$ for some $C=C(n,\Lambda_0)$.

Based on Lemma \ref{lem_change_center} again, given $\rho \in (0,\frac{5}{24}]$, by choosing $\varepsilon=\varepsilon(n,\rho,\Lambda_0)$ small enough, we can apply Theorem \ref{thm__l_2_estimate} with $(\mathcal{V}_{Z,\rho},L_Z^g(H))$ in place of $(\mathcal{V},H)$ to get,
\begin{align}
	&\int_{ B_{\frac{3}{4}}} \frac{\mathrm{dist}^2(X,\mathrm{spt}\|\boldsymbol{C}_{L_Z^g(H)}\|)}{|X|^{n+2-\omega}}d\|(\Pi_{Z,\rho}^g)_{\#}V^\theta\|(X)\nonumber \\
	\le{}&
	C\left( \int_{ B_1} \mathrm{dist}^2(X,\mathrm{spt}\|\boldsymbol{C}_{L_Z^g(H)}\|)d\|(\Pi_{Z,\rho}^g)_{\#}V^\theta\|+\mu(\mathcal{V})\right)
	\label{eq:pfCorChangeCenter}
\end{align}
Following a similar argument to that used in deriving \eqref{eq:pfLongEstOfL2} and \eqref{eq:pfLongEstLeft}, it can be shown that
\begin{align*}
	{} &  \int_{ B_1} \mathrm{dist}^2(X,\mathrm{spt}\|\boldsymbol{C}_{L_Z^g(H)}\|)d\|(\Pi_{Z,\rho}^g)_{\#}V^\theta\|(X)\\
	\le{} & \int_{ B_{(1+C\mu)\rho}(Z)} \frac{1+C\mu}{\rho^{n+2}}\mathrm{dist}^2(X,\mathrm{spt}\|\boldsymbol{C}_Z\|)d\|V^\theta\|(X),\\
	{} & \int_{ B_{\frac{3}{4}}} \frac{\mathrm{dist}^2(X,\mathrm{spt}\|\boldsymbol{C}_{L_Z^g(H)}\|)}{|X|^{n+2-\omega}}d\|(\Pi_{Z,\rho}^g)_{\#}V^\theta\|(X) \\
	\ge{} & \int_{ B_{\frac{3}{4}(1-C\mu)\rho}(Z)} \frac{1-C\mu}{\rho^{\omega}} \frac{\mathrm{dist}^2(X,\mathrm{spt}\|\boldsymbol{C}_Z\|)}{|X-Z|^{n+2-\omega}}d\|V^\theta\|(X).
\end{align*}

Therefore, by choosing $\varepsilon=\varepsilon(n,\Lambda_0,\rho)$ small enough, we establish
\[
	\int_{ B_{\frac{3}{5}\rho}(Z)} \frac{\mathrm{dist}^2(X,\mathrm{spt}\|\boldsymbol{C}_Z\|)}{|X-Z|^{n+2-\omega}}d\|V^\theta\|(X)\le
	\frac{C}{\rho^{n+2-\omega}} \int_{ B_{\frac{6}{5}\rho}(Z)} \mathrm{dist}^2(X,\mathrm{spt}\|\boldsymbol{C}_Z\|)d\|V^\theta\|(X)+C\mu \rho^{\omega},
\]
for any $\rho \in (0,\frac{5}{24}]$ and some $C=C(n,\Lambda_0,\omega)$.
Hence, \eqref{eq:corL2ImproveExcess} holds for any $\rho \in (0,\frac{1}{4}]$.
\end{proof}
\begin{lemma}
	\label{lem_noHole}
	Given $\delta \in (0,\frac{1}{16})$. Suppose $(\mathcal{V},H)$ satisfies $(\Lambda_0,\varepsilon)$-Hypothesis for some $\varepsilon=\varepsilon(n,\delta,\Lambda_0)$ small enough.
	Then,
	\begin{enumerate}[\normalfont(a)]
		\item $B_\delta(0,y)\cap \left\{ \Theta(\mathcal{V},Z)\ge 1-\cos \theta(Z) \right\}\cap \partial \mathbb{H}^{n+1}\neq \emptyset $ for each $(0,y) \in \left\{ 0 \right\}\times B_{\frac{1}{2}}^{n-1}(0)$.
		\item For $\omega \in (0,1)$ and $\sigma \in [\delta,\frac{1}{16})$, we have
			\begin{equation}
				\int_{ B_{\frac{1}{2}}(0)\cap \left\{ r<\sigma \right\}} \mathrm{dist}^2(X,\mathrm{spt}\|\boldsymbol{C}_H\|)d\|V^\theta\|(X)\le
				C \sigma^{1-\omega}
				\mathcal{E}^2(\mathcal{V},H),
				\label{eq:lemNoHole}
			\end{equation}
			where $C=C(n,\Lambda_0,\omega)$.
	\end{enumerate}
\end{lemma}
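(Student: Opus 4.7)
The plan is to prove (a) by a contradiction argument combining the compactness theorem with Theorem \ref{thm:smallDensity}, and then to derive (b) from (a) by covering $B_{\frac{1}{2}}\cap\{r<\sigma\}$ with balls centered at the high-density points produced by (a) and applying the improved $L^2$-decay of Corollary \ref{cor_noHole}.

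For part (a), I would argue by contradiction. Suppose there exist sequences $\{\mathcal{V}_i=(V_i,W_i,\theta_i,g_i)\}$ satisfying $(\Lambda_0,\varepsilon_i)$-Hypothesis with $\varepsilon_i\to 0^+$, half-hyperplanes $H_i$, and points $(0,y_i)\in \{0\}\times B_{\frac{1}{2}}^{n-1}(0)$ for which $B_\delta(0,y_i)\cap\partial\mathbb{H}^{n+1}$ contains no point of density at least $1-\cos\theta_i$. Passing to a subsequence and invoking Theorem \ref{thm_compactnessRIV} together with the analysis of Lemma \ref{lem_graph_over_cone}, $\mathcal{V}_i\to\mathcal{V}=(V,W,\theta,\delta)$ with $V=|H|$ and $W=|H^0|$ on $\mathcal{B}_1$, and $y_i\to y_\infty$. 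Rescaling at $(0,y_i)$ by $\delta$ via Proposition \ref{prop_rescaling} gives $\mathcal{V}_i'\in\mathcal{RIV}(C\delta\varepsilon_i)$, and the hypothesized absence of high-density points transfers to $\Theta(\mathcal{V}_i',X)<1-\cos\theta_i'(X)$ throughout $\mathcal{B}_1\cap\partial\mathbb{H}^{n+1}$. Theorem \ref{thm:smallDensity} then forces $V_i'$ to be $\mu_i'$-stationary under $g_i'$ in the free-boundary sense. Since $H$ is invariant under $y$-translations and conical about each point of its edge, we still have $V_i'\to|H|$. Reflecting across $\partial\mathbb{H}^{n+1}$ converts $V_i'$ into stationary integral varifolds converging to $|H|+|\tilde H|$; yet because $\theta\ge\pi/2+\Lambda_0>\pi/2$, the union $H\cup\tilde H$ bends at the $y$-axis rather than extending to a full hyperplane, so by the constancy theorem it cannot support a stationary integral varifold, giving the desired contradiction.

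For part (b), by (a) I choose a maximal $\sigma$-separated set $\{(0,y_k)\}\subset \{0\}\times B_{\frac{1}{2}}^{n-1}(0)$ (so $N\le C\sigma^{-(n-1)}$), and for each $k$ pick $Z_k=(0,\zeta_k,\eta_k)\in B_\delta(0,y_k)\cap\partial\mathbb{H}^{n+1}$ satisfying the density condition. Since $\delta\le\sigma$, the balls $\{B_{3\sigma}(Z_k)\}$ cover $B_{\frac{1}{2}}\cap\{r<\sigma\}$. Corollary \ref{cor_noHole} gives $|\zeta_k|^2\le C\mathcal{E}^2(\mathcal{V},H)$, and since $\boldsymbol{C}_{Z_k}=(\tau_{-Z_k})_\#\boldsymbol{C}_H$ is a translate of $\boldsymbol{C}_H$ by $(0,\zeta_k,\eta_k)$ with $\boldsymbol{C}_H$ invariant under $y$-translations, the Hausdorff distance between their supports within $B_1$ is $O(|\zeta_k|)$. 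Hence
\[
\mathrm{dist}^2(X,\mathrm{spt}\|\boldsymbol{C}_H\|)\le 2\,\mathrm{dist}^2(X,\mathrm{spt}\|\boldsymbol{C}_{Z_k}\|)+C|\zeta_k|^2.
\]
Integrating over $B_{3\sigma}(Z_k)$, applying \eqref{eq:corL2ImproveExcess} at $Z_k$ with a fixed radius (say $\rho=\tfrac{1}{5}$) together with the pointwise bound $|X-Z_k|\le 3\sigma$ to dominate the first summand by $C\sigma^{n+2-\omega}\mathcal{E}^2$, and using the monotonicity formula (Corollary \ref{cor_monoG}) to bound $\|V^\theta\|(B_{3\sigma}(Z_k))\le C\sigma^n$ on the second summand, I obtain
\[
\int_{B_{3\sigma}(Z_k)}\mathrm{dist}^2(X,\mathrm{spt}\|\boldsymbol{C}_H\|)\,d\|V^\theta\|\le C\sigma^n\mathcal{E}^2.
\]
Summing over $k$ yields $CN\sigma^n\mathcal{E}^2\le C\sigma\mathcal{E}^2\le C\sigma^{1-\omega}\mathcal{E}^2$; the case where $\sigma$ is close to $\tfrac{1}{16}$ is trivial since the integrand is controlled by $\mathcal{E}^2$ directly.

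The main obstacle is part (a): the upper semi-continuity of density (Theorem \ref{thm_theta__v_w_theta_g_x_ge__theta__v_i_w_i_theta_i_g_i_x}) flows in the wrong direction for extracting high-density points in the approximating sequence from those in the limit, so the absence of such points in a region must be leveraged indirectly. Theorem \ref{thm:smallDensity} serves as the crucial conversion device, turning the hypothetical absence of high-density points into a free-boundary structure that is directly incompatible with convergence to a half-hyperplane $|H|$ meeting $\partial\mathbb{H}^{n+1}$ at contact angle $\theta\ne\pi/2$.
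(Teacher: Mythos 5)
Your proposal is correct and follows essentially the same route as the paper: part (a) is the same contradiction/compactness argument converting the hypothesized absence of high-density points into free-boundary stationarity via Theorem \ref{thm:smallDensity} and then ruling out a half-plane meeting $\partial\mathbb{H}^{n+1}$ at angle $\theta\neq\frac{\pi}{2}$, and part (b) is the same covering argument over axis points supplied by (a), driven by \eqref{eq:corDistSing} and \eqref{eq:corL2ImproveExcess}. The only differences are cosmetic (explicit rescaling at $(0,y_i)$ before applying Theorem \ref{thm:smallDensity}, and localizing the weighted estimate at a fixed radius $\rho=\frac{1}{5}$ rather than at scale comparable to $\sigma$), and they do not change the substance of the proof.
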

\begin{proof}
	Assuming the first statement is incorrect, then there exists $\delta \in (0,\frac{1}{16})$ and a sequence of $(\mathcal{V}_k,H_k)=((V_k,W_k,\theta_k,g_k),H_k)$ satisfying $(\Lambda_0,\varepsilon_k)$-hypothesis for some $\varepsilon_k \rightarrow 0^+$, $\mathcal{V}_k\rightarrow \mathcal{V}=(V,W,\theta,\delta)$, $H_k\rightarrow H$ and $(0,y_i)\rightarrow (0,y) \in \left\{ 0 \right\}\times B^{n-1}_{\frac{2}{3}}(0)$ such that
	\[
		B_\delta(0,y_i)\cap \left\{ \Theta(\mathcal{V}_i,Z)\ge 1-\cos \theta_i(Z)\right\}\cap \partial \mathbb{H}^{n+1}=\emptyset .
	\]
	Based on Theorem \ref{thm:smallDensity},
	$V$ is a stationary integral varifold in $B_{\frac{\delta}{2}}(0,y)$ in free boundary sense.
	The constancy theorem implies $\mathrm{spt}\|V\|\subset H$, which leads $H$ to be orthogonal to $\partial \mathbb{H}^{n+1}$.
	This contradicts with $H=H^{\theta}$ for $\theta \in [\frac{\pi}{2}+\Lambda_0,\pi-\Lambda_0]$.

	For the second statement,
	we choose $Z=(0,\zeta,\eta) \in B_{\frac{5}{8}}\cap \mathrm{spt}\|V\|$ with $\Theta(\mathcal{V},Z)\ge 1-\cos \theta(Z)$.
	By Corollary \ref{cor_noHole}, if we choose $\varepsilon=\varepsilon(n,\Lambda_0)$ small enough, we deduce
	\[
		\int_{ B_{\frac{1}{8}}(Z)} \frac{\mathrm{dist}^2(X,\mathrm{spt}\|\boldsymbol{C}_Z\|)}{|X-Z|^{n+2-\omega}}d\|V^\theta\|(X)\le C \int_{ B_{\frac{1}{4}}(Z)} \mathrm{dist}^2(X,\mathrm{spt}\|\boldsymbol{C}_Z\|)d\|V^\theta\|(X)+C\mu(\mathcal{V}).
	\]

	Using the inequality $\frac{1}{|X-Z|^2} \ge 64$ for $X \in B_{\frac{1}{8}}(Z)$, triangle inequality $\frac{1}{2}\mathrm{dist}^2(X,\mathrm{spt}\|\boldsymbol{C}\|)-|\zeta|^2\le \mathrm{dist}^2(X,\mathrm{spt}\|\boldsymbol{C}_Z\|)\le 2\mathrm{dist}^2(X,\mathrm{spt}\|\boldsymbol{C}\|)+2|\zeta|^2$, we establish
	\begin{align*}
		{}&\frac{1}{2}\int_{ B_{\frac{1}{8}}(Z)} \frac{\mathrm{dist}^2(X,\mathrm{spt}\|\boldsymbol{C}\|)}{|X-Z|^{n-\omega}}d\|V^\theta\|(X)-|\zeta|^2 \int_{ B_{\frac{1}{8}}(Z)} d\|V^\theta\|(X)\\
		\le{}& \int_{ B_{\frac{1}{8}}(Z)} \frac{\mathrm{dist}^2(X,\mathrm{spt}\|\boldsymbol{C}_Z\|)}{|X-Z|^{n-\omega}}d\|V^\theta\|(X)\le \frac{1}{64} \int_{ B_{\frac{1}{8}}(Z)} \frac{\mathrm{dist}^2(X,\mathrm{spt}\|\boldsymbol{C}_Z\|)}{|X-Z|^{n+2-\omega}} d\|V^\theta\| \\
		\le{}& C\int_{ B_{\frac{1}{4}}(Z)} \mathrm{dist}^2(X,\mathrm{spt}\|\boldsymbol{C}_Z\|)d\|V^\theta\|(X)+C\mu(\mathcal{V}) \\
		\le{}& C\int_{ B_{\frac{1}{4}}(Z)} \mathrm{dist}^2(X,\mathrm{spt}\|\boldsymbol{C}\|)d\|V^\theta\|(X)+C|\zeta|^2 \int_{ B_{\frac{1}{4}}(Z)} d\|V^\theta\|(X) +C\mu(\mathcal{V}).
	\end{align*}
	This implies
	\begin{align}
		&\int_{ B_{\frac{1}{8}}(Z)} \frac{\mathrm{dist}^2(X,\mathrm{spt}\|\boldsymbol{C}\|)}{|X-Z|^{n-\omega}}d\|V^\theta\|(X)\nonumber\\
		\le{}&
		C\int_{ B_{\frac{1}{4}}(Z)} \mathrm{dist}^2(X,\mathrm{spt}\|\boldsymbol{C}\|)d\|V^\theta\|(X)+C|\zeta|^2 \|V^\theta\|(B_{\frac{1}{4}}(Z))+C\mu(\mathcal{V})\nonumber \\
		\le{}& C \int_{ B_1} \mathrm{dist}^2(X,\mathrm{spt}\|\boldsymbol{C}\|)d\|V^\theta\|(X)+C\mu(\mathcal{V}).
		\label{eq:pfL2DistDecay}
	\end{align}
	in view of \eqref{eq:corDistSing}.

	For any $\sigma \in [\delta,\frac{1}{16})$, and $(0,y) \in B_{\frac{1}{2}}$, by choosing $\varepsilon=\varepsilon(n,\delta,\Lambda_0)$ small enough, based on the first part of this lemma, we find that $Z=(0,\zeta,\eta) \in B_\delta(0,y)\subset B_\sigma(0,y)$ with $\Theta(\mathcal{V},Z)\ge 1-\cos \theta(Z)$. Employing \eqref{eq:pfL2DistDecay}, we obtain,
	\begin{align*}
		&\frac{1}{\sigma^{n-\omega}}\int_{ B_\sigma(0,y)}\mathrm{dist}^2(X,\mathrm{spt}\|\boldsymbol{C}\|)d\|V^\theta\|(X) \le \frac{C}{(2\sigma)^{n-\omega}} \int_{ B_{2\sigma}(Z)} \mathrm{dist}^2(X,\mathrm{spt}\|\boldsymbol{C}\|)d\|V^\theta\|(X) \\
		\le{} & C \int_{ B_{\frac{1}{8}}(Z)} \frac{\mathrm{dist}^2(X,\mathrm{spt}\|\boldsymbol{C}\|)}{|X-Z|^{n-\omega}} d\|V^\theta\|(X)
		\le{} C \int_{ B_1} \mathrm{dist}^2(X,\mathrm{spt}\|\boldsymbol{C}\|)d\|V^\theta\|(X)+C\mu(\mathcal{V}).
	\end{align*}
	
	Now,using at most $C=C(n)\sigma^{n-1}$ balls $B_{2\sigma}(0,y)$ with $(0,y) \in B_{\frac{1}{2}}$ to cover $B_{\frac{1}{2}}\cap \left\{ r<\sigma \right\}$, we get
	\[
		\frac{\sigma^{n-1}}{\sigma^{n-\omega}}
		\int_{ B_{\frac{1}{2}}\cap  \left\{ r<\sigma \right\}} \mathrm{dist}^2(X,\mathrm{spt}\|\boldsymbol{C}\|)d\|V^\theta\|(X)\le
		C \int_{ B_1} \mathrm{dist}^2(X,\mathrm{spt}\|\boldsymbol{C}\|)d\|V^\theta\|(X)+C\mu(\mathcal{V}),
	\]
	which gives us \eqref{eq:lemNoHole}.
\end{proof}

\section{Properties of Blow-ups}%
\label{sec:blow_up_argument}

We fix a constant $\Lambda_0 \in (0,1)$, and consider a sequence of VPCA-quadruples $\left\{ \mathcal{V}_k=(V_k,W_k,\theta_k,g_k) \right\}$, a sequence of half hyperplanes  $\left\{ H_k \right\}$, together with a positive sequence $\left\{ \varepsilon_k \right\}$ with $\lim_{k\rightarrow +\infty} \varepsilon_k=0$.
We assume $(\mathcal{V}_k,H_k)$ satisfies $(\Lambda_0,\varepsilon_k)$-hypothesis for each $k\ge 1$.
We write $\boldsymbol{C}_k=\boldsymbol{C}_{H_k}$.
Based on Theorem \ref{thm_compactnessRIV}, we assume $\mathcal{V}_k\rightarrow \mathcal{V}=(V,W,\theta,\delta)$ and $H_k\rightarrow H$ as $k\rightarrow +\infty$.

We denote
\[
	\mathcal{E}_k:=\mathcal{E}(\mathcal{V}_k,H_k).
\]

We choose $\left\{ \delta_k \right\}, \left\{ \tau_k \right\}$ be two sequences of positive decreasing numbers converging to 0.
Based on the $L^2$ estimate theorem (Theorem \ref{thm__l_2_estimate}) and Lemma \ref{lem_noHole}, we obtain that,
\begin{enumerate}[\normalfont(a)]
	\item (Theorem \ref{thm__l_2_estimate}) $V_k\lfloor(B_{\frac{13}{16}}\backslash \{ r<\tau_k \})=\left|\mathrm{graph}u_k \cap B_{\frac{13}{16}}\backslash \{ r<\tau_k \}\right|$ where $u_k \in C^{1,\beta}(B_{\frac{13}{16}}\cap H_k \backslash \{ r<\frac{\tau_k}{2} \},H_k^\bot )$ for some $\beta=\beta(n,\Lambda_0) \in (0,1)$ and satisfies $\mathrm{dist}(X+u_k(X),\mathrm{spt}\|\boldsymbol{C}\|)=|u_k(X)|$ for $X \in B_{\frac{13}{16}}\cap H_k \backslash \{ r<\frac{\tau_k}{2} \}$.
		$W_k\lfloor(B_{\frac{13}{16}}\backslash \{ r<\tau_k \})=|B_{\frac{13}{16}}\cap \{ x_2< -\tau \}\cap \partial \mathbb{H}^{n+1}|$.
		\label{it:graphK}
	\item (Corollary \ref{cor_noHole}) For each $Z=(0,\zeta,\eta) \in \mathrm{spt}\|V^{\theta_k}_k\|\cap B_{\frac{5}{8}}$ with $\Theta(\mathcal{V}_k,Z)\ge 1-\cos \theta_k(Z)$, we have
		\begin{equation}
			|\zeta|\le C\mathcal{E}_k.
			\label{eq:BlowUpSing}
		\end{equation}
		\label{it:BlowUpSingK}
	\item (Lemma \ref{lem_noHole}) For each $(0,y) \in \{ 0 \}\times \mathbb{R}^{n-1}\cap B_{\frac{1}{2}}$, we have
		\begin{equation}
			B_{\delta_k}(0,y)\cap \{ \Theta(\mathcal{V}_k,Z)\ge 1-\cos \theta_k(Z)\}\cap \partial \mathbb{H}^{n+1}\neq \emptyset.
			\label{eq:BlowUpNoHole}
		\end{equation}
		and
		\begin{equation}
			\int_{ B_{\frac{1}{2}}\cap \{ r<\sigma \}} \mathrm{dist}^2(X,\mathrm{spt}\|\boldsymbol{C}_k\|)d\|V^{\theta_k}_k\|(X)\le C\sqrt{\sigma}\mathcal{E}_k^2,
			\label{eq:BlowUpNonConcen}
		\end{equation}
		for any $\sigma \in (\delta_k,\frac{1}{16}]$ where the constant $C$ is independent of $\sigma$.
		\label{it:BlowUpNoHoleK}
	\item (Corollary \ref{cor_noHole}) For each $\omega\in (0,1)$, $\rho \in (0,\frac{1}{4})$, and $Z=(0,\zeta,\eta) \in \mathrm{spt}\|V^{\theta_k}_k\|\cap B_{\frac{5}{8}}$ such that $\Theta(\mathcal{V}_k,Z)\ge 1-\cos \theta_k(Z)$, we have
		\begin{align}
			{} & \int_{ B_{\frac{\rho}{4}}(Z)\cap H_k \backslash \{ r<\tau_k \}} \frac{\left|u_k-\zeta \sin \theta_k(0)\vec{n}_k \right|^2}{|X+u_k-Z|^{n+2-\omega}}d\mathcal{H}^n(X) \nonumber \\
			\le{} & \frac{C}{\rho^{n+2-\omega}} \int_{ B_\rho(Z)} \mathrm{dist}^2(X,\mathrm{spt}\|(\tau_{-Z})_{\#}\boldsymbol{C}_k\|)d\|V^{\theta_k}_k\|(X)+C\mu(\mathcal{V}_k) \rho^{\omega},
			\label{eq:BlowUpImproveL2}
		\end{align}
		for $k$ sufficient large, where $C=C(n,\Lambda_0,\omega)$, and $\vec{n}_k$ represents the upward-pointing unit normal vector of $H_k$.
		\label{it:BlowUpImproveL2K}
\end{enumerate}

Let $l_k$ be the real number such that $H_k$ can be written as
\[
	H_k=\{ X+l_kr(X)\vec{n}:X \in H \},
\]
where $\vec{n}$ is the upward-pointing unit normal vector of $H$.
In other words, we regard $H_k$ as a graph of function $X\rightarrow l_kr(X)$ over $H$.

We define $\tilde{u}_k$ by
\begin{equation}
	\tilde{u}_k(X)=u_k(X+l_kr(X)).
	\label{eq:defUtil}
\end{equation}

Note that $\tilde{u}_k$ is well-defined on $B_{\frac{3}{4}}\cap H\backslash \{ r<2\tau_k \}$ for $k$ sufficient large.
We set $\tilde{u}_k=0$ on $B_{\frac{3}{4}}\cap H\cap \{ r<2 \tau_k \}$.
It follows that
\begin{equation}
	\|\tilde{u}_k\|^2_{C^{1,\beta}(B_{\frac{3}{4}}\cap H\backslash \left\{ r<\tau\right\})}\le C
	\mathcal{E}_k^2
	\label{eq:estBlowUpC1a}
\end{equation}
for any $\tau \in (0,\frac{1}{2})$ and any $k$ with $2\tau_k<\tau$ where $C=C(n,\Lambda_0,\tau)$ in view of Theorem \ref{thm_allard}.

Therefore, up to a subsequence, we assume
\begin{equation}
	\mathcal{E}_k^{-1}\tilde{u}_k\rightarrow v
	\label{eq:defBlowUp}
\end{equation}
for some $v \in C^{1,\beta}_{\mathrm{loc}}(B_{\frac{3}{4}}\cap H,H^\bot )$, where the convergence is in $C^{1,\beta}_{\mathrm{loc}}(B_{\frac{3}{4}}\cap H,H^\bot )$.
Since $u_k$ solves the minimal surface equation weakly as a graph over plane $H_k$ under metric $g_k$, and using $g_k \rightarrow \delta$ in $C^1$ sense, $H_k\rightarrow H$, and $l_k\rightarrow 0$, we find $v$ is a harmonic function.

Further, from \eqref{eq:BlowUpNonConcen}, and estimate \eqref{eq:estBlowUpC1a}, for any $\sigma \in (0,\frac{1}{16})$, we see that
\[
	\int_{ B_{\frac{3}{8}}\cap H\cap \left\{ r(X)<\sigma \right\}} |\tilde{u}_k|^2\le C \sqrt{\sigma}\mathcal{E}_k^2
\]
for $k$ sufficient large and $C=C(n,\Lambda_0)$.
his establishes that $\tilde{u}_k \rightarrow v$ in $L^2(B_{\frac{3}{8}}\cap H,H^\bot )$ as well.

\begin{definition}
	
	We say a sequence $\left\{ (\mathcal{V}_k,H_k) \right\}$ is a \textit{blow-up sequence} if $(\mathcal{V}_k,H_k)$ satisfies $(\Lambda_0,\varepsilon_k)$-hypothesis for some $\varepsilon_k \rightarrow 0^+$.

	Given a blow-up sequence, $\left\{( \mathcal{V}_k,H_k )\right\}$, we say $v \in C^{1,\beta}_{\mathrm{loc}}(B_{\frac{3}{8}}\cap H)\cap L^2(B_{\frac{3}{8}}\cap H)$ is a \textit{blow-up} if $v$ can be obtained by the limit \eqref{eq:defBlowUp} described above.
\end{definition}

The following part of this section is devoted to proving that any blow-up $v$ has sufficient regularity.
This part is similar to the argument in \cite[Section 12]{Wickramasekera2014Regularity}.
But for the sake of completeness, we give a detailed proof here.

\begin{proposition}
	\label{prop_HolderBlowUp}
	If $v$ is a blow-up, then $v \in C^{0,\alpha}(\overline{B_{\frac{3}{16}}\cap H},H^\bot )$ with the following estimate
	\[
		\|v\|_{C^{0,\alpha}(\overline{B_{\frac{3}{16}}\cap H},H^\bot )}^2\le
		C \int_{ B_{\frac{3}{8}}\cap H} |v(X)|^2d\mathcal{H}^n(X),
	\]
	where $\alpha=\alpha(n,\Lambda_0) \in (0,1)$ and $C=C(n,\Lambda_0)$.
\end{proposition}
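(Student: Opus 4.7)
The strategy is to establish a Campanato-type decay estimate: for each point $Y$ on the edge $\partial H \cap B_{1/2}=\{0\}\times \mathbb{R}^{n-1}\cap B_{1/2}$, produce a ``boundary value'' $\kappa_v(Y)\in H^\bot$ of $v$ and show that the mean-square oscillation of $v$ around $\kappa_v(Y)$ decays like a positive power of the radius. Combined with the standard interior Hölder estimate for harmonic functions, this will yield the claimed global Hölder bound on $\overline{B_{3/16}\cap H}$. The main ingredient will be the uniform $L^2$-excess decay \eqref{eq:BlowUpImproveL2} applied at singular points of $\mathcal{V}_k$ furnished by the ``no-hole'' property \eqref{eq:BlowUpNoHole}, together with the non-concentration estimate \eqref{eq:BlowUpNonConcen}.

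\textbf{Step 1 (definition of the boundary trace).} Fix $Y=(0,y)$ with $y\in B^{n-1}_{1/2}$. By \eqref{eq:BlowUpNoHole}, for each $k$ we may choose $Z_k=(0,\zeta_k,\eta_k)\in\mathrm{spt}\|V_k^{\theta_k}\|\cap B_{\delta_k}(Y)$ with $\Theta(\mathcal{V}_k,Z_k)\ge 1-\cos\theta_k(Z_k)$, so $\eta_k\to y$ and, by \eqref{eq:BlowUpSing}, $|\zeta_k|\le C\mathcal{E}_k$. Passing to a subsequence, set
\[
\kappa_v(Y):=\lim_{k\to\infty}\mathcal{E}_k^{-1}\zeta_k\sin\theta_k(0)\,\vec n_k\in H^\bot.
\]
I will verify later that this value depends only on $Y$ (not on the choice of singular points); this follows by comparing two choices via the decay estimate in Step~2 and the interior $C^{1,\beta}$ bound \eqref{eq:estBlowUpC1a}.

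\textbf{Step 2 (Campanato-type decay at an edge point).} Apply \eqref{eq:BlowUpImproveL2} with $Z=Z_k$ and a fixed $\rho\in(0,\tfrac14)$, divide by $\mathcal{E}_k^2$, change variables via \eqref{eq:defUtil} from $u_k$ to $\tilde u_k$, and pass to the limit $k\to\infty$ using $\mathcal{E}_k^{-1}\tilde u_k\to v$ in $L^2_{\mathrm{loc}}$ (established in the discussion preceding the proposition via \eqref{eq:BlowUpNonConcen} and \eqref{eq:estBlowUpC1a}) and $\mu(\mathcal{V}_k)/\mathcal{E}_k^2\le 1$. This yields, for some $\omega\in(0,1)$ fixed (say $\omega=1/2$) and a constant $C=C(n,\Lambda_0)$,
\begin{equation}\label{eq:prop_blowup_decay}
\int_{B_{\rho/4}(Y)\cap H}\frac{|v(X)-\kappa_v(Y)|^2}{|X-Y|^{n+2-\omega}}\,d\mathcal H^n(X)\le \frac{C}{\rho^{n+2-\omega}}\int_{B_\rho(Y)\cap H}|v(X)-\kappa_v(Y)|^2\,d\mathcal H^n(X)+C\rho^{\omega}E^2,
\end{equation}
where $E^2:=\int_{B_{3/8}\cap H}|v|^2$. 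Coupled with the bound $|\kappa_v(Y)|^2\le CE^2$ coming from \eqref{eq:BlowUpNonConcen} and $L^2$ convergence, this implies
\[
\frac{1}{\sigma^{n+2\alpha}}\int_{B_\sigma(Y)\cap H}|v(X)-\kappa_v(Y)|^2\,d\mathcal H^n(X)\le CE^2\qquad\forall\,\sigma\in(0,\tfrac1{32}],
\]
for any $\alpha<\omega/2$, by a standard iteration (Morrey/Campanato hole-filling) absorbing the first term on the right of \eqref{eq:prop_blowup_decay}.

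\textbf{Step 3 (Hölder continuity from Campanato estimate).} The decay in Step~2 holds uniformly in $Y\in\{0\}\times B^{n-1}_{1/2}$. For interior points $X\in H\cap B_{3/16}$ away from the edge, $v$ is harmonic and the standard interior gradient estimate combined with the oscillation bound above gives $|Dv(X)|\le C\,\mathrm{dist}(X,\partial H)^{\alpha-1}E$. Integrating this along line segments and combining with the boundary Campanato estimate in the usual way (Campanato's characterization of $C^{0,\alpha}$) yields
\[
\|v\|_{C^{0,\alpha}(\overline{B_{3/16}\cap H},H^\bot)}^2\le CE^2=C\int_{B_{3/8}\cap H}|v|^2,
\]
for $\alpha=\alpha(n,\Lambda_0)\in(0,1)$ and $C=C(n,\Lambda_0)$, as required.

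\textbf{Main obstacle.} The delicate point is Step~2: passing to the limit in \eqref{eq:BlowUpImproveL2} around the moving singular points $Z_k\to Y$. The integrand $\mathrm{dist}^2(X,\mathrm{spt}\|(\tau_{-Z_k})_{\#}\boldsymbol C_k\|)$ must be translated into an integral of $|\tilde u_k-\zeta_k\sin\theta_k(0)\vec n_k|^2$ on a domain that excludes the shrinking neighborhood $\{r<\tau_k\}$ where $u_k$ is not defined, and the discarded piece must be shown to be negligible; here \eqref{eq:BlowUpNonConcen} plays a crucial role, because it controls the $L^2$-mass of the excess on $\{r<\sigma\}$ by $\sqrt\sigma\,\mathcal E_k^2$ and thus allows us to send $\tau_k\to 0$ before taking $k\to\infty$. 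The well-definedness of $\kappa_v(Y)$ and its Hölder dependence on $Y$ are then consequences of \eqref{eq:prop_blowup_decay} applied at two nearby edge points.
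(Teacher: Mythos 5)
Your overall strategy is essentially the paper's: you define the edge trace $\kappa_v(Y)$ from singular points supplied by the no-hole property \eqref{eq:BlowUpNoHole} and the distance bound \eqref{eq:BlowUpSing}, pass \eqref{eq:BlowUpImproveL2} to the limit to obtain a weighted $L^2$ decay at edge points (the paper's \eqref{eq:pfL2BlowUpDecay}), verify well-definedness of the trace by the same divergent-integral comparison, and then combine with interior harmonicity. Your interior bookkeeping (the gradient bound $|Dv|\le C\,\mathrm{dist}(\cdot,\partial H)^{\alpha-1}E$ plus integration along segments and Hölder continuity of $Y\mapsto\kappa_v(Y)$) is a standard and perfectly acceptable alternative to the paper's dyadic change-of-center iteration leading to \eqref{eq:pfClaim}.

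There is, however, one concrete gap: the bound $|\kappa_v(Y)|^2\le CE^2$, with $E^2=\int_{B_{3/8}\cap H}|v|^2d\mathcal{H}^n$, which you need for the homogeneous form of the estimate, does not follow from what you cite. The non-concentration estimate \eqref{eq:BlowUpNonConcen} together with $L^2$ convergence and \eqref{eq:BlowUpSing} only give $|\zeta_k|\le C\mathcal{E}_k$, hence $|\kappa_v(Y)|\le C$: dividing the unit-scale excess by $\mathcal{E}_k^2$ produces the constant $1$, not $E^2$. Without $|\kappa_v(Y)|\le CE$ your Campanato decay and final conclusion degrade to $\|v\|^2_{C^{0,\alpha}}\le C(E^2+1)$, which is weaker than the proposition. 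The paper closes exactly this point by re-applying the singular-point estimate of Corollary \ref{cor_noHole} to the rescaled quadruple $(\mathcal{V}_k)_{0,\frac{1}{3}}$ via Lemma \ref{lem_change_center} (see \eqref{eq:pfEstSingular}), so that the controlling quantity is the localized $L^2$ distance over $B_{\frac{1}{3}}$ plus $\mu_k$, whose $\mathcal{E}_k^{-2}$-normalized limit is bounded by $C\int_{B_{\frac{3}{8}}\cap H}|v|^2$; alternatively, you could extract the same bound from your own decay inequality by expanding $|v-\kappa_v(Y)|^2\le 2|v|^2+2|\kappa_v(Y)|^2$ and absorbing $|\kappa_v(Y)|^2$ with $\sigma=c\rho$, $c$ small — but as written the step is unjustified. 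A related minor inaccuracy: after dividing \eqref{eq:BlowUpImproveL2} by $\mathcal{E}_k^2$, your own observation $\mu(\mathcal{V}_k)/\mathcal{E}_k^2\le 1$ only bounds the error term by $C\rho^\omega$, not by $C\rho^\omega E^2$ as you wrote; keeping it in that honest form would again only produce the inhomogeneous estimate, so this term needs to be addressed (the paper states \eqref{eq:pfL2BlowUpDecay} without it), not silently multiplied by $E^2$.
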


\begin{proof}
	For each $Y=(0,y) \in B_{\frac{3}{16}}$, by \eqref{eq:BlowUpNoHole}, we can find $Z_k=(0,\zeta_k, \eta_k) \in \mathrm{spt}\|V_k\|\cap B_{\frac{3}{4}}$ with $\Theta(\mathcal{V}_k,Z_k)\ge 1-\cos\theta_k(Z_k)$ and $Z_k\rightarrow (0,y)$ as $k\rightarrow +\infty$.
	According to \eqref{eq:BlowUpSing}, the ratio $\frac{\zeta_k}{\mathcal{E}_k}$ is uniformly bounded, allowing us to define
	\begin{equation}
		\kappa(y):=\lim_{k\rightarrow +\infty} \frac{\zeta_k}{\mathcal{E}_k}.
		\label{eq:defKappa}
	\end{equation}

	Note that the definition of $\kappa$ does not rely on the choice of the sequence $\left\{ Z_k \right\}$.
	Indeed, for any fixed $\rho \in (0,\frac{1}{8}]$ and any $\tau \in (0,\frac{1}{8})$, employ estimate \eqref{eq:BlowUpImproveL2}, we obtain
	\begin{equation}
		\int_{ B_{\frac{\rho}{4}}(Y)\cap H} \frac{|v-\kappa(y) \sin\theta\vec{n} |^2}{|X-Y|^{n+2-\omega}}d\mathcal{H}^n(X)\le \frac{C}{\rho^{n+2-\omega}}
		\int_{ B_\rho(Y)\cap H} |v-\kappa(y)\sin\theta\vec{n}|^2d\mathcal{H}^n(X),
		\label{eq:pfL2BlowUpDecay}
	\end{equation}
	where the constant $C$ does not depend on $\rho$. 
	Should there be another sequence $\left\{ Z_l \right\}$ resulting in $\kappa'(y)=\lim_{l\rightarrow +\infty} \frac{\zeta_l}{\mathcal{E}_l}$, then $\eqref{eq:pfL2BlowUpDecay}$ holds with $\kappa'(y)$ in place of $\kappa(y)$.
	Since the right-hand side of \eqref{eq:pfL2BlowUpDecay} is bounded by a constant which does not rely on the choice of the sequence $\left\{ Z_k \right\}$ in view of \eqref{eq:BlowUpSing}, using Cauchy-Schwarz inequality, we get
	\begin{equation}
		\int_{ B_{\frac{\rho}{8}}(Y)\cap H} \frac{(\kappa(y)-\kappa'(y))^2\sin ^2 \theta}{|X-Y|^{n+2-\omega}}d\mathcal{H}^n(X)\le C.
		\label{eq:pfNotRelyOnZk}
	\end{equation}
	But the left-hand side of \eqref{eq:pfNotRelyOnZk} is finite if and only if $\kappa(y)=\kappa'(y)$.
	Consequently, $\kappa(y)$ is well-defined.
	We define $v(Y)=\kappa(y) \sin\theta\vec{n}$ for any $Y=(0,y) \in B_{\frac{3}{16}}$.

	Now, we claim
	\begin{equation}
		\frac{1}{\rho^n}\int_{ B_\rho(X_0)\cap H} |v(X)-v(X_0)|^2d\mathcal{H}^n(X)\le C\rho^{2\alpha}\int_{ B_{\frac{3}{4}}\cap H} |v(X)|^2d\mathcal{H}^n(X),\quad \forall X_0 \in B_{\frac{1}{4}}\cap H,
		\label{eq:pfClaim}
	\end{equation}
	for some $\alpha \in (0,1)$.
	We set $\omega=\frac{1}{2}$ in \eqref{eq:pfL2BlowUpDecay} and then
	\begin{equation}
		\frac{1}{\sigma^n}\int_{ B_\sigma(Y)\cap H} |v(X)-v(Y)|^2d\mathcal{H}^n(X)\le C \left( \frac{\sigma}{\rho} \right)^{\frac{3}{2}} \frac{1}{\rho^n}\int_{ B_\rho(Y)\cap H} |v(X)-v(Y)|^2d\mathcal{H}^n(X).
		\label{eq:pfIter}
	\end{equation}
	for any $0<\sigma\le \frac{\rho}{4}\le \frac{1}{32}$.


	The key idea is to employ the iteration \eqref{eq:pfIter} along with the properties of harmonic functions to prove \eqref{eq:pfClaim}.

	Let $\lambda \in (0,\frac{1}{16})$ be determined later on.
	Suppose $X_0=(\xi,\eta)$ and write $Y_0=(0,\eta)$.
	If $|\xi|\le \lambda \rho$, then we have
	\begin{align*}
		&\frac{1}{(\lambda\rho)^n}\int_{ B_{\lambda\rho}(X_0)\cap H} |v(X)-v(Y_0)|^2d\mathcal{H}^n(X)\\
		\le{}&\frac{2^n}{(\lambda \rho+|\xi|)^n}\int_{ B_{\lambda\rho+|\xi|}(Y_0)\cap H} |v(X)-v(Y_0)|^2d\mathcal{H}^n(X) \\
		\le{} & \frac{C}{(\rho-|\xi|)^n} \left( \frac{\lambda\rho + |\xi|}{\rho- |\xi|} \right)^{\frac{3}{2}} \int_{ B_{\rho-|\xi|}(Y_0)\cap H} |v(X)-v(Y_0)|^2d\mathcal{H}^n(X)\\
		\le{}& \frac{C}{\rho^n} \left( \frac{2\lambda}{1-\lambda} \right)^{\frac{3}{2}} \int_{ B_\rho(X_0)\cap H} |v(X)-v(Y_0)|^2d\mathcal{H}^n(X).
	\end{align*}
	Then, we can choose $\lambda$ small enough such that
	\begin{equation}
		\frac{1}{(\lambda\rho)^n}
		\int_{ B_{\lambda\rho}(X_0)\cap H} |v(X)-v(Y_0)|^2d\mathcal{H}^n(X)\le \frac{1}{4\rho^n} \int_{ B_\rho(X_0)\cap H} |v(X)-v(Y_0)|^2d\mathcal{H}^n(X).
		\label{eq:pfChangeCenterIter}
	\end{equation}

	In the case $|\xi|\ge \lambda \rho$, as $v$ is a harmonic function, we deduce
	\begin{equation}
		\frac{1}{(\sigma \lambda \rho)^n}
		\int_{ B_{\sigma\lambda\rho}(X_0)\cap H} 
		|v(X)-v(X_0)|^2d\mathcal{H}^n(X)\le
		\frac{C \sigma^2}{(\lambda \rho)^n}
		\int_{ B_{\lambda\rho}(X_0)\cap H} |v-b\vec{n}|^2d\mathcal{H}^n(X).
		\label{eq:pfHarmonicIter}
	\end{equation}
	for any real number $b \in \mathbb{R}$ and $\sigma \in \left(0,\frac{1}{2}\right)$.
	We choose $j$ with $\lambda^{j+1}<|\xi|\le \lambda^j$ and together with \eqref{eq:pfHarmonicIter} and \eqref{eq:pfChangeCenterIter}, we obtain
	\begin{align}
		&\frac{1}{(\sigma \lambda^{j+1})^n}
		\int_{ B_{\sigma \lambda^{j+1}}\cap H} 
|v(X)-v(X_0)|^2d\mathcal{H}^n(X)\nonumber \\
		\le{}&
\frac{C \sigma^2}{\lambda^{n(j+1)}}
\int_{ B_{\lambda^{j+1}}(X_0)\cap H} |v(X)-v(Y_0)|^2d\mathcal{H}^n(X)\nonumber \\
\le{}& \frac{C \sigma^2}{4^j \lambda^n} \int_{ B_{\lambda}(X_0)\cap H}  |v(X)-v(Y_0)|^2d\mathcal{H}^n(X),
\label{eq:pfIterHarmonic}
	\end{align}
	for any $\sigma \in \left(0,\frac{1}{2}\right)$, where we have used \eqref{eq:pfHarmonicIter} with $v(Y_0)$ in place of $b$ and used \eqref{eq:pfChangeCenterIter} $j$ times.
Applying the triangle inequality and equation \eqref{eq:pfIterHarmonic} with $\sigma=\frac{1}{4}$, we deduce
	\begin{align*}
		&|v(X_0)-v(Y_0)|^2
		\\
		=&
		\frac{C}{\lambda^{n(j+1)}}
		\int_{ B_{\frac{1}{4}\lambda^{j+1}(X_0)}\cap H}
		|v(X_0)-v(Y_0)|^2d\mathcal{H}^n(X)\\
		\le{}& \frac{C}{\lambda^{n(j+1)}}
		\int_{ B_{\frac{1}{4}\lambda^{j+1}(X_0)}\cap H}
		|v(X)-v(X_0)|^2d\mathcal{H}^n(X) +\frac{C}{\lambda^{n(j+1)}}
		\int_{ B_{\lambda^{j+1}(X_0)}\cap H}
		|v(X)-v(Y_0)|^2d\mathcal{H}^n(X)\\
		\le{}& \frac{C}{4^j\lambda^n}\int_{ B_\lambda(X_0)\cap H} |v(X)-v(Y_0)|^2d\mathcal{H}^n(X).
	\end{align*}
	
Thus, for any $1\le i\le j$, in conjunction with equation \eqref{eq:pfChangeCenterIter}, we conclude
	\begin{align*}
		&\frac{1}{\lambda^{n(i+1)}}\int_{ B_{\lambda^{i+1}(X_0)}\cap H} |v(X)-v(X_0)|^2d\mathcal{H}^n(X)\\
		\le{}& 
	\frac{C}{\lambda^{n(i+1)}}\int_{B_{\lambda^{i+1}}(X_0)\cap H} |v(X)-v(Y_0)|^2d\mathcal{H}^n(X)+
	\frac{C}{\lambda^{n(i+1)}}\int_{B_{\lambda^{i+1}}(X_0)\cap H} |v(X_0)-v(Y_0)|^2d\mathcal{H}^n(X)\\
		\le{}& \frac{C}{4 ^{i}\lambda^n}
		\int_{ B_\lambda(X_0)\cap H}|v(X)-v(Y_0)|^2d\mathcal{H}^n(X)+\frac{C}{4^j\lambda^n}\int_{ B_\lambda(X_0)\cap H} |v(X)-v(Y_0)|^2d\mathcal{H}^n(X) \\
		\le{}& \frac{C}{4 ^{i}\lambda^n}
		\int_{ B_\lambda(X_0)\cap H}|v(X)-v(Y_0)|^2d\mathcal{H}^n(X) 
	\end{align*}
Notably, this inequality remains valid even for $i \ge j+1$, as we can select $\sigma=\lambda^{j-i}$ in \eqref{eq:pfIterHarmonic}.
Consequently, through a standard interpolation technique, we find that
	\begin{align*}
		&\frac{1}{\rho^n} \int_{ B_\rho(X_0)\cap H} |v(X)-v(X_0)|^2d\mathcal{H}^n(X)\le C\rho^{2\alpha} \int_{ B_\lambda(X_0)\cap H} |v(X)-v(Y_0)|^2d\mathcal{H}^n(X)\\
		\le{}& C\rho^{2\alpha}\int_{ B_{\frac{3}{8}}\cap H} |v(X)-v(Y_0)|^2d\mathcal{H}^n(X),
	\end{align*}
	for any $\rho \in (0,\lambda)$.

	Now, we focus on estimating $v(Y_0)$.
	To do that, we apply \eqref{eq:BlowUpSing} with $(\mathcal{V}_k)_{0,\frac{1}{3}}$ in place of $\mathcal{V}_k$ in view of Lemma \ref{lem_change_center}, leading to
	\begin{align}
		|\zeta_k|^2\le{}& C
		\left( \int_{ B_{\frac{1}{3}}} \mathrm{dist}^2(X,\mathrm{spt}\|\boldsymbol{C}_k\|) d\|V^{\theta_k}_k\|(X)+\mu_k\right),
		\label{eq:pfEstSingular}
	\end{align}
	for any $Z_k=(0,\zeta_k,\eta_k) \in \mathrm{spt}\|V^{\theta_k}_k\|\cap B_{\frac{5}{24}}$ with $\Theta(\mathcal{V}_k,Z_k)\ge 1-\cos\theta_k(Z_k)$ for $k$ large enough.
	For any $Y \in B_{\frac{3}{16}}\cap \left\{ r=0 \right\}$, by choosing $\left\{ Z_k \right\}\subset \mathrm{spt}\|V^{\theta_k}_k\|\cap B_{\frac{5}{24}}$ with $\Theta(\mathcal{V}_k,Z_k)\ge 1-\cos\theta_k(Z_k)$ such that $Z_k \rightarrow Y$ and dividing both sides of inequality \eqref{eq:pfEstSingular} by $\mathcal{E}_k^2$, then taking $k$ goes to $+\infty$, we obtain
	\[
	|v(Y)|^2\le C \int_{ B_{\frac{3}{8}}\cap H}|v(X)|^2 d\mathcal{H}^n(X),
	\]
	for any $Y =(0,y) \in B_{\frac{3}{16}}$.

In combination with \eqref{eq:pfClaim}, it follows that
	\[
		\|v\|_{C^{0,\alpha}(\overline{B_{\frac{3}{16}}\cap H},H^\bot)}\le
		C \int_{ B_{\frac{3}{8}}\cap H} |v(X)|^2d\mathcal{H}^n(X).
	\]
	by the standard PDE theory.
\end{proof}
\begin{remark}
It is noteworthy that, according to the definition of blow-ups, it holds that
	\[
		\int_{ B_{\frac{3}{8}}\cap H} |v(X)|^2d\mathcal{H}^n(X)\le 1.
	\]
	\label{rmk:boundedL2}
\end{remark}

\begin{proposition}
	\label{prop_C2estimate}
	If $v$ is a blow-up, then $v \in C^2(\overline{B_{\frac{1}{8}}\cap H}, H^\bot)$ with the following estimate
	\[
	\|v\|^2_{C^2(\overline{B_{\frac{1}{8}}\cap H},H^\bot)}\le C \int_{ B_{\frac{3}{8}}\cap H} |v(X)|^2d\mathcal{H}^n(X)
	\]
	where $C=C(n,\Lambda_0)$.
\end{proposition}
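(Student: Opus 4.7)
The plan is to upgrade the $C^{0,\alpha}$-estimate of Proposition \ref{prop_HolderBlowUp} to a $C^2$-estimate by iterating the varifold $L^2$-decay \eqref{eq:BlowUpImproveL2} with higher-order polynomial approximations and then invoking boundary Schauder theory for the Dirichlet problem. The first observation is that once $\kappa$ is shown to satisfy
\[
\|\kappa\|^2_{C^2(\overline{\partial H\cap B_{3/16}})}\le C\int_{B_{3/8}\cap H}|v|^2\,d\mathcal{H}^n,
\]
the conclusion follows. Indeed, $v$ is harmonic in the interior of $B_{3/8}\cap H$ (established in the discussion preceding Proposition \ref{prop_HolderBlowUp}) with Dirichlet trace $v(0,y)=\kappa(y)\sin\theta\,\vec{n}$ on $\partial H\cap B_{3/16}$, and standard boundary Schauder estimates on the half-ball then give $v\in C^2(\overline{B_{1/8}\cap H})$ with the desired bound.

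To establish the $C^2$-estimate for $\kappa$, I will iterate the argument of Proposition \ref{prop_HolderBlowUp}. At each boundary point $Y=(0,y)\in\partial H\cap B_{3/16}$, the zeroth-order approximation $\kappa(y)\sin\theta\,\vec{n}$ used there will be replaced by harmonic polynomials $p_Y^{(1)}$ and $p_Y^{(2)}$ of degrees $1$ and $2$, whose traces on $\partial H$ agree with the corresponding Taylor polynomials of $\kappa$ at $y$. At the varifold level, this corresponds to comparing $V_k$ with a tilted-and-shifted cone obtained from $\boldsymbol{C}_k$ by the infinitesimal rotations and translations encoded in $p_Y^{(k)}$. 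Rerunning the proof of Corollary \ref{cor_noHole} for these tilted cones---combining the $L^2$-estimate of Theorem \ref{thm__l_2_estimate} with a change of center as in Lemma \ref{lem_change_center} and an additional small rotation---yields the refined Campanato decay
\[
\int_{B_{\rho/4}(Y)\cap H}\frac{|v-p_Y^{(k)}|^2}{|X-Y|^{n+2+2k-\omega}}\,d\mathcal{H}^n \le \frac{C}{\rho^{n+2+2k-\omega}}\int_{B_\rho(Y)\cap H}|v-p_Y^{(k)}|^2\,d\mathcal{H}^n
\]
for $k=1,2$. The Campanato characterization of Holder spaces then gives $\kappa\in C^{1,\gamma}$ at the first iteration and $\kappa\in C^{2,\gamma}$ at the second, with the $L^2$-controlled bounds.

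The main obstacle is constructing the tilted cones that approximate $V_k$ to orders $1$ and $2$: one must replace $\boldsymbol{C}_k=\boldsymbol{C}_{H_k}$ by a finite-parameter family of cones over half-hyperplanes tilted according to the Taylor expansion of $\kappa$ near $y$, and verify that the shifted and rotated versions still capture the leading behaviour of $V_k$ up to errors controlled by $\mathcal{E}_k$. Once this approximation is in place, the iterated $L^2$-decay follows from essentially the same argument as in Corollary \ref{cor_noHole}, and the Campanato--Schauder chain described above closes the proof.
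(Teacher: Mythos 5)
Your reduction step is fine as far as it goes: if the trace $\kappa$ were known to be $C^{2,\gamma}$ with an $L^2$-controlled bound, then boundary Schauder estimates for the harmonic function $v$ with Dirichlet data $\kappa\sin\theta\,\vec{n}$ would indeed give the stated $C^2$ estimate up to $\partial H$. But this relocates the entire content of the proposition into the claim that $\kappa\in C^{2,\gamma}$, and the mechanism you offer for that claim does not work. The decay \eqref{eq:corL2ImproveExcess} of Corollary \ref{cor_noHole} is anchored to the monotonicity formula and the $L^2$-estimate of Theorem \ref{thm__l_2_estimate} at a boundary density point $Z$, and these are tied to comparison with the \emph{cones} $\boldsymbol{C}_Z$ (vertex at $Z$, invariant under dilation); the admissible modifications in this framework (rotations $\Gamma\in\mathrm{SO}(n)$, replacement of $H$ by a nearby half-plane $H'$, translations by $Z$) encode only affine data. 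A degree-two Taylor polynomial of $\kappa$ is not representable by any tilted or shifted cone $\boldsymbol{C}_{H'}$: the required comparison surface is a genuinely curved graph, it is not a cone, and there is no monotonicity formula or first-variation identity centered on it. Consequently the asserted Campanato decay with exponent $n+2+2k-\omega$, $k=1,2$, does not "follow from essentially the same argument as in Corollary \ref{cor_noHole}"; the exponent $n+2-\omega$ there is the best that the cone-based monotonicity structure can give, and it only yields the H\"older-type information already contained in Proposition \ref{prop_HolderBlowUp}. Worse, second-order decay of $v-p^{(2)}_Y$ at every boundary point is essentially equivalent to the $C^{2,\gamma}$ regularity you are trying to prove, so the proposal assumes the conclusion in the guise of an unproved decay estimate.

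What is missing is the identification of the \emph{boundary condition} satisfied by the blow-up, which is where the prescribed-contact-angle structure must enter. The paper's proof obtains it by testing \eqref{eq:1stVarFormula} with $\varphi=\psi(|x|,y)\,e_2$ for $\psi$ with $\partial_r\psi\equiv 0$ near $r=0$ and $\int\psi(0,y)\,dy=0$: the contribution of $W_k$ and $\cos\theta_k$ cancels to first order thanks to the mean-zero condition and $|D\theta_k|\le\mu_k$, and after dividing by $\mathcal{E}_k$ and letting $k\to\infty$ one gets $\int_{\{r\ge0\}} v\,\Delta\psi=0$ for all such $\psi$. Taking tangential difference quotients and extending evenly in $r$ then shows that $\partial_{y_l}v$ is harmonic across $\{r=0\}$, which gives smoothness of the trace and, via standard estimates, the $C^2$ bound. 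Without some such derivation of a weak Neumann-type condition for $v$ (or an equivalent reflection principle), there is no source of regularity for $\kappa$ beyond the $C^{0,\alpha}$ estimate, and your Campanato--Schauder chain cannot be closed.
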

\begin{proof}
For the sake of simplicity, we parametrize $H$ using $(r,y) \in \mathbb{R}^+\times \mathbb{R}^{n-1}$ and consider $v$ as a function mapping to real values.
	Let us consider a test function $\psi(r,y)\in C_c^2(B_{\frac{3}{8}}^n(0) \cap \left\{ r\ge 0 \right\})$ satisfying the following conditions,
	\begin{equation}
		\frac{\partial \psi}{\partial r}\equiv 0\text{ near }r=0\quad \text{ and }\quad \int_{B_{\frac{3}{8}}^{n-1}(0)} \psi(0,y)dy=0.
		\label{eq:pfDefPsi}
	\end{equation}
	
	We set $\varphi(x,y)=\psi(|x|,y) e_2$ in \eqref{eq:1stVarFormula} with $\mathcal{V}_k$ in place of $\mathcal{V}$ and $0$ in place of $X_0$.
	This leads to
	\begin{equation}
		\left|\int_{ } \mathrm{div}_S \varphi(X)dV^{\theta_k}_k(X,S)\right|\le
		C\mu_k \int_{ } \left( \left| \varphi(X) \right| + |X| \left|D _S \varphi\right| \right)dV^{\theta_k}_k(X,S)+C\mu_k.
		\label{eq:pf1stVar}
	\end{equation}

	We fix any arbitrary $\tau>0$ such that
	\begin{equation}
		\frac{\partial \psi}{\partial r}\equiv 0 \text{ when }r<\tau.
		\label{eq:pfpsiNear0}
	\end{equation}
	
	First, we deal with the left-hand side of \eqref{eq:pf1stVar}.
	We divide the integral $\int_{} \mathrm{div}_S \varphi(X) dV^{\theta_k}_k(X,S)$ into three parts as
	\begin{align*}
		\int_{} \mathrm{div}_S \varphi(X) dV^{\theta_k}_k(X,S)={}& 
		-\int_{\left\{ r\ge \tau \right\}} \cos \theta_k\mathrm{div}_S \varphi(X) dV_k(X,S)+\int_{ \left\{ r\ge \tau \right\}} \mathrm{div}_S \varphi(X) dW_k(X,S)\\
			&+\int_{ \left\{ r< \tau \right\}} \mathrm{div}_S \varphi(X) dV^{\theta_k}_k(X,S)\\
		=:{}& -\mathrm{I}_k+\mathrm{II}_k+\mathrm{III}_k.
	\end{align*}

	For the first term, according to Theorem \ref{thm__l_2_estimate} \ref{it:Wsupport}, for $k$ large enough, we get
	\begin{align}
		\mathrm{I}_k={}&\int_{\left\{ r\ge \tau \right\}} \cos \theta_k\mathrm{div}_S \varphi(X) dW_k(X,S)=-\int_{ \left\{ r\ge\tau \right\}} \cos \theta_k\frac{\partial \psi}{\partial r}(r,y)drdy\nonumber \\
		={}&-\int_{ } \psi(0,y)dy-\int_{ } \psi(r,y) \sin \theta_k \frac{\partial \theta_k}{\partial r}drdy=-\int_{\left\{ r\ge \tau \right\}} \psi(r,y) \sin \theta_k \frac{\partial \theta_k}{\partial r}drdy,\nonumber 
	\end{align}
	by \eqref{eq:pfDefPsi}.
	By $|D\theta_k|\le \mu_k$, we obtain
	\begin{equation}
		|\mathrm{I}_k|\le C\mu_k.
		\label{eq:pfI}
	\end{equation}

	Regarding the second term, we write $H_k=H^{\theta'_k}$, $\vec{n}_k=\cos\theta'_k e_1+\sin\theta'_k e_2, \vec{n}'_k=\sin \theta'_k e_1-\cos\theta'_k e_2$.
	Indeed, $\vec{n}_k$ is the unit normal vector of the half-plane $H_k$ defined before.

	Then, by Theorem \ref{thm__l_2_estimate} \ref{it:L2Est1} and the definition of $\tilde{u}_k$ in \eqref{eq:defUtil}, for $k$ larger enough, we can parametrize the support of $\|V_k\|$ by the following map
	\begin{align}
		F(r,y)={}& r\sqrt{1+l_k^2} \vec{n}'_k +\sum_{i =1}^{n-1}y_i e_{2+i}+\tilde{u}_k(r,y)\vec{n}_k,
		\label{eq:pfGraphOfUtil}
	\end{align}
	for $(r,y) \in B_{\frac{3}{8}}^{n}(0)\cap \left\{ r\ge \tau \right\}$.

	We define
	\[
		\tilde{\nu}_k:=\vec{n}_k-\frac{1}{\sqrt{1+l_k^2}}\frac{\partial \tilde{u}_k}{\partial r}\vec{n}'_k-D_y\tilde{u}_k.
	\]
	Here, recall that $D_y \tilde{u}_k:=\sum_{i =1}^{n-1}\frac{\partial \tilde{u}_k}{\partial y_i}e_{2+i}$.
	Then $\nu_k:=\frac{\tilde{\nu}_k}{|\tilde{\nu}_k|}$ is the unit normal vector field of the image of $F(r,y)$ defined in \eqref{eq:pfGraphOfUtil}.
	We can see that value of $D\psi$ at point $F(r,y)$ for $(r,y)\in B_{\frac{3}{8}}^n(0)\cap \left\{ r\ge \tau \right\}$ is given by
	\[
		D\psi=\frac{\partial \psi}{\partial r}
		\frac{r\sqrt{1+l_k^2}\vec{n}'_k+\tilde{u}_k\vec{n}_k}{\sqrt{r^2+l_k^2r^2+\tilde{u}_k^2}}+D_y \psi.
	\]

	Thus, we can obtain
	\begin{align*}
		\mathrm{II}_k={}&\int_{ \left\{ r\ge \tau \right\}} \mathrm{div}_S \varphi(X)dV_k(X,S)\\
		={} & \int_{ \left\{ r\ge \tau \right\}} \left( D\psi\cdot e_2-D\psi \cdot \nu_k e_2\cdot\nu_k  \right) \sqrt{1+l_k^2+\left( \frac{\partial \tilde{u}_k}{\partial r} \right)^2+(1+l_k^2)|D_y\tilde{u}_k|^2}drdy\\
		={} & \int_{ \left\{ r\ge \tau \right\}}\left[ \frac{\partial \psi}{\partial r} \frac{-r \sqrt{1+l_k^2}\cos \theta'_k+\tilde{u}_k\sin \theta'_k}{\sqrt{r^2(1+l_k^2)+\tilde{u}_k^2}}
		-\left( \frac{\partial \psi}{\partial r}\frac{\tilde{u}_k-r\frac{\partial \tilde{u}_k}{\partial r}}
	{|\tilde{\nu}_k|\sqrt{r^2(1+l_k^2)+\tilde{u}_k^2}} \right. \right.\\
			{}& \left. \left. - \frac{D_y \psi \cdot D_y \tilde{u}_k}{|\tilde{\nu}_k|} \right) 
			 \frac{1}{|\tilde{\nu}_k|}\left( \frac{1}{\sqrt{1+l_k^2}}\frac{\partial \tilde{u}_k }{\partial r}\cos \theta'_k+ \sin \theta'_k  \right) \right]\sqrt{1+l_k^2}|\tilde{\nu}_k|drdy\\
			={}& \mathrm{II}_k^1+\mathrm{II}_k^2+\mathrm{II}_k^3 +\mathrm{II}_k^4,
	\end{align*}
	where
	\begin{align*}
		\mathrm{II}_k^1={} & \int_{ \left\{ r\ge \tau \right\}} - \frac{r \left( 1+l_k^2 \right)\cos \theta'_k}{\sqrt{r^2(1+l_k^2)+\tilde{u}_k^2}}\frac{\partial \psi}{\partial r}|\tilde{\nu}_k|drdy,\\
		\mathrm{II}_k^2={} & \int_{ \left\{ r\ge \tau \right\}} \frac{\tilde{u}_k \sin \theta'_k}{\sqrt{r^2(1+l_k^2)+\tilde{u}_k^2}} \left( 1-\frac{1}{|\tilde{\nu}_k|^2} \right)\frac{\partial \psi}{\partial r} \sqrt{1+l_k^2}|\tilde{\nu}_k|drdy,\\
		\mathrm{II}_k^3={}& \int_{ \left\{ r\ge \tau \right\}} \left( \frac{\partial \psi}{\partial r} \frac{r \frac{\partial \tilde{u}_k}{\partial r}-\tilde{u}_k}{\sqrt{r^2(1+l_k^2)+\tilde{u}_k^2}}+ D_y \psi \cdot D_y \tilde{u}_k\right)
		\frac{\partial \tilde{u}_k}{\partial r}\frac{\cos\theta'_k}{|\tilde{\nu}_k|}drdy,\\
		\mathrm{II}_k^4={}& \int_{ \left\{ r\ge \tau \right\}} \left( \frac{\partial \psi}{\partial r} \frac{\partial \tilde{u}_k}{\partial r} \frac{r}{\sqrt{r^2(1+l_k^2)+\tilde{u}_k^2}}+ D_y \psi \cdot D_y \tilde{u}_k\right)
		\frac{\sin \theta'_k\sqrt{1+l_k^2}}{|\tilde{\nu}_k|}drdy.
	\end{align*}

	For the term $\mathrm{II}_k^1$, using $\int_{ \left\{ r\ge \tau \right\}} \frac{\partial \psi}{\partial r}drdy=0$, we find
	\begin{align*}
		& - \frac{1}{\sqrt{1+l_k^2}\cos \theta'_k} \mathrm{II}_k^1
		=\int_{ \left\{ r\ge \tau \right\}} \frac{r |\tilde{\nu}_k|\sqrt{1+l_k^2}}{\sqrt{r^2(1+l_k^2)+\tilde{u}_k^2}}\frac{\partial \psi}{\partial r}drdy\\
		={}& \int_{ \left\{ r\ge \tau \right\}}  \left( \frac{r |\tilde{\nu}_k|\sqrt{1+l_k^2 }}{\sqrt{r^2(1+l_k^2)+\tilde{u}_k^2}}-1 \right)\frac{\partial \psi}{\partial r}drdy\\
		={}& \int_{ \left\{ r\ge \tau \right\}} \frac{r^2\left( \frac{\partial \tilde{u}_k}{\partial r} \right)^2+r^2(1+l_k^2)|D_y \tilde{u}_k|^2-\tilde{u}_k^2}{\sqrt{r^2(1+l_k^2)+\tilde{u}_k^2}\left( r|\tilde{\nu}_k|\sqrt{1+l_k^2}+\sqrt{r^2(1+l_k^2)+\tilde{u}_k^2} \right)}\frac{\partial \psi}{\partial r}drdy
	\end{align*}

	Dividing both side by $\mathcal{E}_k$ and taking limit as $k\rightarrow +\infty$ yields
	\begin{equation}
		\lim_{k\rightarrow +\infty} \mathcal{E}_k^{-1}\mathrm{II}_k^1=0.
		\label{eq:pfII1}
	\end{equation}

	Here, we have used the fact that $l_k \rightarrow 0$, $\mathcal{E}_k^{-1}\tilde{u}_k\rightarrow v$ in the sense of $C^{1,\beta}(\left\{ r\ge \tau \right\}\cap B_{\frac{3}{8}}^n(0))$.

	For the term $\mathrm{II}_k^2$ and $\mathrm{II}_k^3$, we directly have
	\begin{equation}
		\lim_{k\rightarrow +\infty} \mathcal{E}_k^{-1}\mathrm{II}_k^2=\lim_{k\rightarrow +\infty} \mathcal{E}_k^{-1}\mathrm{II}_k^3=0.
		\label{eq:pfII23}
	\end{equation}
	
	For the term $\mathrm{II}_k^4$, we note that
	\begin{equation}
		\lim_{k\rightarrow +\infty} \mathcal{E}_k^{-1}\mathrm{II}_k^4=\int_{ \left\{ r\ge \tau \right\}} \frac{\partial \psi}{\partial r}\frac{\partial v}{\partial r}+D_y \psi \cdot D_y v drdy=\int_{ \left\{ r\ge \tau \right\}} D\psi \cdot Dv drdry=-\int_{ \left\{ r\ge \tau \right\}} v \Delta \psi drdy.
		\label{eq:pfII4}
	\end{equation}
	
	Summarizing \eqref{eq:pfII1}, \eqref{eq:pfII23} and \eqref{eq:pfII4}, we conclude
	\begin{equation}
		\lim_{k\rightarrow +\infty} \mathcal{E}_k^{-1}\mathrm{II}_k=\int_{ \left\{ r\ge \tau \right\}} D\psi \cdot Dv drdy.
		\label{eq:pfII}
	\end{equation}

	Concerning the third term $\mathrm{III}_k$, using \eqref{eq:pfpsiNear0}, we have $e_1\cdot D\psi=e_2\cdot D\psi=0$ and hence $D\psi=\sum_{i =1}^{n-1}\frac{\partial \psi}{\partial y_i}e_{2+i}$.
	Therefore, for $(x,y) \in \mathrm{spt}\|V_k^{\theta_k}\|\cap \left\{ r<\tau \right\}\cap B_{\frac{3}{8}}(0)$ and $S \in G(n+1,n)$, we can write
	\[
		\mathrm{div}_S \varphi=
		e_2\cdot (D\psi)^{\top_S}=e_2\cdot D\psi-e_2\cdot (D\psi)^{\bot _S}=-e_2\cdot \sum_{i =1}^{n-1}\frac{\partial \psi}{\partial y_i} e_{2+i}^{\bot _S}.
	\]
	By Theorem \ref{thm__l_2_estimate} \ref{it:L2EsteiNormal}, $\mathrm{III}k$  can be estimated as
	\begin{align}
		\mathrm{III}_k\le{}&
		\int_{ \left\{ r<\tau \right\}} 
		|D\psi| \sum_{i =1}^{n-1}|e_{2+i}^{\bot _S}|dV^{\theta_k}_k(X,S)\\
		\le{}& \|D\psi\|_{\infty}
		\sum_{i =1}^{n-1}
		\left( \int_{ \left\{ r<\tau \right\}\cap B_{\frac{3}{8}}(0)} |e_{2+i}^{\bot _S}|^2dV_k(X,S) \right)^{\frac{1}{2}}\left( \int_{ \left\{ r<\tau \right\}\cap B_{\frac{3}{8}}(0)} d\|V_k\|(X) \right)^{\frac{1}{2}}\nonumber \\
		\le{}& C\|D\psi\|_\infty \mathcal{E}_k \sqrt{\tau}
		\label{eq:pfIII}
	\end{align}
	where we have used the fact that $\|V^{\theta_k}_k\|(B_{\frac{3}{8}}(0)\cap \left\{ r<\tau \right\})\le C\tau$.
	Divide both side of \eqref{eq:pfIII} by $\mathcal{E}_k$, and take the limit as $k\rightarrow +\infty$, we can get
	\[
		\lim_{k\rightarrow 0} \mathcal{E}_k^{-1}\left|\mathrm{III}_k\right|\le C\|D\psi\|_\infty\sqrt{\tau}.
	\]
	Together with \eqref{eq:pfI} and \eqref{eq:pfII}, we conclude
	\[
		\left|
		\lim_{k\rightarrow +\infty} \mathcal{E}_k^{-1} \int_{ } \mathrm{div}_S \varphi dV^{\theta_k}_k+ \int_{ \left\{ r\ge \tau \right\}} v D\psi drdy
		\right|
		\le C \|D\psi\|_{\infty}\sqrt{\tau},
	\]
	where the constant $C$ is independent of $\tau$.
	Since $v \in L^1(B_{\frac{3}{8}}^n(0)\cap \left\{ r\ge 0 \right\})$, we can take the limit $\tau \rightarrow 0^+$ to get
	\begin{equation}
		\lim_{k\rightarrow +\infty} \mathcal{E}_k^{-1} \int_{ } \mathrm{div}_S \varphi dV^{\theta_k}_k(X,S)=-\int_{ \left\{ r\ge 0 \right\}} v \Delta\psi drdy.
		\label{eq:pfLeftSide}
	\end{equation}

	On the other hand, for the right side of \eqref{eq:pf1stVar}, we have
	\begin{align*}
		\mu_k\int_{ }\left( |\varphi(X)|+|X||D_S \varphi|\right)dV^{\theta_k}_k(X,S)\le{}& C \mu_k\left( \|\psi\|_{\infty}+\|D\psi\|_{\infty} \right)\|V^{\theta_k}_k\|(B_{\frac{3}{8}}(0)) \\
		\le{}& C \mathcal{E}_k^2\left( \|\psi\|_{\infty}+\|D\psi\|_{\infty} \right)\|V^{\theta_k}_k\|(B_{\frac{3}{8}}(0))
	\end{align*}
	which, together with \eqref{eq:pf1stVar} and \eqref{eq:pfLeftSide}, allows us to assert that
	\begin{equation}
		\int_{ \left\{ r\ge 0 \right\}} v \Delta \psi drdy=0,
		\label{eq:pfCondHarmonic}
	\end{equation}
	for $\psi$ such that \eqref{eq:pfDefPsi} holds.
	
	For any function $f$ defined on $H$ and $l=1,2,\cdots ,n-1$, we write
	\[
		\delta_{l,h}f(r,y)=f(r,y_1,\cdots ,y_{l-1},y_{l}+h,y_{l+1},\cdots ,y_{n-1})-f(r,y).
	\]

	We extend $v$ to $\mathbb{R}^n$ by $v(r,y)=v(-r,y)$ for $r<0$.
	Giving that $\psi \in C_c^2(B_{\frac{5}{16}})$ such that $\frac{\partial \psi}{\partial r}\equiv 0$ near $r=0$, it is easy to see that
	$\delta_{l,-h}\psi$ satisfies \eqref{eq:pfDefPsi} for any $l=1,\cdots ,n-1$ and $h\in (-\frac{1}{16},\frac{1}{16})$. Following \eqref{eq:pfCondHarmonic}, we obtain
	\begin{equation}
		0=\int v\Delta\delta_{l,-h}\psi drdy=\int_{ } \delta_{l,h} v \Delta \psi drdy.
		\label{eq:pfQuotientHarmonic}
	\end{equation}

	By an approximate argument, we know \eqref{eq:pfQuotientHarmonic} holds for any $\psi \in C_c^2(B_{\frac{5}{16}}^n)$ which is even in the $r$ variable.

	But since $\delta_{l,h}v$ is also an even function in $r$, we can get \eqref{eq:pfQuotientHarmonic} holds for any $\psi \in C_c^2(B_{\frac{5}{16}})$.
	Consequently, $\delta_{l,h}v$ is a smooth harmonic function defined on $B_{\frac{5}{16}}^n(0)$.
	Proposition \ref{prop_HolderBlowUp} and the definition of $\delta_{l,h}$ imply that
	\[
		\left|\int_{ B_{\frac{5}{16}}(0)} \frac{1}{h}\delta_{l,h}vdrdy\right|^2\le C \int_{ B_{\frac{3}{8}}\cap H} |v(X)|^2d\mathcal{H}^n(X).
	\]

	Hence, the standard elliptic estimates for harmonic functions imply that there exists a harmonic function $v_l$ defined on $B_{\frac{9}{32}}^n(0)$ such that $\delta_{l,h}v\rightarrow v_l$ in $C^2(B_{\frac{9}{32}}^n(0))$ as $h\rightarrow 0$, and
	\begin{equation}
		\sup_{B_{\frac{9}{32}}^n(0)}\left( 
		|v_l|^2+|Dv_l|^2+|D^2v_l|^2\right)\le C \int_{ B_{\frac{3}{8}}\cap H} |v(X)|^2d\mathcal{H}^n(X).
		\label{eq:pfvLDeri}
	\end{equation}

	Recall that for any $r\neq 0$ and $(r,y) \in B_{\frac{9}{32}}^n(0)$, we know $\frac{\partial v}{\partial y_l}=v_l$. So
	\begin{equation}
		v(r,y)=v(r,y_1,\cdots ,y_{l-1},0,y_{l+1},\cdots ,y_{n-1})+\int_{0}^{y_l}v_l(r,y_1,\cdots ,y_{l-1},t,y_{l+1},\cdots ,y_{n-1})dt. 
		\label{eq:pfvIntegral}
	\end{equation}

	By the continuity of $v$ and $v_l$, we can let $r\rightarrow 0^+$ and obtain that \eqref{eq:pfvIntegral} holds for $r=0$.
	Hence, $v(0,y)$ is a smooth function on $B_{\frac{9}{32}}^{n-1}(0)$ and from \eqref{eq:pfvLDeri} and Proposition \ref{prop_HolderBlowUp}, we can assert that
	\[
		\sup_{B_{\frac{9}{32}}^{n-1}(0)}\left( 
		|v(0,y)|^2+|D_yv(0,y)|^2+|D^2_yv(0,y)|^2+|D^3_y v(0,y)|^2\right)\le C \int_{ B_{\frac{3}{8}}\cap H} |v(X)|^2d\mathcal{H}^n(X).
	\]

	Then, along with the standard $C^{2,\alpha}$ estimates for harmonic function (e.g., \cite{Morrey1966Book}), we conclude that $v(r,y)$ is a $C^{2,\alpha}$ function on $B_{\frac{1}{8}}^n(0)\cap \left\{ r\ge 0 \right\}$ for some $\alpha\in (0,1)$ with estimate
	\[
		\|v\|_{C^{2,\beta}(B_{\frac{1}{8}}^n(0))\cap \left\{ r\ge 0 \right\}}^2\le
		C \int_{ B_{\frac{3}{8}}^n(0)\cap H} |v(X)|^2d\mathcal{H}^n(X).
	\]
\end{proof}

\section{Improvement of excess}
\label{sec:iterations}

With the application of the $C^2$ estimate for blow-ups, it is possible to improve the excess.

\begin{proposition}
	\label{prop_improveExcess}
	There exists $\lambda_0=\lambda_0(n,\Lambda_0) \in (0,\frac{1}{4})$ and $C=C(n,\Lambda_0) \in (0,+\infty)$ such that following holds.
	For any $\lambda \in (0,\lambda_0]$, there exists $\varepsilon_0$ such that 
	if a VPCA-quadruple $\mathcal{V}=(V,W,\theta,g)$ and a half-hyperplane $H$ satisfies $(\Lambda_0,\varepsilon_0)$-hypothesis, 
	then there exists $\Gamma \in \mathrm{SO}(n)\subset \mathrm{SO}(n+1)$, a half plane $H'$ in $\mathbb{H}^{n+1}$, and we write $\mathcal{V}'=(V',W',\theta',g'):=((\Gamma \circ \eta_{0,\lambda})_{\#}V,(\Gamma \circ \eta_{0,\lambda})_{\#}W,\theta \circ\Gamma \circ \eta_{0,\lambda}, \frac{1}{\lambda^2}(\Gamma \circ \eta_{0,\lambda})_{*}g)$, such that
	\begin{enumerate}[\normalfont(a)]
		\item $\left|\Gamma-\mathrm{Id}\right|\le C\mathcal{E}(\mathcal{V},H)$.
		\item $|H'-H|\le C\mathcal{E}(\mathcal{V},H)$.
		\item $\mathcal{E}(\mathcal{V}',H')\le \frac{1}{2}\mathcal{E}(\mathcal{V},H)$.
	\end{enumerate}
\end{proposition}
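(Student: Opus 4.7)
\medskip

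\noindent\textbf{Proof proposal.} The plan is a contradiction argument driven by the $C^2$ regularity of blow-ups established in Proposition~\ref{prop_C2estimate}. Suppose, towards a contradiction, that no such $\lambda_0,C$ exist. Then one may extract a blow-up sequence $\{(\mathcal{V}_k,H_k)\}$ satisfying $(\Lambda_0,\varepsilon_k)$-Hypothesis with $\varepsilon_k\to 0^+$ for which the conclusion fails with any (large) constant $C$ and any prescribed $\lambda$ small. After passing to a subsequence, Theorem~\ref{thm_compactnessRIV} and the construction in Section~\ref{sec:blow_up_argument} produce a blow-up $v\in C^2(\overline{B_{1/8}\cap H},H^\bot)$ satisfying
\[
\|v\|_{C^2(\overline{B_{1/8}\cap H})}^2\le C\int_{B_{3/8}\cap H}|v|^2\,d\mathcal{H}^n\le C,
\]
where the last bound uses Remark~\ref{rmk:boundedL2}.

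The crucial observation is that $v(0)=0$. Indeed, condition~\ref{hyp_l2_2} of Hypothesis~\ref{hyp_l2} guarantees $\Theta(\mathcal{V}_k,0)\ge 1-\cos\theta_k(0)$, so in the definition \eqref{eq:defKappa} of $\kappa(0)$ one may take $Z_k\equiv 0$, yielding $\kappa(0)=0$ and hence $v(0)=0$. Taylor expansion at the origin then gives $v(X)=Dv(0)\cdot X+R(X)$ with $|R(X)|\le C\|v\|_{C^2}|X|^2$ on $B_\lambda\cap H$, where $Dv(0)$ decomposes into a radial component $\partial_r v(0)$ (an infinitesimal change of the opening angle of $H$) and a tangential component $D_y v(0)\in\mathbb{R}^{n-1}$ (an infinitesimal rotation of $H$ inside the span of $e_2,e_3,\ldots,e_{n+1}$, i.e.\ inside $SO(n)\subset SO(n+1)$). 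I would then choose $\Gamma_k\in SO(n)$ close to the identity by $\mathcal{E}_k D_y v(0)$, and $H'_k=H^{\theta'_k}$ obtained from $H_k$ by tilting with the angle $\mathcal{E}_k\partial_r v(0)$. These satisfy $|\Gamma_k-\mathrm{Id}|\le C\mathcal{E}_k$ and $|H'_k-H_k|\le C\mathcal{E}_k$, and by construction $\Gamma_k^{-1}\mathrm{spt}\|\boldsymbol{C}_{H'_k}\|$ approximates $\mathrm{spt}\|\boldsymbol{C}_{H_k}\|+\mathcal{E}_k Dv(0)\cdot X$ to order $\mathcal{E}_k^2$ on $B_\lambda$.

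Next I would estimate the rescaled excess $\mathcal{E}(\mathcal{V}'_k,H'_k)^2$. A direct change of variables using the cone property of $\boldsymbol{C}_{H'_k}$ rewrites the integral part as
\[
\frac{1}{\lambda^{n+2}}\int_{B_\lambda}\mathrm{dist}^2\bigl(X,\Gamma_k^{-1}\mathrm{spt}\|\boldsymbol{C}_{H'_k}\|\bigr)\,d\|V_k^{\theta_k}\|(X).
\]
On the graphical region $B_{13/16}\cap\{r>\tau_k\}$, Theorem~\ref{thm__l_2_estimate}\ref{it:L2Est1} represents $V_k$ as the graph of $u_k$, and the previous paragraph combined with $\mathcal{E}_k^{-1}\tilde u_k\to v$ gives
\[
\mathcal{E}_k^{-2}\mathrm{dist}^2\bigl(X,\Gamma_k^{-1}\mathrm{spt}\|\boldsymbol{C}_{H'_k}\|\bigr)\le C|v(X)-Dv(0)\cdot X|^2+o(1)\le C\|v\|_{C^2}^2|X|^4+o(1),
\]
so its contribution to the rescaled integral is at most $C\lambda^2\mathcal{E}_k^2+o(\mathcal{E}_k^2)$. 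The contribution from the complementary region $\{r\le \tau_k\}$ (where $v$ does not encode the profile) is controlled by the non-concentration estimate \eqref{eq:BlowUpNonConcen} and the singular-point bound \eqref{eq:BlowUpSing}, yielding a term that is $o(\mathcal{E}_k^2)$ after taking $k\to\infty$ for each fixed $\lambda$. Meanwhile, Proposition~\ref{prop_rescaling} gives $\mu(\mathcal{V}'_k)\le C\lambda\,\mu(\mathcal{V}_k)\le C\lambda\mathcal{E}_k^2$. Combining,
\[
\limsup_{k\to\infty}\mathcal{E}_k^{-2}\mathcal{E}(\mathcal{V}'_k,H'_k)^2\le C_0\lambda^2+C_0\lambda,
\]
for a constant $C_0=C_0(n,\Lambda_0)$. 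Fixing $\lambda_0=\lambda_0(n,\Lambda_0)$ so small that $C_0\lambda_0^2+C_0\lambda_0<1/4$ yields $\mathcal{E}(\mathcal{V}'_k,H'_k)\le\frac12\mathcal{E}_k$ for large $k$, the desired contradiction.

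The main obstacle will be the careful accounting of the error terms in the third paragraph: the blow-up $v$ only records the profile of $V_k$ away from the axis $\{r=0\}$, and one must verify that the integral of $\mathrm{dist}^2(\cdot,\Gamma_k^{-1}\mathrm{spt}\|\boldsymbol{C}_{H'_k}\|)$ against $V_k^{\theta_k}$ over the thin cylindrical neighborhood $\{r<\tau_k\}\cap B_\lambda$ is negligible relative to $\lambda^2\mathcal{E}_k^2$. This is precisely the role of the non-concentration inequality \eqref{eq:BlowUpNonConcen}, together with the translation estimate \eqref{eq:BlowUpSing} applied at singular points converging to $0$, which ensures that the extra defect produced by shifting from $\boldsymbol{C}_{H_k}$ to $\Gamma_k^{-1}\boldsymbol{C}_{H'_k}$ near the boundary of $\mathbb{H}^{n+1}$ is bounded by $|Dv(0)|^2\mathcal{E}_k^2\sqrt{\lambda}$, which is absorbed into the $C_0\lambda\mathcal{E}_k^2$ term.
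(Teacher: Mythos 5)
Your argument is essentially correct, but it takes a genuinely different route from the paper. You Taylor-expand the single blow-up $v$ at the corner (using $v(0)=0$ and the $C^2$ estimate of Proposition \ref{prop_C2estimate}) and then realize the linear part $Dv(0)\cdot X$ geometrically: the tangential part by a rotation $\Gamma\in \mathrm{SO}(n)$ and the radial part by replacing $H$ with a tilted half-plane $H'=H^{\theta''}$, after which the excess of the rescaled quadruple is controlled by $\int_{B_\lambda\cap H}|v-Dv(0)\cdot X|^2\le C\lambda^{n+4}$ plus near-axis and $\mu$-errors. The paper instead chooses $\Gamma_k$ by pinning actual density points $Z_{i,k}\to \frac{1}{2}\lambda e_{2+i}$ (so that the free-boundary curve is mapped, to first order, onto $\{x_1=x_2=0\}$), takes a \emph{second} blow-up $\hat v$ of the rotated sequence, kills $D_y\hat v$ on $B_\lambda$ by a Rolle argument combined with the $C^2$ bound, and absorbs $\partial_r\hat v(0)$ into the tilted plane $\hat H_k$. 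Your route is more direct and avoids the second blow-up; the paper's route avoids having to check explicitly that the chosen rotation is compatible with the boundary curve of $W$, since that compatibility is built in by construction. Both hinge on the same ingredients: $\kappa(0)=0$, Proposition \ref{prop_C2estimate}, the non-concentration estimate \eqref{eq:BlowUpNonConcen}, and $\mu(\mathcal{V}')\le C\lambda\,\mu(\mathcal{V})$.

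Two points in your sketch need care (both fixable, neither fatal). First, $D_yv(0)$ is \emph{not} literally an infinitesimal $\mathrm{SO}(n)$ rotation of $H$: a rotation by angle $c$ in the $(e_2,e_{2+i})$-plane moves $H^{\theta'}$, to first order, to the normal graph $c\,\cos\theta'\,y_i\,\vec n$ over $H^{\theta'}$, so the correct angle is $c_i=-\mathcal{E}_k D_{y_i}v(0)/\cos\theta'$; this is bounded by $C\mathcal{E}_k$ only because $\theta'\in[\frac{\pi}{2}+\Lambda_0,\pi-\Lambda_0]$ keeps $|\cos\theta'|\ge \sin\Lambda_0$ (this is exactly where the hypothesis on $\theta$ enters, and why the free-boundary regime requires the separate treatment of Section \ref{sec:free_boundary_case}). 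Second, the same rotation must simultaneously match the lower part $\{x_1=0,\,x_2\le 0\}$ of the cone to the $W$-part of the varifold; a short computation (the sheet meets $\{x_1=0\}$ at height $\zeta(y)\approx-\mathcal{E}_k D_yv(0)\cdot y/\cos\theta'$, while $\Gamma_k^{-1}\{x_2\le 0\}$ has boundary $x_2\approx c_i y_i$) shows that with the corrected angle these agree to first order, with a quadratic residual that is absorbed as in your last paragraph. With these adjustments, and with the contradiction set up so that $\lambda\le\lambda_0$ is fixed along the sequence while $\varepsilon_k\to 0$, your argument goes through.
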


\begin{proof}
	[Proof of Proposition \ref{prop_improveExcess}]
	For simplicity, we write
	\[
		\mathcal{V}^\Gamma_{0,\lambda}=((\Gamma \circ \eta_{0,\lambda})_{\#}V,(\Gamma \circ \eta_{0,\lambda})_{\#}W,\theta \circ\Gamma \circ \eta_{0,\lambda}, \frac{1}{\lambda^2}(\Gamma \circ \eta_{0,\lambda})_{*}g).
	\]
	
	Assuming the contrary of this proposition, for any given constant $C$, there exists a blow-up sequence $\left\{ (\mathcal{V}_k,H_k) \right\}$, a positive sequence $\left\{ \lambda_k \right\}$ with $\lim_{k\rightarrow +\infty} \lambda_k=0$ such that $(\mathcal{V}_k,H_k)$ satisfies $(\Lambda_0,\varepsilon_k)$-hypothesis for some $\varepsilon_k\rightarrow 0^+$.
	But for each $k$, and $\left|\Gamma-\mathrm{Id}\right|\le C \mathcal{E}(\mathcal{V}_k,H_k)$ and $|H'-H_k|\le C\mathcal{E}(\mathcal{V}_k,H_k)$, we can always find $\lambda \in (0,\frac{1}{4})$ such that
	\begin{equation}
		\mathcal{E}( (\mathcal{V}_k)^\Gamma_{0,\lambda},H')> \frac{1}{2}\mathcal{E}(\mathcal{V}_k,H_k).
		\label{eq:pfContraAssump}
	\end{equation}

	By Theorem \ref{thm_compactnessRIV}, we assume $\mathcal{V}_k \rightarrow \mathcal{V}=(V_0,W_0,\theta_0,\delta)$ and $H_k \rightarrow H_0$.

Up to a subsequence, we can choose two sequences of positive decreasing numbers $\left\{ \delta_k \right\}$ and $\left\{ \tau_k \right\}$ such that \crefrange{it:graphK}{it:BlowUpImproveL2K} in Section \ref{sec:blow_up_argument} hold.

We write $Y_i = \frac{1}{2}\lambda e _{2+i}$
for each $i=1,2,\cdots ,n-1$. 
From \ref{it:BlowUpSingK} in Section \ref{sec:blow_up_argument},
we can find $Z_{i,k}=(0,\zeta_{i,k}, \eta_{i,k}) \in \mathrm{spt}\|V^{\theta_k}_k\|$ such that $\Theta(\mathcal{V}_k,Z_{i,k})\ge 1-\cos \theta_k(Z_{i,k})$ and $\lim_{k\rightarrow +\infty} Z_{i,k}=Y_i$.
We denote $P_k$ the hyperplane in $\mathbb{R}^n=\partial \mathbb{H}^{n+1}$ spanned by $\left\{ Z_{i,k} \right\}_{i=1,2,\cdots ,n-1}$.
Similarly, $P_0$ denotes the hyperplane spanned by $\left\{ Y_i \right\}_{i=1,2,\cdots ,n-1}$.
Noting that $(0,0,\eta_{i,k})\rightarrow Y_i$ as $k\rightarrow +\infty$ and $\left\{ (0,0,\eta_{i,k}) \right\}_{i=1,\cdots ,n-1}$ spans $P_0$ for $k$ large enough, we can estimate the distance between $P_k$ and $P_0$ as
\[
	|P_k-P_0|\le C \max_{1\le i\le n-1}\frac{|\zeta_{i,k}|}{\lambda}
\]
where $C$ is a constant which does not rely on $\lambda$.
Consequently, we can find an orthogonal rotation $\Gamma_k \in \mathrm{SO}(n)$ such that it maps $P_k$ to $P_0$ and
\begin{equation}
	\left|\Gamma_k-\mathrm{Id}\right|\le C \max_{1\le i\le n-1}\frac{|\zeta_{i,k}|}{\lambda}.
	\label{eq:pfOrth}
\end{equation}

On the other hand, we note that
$\lim_{k\rightarrow +\infty} \mathcal{E}_k^{-1} \zeta_{i,k}=\kappa(y)$ by the definition of $\kappa(Y_i)$ in \eqref{eq:defKappa}.
Hence
\[
	v(Y_i)=\kappa(y)\sin \theta_0 \vec{n}_0.
\]
Here, $\vec{n}_0$ is the unit normal vector of $H_0$ pointing upward. By $\kappa(0)=0$, we know $v(0)=0$.
Thus, by Proposition \ref{prop_C2estimate} and Remark \ref{rmk:boundedL2}, we infer
\[
	|v(Y_i)|\le C \lambda.
\]

Hence,
\[
	\mathcal{E}_k^{-1}\left|\zeta_{i,k}\right|\le C \lambda
\]
for $k$ large enough, resulting in
\begin{equation}
	\left|\Gamma_k-\mathrm{Id}\right|\le C \mathcal{E}_k.
	\label{eq:pfDistGamma}
\end{equation}

Now, we consider another sequence
$\{ (\hat{\mathcal{V}}_k,H_k) \}$, where
$\mathcal{\hat{V}}_k=(\hat{V}_k,\hat{W}_k,\hat{\theta}_k,\hat{g}_k):=\mathcal{V}^{\Gamma_k}_{0,\frac{7}{8}}$, which is also a blow-up sequence in view of Lemma \ref{lem_change_center}, and estimate \eqref{eq:pfDistGamma}.
This leads to
\[
	\mathcal{\hat{E}}_k\le C\mathcal{E}_k
\]
for some constant $C$.
We set $\hat{v} \in C_{\mathrm{loc}}^{1,\beta}(B_{\frac{3}{8}}\cap H_0)\cap L^2(B_{\frac{3}{8}}\cap H_0)$ to be the blow-up associated to $\{ (\mathcal{\hat{V}}_k,H_k) \}$ and we actually know $\hat{v} \in C^2(B_{\frac{1}{8}}\cap H_0)$ by Proposition \ref{prop_C2estimate}.

By the choice of $Z_{i,k}$ and $\Gamma_k$, the point $\hat{Z}_{i,k}:=\eta_{0,\frac{7}{8}}\circ \Gamma_k(Z_{i,k})$ can be expressed as
\begin{equation}
	\hat{Z}_{i,k}=(0,0,\hat{\eta}_{i,k})\in \mathrm{spt}\|\hat{V}^{\theta_k}_k\|
	\label{eq:pfZprim}
\end{equation}
with $\Theta(\mathcal{\hat{V}},\hat{Z}_{i,k})\ge 1-\cos \theta_k(Z_{i,k})$ and $\lim_{k\rightarrow +\infty} \hat{Z}_{i,k}=\frac{4}{7}\lambda e_{2+i}$.

Moreover, according to \eqref{eq:pfZprim}, we get $\hat{v}(\frac{4}{7}\lambda e_{2+i})=0$ for each $i=1,2,\cdots ,n-1$.
Along with $\hat{v}(0)=0$, we can find a point $Y_i'$ on segment $[0,\frac{4}{7}\lambda e_{2+i}]$ such that $\frac{\partial \hat{v}}{\partial y_i}(Y_i')=0$.
Again, using $|D^2\hat{v}|^2\le C \int_{ B_{\frac{3}{8}}\cap H_0} \left|\hat{v}(X)\right|^2d\mathcal{H}^n(X)$ from Proposition \ref{prop_C2estimate}, we derive
\begin{equation}
	\sup_{B_{\lambda}^n}\left|D_y \hat{v}\right|^2\le C \lambda^2\int_{ B_{\frac{3}{8}}\cap H_0} |\hat{v}(X)|^2d\mathcal{H}^n(X).
	\label{eq:pfDyVprim}
\end{equation}

We introduce a linear function $\hat{L}(r,y)=\frac{\partial \hat{v}}{\partial r}(0,0)r$, by Proposition \ref{prop_C2estimate} and \eqref{eq:pfDyVprim}, we deduce
\[
	\sup_{B_{\frac{8}{7}\lambda}^n} \left|D\left( \hat{v}-\hat{L}(r,y) \right)\right|^2\le C \lambda^2 \int_{ B_{\frac{3}{8}}\cap H_0} |\hat{v}(X)|^2d\mathcal{H}^n(X).
\]
Therefore,
\begin{equation}
	\int_{ B_{\frac{8}{7}\lambda}^n} \left|\hat{v}-\hat{L}(r,y)\right|^2d\mathcal{H}^n(X)\le C \lambda^{n+4} \int_{ B_{\frac{3}{8}}\cap H_0} |\hat{v}(X)|^2d\mathcal{H}^n(X).
	\label{eq:pfFineL2}
\end{equation}

We then choose the half plane $\hat{H}_k:= \{ X+(l_kr(X)+\mathcal{\hat{E}}_k \hat{L}(X))\vec{n}_0:X \in H_0 \}$ and denote $\hat{\boldsymbol{C}}_k=\boldsymbol{C}_{\hat{H}_k}$.
Here, we understand $\hat{L}(X)=\hat{L}(r(X),y)$ for $X=(x,y)$.
Clearly, we know
\begin{equation}
	|\hat{H}_k-H_k|\le C \mathcal{\hat{E}}_k\le C\mathcal{E}_k.
	\label{eq:pfDistHk}
\end{equation}

From the construction of $\hat{v}$, we see that
\[
	\lim_{k\rightarrow +\infty}
	\frac{1}{(\mathcal{\hat{E}}_k)^2}\int_{ B_{\frac{8}{7}\lambda}} \mathrm{dist}^2(X,\mathrm{spt}\|\hat{\boldsymbol{C}}_k\|)d\|\hat{V}^{\hat{\theta}_k}_k\|(X)=\int_{ B_{\frac{8}{7}\lambda}}\left|\hat{v}-\hat{L}(r,y)\right|^2d\mathcal{H}^n(X)
\]
Together with \eqref{eq:pfFineL2} and Remark \ref{rmk:boundedL2}, we conclude
\begin{align}
	\int_{ B_1} \mathrm{dist}^2(X,\mathrm{spt}\|\hat{\boldsymbol{C}}_k\|)d\|(\eta_{0,\frac{8}{7}\lambda})_{\#}\hat{V}^{\hat{\theta}_k}_k\|(X)\le{} & C \lambda ^{n+2}(\mathcal{\hat{E}}_k)^2\le C\lambda^{2}\mathcal{E}_k^2,
	\label{eq:pfImproveL2}
\end{align}
for $k$ large enough.
Using the fact $\mu((\eta_{0,\frac{8}{7}\lambda})_{\#}\mathcal{V})\le C \lambda \mu(\mathcal{V})$ and choosing $\mathcal{V}'=\eta_{0,\frac{8}{7}\lambda}\mathcal{\hat{V}}_k$, $H'=\hat{H}_k$ for $k$ large enough, we find that
\[
	\mathcal{E}^2(\mathcal{V}',H')\le C \lambda^2 \mathcal{E}_k^2+\lambda \mathcal{E}_k^2\le \frac{1}{4}\mathcal{E}_k^2,
\]
if we choose $\lambda$ small enough.

This contradicts with the formula \eqref{eq:pfContraAssump}.
\end{proof}

\begin{proposition}
	\label{prop_ImproveExcessRho}
	There exists $\varepsilon=\varepsilon(n,\Lambda_0) \in (0,1)$, $\alpha=\alpha(n,\Lambda_0)\in (0,1)$, and a constant $C=C(n,\Lambda_0)$ such that the following holds.
	For any $(\mathcal{V},H)$ which satisfies $(\Lambda_0,\varepsilon)$-Hypothesis, for any $Z \in \mathrm{spt}\|V^\theta\|\cap B_{\frac{1}{4}}\cap \partial \mathbb{H}^{n+1}$ with $\Theta(\mathcal{V},Z)\ge 1-\cos \theta(Z)$, there exists an orthogonal rotation $\Gamma^Z \in \mathrm{SO}(n)\subset \mathrm{SO}(n+1)$ and a half plane $H^Z$ such that following holds,
	\begin{align}
		\left|\Gamma^Z-\mathrm{Id}\right| ={} & C \mathcal{E}(\mathcal{V},H)\label{eq:propGammaZ} \\
		\mathrm{dist}(H^Z,H) \le{} & C \mathcal{E}(\mathcal{V},H)\label{eq:propHZ}\\
		\mathcal{E}((\mathcal{V}^Z)_{0,\rho}^{\Gamma^Z},H^Z)\le{}& C \rho^\alpha\mathcal{E}(\mathcal{V},H)\label{eq:propExcessZ}.
	\end{align}
\end{proposition}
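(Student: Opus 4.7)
The plan is a standard iteration of Proposition \ref{prop_improveExcess} at the geometric sequence of scales $\lambda_0^k$, centered at the point $Z$. First I replace $(\mathcal{V},H)$ by its rescaling around $Z$: by Lemma \ref{lem_change_center}, by choosing the initial $\varepsilon$ small enough, the pair $(\mathcal{V}_{Z,s_0}, L_Z^g(H))$ satisfies the $(\Lambda_0, \varepsilon_0)$-Hypothesis, for some fixed small $s_0$ (say $s_0 = 1/3$) and with the $\varepsilon_0$ from Proposition \ref{prop_improveExcess}. Set $\mathcal{V}^{(0)} := \mathcal{V}_{Z,s_0}$, $H^{(0)} := L_Z^g(H)$, $\tilde\Gamma_0 := \mathrm{Id}$; note that $\mathcal{E}(\mathcal{V}^{(0)}, H^{(0)}) \le C\mathcal{E}(\mathcal{V}, H)$ by the estimates in the proof of Lemma \ref{lem_change_center}.

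I will then define inductively pairs $(\Gamma_k, H^{(k)})$ and quadruples $\mathcal{V}^{(k)}$ by applying Proposition \ref{prop_improveExcess} to $(\mathcal{V}^{(k-1)}, H^{(k-1)})$: choose $\Gamma_k \in \mathrm{SO}(n)$ and half-plane $H^{(k)}$ such that
\[
	\mathcal{V}^{(k)} = (\mathcal{V}^{(k-1)})^{\Gamma_k}_{0,\lambda_0}, \quad |\Gamma_k - \mathrm{Id}| \le C \mathcal{E}^{(k-1)}, \quad |H^{(k)}-H^{(k-1)}| \le C\mathcal{E}^{(k-1)},
\]
and $\mathcal{E}^{(k)} := \mathcal{E}(\mathcal{V}^{(k)}, H^{(k)}) \le \tfrac{1}{2} \mathcal{E}^{(k-1)}$. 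Since each $\Gamma_k$ commutes with $\eta_{0,\lambda_0}$, iterating gives $\mathcal{V}^{(k)} = (\mathcal{V}^{(0)})^{\tilde\Gamma_k}_{0,\lambda_0^k}$ where $\tilde\Gamma_k := \Gamma_k \cdots \Gamma_1 \in \mathrm{SO}(n)$. From the geometric decay $\mathcal{E}^{(k)} \le 2^{-k} \mathcal{E}^{(0)}$, the sequences $\{\tilde\Gamma_k\}$ and $\{H^{(k)}\}$ are Cauchy with telescoping bounds $|\tilde\Gamma_\ell - \tilde\Gamma_k|, |H^{(\ell)} - H^{(k)}| \le C 2^{-k} \mathcal{E}(\mathcal{V},H)$. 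Define $\Gamma^Z := \lim_k \tilde\Gamma_k$ and $H^Z := \lim_k H^{(k)}$; then \eqref{eq:propGammaZ} and \eqref{eq:propHZ} follow immediately.

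For the decay \eqref{eq:propExcessZ}, at the dyadic scale $\rho = \lambda_0^k$ I have $\mathcal{E}((\mathcal{V}^{(0)})^{\tilde\Gamma_k}_{0,\lambda_0^k}, H^{(k)}) \le 2^{-k} \mathcal{E}^{(0)}$, and replacing $\tilde\Gamma_k$ by $\Gamma^Z$ and $H^{(k)}$ by $H^Z$ costs another $O(2^{-k}\mathcal{E})$ in the excess by the triangle inequality in the integral defining $\mathcal{E}$ (using that the distance function to $\mathrm{spt}\|\boldsymbol{C}_{H^Z}\|$ differs from that to $\mathrm{spt}\|\boldsymbol{C}_{H^{(k)}}\|$ by at most $|H^Z - H^{(k)}|$). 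Setting $\alpha := \log 2 / \log(1/\lambda_0)$, this gives the result for $\rho = \lambda_0^k$. For an arbitrary $\rho \in (0,s_0)$, pick $k$ with $\lambda_0^{k+1} < \rho \le \lambda_0^k$; the excess at scale $\rho$ is controlled by the one at scale $\lambda_0^k$ up to a factor $C = C(\lambda_0, n)$, since the domain of integration shrinks by a bounded factor and the rescaling comparisons from \eqref{eq:ballCompLXg} hold. Finally, relating $(\mathcal{V}^Z)^{\Gamma^Z}_{0,\rho}$ to $(\mathcal{V}^{(0)})^{\Gamma^Z}_{0,\rho/s_0}$ involves only absorbing the fixed factor $s_0$ into the constant $C$.

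The main obstacle will be verifying that Proposition \ref{prop_improveExcess} actually applies at every step of the induction, i.e.\ that $(\mathcal{V}^{(k)}, H^{(k)})$ satisfies the $(\Lambda_0, \varepsilon_0)$-Hypothesis. The smallness of $\mathcal{E}^{(k)}$ (condition \eqref{hyp_l2_4}) is automatic from the induction; the condition that $g^{(k)}(0) = \delta$ is a geometric property of the Fermi-type rescaling $\Pi^g_{Z, \lambda_0^k s_0}$ (note that at each further step we rescale around $0$, and we may need to compose with an additional linear map as in Proposition \ref{prop_Pi2Delta} to ensure this, which contributes an $O(\mu)$ extra rotation already absorbed into the estimate). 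The density-type condition \eqref{hyp_l2_2} that $\Theta(\mathcal{V}^{(k)},0) \ge 1 - \cos\theta_k(0)$ and $\|(V^{(k)})^{\theta_k}\|(B_1) < \tfrac{3}{4}(1-\cos\theta_k(0))\omega_n$ is the delicate point: the lower bound on density is preserved because $0$ in $\mathcal{V}^{(k)}$ corresponds to $Z$ in the original, where $\Theta(\mathcal{V},Z) \ge 1-\cos\theta(Z)$; the upper mass bound follows from the monotonicity formula (Corollary \ref{cor_monoG}) combined with the fact that $\mathcal{E}^{(k)}$ is small. Condition \eqref{hyp_l2_3} that $H^{(k)} = H^{\theta'}$ with $|\theta' - \theta_k(0)| < \varepsilon_0$ requires summing the angle perturbations from the cumulative $|H^{(k)} - H^{(0)}| \le C\mathcal{E}$ and the drift of $\theta_k(0)$ (which is $O(\mu \lambda_0^k)$ by the $C^1$ bound on $\theta$), both of which remain bounded by $C\mathcal{E}(\mathcal{V},H)$ and hence $< \varepsilon_0$ by choosing $\varepsilon$ small enough at the start.
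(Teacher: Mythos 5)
Your proposal is correct and follows essentially the same route as the paper: recenter at $Z$ via Lemma \ref{lem_change_center}, iterate Proposition \ref{prop_improveExcess} at scales $\lambda_0^k$ (verifying the $(\Lambda_0,\varepsilon_0)$-Hypothesis inductively, with the cumulative tilt of the half-planes and rotations controlled by the geometric decay of the excess), compose the rotations, pass to the limits $\Gamma^Z$, $H^Z$ by the telescoping bounds, and then interpolate between dyadic scales to get the H\"older decay \eqref{eq:propExcessZ}. The only differences are cosmetic (recentred scale $1/3$ versus the paper's $\mathcal{V}^Z=\mathcal{V}_{Z,\frac12}$, and the explicit $\alpha=\log 2/\log(1/\lambda_0)$), so no further comparison is needed.
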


\begin{proof}
	We choose $\varepsilon_0,\lambda_0$ to be the constants such that Proposition \ref{prop_improveExcess} holds.

We proceed to establish the following lemma
	\begin{lemma}
		Suppose $(\mathcal{V},H)$ satisfies $(\Lambda_0,\varepsilon)$-Hypothesis for $\varepsilon$ small enough.
		Then, for each $Z \in \mathrm{spt}\|V^\theta\|\cap B_{\frac{1}{4}}\cap \partial \mathbb{H}^{n+1}$ with $\Theta(\mathcal{V},Z)\ge 1-\cos \theta(Z)$, we can find a sequence of orthogonal rotations $\left\{ \Gamma_k^Z \right\}_{i=0}^\infty\subset \mathrm{SO}(n)\subset \mathrm{SO}(n+1)$ with $\Gamma_0^Z=\mathrm{Id}$, a sequence of half planes $\left\{ H_k^Z \right\}_{i=0}^\infty$ with $H_0^Z=H$ such that,
	\begin{align}
		\left|\Gamma_k^Z-\Gamma_{k-1}^Z\right|\le{}& \frac{C}{2^k}\mathcal{E}(\mathcal{V}^Z,H) \label{eq:pfGammaIter} \\
		\mathrm{dist}(H^Z_k,H^Z_{k-1})\le{}& \frac{C}{2^k}\mathcal{E}(\mathcal{V}^Z,H)\label{eq:pfHalfPlaneIter}\\
		\mathcal{E}((\mathcal{V}^Z)^{\Gamma_k^Z}_{0,\lambda_0^k},H_k^Z)\le{}& \frac{1}{2^k}\mathcal{E}(\mathcal{V}^Z,H) \label{eq:pfExcessIter}
	\end{align}
	where the constant $C=C(n,\Lambda_0)$.
	Here, $\mathcal{V}^Z:=\mathcal{V}_{Z,\frac{1}{2}}$.
	\end{lemma}
	\begin{proof}
The base case of $k=1$ directly follows from Proposition \ref{prop_improveExcess} and Lemma \ref{lem_change_center}.
	Assume that we have found $\Gamma_1^Z,\cdots, \Gamma_{k-1}^Z$ and $H_1^Z,\cdots, H_{k-1}^Z$ such that the above inequalities hold for $1,2,\cdots ,k-1$ in place of $k$.
	We write $\mathcal{V}'=(\mathcal{V}^Z)^{\Gamma_k^Z}_{0,\lambda_0}$.
	Note that by \eqref{eq:pfHalfPlaneIter} and \eqref{eq:pfExcessIter}, we find
	\begin{align*}
		\left|H_{k-1}^Z-H_0\right|\le{}&
		\sum_{i =1}^{k-1}\left|H_{i}^Z-H_{i-1}^Z\right|+\left|H-H_0\right|\le
		2C \mathcal{E}(V,H)+\varepsilon\\
		\mathcal{E}(\mathcal{V}',H_{k-1})\le{}& \mathcal{E}(\mathcal{V}^Z,H).
	\end{align*}
	Therefore, $(\mathcal{V}',H_{k-1})$ satisfies $(\Lambda_0,\varepsilon_0)$-hypothesis if we choose $\varepsilon$ small enough.
	By applying Proposition \ref{prop_improveExcess}, we can find $\Gamma_k'$ and $H_k'$ such that
	\begin{align}
		\left|\Gamma_k'-\mathrm{Id}\right|\le{} & C \mathcal{E}(\mathcal{V}',H_{k-1}^Z) \le \frac{2C}{2^k}\mathcal{E}(\mathcal{V}^Z,H)\label{eq:pfChoiceGamma}\\
		\mathrm{dist}(H_k',H_{k-1}^Z)\le{} & C \mathcal{E}(\mathcal{V}',H_{k-1}^Z)  \le \frac{2C}{2^k}\mathcal{E}(\mathcal{V}^Z,H)\label{eq:pfChoiceHk}\\
		\mathcal{E}((\mathcal{V}')^{\Gamma_k'}_{0,\lambda_0},H_k')\le{}& \frac{1}{2}\mathcal{E}(\mathcal{V}',H_{k-1}^Z)\le \frac{1}{2^{k}}\mathcal{E}(\mathcal{V}^Z,H).\label{eq:pfChoiceExcess}
	\end{align}
	Setting $\Gamma_k^Z=\Gamma_k'\circ \Gamma_{k-1}^Z$, it follows that
	\[
		(\mathcal{V}')^{\Gamma_k'}_{0,\lambda_0}=
		(\mathcal{V}^Z)^{\Gamma_k^Z}_{0,\lambda_0^k}.
	\]
Noting that $|\Gamma_k'-\mathrm{Id}|=|\Gamma_k^Z-\Gamma_{k-1}^Z|$ and considering \eqref{eq:pfChoiceGamma}, \eqref{eq:pfChoiceHk}, and \eqref{eq:pfChoiceExcess}, the conditions
\eqref{eq:pfGammaIter}, \eqref{eq:pfHalfPlaneIter}, and \eqref{eq:pfExcessIter} are satisfied.
This completes the proof of the lemma.
	\end{proof}

	Now, let us retire to the proof of Proposition \ref{prop_ImproveExcessRho}.
	The limits $\Gamma^Z:=\lim_{k\rightarrow +\infty}\Gamma_k^Z$ and $H^Z:=\lim_{k\rightarrow +\infty}H_k^Z$ exist based on \eqref{eq:pfGammaIter} and \eqref{eq:pfHalfPlaneIter}.
Additionally, by using \eqref{eq:pfGammaIter} and \eqref{eq:pfHalfPlaneIter} again, and noting that $\mathcal{E}(\mathcal{V}^Z,H)\le C\mathcal{E}(\mathcal{V},H)$, we obtain
\begin{align}
	\left|\Gamma^Z-\Gamma_k^Z\right|
	\le{}& \frac{C}{2^k}\mathcal{E}(\mathcal{V},H),
	\label{eq:pfLimitGamma}\\
	\mathrm{dist}(H^Z,H_k^Z)\le{}& \frac{C}{2^k}\mathcal{E}(\mathcal{V},H)\label{eq:pfLimitH}.
\end{align}
Using triangle inequality, along with \eqref{eq:pfLimitGamma}, \eqref{eq:pfLimitH}, and \eqref{eq:pfExcessIter}, we deduce
\begin{equation}
	\mathcal{E}((\mathcal{V}^Z)^{\Gamma^Z}_{0,\lambda_0^k},H^Z)\le \frac{C}{2^k}\mathcal{E}(\mathcal{V},H).
	\label{eq:pfLimitExcess}
\end{equation}
Furthermore, a standard interpolation implies
\begin{equation}
	\mathcal{E}((\mathcal{V}^Z)^{\Gamma^Z}_{0,\rho},H^Z)\le C \rho^{\alpha} \mathcal{E}(\mathcal{V},H),
\end{equation}
for any $\rho \in (0,\frac{1}{4}]$ and some $\alpha =\alpha(n,\Lambda_0)\in (0,1)$, $C=C(n,\Lambda_0) \in (0,+\infty)$.
Therefore, \eqref{eq:propExcessZ} holds in view of the definition of $\mathcal{V}^Z$.
Note that \eqref{eq:propGammaZ} and \eqref{eq:propHZ} hold if we choose $k=0$ in \eqref{eq:pfLimitGamma} and \eqref{eq:pfLimitH}.
\end{proof}
\begin{proof}[Proof of Theorem \ref{thm_l2positiveAngle}]

From Proposition \ref{prop_ImproveExcessRho}, for any $\varepsilon_1>0$, we can find $\Gamma^Z \in \mathrm{SO}(n)$ and a half plane $H^Z$ such that $(\mathcal{V}^{Z,\rho},H^Z)$ satisfies $(\Lambda_0,\varepsilon_1)$-hypothesis for any $\rho \in (0,1)$ by choosing $\varepsilon$ small enough.
Here, we write $\mathcal{V}^{Z,\rho}:=\mathcal{V}^{\Gamma^Z}_{Z,\rho}$.

Let $S=S_{\mathcal{V}}:=\left\{ Z=(0,\zeta,\eta) \in \mathrm{spt}\|V^\theta\| \cap B_1:\Theta(\mathcal{V},Z)\ge 1-\cos \theta(Z)\right\}$.
We aim to demonstrate that $S \cap B_{\frac{1}{16}}$ can be written as a graph of a $C^{1,\alpha}$ function defined on $\left\{ 0 \right\}\times B_{\frac{1}{16}}^{n-1}$.

For any $(0,y) \in \left\{ 0 \right\}\times B_{\frac{1}{16}}^{n-1}$, we show that there exists a unique point $Z=(0,\zeta,y) \in S$.

At first, if $\left\{ 0 \right\}\times \mathbb{R}\times \left\{ y \right\}\cap S=\emptyset $, then, since $S$ is closed and $0 \in S$, we can find $\delta \in (0,\frac{1}{16})$ such that
\begin{equation}
	\left\{ 0 \right\}\times B_\delta^{n-1}(y)\cap S=\emptyset \text{ and }\left\{ 0 \right\}\times  \partial B_\delta^{n-1}(y)\cap S\neq \emptyset.
	\label{eq:pfSmallBall}
\end{equation}
We choose $Z_0 \in \left\{ 0 \right\}\times \partial B_\delta^{n-1}(y)\cap S$ and consider $\mathcal{V}':=(\mathcal{V}^{Z_0})^{\Gamma^{Z_0}}_{0,6\delta}$, $H'=H^{Z_0}$.
From \eqref{eq:pfSmallBall}, there exists a point $y' \in \partial B_{\frac{1}{3}}^{n-1}(0)$ such that
\[
	\Gamma^{Z_0}\circ L^g_{Z_0}(\left\{ 0 \right\}\times B_{\frac{1}{3}}^{n-1}(y'))\cap B_1\cap \left\{ \Theta(\mathcal{V}',Z)\ge 1-\cos\theta'(Z) \right\}=\emptyset.
\]
When $\varepsilon$ is small enough, from \eqref{eq:pfLimitGamma} and Proposition \ref{prop_Pi2Delta}, we can find $y'' \in B_{\frac{1}{2}}^{n-1}$ such that $B_{\frac{1}{4}}(y'')\subset \Gamma^{Z_0}\circ L^g_{Z_0}(\left\{ 0 \right\}\times B_{\frac{1}{3}}^{n-1}(y'))$.
Therefore,
\begin{equation}
	B_{\frac{1}{4}}(0,y'')\cap \left\{ \Theta(\mathcal{V}',Z)\ge 1-\cos\theta'(Z) \right\}=\emptyset.
	\label{eq:pfEmptyIntersection}
\end{equation}

On the other hand, for sufficiently small $\varepsilon$, we can apply Lemma \ref{lem_noHole} to conclude
\[
	B_{\frac{1}{32}}(0,y'')\cap \left\{ \Theta(\mathcal{V}',Z)\ge 1-\cos\theta'(Z) \right\}\cap \partial H\neq \emptyset,
\]
which contradicts with \eqref{eq:pfEmptyIntersection}. 
Therefore, $\left\{ 0 \right\}\times \mathbb{R}\times \left\{ y \right\}\cap S$ is non-empty.

We choosing two distinct points $Z_1=(0,\zeta_1,\eta_1),Z_2=(0,\zeta_2,\eta_2) \in S\cap B_{\frac{1}{16}}$, and write $r=\left|Z_2-Z_1\right|$, $\mathcal{V}'=(\mathcal{V}^{Z_1})^{\Gamma^{Z_1}}_{0,6r}$, $H'=H^{Z_1}$.
By choosing $\varepsilon$ small enough, we know $Z_2'=(0,\zeta_2',\eta_2'):=(\Gamma^{Z_1}\circ L^g_{Z_1}\circ \eta_{Z_1,3r})(Z_2) \in B_{\frac{1}{2}}\backslash B_{\frac{1}{4}}$. By applying Corollary \ref{cor_noHole} with $(\mathcal{V}',H')$ in place of $(\mathcal{V},H)$, we obtain
\[
	\left|\zeta_2'\right|\le \frac{1}{16},
\]
which leads to  $\frac{\left|\zeta_2'\right|}{\left|\eta_2'\right|}\le \frac{1}{3}$.
In view of \eqref{eq:corDistSing} in Corollary \ref{cor_noHole}, by choosing $\varepsilon$ small enough again, we see that
\begin{equation}
	\left|\zeta_1-\zeta_2\right|\le \frac{1}{2}\left|\eta_1-\eta_2\right|.
\label{eq:pfLip}
\end{equation}
In particular, $\eta_1=\eta_2$ will imply $\zeta_1=\zeta_2$. 
Consequently, we know $\left\{ 0 \right\}\times \mathbb{R}\times \left\{ y \right\}\cap S$ contains a unique point, allowing us to define a function $\varphi:B_{\frac{1}{16}}^{n-1} \rightarrow \mathbb{R}$ such that
\[
	\tilde{\varphi}(y):=(0,\varphi(y),y) \in S.
\]
where \eqref{eq:pfLip} implies $\varphi$ is indeed a Lipschitz function.

Thus, $\varphi$ is differentiable almost everywhere.
Suppose $\varphi$ is differentiable at $y_0$, then from Corollary \ref{cor_noHole} and \eqref{eq:pfSmallBall}, we deduce
\begin{equation}
	\Gamma^{\tilde{\varphi}(y_0)}\circ L^g_{\tilde{\varphi}(y_0)}(\left\{ 0 \right\}\times \mathbb{R}^{n-1})=D\tilde{\varphi}(y_0)(\left\{ 0 \right\}\times \mathbb{R}^{n-1}).
	\label{eq:pfGammaLine}
\end{equation}
In other word, $\Gamma^{(0,\varphi(y_0),y_0)}(\left\{ 0 \right\}\times \mathbb{R}^{n-1})$ can be determined by the tangent space of $S$ at $(0,\varphi(y_0),y_0)$.

We fix two points $y_1$ and $y_2$ where $\varphi$ is differentiable.
We denote $Z_1=(0,\varphi(y_1),y_1)$, $Z_2=(0,\varphi(y_2),y_2)$, $r=\left|Z_2-Z_1\right|$.
By applying Proposition \ref{prop_ImproveExcessRho} with $(\mathcal{V}',H'):=(\mathcal{V}^{Z_1,3r},H^{Z_1})$ in place of $(\mathcal{V},H)$, we obtain
\[
	\left|\Gamma^{Z_2'}-\mathrm{Id}\right|\le
	C \mathcal{E}(\mathcal{V}',H')\le Cr^\alpha \mathcal{E}(\mathcal{V},H),
\]
where $Z'_2=\Gamma^{Z_1}\circ \Pi^g_{Z_1,3r}(Z_2)$.

We have two ways to express the tangent space of $S_{\mathcal{V}'}$.
One expression is
\[
	T_{Z_2'}S_{\mathcal{V}'}=\Gamma^{Z_1}\circ L^g_{Z_1}\circ(\Gamma^{Z_2}\circ L^g_{Z_2})^{-1}(\left\{ 0 \right\}\times \mathbb{R}^{n-1}).
\]

The alternative expression is
\[
	T_{Z_2'}S_{\mathcal{V}'}=\Gamma^{Z_2'}\circ L^{g'}_{Z_2'}(\left\{ 0 \right\}\times \mathbb{R}^{n-1}).
\]

Therefore,
\[
	\left|\Gamma^{Z_1}\circ L^g_{Z_1}(\left\{ 0 \right\}\times \mathbb{R}^{n-1})-\Gamma^{Z_2}\circ L^g_{Z_2}(\left\{ 0 \right\}\times \mathbb{R}^{n-1})\right|\le C r^\alpha \mathcal{E}(\mathcal{V},H).
\]

From \eqref{eq:pfGammaLine}, we find
\[
	\left|D\varphi(y_1)-D\varphi(y_2)\right|\le 
	C |Z_1-Z_2|^{\alpha}\mathcal{E}(\mathcal{V},H)\le C \left|y_1-y_2\right|^{\alpha}\mathcal{E}(\mathcal{V},H).
\]

In conclusion, we know $\varphi$ is $\alpha$-H\"older continuous.

Now, we denote $P$ to be the plane in which $H$ lies, and $S'$ the orthogonal projection of $S$ to the hyperplane $P$.
From Theorem \ref{thm__l_2_estimate}, we know $S'\subset \left\{ r<\frac{1}{64} \right\}$.
Let $\Omega$ be the component of $P\cap B_{\frac{1}{16}}\backslash T_{\mathcal{V}}'$ which contain $H\cap B_1\backslash \left\{ r\ge \frac{1}{16} \right\}$.

For any $Z \in S \cap B_{\frac{1}{16}}$ and $r \in (0,1)$, 
applying Theorem \ref{thm__l_2_estimate} with $((V',W',\theta',g'),H)=(\mathcal{V}^{Z,r},H^Z)$ allow us to write $\mathrm{spt}\|V'\|\cap B_{\frac{13}{16}}\cap \left\{ r\ge \frac{1}{32} \right\}$ as a graph of some $C^{1,\beta}$ function over plane $H^Z$ with
\[
	|Du ^{Z,r}|\le C r^\beta\mathcal{E}(\mathcal{V},H).
\]

Hence, by choosing $\mathcal{E}(\mathcal{V},H)$ small enough, we can ensure that $\mathrm{spt}\|V\|\cap B_\frac{r}{2}(Z)\backslash \left\{ r(X-Z)\le \frac{r}{16} \right\}$ can be written as a graph of some function $u$ with
\begin{equation}
	\left|Du\right|\le \tan\left(\frac{1}{2}\Lambda_0\right).
	\label{eq:pfBoundedGrad}
\end{equation}
By the arbitrariness of $Z$ and $r$ and the unique continuation, we know $\mathrm{spt}\|V\|\cap B_{\frac{1}{16}}$ can be written as a graph of a Lipschitz function $u$ defined on $\Omega$ with boundary value
\[
	u|_{\partial \Omega\cap T'}=\varphi.
\]

Utilizing the standard regularity theory (e.g., \cite{Morrey1966Book}), it is infer that $u$ is a $C^{1,\gamma}$ function defined on $B_{\frac{1}{32}}\cap \Omega$ for some $\gamma \in (0,1)$.
We write $\Sigma$ to be the graph of $u$ over $B_{\frac{1}{32}}\cap \Omega$ and $U$ to be the connected region in $B_{\frac{1}{32}}\cap \partial \mathbb{H}^{n+1}\backslash S$ which contain $B_{\frac{1}{32}}\cap \{ x_1=0, x_2<-\frac{1}{64} \}$.
By the gradient estimate \eqref{eq:pfBoundedGrad}, we know $\measuredangle(\Sigma,U) \in (\frac{\pi}{2}+\frac{\Lambda_0}{2},\pi-\frac{\Lambda_0}{2})$ and hence $\measuredangle _g(\Sigma,U) \in (\frac{\pi}{2}+\frac{\Lambda_0}{3},\pi-\frac{\Lambda_0}{3})$ by choosing $\mathcal{E}(\mathcal{V},H)$ small enough.

Finally, for any $X \in \partial \Sigma\cap B_{\frac{1}{32}}$, we only need to show $\measuredangle _g(\Sigma,U)=\theta$ at $X$.
To see this, we consider the tangent cone $\mathcal{V}'=(V',W',\theta(X),\delta)$ of $\mathcal{V}$ at $X$.
Since the regularity for $\mathrm{spt}\mathcal{V}$ has been established, we can write $V'=|P'\cap \mathcal{B}_1|$ for some $n$-dimensional subspace of $\mathbb{R}^{n+1}$.
Notably, \eqref{eq:pfBoundedGrad} implies $P'$ is not orthogonal to $P$.
As $V'-\cos \theta(X)W'$ is stationary in free boundary sense, we know the only possibility is, $W'=|H^0\cap \mathcal{B}_1|$ and 
$\measuredangle (P'\cap \mathcal{B}_1, \mathrm{spt}\|W'\|)=\theta(X)$.
Consequently, we obtain $\measuredangle _g(\Sigma,U)=\theta$ along $\partial \Sigma\cap B_{\frac{1}{32}}$.
\end{proof}

\section{Free boundary case}%
\label{sec:free_boundary_case}

In this section, we aim to establish the regularity for almost stationary varifolds in $\mathcal{B}_1$ in free boundary case.
Some proofs in this part are omitted unless they are different from the previous part.

For any $\mathcal{V} \in \mathcal{RIV}(\mu_0)$ with $\mu_0 \in (0,1)$, we define $\mu_F(\mathcal{V})$ by
\[
	\mu_F(\mathcal{V})=\inf \left\{ \mu: \mathcal{V} \text{ is }\mu \text{-stationary in free boundary sense}\right\}.
\]
Recall that $\mu$-stationary in free boundary sense has been defined in Definition \ref{def_muStat}.
We define the $L^2$-excess in free boundary sense by
\[
	\mathcal{E}^2_F(\mathcal{V},H):=\int_{ \mathcal{B}_1} \mathrm{dist}^2(X,H)d\|V\|(X)+\mu_F(\mathcal{V}).
\]

\begin{hypothesis}
	\label{hyp_free_boundary}
	Given $\varepsilon>0$, a VPCA-quadruple $\mathcal{V} \in \mathcal{RIV}(\mu)$ for $\mu$ small enough, 
	and a half hyperplane $H$ in $\mathbb{H}^{n+1}$,
	we say $(\mathcal{V},H)$ satisfies $\varepsilon$-Hypothesis if the following conditions hold.
	\begin{enumerate}[\normalfont(a)]
		\item $\mathcal{V}$ is $\mu$-stationary in free boundary sense with $\mu<\varepsilon$, $g(0)=\delta$.
			\label{it:hypFBmuStat}
		\item $\Theta(\mathcal{V},0)\ge 1-2\varepsilon$ and $\|V\|(\mathcal{B}_1)<\frac{3}{4}\omega_n$,
		\item $H$ can be written as $H=H^{\theta'}$ with $|\theta'-\frac{\pi}{2}|<\varepsilon$.
		\item The half hyperplane $H$ satisfies $\mathcal{E}^2_F(V,H)<\varepsilon$.
	\end{enumerate}

\end{hypothesis}
Note that the condition \ref{it:hypFBmuStat} implies $\mu_F(\mathcal{V})<\varepsilon$.

Analogue to Theorem \ref{thm_l2positiveAngle}, we have the following regularity result.

\begin{theorem}
	\label{thm_l2freeFB}
	There exists $\varepsilon=\varepsilon(n)\in (0,1)$ such that if $(\mathcal{V},H)$ satisfies $\varepsilon$-Hypothesis, then $\mathrm{spt}\|V\|\cap B_{\frac{1}{32}}=\Sigma$ where $\Sigma$ is a $C^{1,\gamma}$-hypersurface with boundary in $B_{\frac{1}{32}}$ such that $\partial \Sigma \cap B_{\frac{1}{32}}\subset \partial\mathbb{H}^{n+1}$ and $\measuredangle _g(\Sigma,U)=\theta$ or $\frac{\pi}{2}$ along $\partial\Sigma$ where $U$ is the connected component of $\mathcal{B}_{\frac{1}{32}}\cap \partial \mathbb{H}^{n+1}\backslash \Sigma$ positioned below the $\partial\Sigma$.
\end{theorem}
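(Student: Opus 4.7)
The plan is to mirror the three-step scheme used to prove Theorem \ref{thm_l2positiveAngle}: establish an $L^2$-excess estimate, perform a blow-up analysis showing that blow-ups are $C^2$ harmonic, then iterate an excess-decay lemma. The essential simplification in the free-boundary regime $|\theta - \pi/2| < \varepsilon$ is that the reference cone $\boldsymbol{C}_H = |H \cup H^0|$ collapses, up to an error of size $O(\cos\theta) = O(\varepsilon)$, to the single half-plane $H$; this is exactly what $\mathcal{E}_F^2(\mathcal{V},H)$ records. Throughout, the monotonicity formula \eqref{eq:MonotonicityFormulaInteriorG} (the reflected version valid whenever $\theta \geq \pi/2 - \mu$) replaces \eqref{eq:MonotonicityFormulaBoundary}, and the $W$-contributions in \eqref{eq:1stVarG} become absorbable errors since $\|\cos\theta\|_{L^\infty} < \varepsilon$ and $\|W\|(\mathcal{B}_1) \leq \omega_n$.

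First I would establish the graphical decomposition lemmas (analogues of Lemma \ref{lem_graph_over_cone}, Lemma \ref{lem_torus}, and Lemma \ref{lem_graph}). The argument is in fact cleaner than in the positive-angle case: one does not need to split $V$ into an $N(H)$-part and an $N(H^0)$-part, because under $\varepsilon$-Hypothesis the support of $V$ is uniformly close to $H$ on $B_{7/8}$ (by compactness, the constancy theorem, and the lower density bound in Lemma \ref{lem_lowerBoundDensity}), and the reflection argument combined with Theorem \ref{thm_allard} supplies the graph representation together with the $C^{1,\beta}$ bound on any $\{r > \tau\}$. Next, the $L^2$-estimate of Theorem \ref{thm__l_2_estimate} transfers verbatim: the test-field $\varphi = \psi^2(|X|)x$ gives the normal decay estimate, and the terms arising from $\int \cos\theta \, d\|W\|$ in the first-variation identity are bounded by $\varepsilon \|W\|(\mathcal{B}_1) = O(\mu_F)$. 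The non-concentration lemma \ref{lem_noHole} follows via the dimension-reduction Lemma \ref{lem_dimReduction} with $\vartheta$ replaced by a quantity tending to $0$, which still forces $\Theta^k(\|V\|,0) \geq 1/2$ and therefore the existence of density-one points in every small ball along the spine $\{r = 0\}$.

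Second, the blow-up analysis of Section \ref{sec:blow_up_argument} applies with minimal modification. Compactness (Theorem \ref{thm_compactnessRIV}) with $\theta_k \to \pi/2$ produces a limit $v \in C^{1,\beta}_{\mathrm{loc}}(B_{3/8} \cap H, H^\perp)$. For the $C^2$-regularity Proposition \ref{prop_C2estimate}, one again tests against $\varphi = \psi(|x|,y) e_2$ with $\partial_r \psi \equiv 0$ near $r=0$; the normalization $\int \psi(0,y)\,dy = 0$ becomes unnecessary here because the boundary term $\mathrm{I}_k = -\int_{\{r \geq \tau\}} \cos\theta_k\, \partial_r\psi$ is already $O(\varepsilon) = o(\mathcal{E}_k)$. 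So $v$ extends across the spine as a smooth harmonic function on $B_{1/8}\cap H$, and the excess-decay Propositions \ref{prop_improveExcess} and \ref{prop_ImproveExcessRho} are then obtained by the same contradiction-plus-blow-up argument, producing the $C^{1,\gamma}$ graphical representation of $\mathrm{spt}\|V\|$ on $B_{1/32}$.

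The main obstacle — and the only genuinely new feature — is the dichotomy in the conclusion: $\measuredangle_g(\Sigma,U) = \theta$ \emph{or} $\pi/2$. Once regularity of $\Sigma$ is established, one examines the tangent cone $(V', W', \theta(X), \delta) \in \mathrm{VarTan}(\mathcal{V}, X)$ at a boundary point $X \in \partial\Sigma \cap B_{1/32}$. By the just-proved regularity, $V'$ is a half-hyperplane, so $V' - \cos\theta(X) W'$ being stationary in the free-boundary sense (Remark \ref{rmk_stat}) forces, via constancy, either $W' = |H^0 \cap \mathcal{B}_1|$ — which yields $\measuredangle_g(\Sigma,U) = \theta(X)$ — or $W' = 0$, which yields the orthogonal angle $\pi/2$. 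The alternative cannot be eliminated without further hypothesis on $\theta$; this is why the theorem statement permits both values.
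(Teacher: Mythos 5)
Your overall architecture is exactly the paper's: redo the $L^2$-estimate with $\mathrm{dist}(X,H)$ and $\mu_F$ in place of $\mathrm{dist}(X,\mathrm{spt}\|\boldsymbol{C}\|)$ and $\mu$, run the blow-up to get a $C^2$ harmonic limit, iterate the excess decay, and read off the angle dichotomy from the tangent cone (where, as you say, $W'=0$ or $W'=|\partial\mathbb{H}^{n+1}\cap\mathcal{B}_1|$ forces orthogonality while $W'=|H^0\cap\mathcal{B}_1|$ forces angle $\theta(X)$). Your observation that $|\cos\theta_k|\le\mu_F(\mathcal{V}_k)\le\mathcal{E}_{F,k}^2=o(\mathcal{E}_{F,k})$, so the term $\mathrm{I}_k$ is harmless and the normalization $\int\psi(0,y)\,dy=0$ is not needed, is correct and is indeed the main simplification.

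There is one genuine gap, in the free-boundary analogue of Lemma \ref{lem_noHole}(a). You propose to get the ``no-hole'' property from the dimension-reduction Lemma \ref{lem_dimReduction} with $\vartheta\to 0$. That only covers half of the contradiction argument. Negating the claim, a sequence $\mathcal{V}_i$ with no points of density $\ge 1-2\varepsilon_i$ on $\partial\mathbb{H}^{n+1}\cap B_\delta(0,y)$ splits into two scenarios: (i) $\mathrm{spt}\|V_i\|$ meets the boundary there but only at low-density points --- this is ruled out essentially as you say (for a free-boundary stationary integral varifold, reflection gives $\Theta(\|V_i\|,Z)\ge\frac12$ at every boundary point of the support, hence $\Theta(\mathcal{V}_i,Z)\ge 1-2\varepsilon_i$; note Lemma \ref{lem_dimReduction} itself degenerates at $\vartheta=0$ since $V=0$ is then admissible, so you must additionally use $Z\in\mathrm{spt}\|V_i\|$); and (ii) $\mathrm{spt}\|V_i\|$ \emph{avoids} $\partial\mathbb{H}^{n+1}$ entirely near $(0,y)$, so there is no boundary point to which any density lower bound applies. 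In the positive-angle case scenario (ii) is killed because the constancy theorem would force the limit plane $H$ to be orthogonal to $\partial\mathbb{H}^{n+1}$, contradicting $\theta\ge\frac{\pi}{2}+\Lambda_0$; here $H=H^{\pi/2}$ \emph{is} orthogonal, so that route is closed. The paper instead observes that in case (ii) each $V_i$ is interior-stationary in $B_{\delta/2}(0,y)$, passes to the limit $V=|H^{\pi/2}\cap\mathcal{B}_1|$, and invokes the strong maximum principle for stationary varifolds (Solomon--White): a stationary varifold in $B_{\delta/2}$ supported in $\mathbb{H}^{n+1}$ whose support touches $\partial\mathbb{H}^{n+1}$ must lie in $\partial\mathbb{H}^{n+1}$, which $|H^{\pi/2}|$ does not. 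Without this (or an equivalent barrier argument) your no-hole lemma, and hence the well-definedness of the blow-up trace $\kappa(y)$ along the spine and the excess-decay iteration, is not established.
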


Analogous to Theorem \ref{thm__l_2_estimate}, we give the following $L^2$-estimate.
\begin{theorem}
	
	For any $\tau \in (0,\frac{1}{8}),\omega \in (0,1)$, there exists $\varepsilon_0=\varepsilon_0(n,\tau) \in (0,\frac{1}{2})$, $\beta=\beta(n) \in (0,1)$ such that if $(\mathcal{V},H)$ satisfies $\varepsilon_0$-Hypothesis.
	Then, the following conclusions hold,
	\begin{enumerate}[\normalfont(a)]
		\item $V\lfloor(B_{\frac{{13}}{16}}\backslash \left\{ r<\tau \right\})=|\mathrm{graph}u\cap B_{\frac{{13}}{16}}\backslash \left\{ r<\tau \right\}$
			where $u \in C^{1,\beta}(B_{\frac{{13}}{16}}\cap H \backslash \left\{ r<\frac{\tau}{2} \right\},H^\bot )$ 
			and $\mathrm{dist}(X+u(X),H)=|u(X)|$ for $X \in B_{\frac{{13}}{16}}\cap H\backslash \left\{ r<\tau \right\}$.
		\item $\int_{ B_{\frac{3}{4}}} \frac{|X^{\bot_S} |^2}{|X|^{n+2}}dV(X,S)\le C \mathcal{E}^2_F(\mathcal{V},H)$.
		\item $\sum_{j =3}^{n+1}\int_{ B_{\frac{3}{4}}} |e_j^{\bot_S} |^2dV(X,S)\le C \mathcal{E}^2_F(\mathcal{V},H)$.
		\item $\int_{ B_{\frac{3}{4}}} \frac{\mathrm{dist}^2(X,H)}{|X|^{n+2-\omega}}d\|V\|(X)\le C_1 \mathcal{E}^2_F(\mathcal{V},H)$.
	\end{enumerate}
	Here, $C=C(n)$ and $C_1=C_1(n,\omega)$.
	\label{thm:free_boundary_l_2_estimate}
\end{theorem}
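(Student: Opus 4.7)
The plan is to adapt the proof of Theorem \ref{thm__l_2_estimate} to the free boundary setting, where the contact angle $\theta$ lies in $[\pi/2-\varepsilon,\pi/2+\varepsilon]$ and the cone $\boldsymbol{C}_H$ effectively degenerates to the half-hyperplane $H$ itself (since $\cos\theta$ is small). Accordingly, the excess $\mathcal{E}_F$ uses $\|V\|$ and distance to $H$, rather than $\|V^\theta\|$ and distance to $\mathrm{spt}\|\boldsymbol{C}_H\|$; every place $W$ appeared in the previous proof will now contribute only an error controlled by $|\cos\theta|\le \sin\varepsilon \le \mu_F(\mathcal{V})$, absorbable into $\mathcal{E}_F^2$.

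First I would establish the graph representation (a) through free-boundary analogs of Lemma \ref{lem_graph_over_cone}, Lemma \ref{lem_torus}, and Lemma \ref{lem_graph}. Compactness (Theorem \ref{thm_compactnessRIV}) together with Theorem \ref{thm:smallDensity} shows that any limit of a sequence of counterexamples is a stationary integral varifold in the free boundary sense supported on $H$ with multiplicity one; reflecting across $\partial\mathbb{H}^{n+1}$ produces a near-stationary varifold in $\mathcal{B}_1$ to which Allard's regularity theorem (Theorem \ref{thm_allard}) applies on the complement of $\{r<\tau\}$, yielding the $C^{1,\beta}$ graph $u$ over $H$. The only new point compared to the prescribed-angle case is that the two sheets of $\boldsymbol{C}_H$ merge as $\theta\to\pi/2$, so the previous bifurcation into $V_1$ and $V_2$ is replaced by a single flat sheet after reflection.

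Next, for (b) and (c), I would use the reflection-based monotonicity formula \eqref{eq:MonotonicityFormulaInteriorG}, which is designed precisely for $\theta\ge\pi/2-\mu$. The key inequality analogous to \eqref{eq:pfEstVolumeBall} is obtained by testing the first variation \eqref{eq:1stVarFormulaGeneral} against a tangential vector field $\varphi = \phi(|X|)X + \phi(|\tilde X|)\tilde X$ (cf.\ \eqref{eq:defPhiTangential}), which is automatically tangent to $\partial\mathbb{H}^{n+1}$ since $X\cdot e_1=\tilde X\cdot e_1=0$ there. Combining the resulting bound for $\|V\|(\mathcal{B}_\rho)$ with \eqref{eq:MonotonicityFormulaInteriorG}, integrating against a cutoff $\psi(|X|)$ as in \eqref{eq:pfTestPsi}, gives the $|X^{\bot_S}|^2/|X|^{n+2}$ estimate. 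For the $e_j^{\bot_S}$ estimate with $j=3,\ldots,n+1$, I would set $\varphi=\psi^2(|X|)x$ with $x=x_1e_1+x_2e_2$; this is tangent to $\partial\mathbb{H}^{n+1}$ because $x_1$ vanishes there, and the algebraic manipulation paralleling the original proof (using $\mathrm{div}_S x = 1+\sum_{j\ge 3}|e_j^{\bot_S}|^2$ and Cauchy--Schwarz) gives the estimate once $W$-terms are absorbed via $|\cos\theta|\le\varepsilon$.

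For (d), I would use the same weighted test vector field $\varphi(X)=\zeta^2|X|^{-n+\omega}\mathrm{dist}^2(X,H)\frac{X}{|X|^2}$ with $\zeta$ supported in $B_{13/16}$ and equal to $1$ on $B_{3/4}$, which is tangent to $\partial\mathbb{H}^{n+1}$ since $X\cdot e_1=0$ there, and then proceed exactly as in the previous proof, using the estimate from (b) in the interior error and $|\cos\theta|\le\varepsilon$ to control the boundary contribution from $W$. The main technical nuisance, rather than a serious obstacle, is uniform bookkeeping of the $W$-error terms to ensure that they are always of order $\mu_F(\mathcal{V})\le\mathcal{E}_F^2(\mathcal{V},H)$; this is essentially mechanical once one observes that on $\mathrm{spt}\|W\|\subset\partial\mathbb{H}^{n+1}$ the tangent plane is $\{x_1=0\}$, so $|(X-X_0)^{\bot_S}|$ is either zero or equal to $\mathrm{dist}(X_0,\partial\mathbb{H}^{n+1})$, making the analog of the term $\mathrm{I}$ in the proof of Theorem \ref{thm_monotonicityFormula} controllable.
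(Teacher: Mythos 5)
Your proposal follows essentially the same route as the paper's proof: for (a) adapt Lemmas \ref{lem_graph_over_cone}--\ref{lem_change_center} with $\mathrm{dist}(X,H)$ and $\mu_F(\mathcal{V})$ in place of $\mathrm{dist}(X,\mathrm{spt}\|\boldsymbol{C}\|)$ and $\mu(\mathcal{V})$, and for (b)--(d) test the free-boundary first variation with the radial field, with $\varphi=\psi^2(|X|)x$, and with $\varphi=\zeta^2|X|^{-n+\omega}\mathrm{dist}^2(X,H)\frac{X}{|X|^2}$, combined with the monotonicity formula and $\Theta(\mathcal{V},0)\ge 1-2\mu_F(\mathcal{V})$, absorbing all $W$-contributions via $|\cos\theta|\le\mu_F(\mathcal{V})$. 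The only slip is your radial test field: one should reflect the center as in \eqref{eq:defPhiTangential} (which at $X_0=0$ reduces to $\phi(|X|)X$, already tangential to $\partial\mathbb{H}^{n+1}$), not the vector itself, since $\phi(|X|)(X+\tilde{X})$ destroys the radial structure (extra $|e_1^{\top_S}|^2$ and cross terms) needed for the sharp constant $n$ in the volume-derivative inequality.
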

\begin{proof}
	At first, we adapt Lemma \ref{lem_graph_over_cone} through Lemma \ref{lem_change_center} to the case of free boundary.
	Basically, this adaptation requires revisiting the same argument with $\mathrm{dist}(X,H)$ in place of $\mathrm{dist}(X,\mathrm{spt}\|\boldsymbol{C}\|)$, $\mu_F(\mathcal{V})$ in place of $\mu(\mathcal{V})$, and use the fact that the first variation of $\mu$-stationary varifold in free boundary sense can be written as
	\begin{equation}
		\int_{ } \mathrm{div}_S \varphi dV(X,S)\le
		C\mu_F(\mathcal{V}) \int_{ }(|\varphi|+|D_S\varphi|) d\|V+W\|
		\label{eq:pf1stVarFB}
	\end{equation}
	for any $\varphi \in \mathfrak{X}^1_{c,\tan}(\mathcal{B}_1)$.

	To establish Theorem \ref{thm:free_boundary_l_2_estimate}, we claim
	\[
		\|V\|(\mathcal{B}_\rho)\le \frac{(1+C\mu_F(\mathcal{V}))\rho}{n} \frac{d}{d\rho} \|V\|(\mathcal{B}_\rho)+C\mu_F(\mathcal{V}) \rho^n,\quad \text{ for a.e. }\rho \in (0,1)
	\]
	for some $C=C(n)$ for $\varepsilon_0$ small enough.
	Then, the monotonicity formula yields
	\[
		\int_{ B_{\frac{13}{16}}} 
		C \frac{\left|X^{\bot _S}\right|^2}{|X|^{n+2}}d\|V\|(X)\le \int_{ }  \psi^2(|X|)d\|V\|(X)-\int_{ } \psi^2(|X|)dH(X)  +C\mu_F(\mathcal{V}).
	\]
	Here, $\psi$ is a smooth function defined by \eqref{eq:pfTestPsi} and the fact that $\Theta(\mathcal{V},0)\ge 1-2\mu_F(\mathcal{V})$ has been used.

	Next, we can choose $\varphi = \psi^2(|X|)x$ in \eqref{eq:pf1stVarFB} and after a detailed computation, we arrive at
	\begin{equation}
		\int_{ B_{\frac{13}{16}}} \sum_{i=3 }^{n+1}|e_i^{\bot_S} |^2 dV(X,S)+
		\int_{ B_{\frac{13}{16}}(0)} \frac{|X^{\bot_S} |^2}{|X|^{n+2}}dV(X,S)\le
		C \mathcal{E}^2_F(\mathcal{V},H).
	\end{equation}
	Finally, by setting $\varphi=\zeta^2|X|^{-n+\omega}\mathrm{dist}^2(X,H) \frac{X}{|X|^2}$ in \eqref{eq:pf1stVarFB} where $\zeta=1$ on $B_{\frac{3}{4}}$ and $\zeta=0$ outside of $B_{\frac{{13}}{16}}$, it follows that
	\[
		\int_{ B_{\frac{3}{4}}} \frac{\mathrm{dist}^2(X,H)}{|X|^{n+2-\omega}}d\|V\|(X)\le C\mathcal{E}^2_F(\mathcal{V},H).
	\]
\end{proof}

Similar to Lemma \ref{lem_change_center}, we show the following lemma specific to the free boundary case.
\begin{lemma}
	\label{lem_changeCenterFB}
	Let $r\in (0,\frac{1}{3})$. 
	For any $\varepsilon \in (0,1)$, there exists $\varepsilon_0=\varepsilon_0(\varepsilon,n, r) \in (0,\varepsilon)$ such that if $(\mathcal{V},H)$ satisfies $\varepsilon_0$-Hypothesis, then for any $X \in B_{\frac{5}{8}}(0)\cap \partial \mathbb{H}^{n+1}$ with $\Theta(\|\mathcal{V}\|,X)\ge 1-2\varepsilon$, we have
	$(\mathcal{V}_{X,r}, L_X^g(H))$ satisfies $\varepsilon$-Hypothesis.
\end{lemma}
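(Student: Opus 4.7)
The plan is to verify the four conditions of Hypothesis \ref{hyp_free_boundary} for the quadruple $\mathcal{V}' = \mathcal{V}_{X,r}$ paired with the half-hyperplane $H' = L_X^g(H)$. The structure closely parallels the proof of Lemma \ref{lem_change_center}, substituting Theorem \ref{thm:free_boundary_l_2_estimate} for Theorem \ref{thm__l_2_estimate} and the free-boundary notion of $\mu$-stationarity throughout. The main obstacle will be verifying the last condition, namely the $L^2$-excess bound $\mathcal{E}_F^2(\mathcal{V}', L_X^g(H)) < \varepsilon$.

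For the first condition, Proposition \ref{prop_rescaling} yields $\mathcal{V}' \in \mathcal{RIV}(C\mu r)$ and an analogous rescaling statement for the $\mu$-stationarity in free-boundary sense gives $\mu_F(\mathcal{V}') \le C\mu_F(\mathcal{V}) r$, which can be made smaller than $\varepsilon$ by selecting $\varepsilon_0$ appropriately. The normalization $g'(0) = \delta$ follows from the construction of $L_X^g$ in Proposition \ref{prop_Pi2Delta}. For the third condition, writing $H = H^{\theta_1'}$ and $L_X^g(H) = H^{\theta_2'}$, Proposition \ref{prop_Pi2Delta} together with Remark \ref{rmk:LXg} gives $|L_X^g(H) - H| \le C\mu(\mathcal{V}) \le C\varepsilon_0$, whence $|\theta_2' - \theta_1'| \le C\varepsilon_0$; combined with $|\theta_1' - \pi/2| < \varepsilon_0$ from the hypothesis, one obtains $|\theta_2' - \pi/2| < \varepsilon$ for $\varepsilon_0$ sufficiently small. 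The density lower bound and the volume bound in the second condition follow by the same method used in deriving \eqref{eq:pfVolumeUpper}: applying the monotonicity formula (Corollary \ref{cor_monoG}) together with the initial density lower bound $\Theta(\mathcal{V}, 0) \ge 1 - 2\varepsilon_0$ and the upper semi-continuity of density from Theorem \ref{thm_theta__v_w_theta_g_x_ge__theta__v_i_w_i_theta_i_g_i_x}, one can ensure $\Theta(\mathcal{V}', 0) \ge 1 - 2\varepsilon$ and $\|V'\|(\mathcal{B}_1) < \tfrac{3}{4}\omega_n$ by taking $\varepsilon_0$ small; the presence of the high-density point $X$ with $\Theta(\mathcal{V}, X) \ge 1 - 2\varepsilon_0$ is essential here.

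The chief technical burden is the fourth condition. After a change of variables through $\Pi_{X,r}^g = L_X^g \circ \eta_{X,r}$, the bounds $\mathrm{dist}(\Pi_{X,r}^g(Y), L_X^g(H)) \le \tfrac{1}{r}\mathrm{dist}(Y - X, H) + C\mu/r$ from \eqref{eq:propLcloseG}, together with the Jacobian estimate $|J_S \Pi_{X,r}^g| \le (1+C\mu)/r^n$, give
\[
\int_{B_1} \mathrm{dist}^2(Y, L_X^g(H))\, d\|V'\|(Y) \le \frac{1+C\mu}{r^{n+2}} \int_{B_{(1+C\mu)r}(X)} \left(\mathrm{dist}^2(Y, H) + \mathrm{dist}^2(X, H) + C\mu^2\right) d\|V\|(Y).
\]
The first integrand contribution is majorized by $r^{-(n+2)} \int_{B_1} \mathrm{dist}^2(Y, H)\, d\|V\|(Y) \le r^{-(n+2)}\mathcal{E}_F^2(\mathcal{V}, H)$, while $\mathrm{dist}^2(X, H)$ must be controlled separately. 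Here the free-boundary analogue of Lemma \ref{lem_graph} (which writes $\mathrm{spt}\|V\|$ as a small graph over $H$ away from the axis), together with Theorem \ref{thm:free_boundary_l_2_estimate}, yields $\mathrm{dist}^2(X, H) \le C\mathcal{E}_F^2(\mathcal{V}, H)$ whenever $X \in \mathrm{spt}\|V\|$ satisfies the high-density condition, by an argument entirely parallel to the derivation of \eqref{eq:corDistSing} in Corollary \ref{cor_noHole}. Combining these bounds and choosing $\varepsilon_0 = \varepsilon_0(\varepsilon, n, r)$ sufficiently small (with the $r^{-(n+2)}$ factor absorbed into the choice of $\varepsilon_0$) produces the desired $\mathcal{E}_F^2(\mathcal{V}', L_X^g(H)) < \varepsilon$, completing the verification.
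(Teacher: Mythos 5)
Your overall strategy is the same as the paper's: the paper's proof of this lemma is simply the observation that one repeats the proof of Lemma \ref{lem_change_center} with $\mathrm{dist}(X,H)$ in place of $\mathrm{dist}(X,\mathrm{spt}\|\boldsymbol{C}\|)$, $\mu_F$ in place of $\mu$, and the free-boundary $L^2$-estimate in place of Theorem \ref{thm__l_2_estimate}, and your verification of the first three conditions of Hypothesis \ref{hyp_free_boundary} and your change-of-variables computation for the excess follow that template faithfully.

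There is, however, one step whose justification as written is circular. To control the shift term you invoke the quantitative bound $\mathrm{dist}^2(X,H)\le C\,\mathcal{E}^2_F(\mathcal{V},H)$ for high-density points, ``by an argument entirely parallel to the derivation of \eqref{eq:corDistSing} in Corollary \ref{cor_noHole}.'' But the derivation of \eqref{eq:corDistSing} (estimate \eqref{eq:pfEstShiftSing} and the chain \eqref{eq:pfLongEstOfL2}) requires applying the $L^2$-estimate at the re-centered quadruple, i.e.\ it uses the change-of-center lemma itself; in the free-boundary setting this is exactly Corollary \ref{cor_noHoleFB}, which the paper derives \emph{from} Theorem \ref{thm:free_boundary_l_2_estimate} \emph{and} Lemma \ref{lem_changeCenterFB}. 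So you cannot use that estimate to prove the present lemma. The fix is the one the paper itself uses in Lemma \ref{lem_change_center}: you do not need $\mathrm{dist}^2(X,H)\lesssim\mathcal{E}^2_F$, only that $\mathrm{dist}^2(X,H)$ can be made as small as desired (for fixed $r$) by taking $\varepsilon_0$ small, since the fixed factor $r^{-(n+2)}$ is then absorbed into the choice of $\varepsilon_0$. This qualitative smallness follows non-circularly from the fact that small excess plus the density lower bounds (interior density $\ge 1$ a.e., and $\ge\tfrac12$ at boundary points via reflection/monotonicity) force $\mathrm{spt}\|V\|\cap B_{3/4}$ into a small Hausdorff neighborhood of $H$ — the same argument as \eqref{eqLemHauD} in Lemma \ref{lem_graph_over_cone} — together with the observation that a point with $\Theta(\mathcal{V},X)\ge 1-2\varepsilon_0$ must lie in $\mathrm{spt}\|V\|$ because $|\cos\theta|\le\sin\varepsilon_0$ makes the $W$-contribution to the density negligible. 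With the quantitative citation replaced by this soft bound, your argument is complete and coincides with the paper's intended proof.
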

\begin{proof}
	The proof is similar to the proof of Lemma \ref{lem_change_center}.
\end{proof}

Following from Theorem \ref{thm:free_boundary_l_2_estimate} and Lemma \ref{lem_changeCenterFB}, we derive the following corollary.
\begin{corollary}
	\label{cor_noHoleFB}
	For any $\rho \in (0,\frac{1}{4}]$, $\omega \in (0,1)$, there exists $\varepsilon=\varepsilon(n,\rho)$ such that if $(\mathcal{V},H)$ satisfies $\varepsilon$-Hypothesis, then for each $Z = (0,\zeta,\eta) \in \mathrm{spt}\|V\|\cap B_{\frac{5}{8}}\cap \partial \mathbb{H}^{n+1}$ with $\Theta(\mathcal{V},Z)\ge 1-2\varepsilon$, we have
	\begin{equation}
		|\zeta|^2\le C\mathcal{E}^2_F(\mathcal{V},H), 
		\label{eq:corDistSingFB}
	\end{equation}
	for some $C=C(n)$, and
	\begin{equation}
	\int_{ B_{\frac{\rho}{2}}(Z)} \frac{\mathrm{dist}^2(X,H_Z)}{|X-Z|^{n+2-\omega}}d\|V\|(X)\le
	\frac{C}{\rho^{n+2-\omega}} \int_{ B_\rho(Z)} \mathrm{dist}^2(X,H_Z)d\|V\|(X)+C\mu \rho^{\omega},
	\label{eq:corL2ImproveExcessFB}
\end{equation}
for some $C=C(n,\omega)$,
where the half-hyperplane $H_Z$ defined as $H_Z:=\tau_{-Z}(H)$.
\end{corollary}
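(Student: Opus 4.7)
The plan is to follow the proof of Corollary \ref{cor_noHole} line by line, substituting the half-hyperplane $H$ in place of the cone $\mathrm{spt}\|\boldsymbol{C}_H\|$, the shifted half-hyperplane $H_Z=\tau_{-Z}(H)$ in place of $\mathrm{spt}\|(\tau_{-Z})_\#\boldsymbol{C}_H\|$, and $\mu_F(\mathcal{V})$ in place of $\mu(\mathcal{V})$. The simplification in the free boundary case is that the cone $\boldsymbol{C}_H$ collapses to just $H$: indeed, $H$ is almost orthogonal to $\partial \mathbb{H}^{n+1}$, so the branch $H^{0}$ plays no role, and all the tools developed in Theorem \ref{thm:free_boundary_l_2_estimate} and Lemma \ref{lem_changeCenterFB} are the direct analogs of Theorem \ref{thm__l_2_estimate} and Lemma \ref{lem_change_center}.

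First, I establish the geometric observation that underlies \eqref{eq:corDistSingFB}. For $Z=(0,\zeta,\eta)$ with $|\zeta|\le \rho/8$ and any $X$ with $r(X)\ge \rho/2$ and $\mathrm{dist}(X,H)\le \rho/8$, the triangle inequality gives
\[
\mathrm{dist}(X,H)+\mathrm{dist}(X,H_Z)\ge |H^{\bot}(Z)|\ge C|\zeta|,
\]
where the last step uses that $H=H^{\theta'}$ with $|\theta'-\pi/2|<\varepsilon$, so the orthogonal projection of $Z$ onto $H^\bot$ has length comparable to $|\zeta|$. Applying Theorem \ref{thm:free_boundary_l_2_estimate} with $\tau=\rho/8$, together with Theorem \ref{thm_allard}, I conclude $r(Z)\le \rho/8$ and that every $X\in\mathrm{spt}\|V\|\cap B_\rho(Z)\setminus\{r<\rho/2\}$ satisfies $\mathrm{dist}(X,H)\le \rho/8$. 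Since the monotonicity formula gives $\|V\|(B_\rho(Z)\setminus\{r<\rho/2\})\ge C\rho^n$, I obtain
\[
|\zeta|^2\le \frac{C}{\rho^n}\left(\int_{B_\rho(Z)}\mathrm{dist}^2(X,H)d\|V\|(X)+\int_{B_\rho(Z)}\mathrm{dist}^2(X,H_Z)d\|V\|(X)\right).
\]

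Next, using Lemma \ref{lem_changeCenterFB}, I choose $\varepsilon$ so small that $(\mathcal{V}_{Z,1/3},L_Z^g(H))$ satisfies $\varepsilon_0$-Hypothesis with $\varepsilon_0$ small enough to invoke the $L^2$-decay estimate of Theorem \ref{thm:free_boundary_l_2_estimate}. Then, exactly as in the chain \eqref{eq:pfLongEstOfL2}--\eqref{eq:pfLongEstLeft}, transporting the integral back via $\Pi^g_{Z,1/3}$ using Proposition \ref{prop_Pi2Delta} and the Jacobian bound $|J_S\Pi^g_{Z,1/3}|\le (1+C\mu)/3^{-n}$, I bound
\[
\int_{B_\rho(Z)}\mathrm{dist}^2(X,H_Z)d\|V\|(X)\le C\rho^{n+2-\omega}\!\left(\int_{B_1}\mathrm{dist}^2(X,H)d\|V\|+\mu_F(\mathcal{V})+|\zeta|^2\right),
\]
where the $|\zeta|^2$ term appears because $\mathrm{dist}(X,L_Z^g(H))$ differs from $\mathrm{dist}(\eta_{Z,1/3}(X),H)$ by a term controlled by $|\zeta|/\rho$. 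Combining the two displayed bounds yields
\[
(1-C\rho^{2-\omega})|\zeta|^2\le \frac{C}{\rho^n}\mathcal{E}_F^2(\mathcal{V},H)+C\rho^{2-\omega}\mu_F(\mathcal{V}),
\]
and choosing $\omega=1/2$ with a fixed small $\rho_0=\rho_0(n)$ (making $C\rho_0^{3/2}<1/2$) and then taking $\varepsilon$ small enough to ensure the hypothesis at scale $\rho_0$, I conclude \eqref{eq:corDistSingFB}.

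For the second inequality \eqref{eq:corL2ImproveExcessFB}, I apply Lemma \ref{lem_changeCenterFB} again to transfer Theorem \ref{thm:free_boundary_l_2_estimate}~(d) to $(\mathcal{V}_{Z,\rho},L_Z^g(H))$ for any $\rho\in(0,5/24]$. Reversing the change of variables, replacing $\mathrm{dist}(X,L_Z^g(H))$ by $\mathrm{dist}(X-Z,H)=\mathrm{dist}(X,H_Z)$ modulo a factor $(1\pm C\mu)$, and absorbing the $\rho^{-\omega}$ and $\rho^{-n-2+\omega}$ factors exactly as in the derivation following \eqref{eq:pfCorChangeCenter}, I obtain the desired estimate on $B_{3\rho/5}(Z)$, which covers $B_{\rho/2}(Z)$; rescaling $\rho$ then extends this to the full range $\rho\in(0,1/4]$. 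The main obstacle throughout is bookkeeping of the affine transformation $\Pi^g_{Z,\rho}=\eta_{Z,\rho}\circ L_Z^g$: the interplay between the Jacobian factors, the shift between $\mathrm{dist}(X,H_Z)$ and $\mathrm{dist}(\Pi^g_{Z,\rho}(X),L_Z^g(H))$, and the extra $|\zeta|^2$ contribution must be handled uniformly in $\rho$, but this is routine once the analog of \eqref{eq:pfLongEstOfL2}--\eqref{eq:pfLongEstLeft} is written in the simpler setting where $\boldsymbol{C}$ is replaced by a single half-hyperplane.
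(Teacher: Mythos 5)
Your proposal is correct and follows exactly the route the paper intends: the paper omits the proof of Corollary \ref{cor_noHoleFB}, stating only that it follows from Theorem \ref{thm:free_boundary_l_2_estimate} and Lemma \ref{lem_changeCenterFB}, and your argument is the faithful transcription of the proof of Corollary \ref{cor_noHole} with $\mathrm{spt}\|\boldsymbol{C}_H\|$ replaced by $H$, $\mu(\mathcal{V})$ by $\mu_F(\mathcal{V})$, and the density threshold $1-\cos\theta(Z)$ by $1-2\varepsilon$. The bookkeeping of the affine map $\Pi^g_{Z,\rho}$, the $|\zeta|^2$ absorption via $\omega=\tfrac12$ and a fixed small $\rho_0$, and the final rescaling to cover $\rho\in(0,\tfrac14]$ all match the paper's treatment of \eqref{eq:pfLongEstOfL2}--\eqref{eq:pfEstZeta}.
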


\begin{lemma}
	\label{lem_noHoleFB}
	For any $\delta \in (0,\frac{1}{16})$ and suppose $(\mathcal{V},H)$ satisfies $\varepsilon$-Hypothesis for some $\varepsilon=\varepsilon(n,\delta)$ small enough.
	Then,
	\begin{enumerate}[\normalfont(a)]
		\item $B_\delta(0,y)\cap \left\{ \Theta(\mathcal{V},Z)\ge 1-2\varepsilon \right\}\cap \partial \mathbb{H}^{n+1}\neq \emptyset $ for each $(0,y) \in \left\{ 0 \right\}\times B_{\frac{1}{2}}^{n-1}(0)$.
		\item For $\omega \in (0,1)$ and $\sigma \in [\delta,\frac{1}{16})$, we have
			\[
				\int_{ B_{\frac{1}{2}}(0)\cap \left\{ r<\sigma \right\}} \mathrm{dist}^2(X,\mathrm{spt}\|\boldsymbol{C}_H\|)d\|V^\theta\|(X)\le
				C \sigma^{1-\omega}
				\mathcal{E}^2(\mathcal{V},H),
			\]
			where $C=C(n,\omega)$.
	\end{enumerate}
\end{lemma}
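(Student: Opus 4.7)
The plan is to mirror the two-step structure of Lemma \ref{lem_noHole}: Part (a) by contradiction via reflection and the classical Allard theorem, Part (b) by a direct adaptation of the proof of Lemma \ref{lem_noHole}(b) using Corollary \ref{cor_noHoleFB}. The main obstacle is Part (a): the original argument derives its contradiction from the perpendicularity of the limit half-plane being incompatible with $\theta \in [\pi/2+\Lambda_0, \pi-\Lambda_0]$, but in the free boundary regime perpendicularity is precisely what we expect, so a new geometric input is required; I would supply this by reflecting across $\partial\mathbb{H}^{n+1}$ and invoking classical (interior) Allard regularity on the reflected varifold.

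For Part (a), suppose the claim fails, so there exist $\mathcal{V}_k=(V_k,W_k,\theta_k,g_k)$ satisfying $\varepsilon_k$-Hypothesis with $\varepsilon_k \to 0^+$, half-hyperplanes $H_k$, and $(0,y_k)\to(0,y)\in\{0\}\times B_{1/2}^{n-1}(0)$ for which $B_\delta(0,y_k)\cap\{\Theta(\mathcal{V}_k,Z)\ge 1-2\varepsilon_k\}\cap\partial\mathbb{H}^{n+1}=\emptyset$ for every $k$. First I would observe that any $Z\in\mathrm{spt}\|V_k\|\cap\partial\mathbb{H}^{n+1}$ already satisfies $\Theta(\mathcal{V}_k,Z)\ge 1-C\varepsilon_k$: the standard integral density lower bound (via reflection) yields $\Theta(\|V_k\|,Z)\ge 1/2$, while $|\cos\theta_k(Z)|\le\sin\varepsilon_k$ together with $\Theta(\|W_k\|,Z)\le 1$ controls the $W_k$-contribution. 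Hence the failure assumption forces $\mathrm{spt}\|V_k\|\cap\partial\mathbb{H}^{n+1}\cap B_\delta(0,y_k)=\emptyset$ for large $k$. Under this disjointness, the varifold $\tilde V_k$ obtained by adding $V_k$ to its reflection across $\partial\mathbb{H}^{n+1}$ is a disjointly-supported integral $C\mu_k$-stationary varifold in $B_\delta(0,y_k)$ in the ordinary (non-free-boundary) sense. By Theorem \ref{thm_compactnessRIV} and passage to a subsequence, $\mathcal{V}_k\to\mathcal{V}=(V,W,\pi/2,\delta)$ with $V$ stationary in free boundary sense; the $L^2$-excess hypothesis, the constancy theorem, and the mass bound $\|V\|(\mathcal{B}_1)<\tfrac{3}{4}\omega_n$ force $V=|H_\infty|$ for some half-plane $H_\infty$ perpendicular to $\partial\mathbb{H}^{n+1}$, so $\tilde V_k$ converges to the multiplicity-one full $n$-plane $\tilde H_\infty$ obtained from $H_\infty$ and its reflection. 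Applying the classical Allard theorem to $\tilde V_k$ at any support point in $B_{\delta/2}(0,y_k)$ (density $\ge 1$ by integrality, $L^2$-excess small) shows that $\mathrm{spt}\|\tilde V_k\|$ is a $C^{1,\alpha}$-graph over $\tilde H_\infty$ in a neighborhood of $(0,y_k)$ for large $k$. Since $H_\infty$ is perpendicular to $\partial\mathbb{H}^{n+1}$, the normal to $\tilde H_\infty$ is tangent to $\partial\mathbb{H}^{n+1}$; consequently, for each $p\in\partial H_\infty$ in the graph domain, the graph point $p+u(p)\nu$ has vanishing $x_1$-coordinate and therefore lies on $\partial\mathbb{H}^{n+1}$. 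This produces an $(n-1)$-dimensional $C^{1,\alpha}$-submanifold of $\mathrm{spt}\|\tilde V_k\|\cap\partial\mathbb{H}^{n+1}$ passing near $(0,y_k)$; any such intersection point lies in $\mathrm{spt}\|V_k\|\cap\partial\mathbb{H}^{n+1}$, yielding the desired contradiction.

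For Part (b), I would adapt the proof of Lemma \ref{lem_noHole}(b) essentially verbatim, with $H$ replacing $\mathrm{spt}\|\boldsymbol{C}_H\|$, $H_Z$ replacing $\mathrm{spt}\|\boldsymbol{C}_Z\|$, $\|V\|$ replacing $\|V^\theta\|$, $\mathcal{E}_F^2$ replacing $\mathcal{E}^2$, and Corollary \ref{cor_noHoleFB} replacing Corollary \ref{cor_noHole}. For each $(0,y)\in\{0\}\times B_{1/2}^{n-1}$, Part (a) supplies $Z=(0,\zeta,\eta)\in\mathrm{spt}\|V\|\cap B_\delta(0,y)\cap\partial\mathbb{H}^{n+1}$ with $\Theta(\mathcal{V},Z)\ge 1-2\varepsilon$. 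Applying \eqref{eq:corL2ImproveExcessFB} at this $Z$ with $\rho=1/4$, combined with the triangle inequality $\mathrm{dist}^2(X,H)\le 2\mathrm{dist}^2(X,H_Z)+2|\zeta|^2$ and the bound $|\zeta|^2\le C\mathcal{E}_F^2(\mathcal{V},H)$ from \eqref{eq:corDistSingFB}, produces an estimate of the form $\int_{B_{1/8}(Z)}\mathrm{dist}^2(X,H)|X-Z|^{-(n-\omega)}\,d\|V\|(X)\le C\mathcal{E}_F^2(\mathcal{V},H)$. Covering $B_{1/2}(0)\cap\{r<\sigma\}$ with $O(\sigma^{-(n-1)})$ balls of radius $2\sigma$ centered on $\partial\mathbb{H}^{n+1}$, each containing a high-density point supplied by Part (a), then yields \eqref{eq:lemNoHole}.
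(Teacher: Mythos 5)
Your Part (b) is the intended routine adaptation (the paper omits it for exactly this reason), and your overall skeleton for Part (a) — contradiction, show the support of $V_k$ avoids $\partial\mathbb{H}^{n+1}$ in $B_\delta(0,y_k)$, then derive a contradiction from the limit being a perpendicular multiplicity-one half-plane — matches the paper. But the two key steps of your Part (a) have genuine gaps. First, the opening claim that every $Z\in\mathrm{spt}\|V_k\|\cap\partial\mathbb{H}^{n+1}$ satisfies $\Theta(\mathcal{V}_k,Z)\ge 1-C\varepsilon_k$ ``by the standard integral density lower bound via reflection'' is not available as stated: $V_k$ alone is not known to be almost stationary in the free boundary sense, since for tangential $\varphi$ its first variation still carries the term $\delta^{g_k}W_k(\cos\theta_k\varphi)$, so the classical reflection argument does not directly apply to $V_k$. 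The paper's proof supplies exactly the missing ingredient: under the contradiction hypothesis one has $\Theta(\mathcal{V}_k,Z)<1-2\varepsilon_k\le 1-\cos\theta_k(Z)$ at every boundary point of the ball, so Theorem \ref{thm:smallDensity} applies there and upgrades $V_k$ to an $\varepsilon_k$-stationary varifold in free boundary sense, after which reflection and the (error-vanishing) monotonicity give $\Theta(\|V_k\|,Z)\ge\frac12$ exactly, hence $\Theta(\mathcal{V}_k,Z)\ge 1-2\sin\varepsilon_k\ge 1-2\varepsilon_k$. Note also that the constant matters: a bound of the form $1-C\varepsilon_k$ with an unspecified $C$ does \emph{not} contradict the absence of points with $\Theta\ge 1-2\varepsilon_k$; you must show the loss is exactly the $W$-contribution $2|\cos\theta_k|\le 2\sin\varepsilon_k$, which is what the route through Theorem \ref{thm:smallDensity} delivers.

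Second, your endgame (reflect each $V_k$ and apply Allard at a fixed scale in a ball straddling $\{x_1=0\}$) has two technical problems. The reflected varifold must be almost stationary with respect to \emph{some} metric on the full ball; the even reflection of $g_k$ across $\{x_1=0\}$ is in general only Lipschitz, whereas Theorem \ref{thm_allard} is stated for $C^1$ metrics and $\mu$-stationarity measured in that metric, so the theorem does not literally apply in a ball crossing the hyperplane. Moreover, as stated Theorem \ref{thm_allard} gives only $\mathrm{spt}\|V\|\cap B_\gamma\subset\mathrm{graph}\,u$, while your contradiction needs the reverse inclusion (the graph point over $\{x_1=x_2=0\}$ must be a support point); you would have to invoke the two-sided form of Allard's conclusion. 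Both issues are avoidable by following the paper: once $\mathrm{spt}\|V_k\|\cap\partial\mathbb{H}^{n+1}\cap B_{\delta/2}(0,y)=\emptyset$, each $V_k$ is interior almost stationary there, the compactness theorem makes the limit $V$ stationary in $B_{\delta/2}(0,y)$ with support in the closed half-space, and the Solomon--White maximum principle forces $\mathrm{spt}\|V\|\cap B_{\delta/2}\subset\partial\mathbb{H}^{n+1}$, contradicting $V=|H^{\pi/2}\cap\mathcal{B}_1|$ (for which you also need $\Theta(\mathcal{V}_k,0)\ge 1-2\varepsilon_k$ to pin the multiplicity at one, an ingredient you did not cite). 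So the architecture is right, but the first step needs Theorem \ref{thm:smallDensity} (or an equally careful treatment of the $W$-term with the sharp constant), and the Allard-reflection finish should be replaced by, or carefully upgraded to, the compactness-plus-maximum-principle argument.
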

\begin{proof}
	We only provide the proof for the first statement as the approach to the second statement differs slightly from that in Lemma \ref{lem_noHole}.
	On the contrary, let us assume there exists $\delta \in (0,\frac{1}{16})$ and a consequence of $(\mathcal{V}_k,H_k)=((V_k,W_k,\theta_k,g_k),H_k)$ satisfying $\varepsilon_k$-Hypothesis for some $\varepsilon_k\rightarrow 0^+$, $\mathcal{V}_k\rightarrow \mathcal{V}=(V,W,\theta,\delta)$, $H_k\rightarrow H$ and $(0,y_i)\rightarrow (0,y) \in \left\{ 0 \right\}\times B_{\frac{2}{3}}^{n-1}$ such that
	\begin{equation}
		B_\delta(0,y_i)\cap \left\{ \Theta(\mathcal{V}_i,Z)\ge 1-2\varepsilon_i\right\}\cap \partial \mathbb{H}^{n+1}=\emptyset.
		\label{eq:pfnoLargeDensity}
	\end{equation}

	It is easy to see that $V=|H^{\frac{\pi}{2}}\cap \mathcal{B}_1|$ by the constancy theorem.

	According to Theorem \ref{thm:smallDensity}, we find that each $V_i$ is an integral $\varepsilon_i$-stationary rectifiable $n$-varifold in free boundary sense.
	Thus, for any $Z \in \mathrm{spt}\|V_i\|\cap \partial \mathbb{H}^{n+1}\cap B_{\frac{\delta}{2}}(0,y)$, we have $\Theta(\|V_i\|,Z)\ge \frac{1}{2}$ for $i$ large enough.
	Therefore,
	\[
		\Theta(\mathcal{V},Z)=
		2\Theta(\|V\|,Z)-\lim_{\rho\rightarrow 0^+} \frac{2}{\omega_n \rho^n}\int_{ \mathcal{B}_\rho}\cos \theta d\|W\|\ge 1-2 \sin(\varepsilon_i)\ge 1-2\varepsilon_i.
	\]
	This contradicts with \eqref{eq:pfnoLargeDensity}, which implies $\mathrm{spt}\|V_i\|\cap B_{\frac{1}{2}}(0,y)\cap \partial \mathbb{H}^{n+1}=\emptyset$.
	Therefore, $V_i$ is an integral $\varepsilon_i$-stationary rectifiable $n$-varifold in $B_{\frac{\delta}{2}}(0,y)$ (Extend the metric $g_i$ to $B_1$ if necessary).
	The Compactness Theorem (Theorem \ref{thm_compactness_theorem_for_n_varifolds_under_lipschitz_metric}) implies $V$ is an integral stationary rectifiable $n$-varifold in $B_{\frac{\delta}{2}}$.
	This finding contradicts with the fact that $V=|H^{\frac{\pi}{2}}\cap \mathcal{B}_1|$. (Indeed, If $V$ is stationary in $B_{\frac{\delta}{2}}$ and $\mathrm{spt}V \in \mathbb{H}^{n+1}$, the according to the Maximum Principle \cite{Solomon1989Maximum}, we can establish $\mathrm{spt}\|V\|\cap B_{\frac{\delta}{2}}\subset \partial\mathbb{H}^{n+1}$.)
\end{proof}

Now, we can describe the blow-ups in free boundary case.
We consider a sequence of VPCA-quadruples $\left\{ \mathcal{V}_k=(V_k,W_k,\theta_k,g_k) \right\}$, a sequence of half hyperplanes  $\left\{ H_k \right\}$, together with a positive sequence $\left\{ \varepsilon_k \right\}$ with $\lim_{k\rightarrow +\infty} \varepsilon_k=0$.
We assume $(\mathcal{V}_k,H_k)$ satisfies $\varepsilon_k$-hypothesis for each $k\ge 1$.
According to Theorem \ref{thm_compactnessRIV}, we assume $\mathcal{V}_k\rightarrow \mathcal{V}=(V,W,\theta,\delta)$ and $H_k\rightarrow H$ as $k\rightarrow +\infty$.
Notably, $H=H^{\frac{\pi}{2}}$.

We define
\[
	\mathcal{E}_{F,k}:=\mathcal{E}_F(\mathcal{V}_k,H_k).
\]

We choose two sequences, $\left\{ \delta_k \right\}, \left\{ \tau_k \right\}$, consisting of positive, decreasing numbers that converge to 0.
By Theorem \ref{thm:free_boundary_l_2_estimate}, Corollary \ref{cor_noHoleFB}, and Lemma \ref{lem_noHoleFB}, we have
\begin{enumerate}[\normalfont(a)]
	\item $V_k\lfloor(B_{\frac{13}{16}}\backslash \{ r<\tau_k \})=\left|\mathrm{graph}u_k \cap B_{\frac{13}{16}}\backslash \{ r<\tau_k \}\right|$ where $u_k \in C^{1,\beta}(B_{\frac{13}{16}}\cap H_k \backslash \{ r<\frac{\tau_k}{2} \},H_k^\bot)$ for some $\beta=\beta(n) \in (0,1)$ and satisfies $\mathrm{dist}(X+u_k(X),H_k)=|u_k(X)|$ for $X \in B_{\frac{13}{16}}\cap H_k \backslash \{ r<\frac{\tau_k}{2} \}$.
		\label{it:graphKFB}
	\item For each $Z=(0,\zeta,\eta) \in \mathrm{spt}\|V^{\theta_k}_k\|\cap B_{\frac{5}{8}}$ with $\Theta(\mathcal{V}_k,Z)\ge 1-2\varepsilon_k$, we have
		\begin{equation}
			|\zeta|\le C\mathcal{E}_{F,k}.
			\label{eq:BlowUpSingFB}
		\end{equation}
		\label{it:BlowUpSingFB}
	\item For each $(0,y) \in \{ 0 \}\times \mathbb{R}^{n-1}\cap B_{\frac{1}{2}}$, we have
		\begin{equation}
			B_{\delta_k}(0,y)\cap \{ \Theta(\mathcal{V}_k,Z)\ge 1-2\varepsilon_k\}\cap \partial \mathbb{H}^{n+1}\neq \emptyset.
			\label{eq:BlowUpNoHoleFB}
		\end{equation}
		and
		\begin{equation}
			\int_{ B_{\frac{1}{2}}\cap \{ r<\sigma \}} \mathrm{dist}^2(X,H_k)d\|V_k\|(X)\le C\sqrt{\sigma}\mathcal{E}_{F,k}^2,
			\label{eq:BlowUpNonConcenFB}
		\end{equation}
		for any $\sigma \in (\delta_k,\frac{1}{16}]$ where the constant $C$ does not depend on $\sigma$.
		\label{it:BlowUpNoHoleKFB}
	\item For each $\omega\in (0,1)$, $\rho \in (0,\frac{1}{4})$, and $Z=(0,\zeta,\eta) \in \mathrm{spt}\|V^{\theta_k}_k\|\cap B_{\frac{5}{8}}$ such that $\Theta(\mathcal{V}_k,Z)\ge 1-2\varepsilon_k$, we have
		\begin{align}
			{} & \int_{ B_{\frac{\rho}{4}}(Z)\cap H_k \backslash \{ r<\tau_k \}} \frac{\left|u_k-\zeta \sin \theta_k(0)\vec{n}_k \right|^2}{|X+u_k-Z|^{n+2-\omega}}d\mathcal{H}^n(X) \nonumber \\
			\le{} & \frac{C}{\rho^{n+2-\omega}} \int_{ B_\rho(Z)} \mathrm{dist}^2(X,\tau_{-Z}(H_k))d\|V_k\|(X)+C\mu_F(\mathcal{V}_k) \rho^{\omega},
			\label{eq:BlowUpImproveL2FB}
		\end{align}
		for $k$ sufficient large.
		Here, $C=C(n,\omega)$, and $\vec{n}_k$ is the unit normal vector of $H_k$ and pointing upward.
		\label{it:BlowUpImproveL2KFB}
\end{enumerate}
We define
\[
	\tilde{u}_k(X)=u_k(X+l_kr(X)),
\]
where $l_k$ is the constant such that $H_k= \{ X+l_kr(X)\vec{n}:X \in H \}$.
\begin{definition}
	\label{def_BlowUpFB}
	We say a sequence $\{ (\mathcal{V}_k,H_k) \}$ is a \textit{blow-up sequence in free boundary sense} if $(\mathcal{V}_k,H_k)$ satisfies $\varepsilon_k$-hypothesis for some $\varepsilon_k \rightarrow 0^+$.

	Given a blow-up sequence in free boundary sense $\left\{( \mathcal{V}_k,H_k )\right\}$, we say $v \in C^{1,\beta}_{\mathrm{loc}}(B_{\frac{3}{8}}\cap H)\cap L^2(B_{\frac{3}{8}}\cap H)$ is a \textit{blow-up in free boundary sense} if $v$ can be obtained by
	\[
		v=\lim_{k\rightarrow +\infty} \mathcal{E}_{F,k}^{-1}\tilde{u}_k.
	\]
\end{definition}

Analogous to \ref{prop_C2estimate}, we present the following continuity estimate.
\begin{proposition}
	\label{prop_C2estimateFB}
	If $v$ is a blow-up in free boundary sense, then $v \in C^2(\overline{B_{\frac{1}{8}}\cap H}, H^\bot)$ with the estimate
	\[
	\|v\|^2_{C^2(\overline{B_{\frac{1}{8}}\cap H},H^\bot)}\le C \int_{ B_{\frac{3}{8}}\cap H} |v(X)|^2d\mathcal{H}^n(X)
	\]
	where $C=C(n)$.
\end{proposition}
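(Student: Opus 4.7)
The plan is to adapt the proof of Proposition \ref{prop_C2estimate}, taking advantage of the simplifications that arise because in the free boundary setting there is no $W_k$ contribution to the first variation (equivalently, the effective contact angle is $\pi/2$, so $\cos\theta = 0$). I will first use the first variation identity \eqref{eq:pf1stVarFB} with a carefully chosen test vector field to derive a weak Neumann-type PDE for the blow-up $v$, and then upgrade this to $C^2$ regularity via difference quotients and reflection.

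\textbf{Step 1 (Weak identity for $v$).} Fix a test function $\psi(r,y) \in C_c^2(B^n_{3/8}(0)\cap\{r\ge 0\})$ with $\partial\psi/\partial r\equiv 0$ in a neighborhood of $\{r=0\}$. Set $\varphi(X)=\psi(|x|,y)e_2$, which is tangent to $\partial\mathbb{H}^{n+1}$. Applying \eqref{eq:pf1stVarFB} to $V_k$ gives
\[
\left|\int\mathrm{div}_S\varphi\,dV_k(X,S)\right|\le C\mu_F(\mathcal{V}_k)\int\bigl(|\varphi|+|X|\,|D_S\varphi|\bigr)d\|V_k\|+C\mu_F(\mathcal{V}_k).
\]
Split the integral into the contributions from $\{r\ge\tau\}$, where $V_k$ is the graph of $\tilde u_k$ over $H_k$ by Theorem~\ref{thm:free_boundary_l_2_estimate}, and from $\{r<\tau\}$. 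On the graph piece, use the explicit parametrization $F(r,y)=r\sqrt{1+l_k^2}\vec n_k'+\sum y_i e_{2+i}+\tilde u_k(r,y)\vec n_k$ (as in \eqref{eq:pfGraphOfUtil}) to expand $\mathrm{div}_S\varphi$, divide by $\mathcal{E}_{F,k}$, and pass to the limit exactly as in the analysis of $\mathrm{II}_k^1,\dots,\mathrm{II}_k^4$ in the proof of Proposition~\ref{prop_C2estimate}; only the term analogous to $\mathrm{II}_k^4$ survives and yields $\int_{\{r\ge\tau\}}D\psi\cdot Dv\,drdy$. On the set $\{r<\tau\}$, control $\mathrm{div}_S\varphi$ by $\sum_{i\ge 3}|e_{2+i}^{\bot_S}|$ and use the tilt-excess estimate in Theorem~\ref{thm:free_boundary_l_2_estimate}(c) to bound the contribution by $C\|D\psi\|_\infty\mathcal{E}_{F,k}\sqrt\tau$. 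The right-hand side of the first-variation inequality is $O(\mu_F(\mathcal{V}_k))=O(\mathcal{E}_{F,k}^2)$, so dividing by $\mathcal{E}_{F,k}$ and sending $k\to\infty$, then $\tau\to 0^+$, yields
\[
\int_{\{r\ge 0\}}v\,\Delta\psi\,drdy=0
\]
for every admissible $\psi$.

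\textbf{Step 2 (Upgrade to $C^2$).} Fix $l\in\{1,\ldots,n-1\}$ and apply the previous identity to $\delta_{l,-h}\psi$, which still satisfies $\partial(\delta_{l,-h}\psi)/\partial r\equiv 0$ near $r=0$; this gives $\int\delta_{l,h}v\,\Delta\psi\,drdy=0$ for every admissible $\psi$ and every small $h$. Extend $v$ evenly by $v(r,y):=v(|r|,y)$; since $\delta_{l,h}v$ is likewise even in $r$, an approximation argument (identical to the one leading to \eqref{eq:pfQuotientHarmonic} in Proposition \ref{prop_C2estimate}) shows that the identity extends to all $\psi\in C_c^2(B^n_{5/16})$. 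Therefore $\delta_{l,h}v$ is a harmonic function on $B^n_{5/16}$. Standard interior elliptic estimates, combined with a $C^{0,\alpha}$ bound on $v$ up to $\{r=0\}$ (the free boundary analogue of Proposition~\ref{prop_HolderBlowUp}, obtained by iterating the improvement \eqref{eq:corL2ImproveExcessFB} from Corollary~\ref{cor_noHoleFB} exactly as in Proposition \ref{prop_HolderBlowUp}) then yield convergence $\delta_{l,h}v/h\to v_l$ in $C^2_{\mathrm{loc}}$ with
\[
\sup_{B^n_{9/32}}\bigl(|v_l|^2+|Dv_l|^2+|D^2 v_l|^2\bigr)\le C\int_{B_{3/8}\cap H}|v(X)|^2\,d\mathcal{H}^n(X).
\]
Integrating $v_l=\partial v/\partial y_l$ as in \eqref{eq:pfvIntegral} shows $v(0,y)$ is smooth with the same quantitative bound. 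Combining with the harmonicity of $v$ on $\{r>0\}$ and applying $C^{2,\alpha}$ estimates for the Neumann problem of the Laplacian on the half-space completes the proof.

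\textbf{Main obstacle.} The only genuinely new ingredient is the free boundary analogue of Proposition~\ref{prop_HolderBlowUp}, which provides the boundary $C^{0,\alpha}$ bound and the pointwise estimate on $v(0,y)$ used to integrate the tangential derivatives. Its proof is a direct adaptation: define $\kappa(y)=\lim_k\zeta_k/\mathcal{E}_{F,k}$ for sequences $Z_k\in\mathrm{spt}\|V_k\|\cap\partial\mathbb{H}^{n+1}$ with $\Theta(\mathcal{V}_k,Z_k)\ge 1-2\varepsilon_k$ approaching $(0,y)$ (which exist by \eqref{eq:BlowUpNoHoleFB}), verify via \eqref{eq:BlowUpImproveL2FB} that $\kappa$ is well defined, and run the same Campanato-type iteration (combining the harmonic self-improvement on interior balls with \eqref{eq:pfChangeCenterIter}-type boundary estimates) to establish the desired Hölder estimate. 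Beyond this, the computations in Step~1 are strictly simpler than those in Proposition~\ref{prop_C2estimate} because the $\mathrm{I}_k$ and $\mathrm{II}_k^1$--$\mathrm{II}_k^3$ contributions all either vanish or have obvious analogues, so no new algebraic difficulty arises.
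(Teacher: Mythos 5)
Your overall strategy is exactly the intended one (the paper omits this proof because it is the adaptation of Propositions \ref{prop_HolderBlowUp} and \ref{prop_C2estimate} that you describe), but Step 1 contains a genuine error: you may not drop the normalization $\int\psi(0,y)\,dy=0$ from \eqref{eq:pfDefPsi}, and the identity $\int_{\{r\ge 0\}}v\,\Delta\psi\,dr\,dy=0$ is false for general admissible $\psi$. In the decomposition of $\mathrm{II}_k$ the term $\mathrm{II}_k^1$ carries the prefactor $\cos\theta_k'$, where $H_k=H^{\theta_k'}$, and without the mean-zero condition one has $\int_{\{r\ge\tau\}}\partial_r\psi\,dr\,dy=-\int\psi(0,y)\,dy$, so that $\mathcal{E}_{F,k}^{-1}\mathrm{II}_k^1\to c\int\psi(0,y)\,dy$ with $c=\lim_k\cos\theta_k'/\mathcal{E}_{F,k}$. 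Hypothesis \ref{hyp_free_boundary} only gives $|\theta_k'-\frac{\pi}{2}|<\varepsilon_k$; it does not force $\cos\theta_k'$ to be of smaller order than the excess. Concretely, take $V_k=|H^{\frac{\pi}{2}}\cap\mathcal{B}_1|$ (exactly stationary in the free boundary sense, so $\mu_F=0$) and reference half-planes $H_k=H^{\theta_k'}$ with $\cos\theta_k'=t_k\rightarrow 0$: then $\mathcal{E}_{F,k}\sim t_k$, the blow-up is a nonzero linear function $v=a\,r$, and $\int v\,\Delta\psi\,dr\,dy=a\int\psi(0,y)\,dy\neq 0$ for suitable admissible $\psi$. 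In particular the homogeneous Neumann condition invoked in your final sentence fails for this blow-up, so "only the term analogous to $\mathrm{II}_k^4$ survives" is not correct, and the concluding appeal to Schauder estimates for the Neumann problem with zero data is unjustified as written; the correct limiting statement is a constant (not necessarily zero) Neumann datum.

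The damage is localized and fixable. Your difference-quotient argument only ever applies the identity to test functions $\delta_{l,-h}\psi$, which automatically satisfy $\int\delta_{l,-h}\psi(0,y)\,dy=0$, so the harmonicity of $\delta_{l,h}v$ after even extension, the bounds on $v_l$, and the smoothness of $v(0,\cdot)$ all survive. The simplest repair is to retain both conditions in \eqref{eq:pfDefPsi} (so Step 1 reads exactly as in Proposition \ref{prop_C2estimate}) and to finish as the paper does there: $v$ is harmonic in $\{r>0\}$, $v(0,\cdot)$ is smooth with the quantitative bound, and boundary $C^{2,\alpha}$ estimates with $v(0,\cdot)$ as boundary data give the stated $C^2$ estimate without ever using a Neumann condition. (Alternatively, carry the identity $\int v\,\Delta\psi\,dr\,dy=c\int\psi(0,y)\,dy$ with the constant $c$, which is then itself controlled by the $L^2$ norm of $v$, and use inhomogeneous Neumann Schauder theory.) Your sketch of the free boundary analogue of Proposition \ref{prop_HolderBlowUp}, via $\kappa$, \eqref{eq:BlowUpImproveL2FB} and the Campanato-type iteration, and your use of \eqref{eq:pf1stVarFB} in place of the capillary first variation, are the intended adaptations and are fine.
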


With the help of Proposition \ref{prop_C2estimateFB}, we can improve the $L^2$-excess for $\mathcal{V}_k$ as illustrated in the following proposition.
\begin{proposition}
	\label{prop_ImproveExcessRhoFB}
	There exists $\varepsilon=\varepsilon(n) \in (0,1)$, $\alpha=\alpha(n)\in (0,1)$, and a constant $C=C(n)$ such that the following holds.

	For any $(\mathcal{V},H)$ which satisfies $\varepsilon$-Hypothesis, and for any $Z \in \mathrm{spt}\|V^\theta\|\cap B_{\frac{1}{4}}\cap \partial \mathbb{H}^{n+1}$ with $\Theta(\mathcal{V},Z)\ge 1-2\varepsilon$, there exists an orthogonal rotation $\Gamma^Z \in \mathrm{SO}(n)\subset \mathrm{SO}(n+1)$, and a half plane $H^Z$ for which the following are true,
	\begin{align}
		\left|\Gamma^Z-\mathrm{Id}\right| ={} & C \mathcal{E}_F(\mathcal{V},H)\label{eq:propGammaZFB} \\
		\mathrm{dist}(H^Z,H) \le{} & C \mathcal{E}_F(\mathcal{V},H)\label{eq:propHZFB}\\
		\mathcal{E}_F((\mathcal{V}^Z)_{0,\rho}^{\Gamma^Z},H^Z)\le{}& C \rho^\alpha\mathcal{E}_F(\mathcal{V},H)\label{eq:propExcessZFB}.
	\end{align}
\end{proposition}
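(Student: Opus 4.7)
The plan is to mimic the proof of Proposition \ref{prop_ImproveExcessRho}, substituting the free-boundary versions of every ingredient. First I would establish a one-step excess improvement analogous to Proposition \ref{prop_improveExcess}: there exist $\lambda_0 = \lambda_0(n) \in (0, \tfrac14)$ and $C = C(n)$ such that for every $\lambda \in (0, \lambda_0]$ there is an $\varepsilon_0$ for which, whenever $(\mathcal{V}, H)$ satisfies $\varepsilon_0$-Hypothesis (Hypothesis \ref{hyp_free_boundary}), one can find $\Gamma \in \mathrm{SO}(n) \subset \mathrm{SO}(n+1)$ and a half-plane $H'$ with $|\Gamma - \mathrm{Id}| \le C \mathcal{E}_F(\mathcal{V}, H)$, $|H' - H| \le C \mathcal{E}_F(\mathcal{V}, H)$, and $\mathcal{E}_F(\mathcal{V}^\Gamma_{0,\lambda}, H') \le \tfrac12 \mathcal{E}_F(\mathcal{V}, H)$.

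This one-step statement I would prove by contradiction, exactly parallel to Proposition \ref{prop_improveExcess}. Assume a blow-up sequence $\{(\mathcal{V}_k, H_k)\}$ in the sense of Definition \ref{def_BlowUpFB} violating the conclusion; extract a blow-up $v \in C^2(\overline{B_{1/8} \cap H})$ via Proposition \ref{prop_C2estimateFB}. For each $i=1,\dots,n-1$ pick $Z_{i,k} = (0, \zeta_{i,k}, \eta_{i,k}) \in \mathrm{spt}\|V_k\|$ close to $Y_i := \tfrac{\lambda}{2} e_{2+i}$ with $\Theta(\mathcal{V}_k, Z_{i,k}) \ge 1 - 2 \varepsilon_k$, supplied by Lemma \ref{lem_noHoleFB}. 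The bound \eqref{eq:BlowUpSingFB} gives $|\zeta_{i,k}| \le C \mathcal{E}_{F,k}$, so the hyperplane through $\{Z_{i,k}\}$ differs from $\{0\} \times \mathbb{R}^{n-1}$ by at most $C \mathcal{E}_{F,k}/\lambda$, which selects a rotation $\Gamma_k \in \mathrm{SO}(n)$ with $|\Gamma_k - \mathrm{Id}| \le C \mathcal{E}_{F,k}$ (using $|v(Y_i)| \le C\lambda$ from Proposition \ref{prop_C2estimateFB} and Remark \ref{rmk:boundedL2}). Rescaling to $\hat{\mathcal{V}}_k := (\mathcal{V}_k)^{\Gamma_k}_{0, 7/8}$, the new blow-up $\hat v$ vanishes at the origin and at $\tfrac{4}{7}\lambda e_{2+i}$; hence the linear map $\hat L(r,y) := \partial_r \hat v(0) \, r$ approximates $\hat v$ on $B_{8\lambda/7}$ with $\|\hat v - \hat L\|_{L^2}^2 \le C \lambda^{n+4}$ by $C^2$-regularity and Proposition \ref{prop_C2estimateFB}. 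Choosing $\hat H_k := \{X + (l_k r(X) + \mathcal{E}_{F,k} \hat L(X)) \vec n : X \in H\}$ and applying Theorem \ref{thm:free_boundary_l_2_estimate} yields $\mathcal{E}_F(\eta_{0, 8\lambda/7 \#} \hat{\mathcal{V}}_k, \hat H_k)^2 \le C \lambda^2 \mathcal{E}_{F,k}^2 + \lambda \mathcal{E}_{F,k}^2$, which is less than $\tfrac14 \mathcal{E}_{F,k}^2$ for $\lambda = \lambda_0(n)$ small enough, contradicting the assumption.

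Given the one-step decay, I would iterate exactly as in the proof of Proposition \ref{prop_ImproveExcessRho}: at each scale $\lambda_0^k$ apply Lemma \ref{lem_changeCenterFB} to verify that the rescaled $(\mathcal{V}^Z)^{\Gamma_{k-1}^Z}_{0, \lambda_0^{k-1}}$ again satisfies the $\varepsilon_0$-Hypothesis (with the accumulated rotations and half-planes remaining within $C \mathcal{E}_F(\mathcal{V}, H)$ of the identity and of $H$), then invoke the one-step improvement to produce $\Gamma_k^Z$, $H_k^Z$ satisfying the geometric-series bounds
\begin{equation*}
|\Gamma_k^Z - \Gamma_{k-1}^Z| + \mathrm{dist}(H_k^Z, H_{k-1}^Z) \le \frac{C}{2^k} \mathcal{E}_F(\mathcal{V}, H), \quad \mathcal{E}_F((\mathcal{V}^Z)^{\Gamma_k^Z}_{0, \lambda_0^k}, H_k^Z) \le \frac{1}{2^k} \mathcal{E}_F(\mathcal{V}, H).
\end{equation*}
Setting $\Gamma^Z := \lim \Gamma_k^Z$ and $H^Z := \lim H_k^Z$ gives \eqref{eq:propGammaZFB} and \eqref{eq:propHZFB}, and a routine interpolation between the dyadic scales $\lambda_0^{k+1} < \rho \le \lambda_0^k$ converts the discrete decay into the H\"older-type estimate \eqref{eq:propExcessZFB} with $\alpha := \log 2 / \log(1/\lambda_0) \in (0,1)$.

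The main obstacle, as in the positive-angle case, is to secure the linear approximation inside the one-step improvement: one needs both the $C^2$-regularity of $\hat v$ (which Proposition \ref{prop_C2estimateFB} already provides) and the fact that the boundary hyperplane spanned by the high-density points $\{Z_{i,k}\}$ is very close to $\{0\} \times \mathbb{R}^{n-1}$. The latter rests on Lemma \ref{lem_noHoleFB} producing such $Z_{i,k}$ with $\Theta \ge 1 - 2\varepsilon$, together with \eqref{eq:BlowUpSingFB}; one must verify that the density threshold $1 - 2\varepsilon$ used in Hypothesis \ref{hyp_free_boundary} propagates correctly through rescaling under Lemma \ref{lem_changeCenterFB} when $\mu_F(\mathcal{V})$ is itself allowed to shrink. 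Once these items are lined up, every remaining step is a direct transcription of the positive-angle argument with $\mathcal{E}$ replaced by $\mathcal{E}_F$, $\mathrm{spt}\|\boldsymbol{C}\|$ replaced by $H$, and Remark \ref{rmk_changeAngle}-based manipulations deleted.
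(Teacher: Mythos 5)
Your proposal is correct and is essentially the proof the paper intends: the paper omits the argument precisely because it is the direct transcription of Propositions \ref{prop_improveExcess} and \ref{prop_ImproveExcessRho} to the free-boundary setting, with $\mathcal{E}_F$, $H$, Lemma \ref{lem_changeCenterFB}, Lemma \ref{lem_noHoleFB}, Corollary \ref{cor_noHoleFB}, and Proposition \ref{prop_C2estimateFB} replacing their positive-angle counterparts, exactly as you describe. The density-threshold concern you raise is already handled by the statement of Lemma \ref{lem_changeCenterFB}, so no genuine gap remains.
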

\begin{proof}
	[Proof of Theorem \ref{thm_l2freeFB}]
	We define
	\[
		S=\left\{ Z=(0,\zeta,\eta) \in \mathrm{spt}\|V\|\cap B_1\cap \partial \mathbb{H}^{n+1}:\Theta(\mathcal{V},Z)\ge 1-2\varepsilon\right\}.
	\]
	Using Proposition \ref{prop_ImproveExcessRhoFB}, we can show that $S\cap B_{\frac{1}{16}}$ is a graph of some H\"older continuity function $\varphi$ defined on $B_{\frac{1}{16}}\cap \left\{ x_1=x_2=0 \right\}$.
	Applying Proposition \ref{prop_ImproveExcessRhoFB} and Theorem \ref{thm:free_boundary_l_2_estimate} once more, we can establish that $\mathrm{spt}\|V\|$ can be written as a graph of a function $u$ defined on $B_{\frac{1}{16}}\cap H^{\frac{\pi}{2}}$ with the boundary value $u=\varphi$ on $B_{\frac{1}{16}}\cap \left\{ x_1=x_2=0 \right\}$.
	The standard regularity theory tells us $u$ is a $C^{1,\gamma}$ function defined on $B_{\frac{1}{32}}\cap H^{\frac{\pi}{2}}$for some $\gamma \in (0,1)$.

	Finally, for any $X \in \partial\Sigma \cap \partial\mathbb{H}^{n+1}$, we need to show $\measuredangle_g(\Sigma,U)=\theta$ or $\frac{\pi}{2}$ at $X$. 
	Analogue to the proof of Theorem \ref{thm_l2positiveAngle}, we can consider the tangent cone $\mathcal{V}'=(V',W',\theta(X),\delta)$ of $\mathcal{V}$ at $X$.
	The difference here is that we cannot assert $\mathrm{spt}\|V'\|$ is orthogonal to $\partial\mathbb{H}^{n+1}$ even if $\theta(X)\neq \frac{\pi}{2}$.
	Therefore, we conclude that $\measuredangle_g(\Sigma,U)=\theta$ or $\frac{\pi}{2}$ at $X$.
\end{proof}

\section{Proof of Main Results}%
\label{sec:proof_of_main_theorem}

Now, we are ready to prove our main results.
\begin{proof}
	[Proof of Theorem \ref{thm_densityG}]
	By the argument at the beginning of Subsection \ref{sub:VPA}, we only need to consider the VPCA-quadruple $\mathcal{V}=(V,W,\theta,g)$ satisfies items \ref{it:thetaG} to \ref{it:TangentHalfPlane} in Subsection \ref{sub:VPA}.

	Moreover, by Remark \ref{rmk_changeAngle}, after a possible rotation, we only need to consider the case $\theta(0)\in [\frac{\pi}{2},\pi)$ and $g(0)=\delta$.

	We choose $\varepsilon_1 \in (0,1)$ such that Theorem \ref{thm_l2freeFB} holds with $\varepsilon_1$ in place of $\varepsilon$.
	Then, we choose $\Lambda_0 =\frac{1}{4}\min \{(\pi-\theta(0)), \frac{\varepsilon_1}{2^n}\}$ and $\varepsilon_2\in (0,1)$ so that Theorem \ref{thm_l2positiveAngle} holds with $(\varepsilon_2,\Lambda_0)$ in place of $(\varepsilon,\Lambda_0)$.

	Next, let us analyze the tangent cone of $\mathcal{V}$ at $0$.
	We denote $\mathcal{V}'=(V',W',\theta(0),\delta):=\lim_{i\rightarrow +\infty} \mathcal{V}_{0,\rho_i}$ a tangent cone of $\mathcal{V}$ at $0$ where $\rho_i\rightarrow 0^+$ as $i\rightarrow +\infty$.

	For short, we write $\mathcal{V}_i=(V_i,W_i,\theta_i,g_i):=\mathcal{V}_{0,\rho_i}$.
	To estimate the $L^2$-excess of $\mathcal{V}_i$, we assume $\mathcal{V}_i$ is defined on $\mathcal{B}_2$ for $i$ large enough by Remark \ref{rmk_extendQuad}.

	Upon an appropriate rotation, we may assume $V'=|H_\theta\cap \mathcal{B}_2|$ for some $\theta_0 \in [0,\pi]$ given that $V$ has a multiplicity-one tangent half-hyperplane at $0$.

	The first case is $H^{\theta_0}=H^{\frac{\pi}{2}}$. 
	This implies $\theta(0)=\frac{\pi}{2}$ or $W'=0$, which leads to $V_i \rightarrow V'$ in the sense of Radon measure.
	Thus,
	\begin{align*}
		\Theta(\mathcal{V}_i,0)={} & 2\Theta(\|V_i\|,0)=1 \\
		\lim_{i\rightarrow +\infty} \|V_i\|(\mathcal{B}_1) ={} & \lim_{i\rightarrow +\infty} \frac{1}{\rho_i^n}\|V_i\|(\mathcal{B}_{\rho_i})=\omega_n \Theta(\|V_i\|,0)=\frac{1}{2}\omega_n\\
		\limsup_{i\rightarrow +\infty}\int_{ \mathcal{B}_1} \mathrm{dist}^2(X,H^{\frac{\pi}{2}})d\|V_i\|(X)  \le{}& \lim_{i\rightarrow +\infty} \int_{ } \eta(X)\mathrm{dist}^2(X,H^{\frac{\pi}{2}})d\|V_i\|(X)  \\
		={}& \int_{H^{\frac{\pi}{2}} } \eta(X) \mathrm{dist}^2(X,H^{\frac{\pi}{2}})d\mathcal{H}^n(X)=0,
	\end{align*}
	for any non-negative smooth function $\eta$ with compact support in $B_2$ and $\eta=1$ on $B_1$.
	We then choose $i$ large enough such that $\mathcal{V}_i \in \mathcal{RIV}(\frac{\varepsilon_1}{4})$ and $(\mathcal{V}_i,H^{\frac{\pi}{2}})$ satisfies $\varepsilon_1$-Hypothesis.
	By applying Theorem \ref{thm_l2freeFB}, we can establish the regularity result.

	The second case is $H^{\theta_0}\neq \frac{\pi}{2}$.
	Since $V'-\cos \theta(0)W'$ is stationary in free boundary sense, after a rigid rotation if needed, we find the only possibility is $W'=|\left\{ x_1=0,x_2\le 0 \right\}\cap \mathcal{B}_2|$ and $\theta_0=\theta(0)$.
	Similar to the previous case, we deduce
	\begin{align*}
		\Theta(\mathcal{V}_i,0)={} &  1-\cos \theta(0)\\
		\lim_{i\rightarrow +\infty} \|V_i-\cos \theta_iW_i\|(\mathcal{B}_1)={} & \frac{1}{2}\omega_n(1-\cos \theta(0)),\\
		\limsup_{i\rightarrow +\infty} \int_{ \mathcal{B}_i} \mathrm{dist}^2(X,\boldsymbol{C}_{H^{\theta_0}})d\|V_i-\cos \theta_iW_i\|(X) ={}& 0.
	\end{align*}

	Here are two subcases to consider.
	If $\theta(0) \ge \frac{\varepsilon_1}{2^{n+1}}+\frac{\pi}{2}$, we choose $i$ large enough ensuring $\mathcal{V}_i \in \mathcal{RIV}(\frac{\varepsilon_2}{4})$ and $(\mathcal{V}_i,H^{\theta_0})$ satisfies $(\varepsilon_1,\Lambda_0)$-Hypothesis.
	Thus, Theorem \ref{thm_l2positiveAngle} is applied to obtain the regularity result.

	In the case where $\theta(0)<\frac{\varepsilon_1}{2^{n+1}}+\frac{\pi}{2}$, the only difference is showing $\mathcal{E}_F(\mathcal{V}_i,H^{\theta_0})<\varepsilon_1$ to utilize Theorem \ref{thm_l2freeFB}.
	For this purpose, we pick $i$ large such that $\theta_i\le \frac{3\varepsilon_1}{2^{n+2}}$ on $\mathcal{B}_1\cap \partial\mathbb{H}^{n+1}$ and
	\[
		\int_{ \mathcal{B}_1} \mathrm{dist}^2(X,H^{\theta_0})d\|V_i-|\cos \theta_i| W_i\|(X)< \frac{\varepsilon_1}{2}
	\]
	as $V_i-|\cos \theta_i|W_i\rightarrow V'-\cos \theta(0) W'$ in the sense of Radon measure on $\mathcal{B}_2$.
	Note that
	\[
		\int_{ \mathcal{B}_1} |\cos \theta_i| d\|W_i\|(X)\le \frac{3\varepsilon_1}{2^{n+2}}\omega_n\le \frac{3\varepsilon_1}{4}.
	\]
	Thus, $\mathcal{E}_F^2(\mathcal{V}_i,H^{\theta_0})<\varepsilon_1$.
	It is straightforward to see that $\Theta(\mathcal{V},0)=1-\cos \theta(0)\ge 1-2\varepsilon_1$.
	Therefore, $(\mathcal{V}_i,H^{\theta_0})$ satisfies $\varepsilon_1$-Hypothesis for $i$ large enough.
	Hence, Theorem \ref{thm_l2freeFB} is applied to get the regularity result.
\end{proof}

\begin{proof}
	[Proof of Corollary \ref{cor_contactAngleG}]
	Indeed, this follows directly from Theorem \ref{thm_densityG}, as the contact angle is readily determined from the tangent cone of $V$ at point $Y$.
\end{proof}

\begin{proof}
	[Proof of Theorem \ref{thm_densityG}]

Let us proceed by contradiction.
	Suppose there exists a sequence of positive numbers $\left\{ \varepsilon_k \right\}$ with $\lim_{k\rightarrow +\infty} \varepsilon_k=0$ such that for every $V$ satisfying the condition of Theorem \ref{thm_densityG}, the regularity result fails.
	Upon the initial discussion in Subsection \ref{sub:VPA} and after an appropriate scaling, for each $i$, we can find a VPCA-quadruple $\mathcal{V}_i=(V_i,W_i,\theta_i,g_i)$ satisfying the following conditions,
	\begin{enumerate}[\normalfont(a)]
		\item $g_i(0)=\delta$, and $0 \in \mathrm{spt}\|V_i\|$,
		\item $\mathcal{V}_i \in \mathcal{RIV}(\varepsilon_i)$,
			\label{it:pfVisRIV}
		\item $n=2$ and $\theta_i \in [\Lambda_0,\pi-\Lambda_0]$, or $\theta_i \in (\frac{\pi}{2}-\varepsilon_i,\frac{\pi}{2}+\varepsilon_i)$,
			\label{it:pfTheta}
		\item $\|V_i\|(B_1)\le \omega_n\left(\frac{1}{2}+2\varepsilon_i\right)$.
		\label{it:pfLimit}
	\end{enumerate}
	By Compactness Theorem (Theorem \ref{thm_compactnessRIV}), we may assume $\mathcal{V}_i\rightarrow \mathcal{V}=(V,W,\theta,\delta)$ where $\theta$ is a constant in $[\Lambda_0,\pi-\Lambda_0]$.
	From the condition \ref{it:pfLimit}, we know $\|V\|(B_1)\le \frac{1}{2}$.

	Now, we claim that either $V=|H^{\theta}\cap \mathcal{B}_1|$ or $V=|H^{\frac{\pi}{2}}\cap \mathcal{B}_1|$.

	In the case $n\neq 2$, condition \ref{it:pfTheta} implies $\theta=\frac{\pi}{2}$.
	By Theorem \ref{thm_compactnessRIV} and Theorem \ref{thm_compactness_theorem_for_n_varifolds_under_lipschitz_metric}, we find $V$ is an integral stationary rectifiable $n$-varifold in free boundary sense in $\mathcal{B}_1$.
	Consequently, the only possibility is $\mathrm{spt}\|V\|=H^{\frac{\pi}{2}}\cap \mathcal{B}_1$ after a possible rotation.

	For the case $n=2$, we need to analyze the structure of $V$.
	Without loss of generality, we assume $\theta\ge \frac{\pi}{2}$.
	For any $X \in \mathrm{spt}\|V\|\backslash \partial\mathbb{H}^{3}$, since $V$ is a stationary cone, each tangent cone of $V$ at $X$ should correspond to the union of integer-density multiple half-planes with a common boundary by a dimension reduction argument.
	Hence, $\Theta(\|V\|,X)=\frac{m}{2}$ for some integer $m\ge 2$.

	By the property of stationary cone and Remark \ref{rmk_reflectionFB}, we have $\Theta(\|V\|,X)\le \Theta(\mathcal{V},0)\le 2-\Lambda_0$, which implies $m\le 3$.
	By Allard regularity and regularity of multiplicity-one stationary varifold near cylindrical cone due to Simon \cite{Simon1993cylindrical} (see also \cite{AllardAlmgren1976OneDimStructure}), we know $V \lfloor(\mathcal{B}_1\backslash \partial\mathbb{H}^2)=\sum_{i \in \mathcal{I}}|C(\gamma_i)\cap B_1|$ where $\mathcal{I}$ is an index set and $C(\gamma_i)$ is a cone associated with $\gamma_i$ defined by
	\[
		C(\gamma_i):=\left\{ rX:X \in \gamma_i, r \in (0,+\infty) \right\}
	\]
	where $\gamma_i$ represents closed geodesic arc in $\partial B_1\cap \mathbb{H}^{3}$.
	Now, we write $\mathcal{P}$ to be the set of all endpoints of $\gamma_i$ for each $i \in \mathcal{I}$. 
	It follows that $\{\gamma_i\}_{i \in \mathcal{I}}$ satisfies the following conditions,
	\begin{enumerate}[\normalfont(a)]
		\item $\gamma_i$ intersects $\gamma_j$ can only happen at the endpoint of $\gamma_i$ and $\gamma_j$ for different $i,j \in \mathcal{I}$.
		\item For each $X \in \mathcal{P}$, we have either $X \in \partial \mathbb{H}^3$ or there exists three geodesic arcs $\gamma_{i_1},\gamma_{i_2},\gamma_{i_3}$ such that $X$ is the common endpoint of $\gamma_{i_1},\gamma_{i_2},\gamma_{i_3}$ and the contact angle between $\gamma_{i_p},\gamma_{i_q}$ is $\frac{\pi}{3}$ for $1\le p\neq q\le 3$ at $X$.
	\end{enumerate}
	
	Utilizing the result from spherical geometric, we deduce that $\mathcal{I}$ contains at most 1 element and that $\gamma_i$ is a half-circle for $i \in \mathcal{I}$.
	This result is elementary and interesting readers can refer to Appendix \ref{sec:append_stationaryNetwork} for more details.

	Hence, the claim is proved.

	Similar to the first case in this proof, we can apply either Theorem \ref{thm_l2freeFB} or Theorem \ref{thm_l2positiveAngle} depending on the value of $\theta$ for some $i$ large enough to obtain the regularity result.
	This is a contradiction to our initial assumption.
\end{proof}


\appendix

\section{Almost Stationary Varifolds under $C^1$ Metrics}%
\label{sec:varifolds}

In this section, we introduce the notion of $\mu$-stationary varifold under some $C^1$ metrics.
The majority of the results here align closely with the standard theory of varifold.
Therefore, we will not delve into detailed proofs for some statements, as they follow analogously from arguments found in standard texts, such as Chapter 8 of Simon's book \cite{simon1983lectures}.

We assume $U$ is a domain in $\mathbb{R}^{n+k}$.

\subsection{Theory of Varifolds under $C^1$ Metrics}%
\label{sub:theory_of_varifolds_under_lipschitz_metrics}

This subsection gathers some fundamental aspects of varifolds under $C^1$ metrics.
Let $g$ denote a $C^1$ metric defined on $U$.

\begin{definition}
	\label{def_WeightMeasureG}
	For any $n$-varifold $V \in G(n,U)$, we define the \textit{weight measure under metric $g$} as
	\begin{equation}
		\|V\|_g(K)=\int_{ K } \sqrt{\mathrm{det}g_S}dV(X,S).
		\label{eq:defWeightMeasure}
	\end{equation}
\end{definition}
Here, we define $\mathrm{det}g_S$ as
\[
	\mathrm{det}g_S:=\mathrm{det} \left( \left< \tau_i,\tau_j \right>_g \right)_{i,j=1}^n
\]
where $\left\{ \tau_i \right\}_{i=1}^n$ represents an orthonormal basis of $S$ with respect to the Euclidean metric $\delta$.

It is straightforward to observe that the measures $|V|_g$ and $|V|$ are mutually absolutely continuous.

\begin{remark}
	Should $V$ correspond to an $n$-dimensional submanifold $M$ in $U$ with a density of 1, then $\|V\|_g$ is the $n$-dimensional volume measure of $M$ under the metric $g$.
\end{remark}

By computing the first variation of the weight measure under the metric $g$, we define the first variation $\delta^g V$ as
\[
	\delta^g V(\varphi):= \left.\frac{d}{dt}\right|_{t=0}\|(\phi_t)_{\#}V\|_g(W)=\int_{W \times G(n+k,n)} \mathrm{div}_S^g \varphi(x)\sqrt{\mathrm{det}g_S}dV(x,S).
\]
where $\varphi \in \mathfrak{X}_c^1(W)$ and $\phi_t$ is the variation associated with $\varphi$.

\begin{definition}
	\label{def_localBounded1st}
	We say a varifold $V$ has \textit{locally bounded first variation in} $U$ \textit{under metric} $g$ if for each $W\subset \subset U$, there exists a constant $C<\infty$ such that
	\[
		\left|\delta^gV(\varphi)\right|\le C \sup_{U}\left|\varphi\right|_g,\quad \forall \varphi \in \mathfrak{X}_c(W).
	\]
\end{definition}
Given that $V$ has locally bounded first variation in $U$ under metric $g$, by general Riesz Representation Theorem, we can find a Radon measure $\|\delta^gV\|$ on $U$ given by
\[
	\|\delta ^gV\|(W)=\sup_{\varphi \in \mathfrak{X}^1_c(W),|\varphi|_g\le 1} |\delta^g V(\varphi)|,
\]
along with a $\|\delta^g V\|$-measurable $\nu_g$ vector field with $\left|\nu_g(X)\right|_g=1$, for $\|\delta^g V\|$-a.e. $X \in U$ such that
\[
	\delta^g V(\varphi)=-\int_{ U} \left< \nu_g, \varphi \right> _g d\|\delta^g V\|.
\]

Further, it can be expressed as
\[
	\int_{ U} \left< \nu_g, \varphi \right> _g d\|\delta^g V\|=\int_{ U} \left< \boldsymbol{H}_g, \varphi \right> _g d\|V\|_g + \int_{ U} \left< \nu_g, \varphi \right> _g d\sigma
\]
where
\[
	\boldsymbol{H}_g(X):= \lim_{\rho\rightarrow 0^+} \frac{\|\delta^g V\|(B_\rho(X))}{\|V\|_g(B_\rho(X))} \nu_g(X),
\]
and $d \sigma$ denotes the singular part of $\|\delta^g V\|$ with respect to $\|V\|_g$.

Suppose $k=0$ and $\Omega$ is a $\mathcal{H}^n$-measurable set in $U$.
We call $\Omega$ a \textit{Caccioppoli set in $U$ under metric $g$} if $V:=|\Omega|$ has locally bounded first variation in $U$ under metric $g$.

The proposition presented below establishes the equivalence between the given definition of a Caccioppoli set under a $C^1$ metric and the standard definition under the Euclidean metric.

\begin{proposition}
	\label{prop_equivCacci}
	$\Omega$ is a Caccioppoli set in $U$ under metric $g$ if and only if $\Omega$ is a Caccioppoli set in $U$ under Euclidean metric $\delta$.
\end{proposition}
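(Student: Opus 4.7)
The plan is to reduce everything to a single pointwise identity and then obtain the equivalence by a trivial change of test vector field. Since $k=0$ and $V=|\Omega|$, the only tangent plane $S$ appearing in the integrand of $\delta^gV$ is $S=\mathbb{R}^n$, so $\mathrm{div}_S^g\varphi=\mathrm{div}^g\varphi$ and $\det g_S=\det g$. The classical Riemannian formula $\mathrm{div}^g\varphi=\tfrac{1}{\sqrt{\det g}}\,\partial_i(\sqrt{\det g}\,\varphi^i)$ gives the pointwise identity
\[
\mathrm{div}^g\varphi\,\sqrt{\det g}\;=\;\mathrm{div}\bigl(\sqrt{\det g}\,\varphi\bigr)
\]
on all of $U$, valid for any $\varphi\in\mathfrak{X}^1(U)$. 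Integrating against $\mathcal{H}^n\lfloor\Omega$ immediately yields the key relation
\[
\delta^gV(\varphi)\;=\;\int_\Omega \mathrm{div}\bigl(\sqrt{\det g}\,\varphi\bigr)\,dx\;=\;\delta V\bigl(\sqrt{\det g}\,\varphi\bigr).
\]

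Once this identity is established, the equivalence of the two Caccioppoli properties is a change-of-test-field argument. Fix $W\subset\subset U$. Since $g$ is a $C^1$ metric, there exist constants $0<c_1\le c_2<\infty$ such that $c_1\le\sqrt{\det g}\le c_2$ and $c_1|\varphi|\le|\varphi|_g\le c_2|\varphi|$ on $\overline W$, and the scaling factors $\sqrt{\det g}$ and $1/\sqrt{\det g}$ are both of class $C^1$ on $\overline W$. If $\Omega$ has bounded first variation under $g$ on $W$, then given $\varphi\in\mathfrak{X}^1_c(W)$ the field $\tilde\varphi:=(\det g)^{-1/2}\varphi$ lies in $\mathfrak{X}^1_c(W)$ and $\delta V(\varphi)=\delta^gV(\tilde\varphi)$, so
\[
|\delta V(\varphi)|\;\le\;C\sup_W|\tilde\varphi|_g\;\le\;C'\sup_W|\varphi|.
\]
The reverse implication is obtained identically by taking $\tilde\varphi:=\sqrt{\det g}\,\varphi$ and applying the identity in the other direction together with $|\tilde\varphi|\le C\,|\varphi|_g/c_1$.

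I do not anticipate any serious obstacle here: the only things to verify are that $\sqrt{\det g}$ is positive and $C^1$ on $U$ (immediate from $g\in C^1$ and positive-definiteness) and that $\sqrt{\det g}\,\varphi$ remains compactly supported in $W$ (clear since $\sqrt{\det g}$ is a scalar multiplier that does not move the support). The proof requires no compactness or regularity input beyond these elementary observations.
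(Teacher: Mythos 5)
Your argument is correct, and it takes a genuinely different (and more economical) route than the paper. Both proofs start from the same pointwise fact that, since $k=0$ and $S=\mathbb{R}^n$, one has $\mathrm{div}_S^g\varphi\,\sqrt{\det g_S}=\partial_i\bigl(\sqrt{\det g}\,\varphi^i\bigr)$; but where you stop there and convert it into the test-field identity $\delta^g|\Omega|(\varphi)=\delta|\Omega|\bigl(\sqrt{\det g}\,\varphi\bigr)$, the paper instead invokes De Giorgi's structure theorem to rewrite $\delta^g|\Omega|(\varphi)$ as an integral over the reduced boundary $\partial^*\Omega$ with the $g$-unit normal $\nu_\Omega^g$ and the $g$-area element $\sqrt{\det g_{T_X\partial^*\Omega}}$, and for the converse direction only sketches that De Giorgi's theorem "is applicable under the metric $g$." Your multiplier argument --- using that $\sqrt{\det g}$ and $(\det g)^{-1/2}$ are $C^1$, positive, and bounded above and below on each $\overline{W}$ with $W\subset\subset U$, so that $\varphi\mapsto(\det g)^{\pm 1/2}\varphi$ preserves $\mathfrak{X}^1_c(W)$ and distorts sup-norms only by locally bounded factors --- handles both implications symmetrically and avoids any structure-theorem input, which arguably makes the "only if" direction more rigorous than the paper's sketch. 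What your approach does not deliver, and what the paper's computation is really after, is the explicit representation $\delta^g|\Omega|(\varphi)=\int_{\partial^*\Omega}\langle\nu_\Omega^g,\varphi\rangle\,\sqrt{\det g_{T_X\partial^*\Omega}}\,d\mathcal{H}^{n-1}$, i.e.\ the identification of the $g$-perimeter measure and $g$-unit conormal that the paper reuses in Remark \ref{rmk:Cacci}; so your proof suffices for the proposition as stated, while the paper's longer computation is doing extra work for later use.
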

\begin{proof}
	We prove the "if" part first.

	Suppose $\Omega$ is a Caccioppoli set.
	Then, by De Giorgi's structure theorem, we get $\partial^* \Omega$ is countably $n$-rectifiable and $\|\delta|\Omega|\|=\mathcal{H}^{n-1} \lfloor(\partial ^* \Omega)$ and there exists a $\mathcal{H}^{n-1}\lfloor(\partial ^*\Omega)$-measurable vector field $\nu_\Omega$ with $\left|\nu_\Omega(X)\right|=1$ for $\mathcal{H}^{n-1}\lfloor(\partial ^*\Omega)$-a.e., $X \in U$ and
	\[
		\int_{ \Omega} \mathrm{div} \varphi d\mathcal{H}^n=\int_{\partial ^*\Omega } \nu_\Omega \cdot \varphi d\mathcal{H}^{n-1},
	\]
	for any $\varphi \in \mathfrak{X}^1_{c}(U)$.
This leads us to
	\begin{equation}
		\delta^g|\Omega|(\varphi)=
		\int_{ \Omega} \sum_{i =1}^{n}\frac{\partial }{\partial x_i}(\sqrt{\mathrm{det}g}\varphi^i)d\mathcal{H}^n=\int_{ \partial ^*\Omega} \nu_\Omega \cdot \varphi \sqrt{\mathrm{det}g} d\mathcal{H}^{n-1}. 
		\label{eq:pf1stCacci}
	\end{equation}
	where we write $\varphi=\sum_{i =1}^{n}\varphi^i e_i$ and recall that $\mathrm{det}g:= \mathrm{det}(\left< e_i,e_j \right>_g )_{i,j=1}^n$.
	For any $X \in \partial^*\Omega$, we define $\nu_\Omega^g(X)$ to be the unit normal vector of $T_X\partial^*\Omega$ at $X$ under metric $g(X)$ and pointing to the same side as $\nu_\Omega(X)$.
	An elementary calculation in linear algebra yields
	\begin{equation}
		\sqrt{\mathrm{det}(g(X))}=\left< \nu_\Omega(X),\nu_\Omega^g(X) \right>_{g(X)}\sqrt{\mathrm{det}(g_{T_X\partial^*\Omega}(X))}.
		\label{eq:pfDetHyper}
	\end{equation}
	We decompose $\varphi=\varphi^\top+(\varphi \cdot \nu_\Omega)\nu_\Omega$ where $\varphi^\top(X)$ is the vector in $T_X\partial^*\Omega$ for any $X \in \partial^*\Omega$, and then,
	\[
		\left< \varphi(X), \nu_\Omega^g(X) \right> = 
		0+\left< (\varphi(X)\cdot \nu_\Omega(X))\nu_\Omega(X),\nu_\Omega^g(X) \right> = \varphi(X)\cdot \nu_\Omega(X) \left< \nu_\Omega(X),\nu_\Omega^g(X) \right>_{g(X)}.
	\]
	Together with \eqref{eq:pf1stCacci} and \eqref{eq:pfDetHyper}, we arrive at
	\[
		\delta^g|\Omega|(\varphi)=\int_{ \partial ^*\Omega} \left< \nu_\Omega^g, \varphi \right> \sqrt{\mathrm{det}g_{T_X\partial^*\Omega}}  d\mathcal{H}^{n-1}.
	\]
	This equation directly leads to the conclusion that $\Omega$ is a Caccioppoli set in $U$ under metric $g$.

	For the "only if" part,it suffices to verify that De Giorgi's structure theorem is applicable under the metric $g$.
	Subsequently, we can use the same argument as above to obtain $\Omega$ is a Caccioppoli set in $U$ under Euclidean metric $\delta$.
\end{proof}

\begin{remark}
	\label{rmk:Cacci}
	Suppose $\Omega$ is a Caccioppoli set in $U$ and let $V=|\partial^*\Omega|$ be the $(n-1)$-rectifiable varifold associated to $\partial^*\Omega$.
	Then, we can find
	\[
		\delta^g|\Omega|(\varphi)=\int_{ } \left< \nu_\Omega^g, \varphi \right> d\|V\|_g
	\]
	for some vector field $\nu_\Omega^g$ with $\left|\nu_\Omega^g\right|_g=1$ for $\|V\|_g$-a.e. $X \in U$ by the proof of Proposition \ref{prop_equivCacci}.
\end{remark}

For the monotonicity formula, we need to consider the geodesic balls instead of Euclidean balls.
We define the \textit{geodesic ball} $B_\rho^g(X)$ as
\[
	B_\rho^g(X):=\left\{ Y\in U: \mathrm{dist}^g(X,Y)<\rho \right\},
\]
where $\mathrm{dist}_g$ is the distance function under metric $g$.
We consider a point $X \in U$, along with two constants, $\rho_0 \in (0,\mathrm{dist}_g(X,\partial U))$ and $\Lambda \ge 0$ such that
\begin{equation}
	\|\delta^gV\|(B_\rho^g(X))\le \Lambda \|V\|_g(B_\rho^g(X)),\quad \forall \rho \in (0,\rho_0).
	\label{eq:conditionFirstVar}
\end{equation}

At first, we derive the monotonicity formula at point $g=\delta$.
\begin{theorem}
	[Monotonicity Formula]
	Suppose $V$ has a locally bounded first variation in $U$ under metric $g$.
	We also assume $0 \in U$ and \eqref{eq:conditionFirstVar} holds with $X=0$.
	Furthermore, we assume $g(0)=\delta$.
	Then, there exists $\rho_1=\rho_1(n,k,\Lambda,g) \in (0,\rho_0)$ small enough such that
	\begin{equation}
		\frac{\|V\|(B_\sigma)}{\sigma^n}\le \left( 1+C(\mu+\Lambda)\rho \right) \frac{\|V\|(B_\rho)}{\rho^n}
		\label{eq:thmMonotonicityFormula}
	\end{equation}
	for any $0<\sigma<\rho\le \rho_1$.
	Here $C=C(n,k) \in (0,+\infty)$ is a constant and $\mu:=\|Dg\|_{L^\infty(U)}$	\label{thm_monotonicity_formula}.
\end{theorem}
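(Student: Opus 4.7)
The plan is to run the classical monotonicity argument using Euclidean radial cut-offs, exploiting the normalization $g(0)=\delta$ together with the $C^1$-bound $|Dg|\le \mu$ to keep all metric corrections at order $\mu|X|$. Concretely, for $0<\sigma<\rho\le \rho_1$ I would pick a smooth decreasing cut-off $\phi_{\sigma,\rho}\colon [0,\infty)\to [0,1]$ with $\phi=1$ on $[0,\sigma]$ and $\phi=0$ on $[\rho,\infty)$ (piecewise linear, then smoothed), and test the first variation of $V$ under $g$ against the vector field $\varphi(X)=\phi_{\sigma,\rho}(|X|)X$. The hypothesis $\|\delta^g V\|(B_\rho^g)\le \Lambda\|V\|_g(B_\rho^g)$ bounds the left-hand side, while the right-hand side is the Euclidean monotonicity identity plus metric-perturbation errors.

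The key calculation is the pointwise estimate
\[
\bigl|\operatorname{div}_S^g\varphi(X)\sqrt{\det g_S(X)}-\operatorname{div}_S\varphi(X)\bigr|\le C\mu|X|\bigl(|\varphi(X)|+|D\varphi(X)|\bigr),
\]
valid on a neighborhood of $0$ of size $\rho_1=\rho_1(n,k,g)$. The factor $|X|$ comes from Taylor-expanding $g$ around the origin: $g(0)=\delta$ and $|Dg|\le \mu$ give $|g(X)-\delta|\le \mu|X|$, and analogous linear estimates hold for $\sqrt{\det g_S}-1$ and for the Christoffel-type correction in $\operatorname{div}_S^g-\operatorname{div}_S$. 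With $\varphi(X)=\phi(|X|)X$ one has $|\varphi|\le \rho$ and $|D\varphi|\le C(1+\rho\|\phi'\|_\infty)$, so the metric error integrates to at most $C\mu\rho\cdot\|V\|(B_\rho)$ plus a term that vanishes when $\phi_{\sigma,\rho}$ is sent to the characteristic function of $[0,\sigma]$. Simultaneously, $\|V\|_g$ and $\|V\|$ differ multiplicatively by $1+O(\mu\rho)$, and $B_\rho^g\subset B_{(1+C\mu\rho)\rho}$ by Proposition~\ref{prop_compBall}, so the $\Lambda$-term contributes $C\Lambda\rho\|V\|(B_\rho)$.

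Letting $I(\rho)=\|V\|(B_\rho)/\rho^n$ and passing $\phi_{\sigma,\rho}$ to an indicator, the Euclidean part of the first variation identity reads, for a.e.~$\rho$,
\[
n\|V\|(B_\rho)\le \rho\frac{d}{d\rho}\|V\|(B_\rho)-\rho\frac{d}{d\rho}\!\int_{B_\rho}\frac{|X^{\perp_S}|^2}{|X|}\,dV(X,S)+C(\mu+\Lambda)\rho\|V\|(B_\rho),
\]
which (after dropping the nonnegative orthogonal-projection term) gives $\frac{d}{d\rho}\log I(\rho)\ge -C(\mu+\Lambda)$ on $(0,\rho_1)$. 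Integrating from $\sigma$ to $\rho$ and exponentiating,
\[
\frac{I(\sigma)}{I(\rho)}\le e^{C(\mu+\Lambda)(\rho-\sigma)}\le 1+C(\mu+\Lambda)\rho,
\]
once $\rho_1$ is small enough that $C(\mu+\Lambda)\rho_1\le 1$; this is exactly \eqref{eq:thmMonotonicityFormula}.

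The main obstacle will be the bookkeeping that turns the a priori $O(\mu)$ discrepancy between $\operatorname{div}^g_S\varphi\sqrt{\det g_S}$ and $\operatorname{div}_S\varphi$ into the sharper $O(\mu|X|)$ bound needed for the $C\mu\rho$ factor in the conclusion; this uses crucially that $g(0)=\delta$, not merely $|g-\delta|\le\mu$. A secondary issue is synchronizing the hypothesis \eqref{eq:conditionFirstVar}, which is stated on geodesic balls $B^g_\rho$, with the Euclidean balls $B_\rho$ that appear in the conclusion: the comparison \eqref{eq:ballCompFine} handles this at the cost of an additional $O(\mu\rho)$ factor, which is absorbed into $C(\mu+\Lambda)\rho$. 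Finally, one must verify that the approximation of $\phi_{\sigma,\rho}$ by Lipschitz cut-offs and the passage to the limit are legitimate, which is standard and parallels the Euclidean proof in \cite[\S 17]{simon1983lectures}.
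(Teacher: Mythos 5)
Your argument is correct in substance, but it closes the proof by a genuinely different mechanism than the paper. The paper tests with the two-scale field $\varphi(X)=\bigl(\max\{|X|,\sigma\}^{-n}-\rho^{-n}\bigr)X$, which compares $f(\sigma)=\sigma^{-n}\|V\|(B_\sigma)$ and $f(\rho)$ directly but produces the nonlocal error $C\mu\int_\sigma^\rho f(\tau)\,d\tau$ (see \eqref{eq:pfMonoWithMu}); since $f$ is not a priori controlled at intermediate radii, the paper must remove that term by the sup-trick leading to \eqref{eq:pfClaimMono} and then a dyadic iteration, and this is precisely where $\rho_1$ acquires its dependence on $\Lambda$ and $g$. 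You instead use a single-scale cutoff $\phi(|X|)X$, so all metric errors localize at scale $\rho$ and you obtain a purely local differential inequality that integrates Gronwall-style to $I(\sigma)\le e^{C(\mu+\Lambda)(\rho-\sigma)}I(\rho)$, which is \eqref{eq:thmMonotonicityFormula}; this avoids the nonlocal term and the iteration altogether, at the cost of having to justify integrating an a.e.\ differential inequality for the non--absolutely-continuous function $F(\rho)=\|V\|(B_\rho)$. That justification is the one point you should make explicit: it works because $F$ is nondecreasing, so its distributional derivative exceeds its pointwise a.e.\ derivative by a nonnegative singular measure, and in the combination $e^{K\rho}\rho^{-n}F(\rho)$ with $K=C(\mu+\Lambda)$ this singular part enters with a favorable sign; this is the same device as in the classical $\|\delta V\|\le\Lambda\|V\|$ monotonicity of Simon, \S 17, so it is standard but not automatic from ``$G'\ge 0$ a.e.''

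Two small corrections to your bookkeeping, neither fatal. First, the pointwise comparison should read $\bigl|\operatorname{div}^g_S\varphi\sqrt{\det g_S}-\operatorname{div}_S\varphi\bigr|\le C\mu\bigl(|\varphi|+|X|\,|D_S\varphi|\bigr)$: the Christoffel-type correction is of size $|Dg|\,|\varphi|\le\mu|\varphi|$ with no extra factor $|X|$, since $Dg(0)$ need not vanish; your version with $\mu|X|$ on the zeroth-order term is only harmless here because $|\varphi|\le|X|$ for the radial field. Second, the error piece $C\mu\int|X|^2|\phi'|\,dV$ does not vanish as the cutoff sharpens; it converges (at a.e.\ $\rho$) to $C\mu\rho^2\frac{d}{d\rho}\|V\|(B_\rho)$ and must be absorbed as a factor $(1+C\mu\rho)$ multiplying $\rho\frac{d}{d\rho}\|V\|(B_\rho)$ in your differential inequality — this only changes the constant $C(n,k)$ in the conclusion. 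Finally, as you note, the hypothesis \eqref{eq:conditionFirstVar} is stated on geodesic balls, so the $\Lambda$-term is a priori bounded by $\|V\|$ of a slightly enlarged Euclidean ball; the comparison \eqref{eq:ballCompFine} and the monotonicity of $F$ absorb this into $C(\mu+\Lambda)\rho$, exactly as the paper implicitly does.
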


\begin{proof}
	Given that $g(0)=\delta$, we know
	\[
		|g_{ij}(X)-\delta|\le \mu|X|.
	\]
	By choosing $\rho_1=\rho_1(n,k)$ small enough, we get
	\[
		|g^{ij}_S-\delta^{ij}|\le C\mu,\quad |\mathrm{det} g_S -1|\le C\mu
	\]
	for some constant $C=C(n,k) \in (0,+\infty)$.
	Here, $g^{ij}_S$ is the inverse matrix of $g_{ij}^S$ and $g_{ij}^S=g(\tau_i,\tau_j)$.
	Consequently, for any $\varphi \in \mathfrak{X}_c(B_{\rho_1})$, the following inequality holds,
	\[
		\left|\int_{ } \left( \mathrm{div}_S^g \varphi \sqrt{\mathrm{det} g_S}-\mathrm{div}_S \varphi  \right)dV(X,S)
		\right|\le C \mu \int_{ } \left|\varphi\right|+|X|\left|D_S \varphi\right| dV(X,S),
	\]
	for some $C=C(n,k) \in (0,+\infty)$.
	Together with the definition of the first variation, we get
	\begin{align*}
		\left|\int_{ } \mathrm{div}_S \varphi dV(X,S)\right|\le{} & \sup_{B_{\rho}}|\varphi|_g\|\delta^g V\|(B_\rho) +C \mu \int_{ } |\varphi|+|X||D_S \varphi| dV(X,S)\\
		\le{} & C\Lambda\sup_{B_{\rho}}|\varphi|\|V\|_g(B_\rho) +C \mu \int_{ } |\varphi|+|X||D_S \varphi| dV(X,S),
	\end{align*}
	for any $\varphi \in \mathfrak{X}_c(B_\rho)$ with $\rho\le \rho_1$ where we used the condition \eqref{eq:conditionFirstVar} and the fact that $|\varphi|_g\le C|\varphi|$ if we choose $\rho_1=\rho_1(n,k,g)$ small enough.
	Now, we choose
	\[
		\varphi(X)=
		\begin{cases}
		\left( \frac{1}{\sigma^n}-\frac{1}{\rho^n} \right)X, & X \in B_\sigma,\\
		\left( \frac{1}{|X|^n}-\frac{1}{\rho^n} \right)X, & X \in B_\rho\backslash B_\sigma,\\
		0, & \text{otherwise},
		\end{cases}
	\]
	and denote $f(\rho)=\frac{1}{\rho^n}\|V\|(B_\rho)$, then the direct computation (cf. \cite[Page. 778-779]{SchoenSimon1981Regularity}) implies,
	\begin{align}
		&(1-C\mu \sigma)f(\sigma)-(1+C\mu \rho)f(\rho)+ \int_{B_\rho\backslash B_\sigma} \frac{|S^\bot  X|^2}{|X|^{n+2}}dV(X,S)\nonumber \\
		\le{}&C\Lambda \left( \frac{1}{\sigma^n}-\frac{1}{\rho^n} \right) \rho^{n+1}f(\rho)+
		C\mu \int_{ \sigma} ^\rho f(\tau)d\tau,
		\label{eq:pfMonoWithMu}
	\end{align}
	for $0<\sigma<\rho\le \rho_1$ and $C=C(n,k) \in (0,+\infty)$.
	Note that we can assume $C\mu \rho\le \frac{1}{2}$ by choosing $\rho_1$ small enough.
	Consequently, from \eqref{eq:pfMonoWithMu}, for any $0< \frac{1}{2}\rho\le \sigma< \rho\le \rho_1$, it follows that,
	\begin{equation}
		f(\sigma)\le (1+C(\mu+\Lambda)\rho)f(\rho)+C\mu \int_{\sigma}^\rho f(\tau)d\tau.
		\label{eq:pfMonWithIntegral}
	\end{equation}
	We claim that we have
	\begin{equation}
		f(\sigma) \le (1+C(\mu+\Lambda)\rho)f(\rho),\quad \forall 0<\frac{1}{2}\rho\le \sigma<\rho\le \rho_1,
		\label{eq:pfClaimMono}
	\end{equation}
	for some constant $C=C(n,k) \in (0,+\infty)$ if we choose $\rho_1=\rho_1(n,k,\Lambda,g)$ small enough.

	Suppose $\xi \in [\sigma,\rho]$ such that $f(\xi)\ge \frac{1}{2} \sup_{\tau \in [\sigma,\rho]}f(\tau)$.
	Substituting $\sigma$ with $\xi$ or $\rho$ with $\xi$ in \eqref{eq:pfMonWithIntegral} yields two inequalities involving $f(\sigma)$, $f(\xi)$, and $f(\rho)$.
	By choosing $\rho_1=\rho_1(n,k,\Lambda,g)$ small enough, we can eliminate the term containing $f(\xi)$ to get \eqref{eq:pfClaimMono}.

	Iteratively applying \eqref{eq:pfClaimMono} by setting $\sigma$ to $\frac{\rho}{2^{j+1}}$ and $\rho$ to $\frac{\rho}{2^j}$ for $j = 0, 1, 2, \ldots$, we obtain
	\begin{equation}
		f(\sigma)\le \prod_{i=0}^j \left( 1+C(\mu+\Lambda)\frac{\rho}{2^i} \right)f(\rho)\le (1+C(\mu+\Lambda)\rho)f(\rho),
		\label{eq:pfMonoLocallyBoundedFirst}
	\end{equation}
	for any $\sigma \in \left[ \left.\frac{\rho}{2^{j+1}},\frac{\rho}{2^{j}}\right. \right)$ by choosing $\rho_1$ small enough if necessary.
	Here, the constant $C$ depends on $n,k$ only.
	This is what we want to prove.
\end{proof}

\begin{corollary}
	\label{cor_MonoFormula}
	Suppose $V$ is a varifold in $U$ with locally bounded first variation under the metric $g$, and condition \eqref{eq:conditionFirstVar} is satisfied for some point $X \in U$.
	Then, there exists $\rho_1=\rho_1(n,k,g,\Lambda) \in (0,\rho_0)$ such that for any $0<2\sigma<\rho\le \rho_1$, we have
	\begin{equation}
		\frac{\|V\|_g(B_\sigma^g(X))}{\sigma^n}\le
		(1+C\rho) \frac{\|V\|_g(B_\rho^g(X))}{\rho^n},
		\label{eq:corMonoFormula}
	\end{equation}
	for $C=C(n,k,g,\Lambda) \in (0,+\infty)$.
\end{corollary}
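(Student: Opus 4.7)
The plan is to reduce the statement to Theorem \ref{thm_monotonicity_formula} by normalizing the metric at the basepoint $X$ via an affine change of coordinates, and then translating Euclidean balls to $g$-geodesic balls through the ball comparison in Proposition \ref{prop_compBall}.

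First, after translating so that $X$ becomes the origin, I would choose a linear map $L\in\mathrm{GL}(n+k)$ (for example via the Cholesky factor of $g(X)^{-1}$, as in Proposition \ref{prop_Pi2Delta}) so that the pushforward metric $\tilde{g}:=L_{*}g$ satisfies $\tilde{g}(0)=\delta$. The map $L$ depends only on $g(X)$, so $|L|+|L^{-1}|$ and $\|D\tilde{g}\|_{\infty}$ are controlled by constants depending on $g$. Setting $\tilde{V}=L_{\#}V$, the standard transformation rules give $\|\tilde{V}\|_{\tilde{g}}=L_{\#}\|V\|_g$ and $\|\delta^{\tilde{g}}\tilde{V}\|=L_{\#}\|\delta^{g}V\|$; moreover $L$ sends $B^{g}_{\rho}(X)$ onto $B^{\tilde{g}}_{\rho}(0)$. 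Hence \eqref{eq:conditionFirstVar} transfers into the analogous bound for $\tilde{V}$ at $0$ with the same $\Lambda$.

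Next, I would apply Theorem \ref{thm_monotonicity_formula} to $\tilde{V}$ at the origin with respect to the metric $\tilde{g}$, obtaining, for some $\rho_{1}=\rho_{1}(n,k,\Lambda,g)$ and all $0<\sigma'<\rho'\le\rho_{1}$,
\[
\frac{\|\tilde{V}\|(B_{\sigma'})}{(\sigma')^{n}}\le(1+C(\mu+\Lambda)\rho')\frac{\|\tilde{V}\|(B_{\rho'})}{(\rho')^{n}},
\]
where $\|\tilde{V}\|$ is the Euclidean weight. Because $\tilde{g}(0)=\delta$ and $|D\tilde{g}|\le\mu$, one has $|\tilde{g}(Y)-\delta|\le C\mu|Y|$ on $B_{\rho_{1}}$, so $|\sqrt{\det\tilde{g}_{S}}-1|\le C\mu|Y|$ and consequently $\|\tilde{V}\|_{\tilde{g}}=(1+O(\mu\rho))\|\tilde{V}\|$ on any $B_{\rho}$ with $\rho\le\rho_{1}$. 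This lets me convert the Euclidean monotonicity into one for the $\tilde{g}$-weight with only an additional $(1+C\mu\rho)$-factor.

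Finally, I would use the fine ball comparison \eqref{eq:ballCompFine} of Proposition \ref{prop_compBall}, which is valid precisely because $\tilde{g}(0)=\delta$:
\[
B_{(1-C\mu\rho)\rho}(0)\subset B^{\tilde{g}}_{\rho}(0)\subset B_{(1+C\mu\rho)\rho}(0).
\]
Applying the Euclidean monotonicity with $\sigma'=(1+C\mu\sigma)\sigma$ and $\rho'=(1-C\mu\rho)\rho$ (which satisfy $\sigma'<\rho'$ under the assumption $2\sigma<\rho$, provided $\rho_{1}$ is taken small enough in terms of $\mu=\|Dg\|_{\infty}$) and sandwiching by the ball comparison yields
\[
\frac{\|\tilde{V}\|_{\tilde{g}}(B^{\tilde{g}}_{\sigma}(0))}{\sigma^{n}}\le(1+C\rho)\frac{\|\tilde{V}\|_{\tilde{g}}(B^{\tilde{g}}_{\rho}(0))}{\rho^{n}},
\]
and undoing the pushforward by $L^{-1}$ converts this into \eqref{eq:corMonoFormula}.

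The main point requiring care is the last step: the ball comparison inflates radii by $(1\pm C\mu\rho)$, so the Euclidean monotonicity must be invoked on slightly perturbed radii rather than on $\sigma,\rho$ themselves. The hypothesis $2\sigma<\rho$ is exactly the slack needed so that $(1+C\mu\sigma)\sigma<(1-C\mu\rho)\rho$ holds for $\rho\le\rho_{1}$ small, allowing all of these $(1+C\mu\rho)$-type distortions (from metric-vs-Euclidean weight, from the $L$-change of coordinates, and from the ball comparison) to be absorbed into a single clean factor $(1+C\rho)$ with $C=C(n,k,g,\Lambda)$.
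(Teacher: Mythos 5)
Your proposal is correct and follows essentially the same route as the paper: normalize $g(X)=\delta$ by an affine change of coordinates, apply the Euclidean-ball monotonicity of Theorem \ref{thm_monotonicity_formula}, and then absorb the $(1\pm C\mu\rho)$ distortions coming from the geodesic-vs-Euclidean ball comparison and from comparing $\|V\|_g$ with $\|V\|$. The only differences are presentational (you make the linear map and the pushforward bookkeeping explicit, and you perturb the radii before invoking the monotonicity rather than substituting afterwards), so no further changes are needed.
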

\begin{proof}
	We only need to consider the case that $0 \in U$ and $g(0)=\delta$ up to an affine transformation.

	Let $\mu=\|Dg\|_{L^{\infty}(U)}$.
	At first, we choose $\rho_1$ such that Theorem \ref{thm_monotonicity_formula} holds and adjust $\rho_1$ if necessary.
	As $\left|g-\delta\right|\le C\rho \mu$ in $B_\rho$, for any $C^1$ curve $\gamma$ in $B_\rho$, we have
	\[
		\left|L(\gamma)-L^g(\gamma)\right|\le C\rho \mu L(\gamma).
	\]
	With $\rho_1$ small enough, it can be shown that
	\begin{align*}
		B_{(1-C\mu\rho)\rho}^g\subset B_\rho \subset B_{(1+C\mu\rho)\rho}^g.
	\end{align*}
	Again, using $\left|g-\delta\right|\le C\mu \rho$ in $B_\rho$ and the definition of $\|V\|_g$ in \eqref{eq:defWeightMeasure}, we deduce that
	\[
		(1-C\mu\rho)\|V\|_g(B_\rho)\le \|V\|(B_\rho)\le (1+C\mu\rho)\|V\|_g(B_\rho).
	\]
	Together with \eqref{eq:thmMonotonicityFormula}, we get 
	\[
		(1-C\mu \sigma) \frac{\|V\|_g(B_{(1-C\mu\sigma)\sigma}^g)}{\sigma^n}\le
		(1+C\mu\rho) \frac{\|V\|_g(B_{(1+C\mu\rho)\rho}^g)}{\rho^n}.
	\]

	Now, we replace $(1-C\mu \sigma)\sigma$ by $\sigma$ and $(1+C\mu\rho)\rho$ by $\rho$ to get \eqref{eq:corMonoFormula} with $X=0$ by choosing $\rho_1=\rho_1(n,k,g,\Lambda)$ if necessary.
\end{proof}

For $\|V\|_g$-a.e. $X \in U$, there exists $\rho_0 \in (0, \mathrm{dist}_g(X,\partial U))$ and $\Lambda$ such that \eqref{eq:conditionFirstVar} holds by Differentiation Theorem (cf. \cite[Lemma 40.5]{simon1983lectures}). Note that we should replace Euclidean balls with geodesic balls in the Differentiation Theorem.

\begin{proposition}
	\label{prop_Density}
	Suppose $V$ has a locally bounded first variation in $U$ under $C^1$ metric $g$.
	The density function $\Theta^g(\|V\|_g,X)$ defined by
	\[
		\Theta^g(\|V\|_g,X):=\lim_{\rho\rightarrow 0^+} \frac{\|V\|_g(B_\rho^g(X))}{\omega_n \rho^n}
	\]
	exists for $\|V\|_g$-a.e. $X\in U$.
	Furthermore, $\Theta^g(\|V\|_g,X)$ is a $\|V\|_g$-measurable function.
\end{proposition}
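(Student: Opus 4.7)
The plan is to split the proposition into two parts: first, existence of the limit at $\|V\|_g$-a.e.\ $X$; second, Borel measurability of the resulting function.

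For existence, the first step is to apply the Differentiation Theorem for Radon measures on the metric space $(U,\mathrm{dist}_g)$ to the pair $(\|\delta^g V\|, \|V\|_g)$. Since $V$ has locally bounded first variation, $\|\delta^g V\|$ is a Radon measure, and for $\|V\|_g$-a.e.\ $X \in U$ the upper derivative $\limsup_{\rho \to 0^+} \|\delta^g V\|(B_\rho^g(X))/\|V\|_g(B_\rho^g(X))$ is finite. In particular, for such $X$ there exist $\rho_0 = \rho_0(X) > 0$ and $\Lambda = \Lambda(X) < +\infty$ such that \eqref{eq:conditionFirstVar} holds. Once this is secured, Corollary \ref{cor_MonoFormula} provides a $\rho_1 = \rho_1(n,k,g,\Lambda)$ such that for all $0 < 2\sigma < \rho \le \rho_1$,
\[
\frac{\|V\|_g(B_\sigma^g(X))}{\omega_n\sigma^n} \le (1 + C\rho) \frac{\|V\|_g(B_\rho^g(X))}{\omega_n\rho^n}.
\]
Fixing $\rho \in (0,\rho_1]$ and letting $\sigma \to 0^+$ yields $\limsup_{\sigma \to 0^+} \|V\|_g(B_\sigma^g(X))/(\omega_n\sigma^n) \le (1+C\rho)\|V\|_g(B_\rho^g(X))/(\omega_n\rho^n)$, and then letting $\rho \to 0^+$ gives $\limsup \le \liminf$, so the limit $\Theta^g(\|V\|_g, X)$ exists (and is finite).

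For measurability, I would show that for each fixed $\rho > 0$ the function $X \mapsto \|V\|_g(B_\rho^g(X))$ is lower semi-continuous on $U$, and hence Borel measurable. Indeed, since $\mathrm{dist}_g$ is continuous, if $X_j \to X$ then any compact $K \subset B_\rho^g(X)$ satisfies $K \subset B_\rho^g(X_j)$ for all large $j$; inner regularity of the Radon measure $\|V\|_g$ then gives $\|V\|_g(B_\rho^g(X)) \le \liminf_j \|V\|_g(B_\rho^g(X_j))$. Choosing a countable sequence $\rho_i \to 0^+$, the density $\Theta^g(\|V\|_g,X)$ equals $\lim_{i \to \infty} \|V\|_g(B_{\rho_i}^g(X))/(\omega_n \rho_i^n)$ on the full-$\|V\|_g$-measure set where the limit exists, which exhibits $\Theta^g(\|V\|_g,\cdot)$ as a pointwise limit of Borel functions on a measurable set; extending by $0$ on the null exceptional set then produces a $\|V\|_g$-measurable function.

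I do not anticipate a significant obstacle here: the argument is essentially a routine adaptation of the Euclidean proof in \cite[\S17]{simon1983lectures} once Corollary \ref{cor_MonoFormula} and the Differentiation Theorem on $(U,\mathrm{dist}_g)$ are in hand. The only mildly delicate point is that Corollary \ref{cor_MonoFormula} is stated for $2\sigma < \rho$ rather than $\sigma < \rho$, but this restriction is harmless for the present purpose since the existence argument only requires sending $\sigma \to 0^+$ with $\rho$ held fixed.
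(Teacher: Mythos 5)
Your argument is correct and is essentially the proof the paper intends: the paragraph preceding the proposition already notes that the Differentiation Theorem (with geodesic balls) supplies \eqref{eq:conditionFirstVar} at $\|V\|_g$-a.e.\ point, and then Corollary \ref{cor_MonoFormula} gives the almost-monotonicity of $\rho\mapsto \|V\|_g(B^g_\rho(X))/\rho^n$, which is exactly how you obtain $\limsup\le\liminf$. Your measurability argument (lower semicontinuity of $X\mapsto\|V\|_g(B^g_\rho(X))$ plus a countable sequence of radii) is the standard completion of the omitted routine step and is fine.
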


\begin{remark}
	When $V$ is rectifiable, we can verify that
	\begin{equation}
		\Theta^g(\|V\|_g,X)=\Theta(\|V\|,X),\quad \text{ for }\|V\|\text{-a.e. }X \in U.
		\label{eq:rmkSameDensity}
	\end{equation}
	However, it is worth noting that \eqref{eq:rmkSameDensity} may not hold in general.
\end{remark}

\begin{theorem}
	[Semi-continuity of $\Theta^g$ under varifold convergence]
	Let $\left\{ V_i \right\}$ be a sequence of varifolds in $U$ with locally bounded first variation under $C^1$ metric $g_i$ such that $V_i \rightarrow V$ as Radon measures.
	We assume $\Theta ^{g_i}(\|V_i\|_{g_i},X)\ge 1$ except on a set $B_i \subset U$ with $\|V_i\|_{g_i}(B_i\cap W)\rightarrow 0$ and $\liminf_{i\rightarrow +\infty} \|\delta^{g_i}V_i\|(W)<\infty$ for each $W \subset \subset U$.
	We also assume the metrics $g_i$ satisfies $\sup_{i}(|g_i|+|Dg_i|)<+\infty$ and converges to a $C^1$ metric $g$ in $C^1$ sense in $U$ as $i\rightarrow +\infty$.
	Under these conditions, $V$ is a $n$-varifold with locally bounded first variation under metric $g$ and it satisfies $\|\delta^g V\|(W)\le \liminf_{i\rightarrow +\infty} \|\delta^{g_i}V_i\|(W)$ for all $W \subset \subset U$ and $\Theta^g(\|V\|,X)\ge 1$ for $\|V\|_g$-a.e. $X \in U$.
	\label{thm_semi_continuity_of_theta_g_under_varifold_convergence}
\end{theorem}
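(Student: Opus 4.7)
The plan is to handle the three assertions in sequence, the first two following almost routinely from the $C^1$ convergence of the metrics and the weak convergence of varifolds, with the density lower bound being the substantive part that requires combining the monotonicity formula of Corollary \ref{cor_MonoFormula} with a careful selection of approximating points.

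First I would verify that, for every $\varphi \in \mathfrak{X}^1_c(W)$, the integrand $\mathrm{div}^{g_i}_S\varphi(X)\sqrt{\det g_{i,S}(X)}$ converges uniformly on $W\times G(n+k,n)$ to $\mathrm{div}^{g}_S\varphi(X)\sqrt{\det g_S(X)}$, which is immediate from $g_i\to g$ in $C^1$ together with $\sup_i(|g_i|+|Dg_i|)<\infty$. Combined with $V_i\to V$ as Radon measures on $G(n,U)$, this yields $\delta^{g_i}V_i(\varphi)\to \delta^gV(\varphi)$. Since $|\varphi|_{g_i}\to |\varphi|_g$ uniformly on $W$, the inequality
\[
|\delta^gV(\varphi)|=\lim_i |\delta^{g_i}V_i(\varphi)|\le \liminf_i\|\delta^{g_i}V_i\|(W)\sup_W|\varphi|_g
\]
gives both that $V$ has locally bounded first variation under $g$ and the lower semi-continuity estimate $\|\delta^gV\|(W)\le \liminf_i\|\delta^{g_i}V_i\|(W)$ via the Riesz representation.

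For the density bound, note first that $\|V_i\|_{g_i}\to \|V\|_g$ as Radon measures on $U$, again by uniform convergence of $\sqrt{\det g_{i,S}}$ and $V_i\to V$. Fix $X_0\in U$ such that $\Theta^g(\|V\|_g,X_0)$ exists and is positive; by Proposition \ref{prop_Density} this holds for $\|V\|_g$-a.e.\ $X_0$. I would select a sequence $X_i\to X_0$ with $X_i\in \mathrm{spt}\|V_i\|_{g_i}\setminus B_i$ at which the hypothesis \eqref{eq:conditionFirstVar} of Corollary \ref{cor_MonoFormula} holds for some uniform $\rho_0>0$ and $\Lambda<\infty$. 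Applying Corollary \ref{cor_MonoFormula} at $X_i$ and letting $\sigma\to 0^+$, the condition $\Theta^{g_i}(\|V_i\|_{g_i},X_i)\ge 1$ gives
\[
\omega_n \le (1+C\rho)\frac{\|V_i\|_{g_i}(B_\rho^{g_i}(X_i))}{\rho^n}
\]
for all $\rho\in(0,\rho_0)$ with $C$ uniform in $i$. Since $g_i\to g$ in $C^0$ and $X_i\to X_0$, the geodesic balls $B_\rho^{g_i}(X_i)$ are trapped between $B_{\rho(1-\varepsilon_i)}^g(X_0)$ and $B_{\rho(1+\varepsilon_i)}^g(X_0)$ with $\varepsilon_i\to 0$. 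Choosing $\rho$ outside the countable set on which $\|V\|_g(\partial B_\rho^g(X_0))>0$ and passing to the limit yields $\omega_n\le (1+C\rho)\rho^{-n}\|V\|_g(B_\rho^g(X_0))$, whence $\Theta^g(\|V\|_g,X_0)\ge 1$ upon sending $\rho\to 0^+$.

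The main obstacle is the existence of the approximating points $X_i$: I must arrange simultaneously that (a) $X_i\notin B_i$, (b) condition \eqref{eq:conditionFirstVar} holds at $X_i$ with constants $\rho_0,\Lambda$ independent of $i$, and (c) $X_i\to X_0$. For (b), after passing to a subsequence so that $\sup_i\|\delta^{g_i}V_i\|(W)<\infty$, the set of $X\in\mathrm{spt}\|V_i\|_{g_i}$ at which the ratio $\|\delta^{g_i}V_i\|(B_\rho^{g_i}(X))/\|V_i\|_{g_i}(B_\rho^{g_i}(X))$ fails to be bounded for all small $\rho$ has $\|V_i\|_{g_i}$-measure that can be made arbitrarily small (uniformly in $i$) by Besicovitch covering applied to the uniformly bounded measures $\|\delta^{g_i}V_i\|$; together with $\|V_i\|_{g_i}(B_i\cap W)\to 0$ and the fact that $\|V_i\|_{g_i}(B_\rho^{g}(X_0))\to \|V\|_g(B_\rho^g(X_0))>0$ for small $\rho$ (assuming $X_0\in\mathrm{spt}\|V\|_g$, which is automatic since $\Theta^g(\|V\|_g,X_0)>0$), the set of admissible $X_i$ in $B_\rho^g(X_0)$ is non-empty for every small $\rho$ and all large $i$, allowing a diagonal selection.
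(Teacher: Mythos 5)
Your treatment of the first two assertions is correct and is the standard argument: the uniform convergence of $\mathrm{div}^{g_i}_S\varphi\,\sqrt{\det g_{i,S}}$ on $W\times G(n+k,n)$, combined with varifold convergence, gives $\delta^{g_i}V_i(\varphi)\to\delta^gV(\varphi)$, and the bound $\|\delta^gV\|(W)\le\liminf_i\|\delta^{g_i}V_i\|(W)$ follows by Riesz representation. The paper omits its own proof and points to Simon's Theorem 40.6, and your overall route for the density bound (Besicovitch control of the set where \eqref{eq:conditionFirstVar} fails, Corollary \ref{cor_MonoFormula} at approximating points $X_i$ with density $\ge 1$, comparison of $B^{g_i}_\rho(X_i)$ with $B^g_\rho(X_0)$) is exactly the intended adaptation. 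The genuine gap is in the final selection step, i.e.\ in how you produce the points $X_i$ for $\|V\|_g$-a.e.\ $X_0$.

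Concretely, for a fixed $\Lambda$ the Besicovitch argument gives $\|V_i\|_{g_i}\bigl(\mathrm{Bad}_i(\Lambda)\cap W\bigr)\le C M\Lambda^{-1}$ with $M=\sup_i\|\delta^{g_i}V_i\|(W')$; this bound is small only for large $\Lambda$ and does not decay in $i$ or with the radius. Your claim that the admissible set meets $B^g_\rho(X_0)$ for every small $\rho$ and all large $i$ needs $\|V\|_g(B^g_\rho(X_0))>CM\Lambda^{-1}$, which fails as $\rho\to0$ for fixed $\Lambda$; and if you let $\Lambda=\Lambda(\rho)\to\infty$ to compensate, the constants in Corollary \ref{cor_MonoFormula} degenerate ($\rho_1(\Lambda)$ drops below $\rho$ and $C(\Lambda)\rho$ no longer tends to $0$), so the monotonicity step does not close. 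The standard repair, and the way Simon's Theorem 40.6 is actually organized, is: for each fixed $\Lambda$ restrict the measures to the good sets $G_i(\Lambda)=\mathrm{spt}\|V_i\|_{g_i}\setminus\bigl(B_i\cup\mathrm{Bad}_i(\Lambda)\bigr)$, pass to a subsequential weak limit $\nu_\Lambda\le\|V\|_g$, and run your monotonicity and ball-comparison argument at every point $X_0\in\mathrm{spt}\,\nu_\Lambda$ — there the approximants $X_i\in G_i(\Lambda)$ with $X_i\to X_0$ exist because $\nu_\Lambda(B^g_\rho(X_0))>0$ forces $G_i(\Lambda)\cap B^g_\rho(X_0)\neq\emptyset$ for large $i$, with $\Lambda$ fixed throughout — and only at the very end let $\Lambda\to\infty$, using $\|V\|_g(W\setminus\mathrm{spt}\,\nu_\Lambda)\le CM\Lambda^{-1}$ to upgrade to the $\|V\|_g$-a.e.\ statement. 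Separately, a small misstatement: Proposition \ref{prop_Density} yields existence of the density $\|V\|_g$-a.e., not positivity (positivity a.e.\ is essentially what is being proved); you only use positivity to place $X_0$ in $\mathrm{spt}\|V\|_g$, which holds $\|V\|_g$-a.e.\ for trivial reasons, so that part is harmless once rephrased.
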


\begin{proof}
	The detailed proof is omitted here, as it parallels the approach used for Theorem 40.6 in \cite{simon1983lectures}, adjusted for the $C^1$ metrics set.
\end{proof}

The Rectifiability theorem for varifolds under metric $g$ is a fundamental part of the theory of varifolds.
We can establish the following theorem for locally bounded first variation under metric $g$.
\begin{theorem}
	[Rectifiability Theorem]
	Suppose $V$ has locally bounded first variation in $U$ under $C^1$ metric $g$ and $\Theta^g(\|V\|_g,X)>0$ for $\|V\|_g$-a.e. $X \in U$.
	Then, $V$ is an $n$-rectifiable varifold.
	\label{thm_rectifiability}
\end{theorem}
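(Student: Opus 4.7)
The plan is to adapt Allard's classical rectifiability argument (cf.\ Chapter~8 of \cite{simon1983lectures}) to the $C^1$-metric framework already set up in this appendix. Since rectifiability is a local property, fix an arbitrary point $X_0\in U$ and work in a small neighborhood of it. After a linear change of coordinates that diagonalizes $g(X_0)$ (and shrinking the ball if necessary) we may assume $g(X_0)=\delta$ and that $g$ is $\mu$-flat on $B_\rho(X_0)$ with $\mu=C\rho$. The weight measures $\|V\|_g$ and $\|V\|$ are then mutually absolutely continuous with Radon--Nikodym derivative $\sqrt{\det g_S}\in[1-C\mu,1+C\mu]$, and geodesic balls are comparable to Euclidean balls by Proposition~\ref{prop_compBall}.

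The first step is to establish $0<\Theta^g(\|V\|_g,X)<\infty$ for $\|V\|_g$-a.e.\ $X\in B_\rho(X_0)$. Positivity is the hypothesis; the upper bound and the existence of the density limit follow from the monotonicity formula (Corollary~\ref{cor_MonoFormula}) together with Proposition~\ref{prop_Density}. Using the comparisons above, the analogous two-sided bound transfers to $\|V\|$ with respect to Euclidean balls. Then Preiss' rectifiability theorem for Radon measures applies and shows that $\|V\|$ is $n$-rectifiable on $B_\rho(X_0)$, carried by a countable union of $C^1$ submanifolds with an approximate tangent plane $T_X\|V\|$ at $\|V\|$-a.e.\ point.

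The second and more delicate step upgrades rectifiability of the weight measure $\|V\|$ to rectifiability of the varifold $V$ itself, i.e., shows that the Grassmannian disintegration $V_X$ equals $\delta_{T_X\|V\|}$ for $\|V\|$-a.e.\ $X$. This is where the bounded-first-variation hypothesis plays its essential role, via Allard's blowup/tilt-excess argument: at a generic point $X$, consider the rescalings $(\eta_{X,r_i})_\# V$; by the $g$-monotonicity formula and the compactness result (Theorem~\ref{thm_semi_continuity_of_theta_g_under_varifold_convergence}), after passing to a subsequence these converge to a tangent varifold $C$ that is stationary with respect to the blown-up limiting metric, has positive constant density, and whose weight measure is supported on $T_X\|V\|$. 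Standard constancy-type reasoning then forces $C=\Theta^g(\|V\|_g,X)\,|T_X\|V\||$, which yields $V_X=\delta_{T_X\|V\|}$ at the original scale.

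The main obstacle is the tangent-varifold analysis in this last step: one must verify that the blowup limits inherit stationarity under the Euclidean metric rather than merely under $g$. The reduction $g(X_0)=\delta$ is exactly what makes this work, since the rescaled metrics $r^{-2}(\eta_{X,r})_*g$ converge to $\delta$ in $C^1$ on any fixed ball as $r\to 0^+$, so the $\mu_r$-almost-stationarity of the rescaled $V$'s (with $\mu_r\to 0$) passes to genuine stationarity of $C$ under $\delta$. With this in hand, Allard's classical argument applies verbatim to identify $C$ and conclude; varying $X_0\in U$ then completes the proof.
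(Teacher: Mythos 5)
Your route (Preiss' theorem for the weight measure, then a blow-up to identify the Grassmannian part) is genuinely different from what the paper intends, which is to rerun the classical Allard/Simon argument of \cite[Theorem 42.4]{simon1983lectures} with the metric error terms, as was done for the monotonicity formula in this appendix. The difficulty is that your key step does not go through as written. Preiss' theorem requires that the \emph{Euclidean} density $\lim_{r\to 0}\|V\|(B_r(X))/(\omega_n r^n)$ \emph{exists} (and is positive and finite) for $\|V\|$-a.e.\ $X$; two-sided bounds on upper and lower densities are not sufficient for rectifiability (purely unrectifiable sets with pinched positive finite upper and lower densities exist). What the monotonicity formula gives you is existence of the density with respect to geodesic balls, equivalently with respect to the ellipsoids determined by $g(X)$: Theorem \ref{thm_monotonicity_formula} requires the metric to equal $\delta$ \emph{at the center}, and your normalization $g(X_0)=\delta$ helps only at the single point $X_0$. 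At a generic point $X$ one has $g(X)\neq\delta$ with $|g(X)-\delta|$ bounded by a fixed small constant that does not tend to $0$ as the radius shrinks, so the comparison $B^g_\rho(X)\approx B_\rho(X)$ from Proposition \ref{prop_compBall} only transfers bounds, not existence of the limit; the error in the Euclidean-ball monotonicity computation at such a center is of size $|g(X)-\delta|/\rho$ and is not integrable in $\rho$. Hence the hypothesis of Preiss' theorem is not verified, and a Preiss-type statement for densities with respect to pointwise-varying ellipsoidal norms is not available off the shelf. A related, smaller slip occurs in your last step: the rescaled metrics $r^{-2}(\eta_{X,r})_*g$ converge in $C^1_{\mathrm{loc}}$ to the \emph{constant} metric $g(X)$, not to $\delta$, except at $X_0$; this one is harmless because you can compose with the linear map $L^g_X$ of Proposition \ref{prop_Pi2Delta}, but it reflects the same issue of normalizing only at one point.

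By contrast, the adaptation the paper has in mind avoids this entirely: one works throughout with $g$-densities and geodesic balls (Corollary \ref{cor_MonoFormula}, Proposition \ref{prop_Density}), or equivalently normalizes by $L^g_X$ at each point, and repeats the classical proof, in which the first-variation bound — not a purely density-based criterion — is what produces the approximate tangent planes. Your second step (stationarity of tangent varifolds after rescaling, identification via the constancy theorem, compactness as in Theorem \ref{thm_semi_continuity_of_theta_g_under_varifold_convergence}) is sound in outline once the pointwise normalization is inserted, but as it stands it rests on the rectifiability of $\|V\|$ obtained in the flawed step, so the gap is real. Either supply an argument that the Euclidean (or a fixed-norm) density actually exists $\|V\|$-a.e., or abandon the Preiss shortcut and carry out the tilt-excess/Lipschitz-approximation argument of \cite{simon1983lectures} with the $C\mu$ error terms, which is what ``straightforward adaptation'' means here.
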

\begin{proof}
	This proof can be viewed as a straightforward adaptation of Theorem 42.4 in \cite{simon1983lectures}.
\end{proof}

Another important result is the compactness theorem for varifolds with locally bounded first variation.
\begin{theorem}
	[Compactness theorem for $n$-varifolds under $C^1$ metric]
	Suppose $V_i$ is a rectifiable $n$-varifolds that has locally bounded first variation in $U$ under metric $g_i$.
	We assume $\sup_i |g_i|+|Dg_i|<+\infty$ and there exists a $C^1$ metric $g$ such that $g_i\rightarrow g$ in $C^1$ sense, along with
	\[
		\sup_{i}\left( \|V_i\|_{g_i}(W)+\|\delta^{g_i}V_i\|(W) \right)<\infty\quad 
		\forall W \subset \subset U,
	\]
	and $\Theta^g(\|V_i\|_{g_i},X)\ge 1$ on $U \backslash A_i$ where $\|V_i\|_{g_i}(A_i\cap W)\rightarrow 0$ as $i\rightarrow 0, \forall W \subset \subset U$.
	Then, up to a subsequence, there exists an $n$-varifold $V$ in $U$ with locally bounded first variation under metric $g$ such that $V_i\rightarrow V$ in the sense of Radon measures, $\Theta^g(\|V\|_g,X)\ge 1$ for $\|V\|_g$-a.e. $X \in U$, and $\|\delta^g V\|(W)\le \liminf_{i\rightarrow +\infty} \|\delta^{g_i}V_i\|(W)$ for each $W\subset \subset U$.
	
	In particular, if all the $V_i$ has integer multiplicity, then so does $V$.
	\label{thm_compactness_theorem_for_n_varifolds_under_lipschitz_metric}
\end{theorem}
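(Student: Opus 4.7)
The plan is to follow the classical compactness strategy for rectifiable varifolds (Simon's Chapter 8, in particular Theorems 42.7--42.9, or Allard's original compactness result), adapting each step to the setting of varying $C^1$ metrics $g_i \to g$. The new technical feature is that factors of the form $\sqrt{\mathrm{det}\,g_{i,S}(X)}$ appear inside every integral, and the first-variation operator $\delta^{g_i}$ depends on first derivatives of $g_i$. Since $g_i\to g$ in $C^1$, both of these ingredients converge uniformly on compact subsets of $G(n,U)$, so the limit procedure is essentially the Euclidean one with uniformly small perturbations.

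First I would extract a subsequential limit. The uniform $C^0$ bound on $g_i$ makes $\|V_i\|$ (Euclidean) and $\|V_i\|_{g_i}$ comparable up to a fixed multiplicative constant, so $\sup_i\|V_i\|(W)<\infty$ for every $W\subset\subset U$. Standard Radon-measure compactness on $G(n,U)$ then yields a subsequence (not relabeled) with $V_i\to V$ as Radon measures. Uniform convergence of $\sqrt{\mathrm{det}\,g_{i,S}}$ to $\sqrt{\mathrm{det}\,g_S}$ on compact subsets of $G(n,U)$ upgrades this to $\|V_i\|_{g_i}\to\|V\|_g$ as Radon measures on $U$. For any $\varphi\in\mathfrak{X}^1_c(W)$ the integrand $(X,S)\mapsto \mathrm{div}^{g_i}_S\varphi(X)\sqrt{\mathrm{det}\,g_{i,S}(X)}$ is continuous and converges uniformly to $\mathrm{div}^g_S\varphi(X)\sqrt{\mathrm{det}\,g_S(X)}$, the convergence of the divergence using precisely $g_i\to g$ in $C^1$. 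Hence $\delta^{g_i}V_i(\varphi)\to\delta^g V(\varphi)$, and the standard lower semi-continuity of total variation gives $\|\delta^g V\|(W)\le\liminf_i\|\delta^{g_i}V_i\|(W)$, so $V$ has locally bounded first variation under $g$.

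Next, Theorem~\ref{thm_semi_continuity_of_theta_g_under_varifold_convergence} (applied with the bad sets $A_i$) gives the density bound $\Theta^g(\|V\|_g,X)\ge 1$ for $\|V\|_g$-a.e. $X$, and Theorem~\ref{thm_rectifiability} then implies $V$ is $n$-rectifiable. For the integer-multiplicity assertion under the extra hypothesis that each $V_i$ has integer multiplicity, I would follow the Allard--Simon counting argument (Simon \cite{simon1983lectures}, Theorem 42.7 and Lemma 42.9). The three steps are: at $\|V\|$-a.e. point $X$ the approximate tangent plane $T_X$ exists and $(\eta_{X,\rho})_{\#}V\to\Theta(\|V\|,X)\,|T_X|$ as $\rho\to 0^+$; a diagonal procedure produces scales $\rho_i\to 0^+$ with $(\eta_{X,\rho_i})_{\#}V_i\to\Theta(\|V\|,X)\,|T_X|$; finally a projection-and-counting estimate forces $\Theta(\|V\|,X)$ to be an integer by exploiting the integer structure of each $V_i$.

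The main obstacle will be verifying that Simon's tilt-excess counting lemma (Lemma 42.9 of \cite{simon1983lectures}) continues to hold uniformly in $i$ in the $C^1$-metric setting. The underlying cylindrical monotonicity inequalities acquire $O(\mu_i\rho)$ error terms coming from the difference between $g_i$ and a constant metric, in complete analogy with the derivation around \eqref{eq:pfMonCylinderDensity}--\eqref{eq:pfMonCylinder} and the proof of Theorem~\ref{thm_monotonicity_formula}. At $\|V\|$-a.e. point $X$, however, one may after an affine transformation assume $g_i(X)=\delta$, and since $\sup_i\|Dg_i\|_\infty<\infty$ the error terms decay linearly in the scale $\rho$ and vanish in the blow-up limit. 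With this adjustment, Simon's counting argument applies essentially verbatim. Since a fully analogous (and slightly more delicate) adaptation is already carried out in the proof of Theorem~\ref{thm_compactnessRIV} of the main text, the cleanest presentation is to reduce to that argument rather than reproducing the entire counting proof in full.
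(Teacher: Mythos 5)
Your proposal is correct and follows essentially the same route as the paper, which simply adapts Simon's Theorem 42.7 and points to the proof of Theorem \ref{thm_compactnessRIV} for the necessary adjustments (the cylindrical monotonicity estimates with $O(\mu\rho)$ error terms in the counting lemma). Your fleshing-out of the preliminary steps (mass comparability, lower semi-continuity of first variation, the density bound via Theorem \ref{thm_semi_continuity_of_theta_g_under_varifold_convergence}, rectifiability via Theorem \ref{thm_rectifiability}) and the reduction of the integrality argument to that of Theorem \ref{thm_compactnessRIV} matches the intended proof.
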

\begin{proof}
	The argument is similar to the proof of Theorem 42.7 in \cite{simon1983lectures}.
	See the proof of Theorem \ref{thm_compactnessRIV} for the necessary adjustments.
\end{proof}

\subsection{$\mu$-stationary and regularity}%
\label{sub:_mu_stationary_and_regularity}

We now introduce the notion of $\mu$-stationary condition under a $C^1$ metric.

\begin{definition}
	Given an $n$-varifold $V$ and a $C^1$ metric $g$ on $U\subset \mathbb{R}^{n+k}$, we say $(V,g)$ is \textit{$\mu$-stationary in $U$} if the metric $g$ satisfies $\|g-\delta\|_{C^1(U)}\le \mu$, and for any vector field $\varphi \in \mathfrak{X}^1_c(U)$, we have
	\[
		\left|\delta^g V(\varphi)\right|\le \mu \int_{ } |\varphi|_g d\|V\|_g.
	\]
\end{definition}

For simplicity, we write
\[
	\mu(V,g)=\inf\{ \mu: (V,g)\text{ is }\mu \text{-stationary} \}.
\]
Notably, $(V,g)$ is $\mu$-stationary 
can imply $V$ has locally bounded first variation in $U$ under metric $g$,
enabling the application of results from the previous subsection to $\mu$-stationary varifolds.
In particular, for any $\mu$-stationary $(V,g)$, if we require $\mu=\mu(n,k,U)$ small enough instead of $\rho$ small enough in the proof of \eqref{eq:thmMonotonicityFormula} (Note that $\Lambda$ is 0 here.), we can derive the following monotonicity formula,
\[
	\frac{\|V\|_g(B_\sigma^g(X))}{\sigma^n}\le (1+C\mu \rho) \frac{\|V\|_g(B_\rho^g(X))}{\rho^n}, \forall X \in U, \text{ and } \forall 0<2\sigma<\rho<\mathrm{dist}_g(X,\partial U)
\]
for $C=C(n,k,U)$.

The final significant theorem is due to Allard \cite{Allard1972}.
By adapting the proof to $C^1$ metric case, we can establish the following version of Allard regularity theorem.

\begin{theorem}
	[Allard Regularity]
	\label{thm_allard}
	Let $V$ be an integral rectifiable $n$-varifold in $B_1^{n+k}(0)$ and $g$ a $C^1$ metric on $B_1^{n+k}(0)$.
	Suppose $(V,g)$ is a $\mu$-stationary varifold in $B_1^{n+k}(0)$ and $P$ is an $n$-dimensional plane in $\mathbb{R}^{n+k}$ with $0 \in P$.
	Given $\gamma ,\delta\in (0,1)$, then there exists a number $\varepsilon=\varepsilon(n,k,\gamma,\delta) \in (0,1)$ such that if the following condition holds,
	\begin{enumerate}[\normalfont(a)]
		\item $\mu<\varepsilon$,
			\label{itAllThm1}
		\item $\mathrm{spt}\|V\|\cap B_{\frac{1}{2}}(0)\neq \emptyset$ and $\frac{1}{\omega_n}\|V\|(B_1(0))<1+\delta$,
			\label{itAllThm2}
		\item $\int_{ B_1(0)} \mathrm{dist}^2(X,P)d\|V\|(X)< \varepsilon$.
			\label{itAllThm3}
	\end{enumerate}
	Then, we have
	\[
		\mathrm{spt}\|V\|\cap B_{\gamma}^{n+k}(0)\subset \mathrm{graph}u
	\]
	with $u \in C^{1,\beta}(B_{\gamma}^{n+k}(0)\cap P,P^\bot)$ for some $\beta \in (0,1)$ and
	\[
		\|u\|^2_{C^{1,\beta}(B_{\gamma}(0)\cap P,P^\bot )}\le C\left( \int_{B_1(0)} \mathrm{dist}^2(X,P)d\|V\|(X)+\mu \right).
	\]

	Here, the constant $C=C(n,k,\gamma,\delta) \in (0,+\infty)$ and $\beta=\beta(n,k,\gamma,\delta)$. $P(X)$ denotes the orthogonal projection of $X$ to plane $P$.
\end{theorem}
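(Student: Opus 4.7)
The strategy is to adapt Allard's classical regularity argument \cite{Allard1972} to the $C^1$-metric setting, treating the $\mu$-stationarity defect and the metric deviation $|g-\delta|+|Dg|\le\mu$ as perturbations of the same size. The fundamental tools are already in place: the monotonicity formula under $g$ (Corollary \ref{cor_MonoFormula}), the rectifiability theorem (Theorem \ref{thm_rectifiability}), and the compactness theorem (Theorem \ref{thm_compactness_theorem_for_n_varifolds_under_lipschitz_metric}). As a first step, I would convert the $\mu$-stationarity hypothesis into a bound on the \emph{Euclidean} first variation. Using the pointwise comparison
\[
|\mathrm{div}_S^g\varphi\sqrt{\mathrm{det}\,g_S}-\mathrm{div}_S\varphi|\le C\mu(|\varphi|+|D\varphi|),
\]
which is immediate from $|g-\delta|+|Dg|\le\mu$ (exactly as in Proposition \ref{prop_1stVar}), together with the definition of $\mu$-stationarity, one obtains $|\delta V(\varphi)|\le C\mu\int(|\varphi|+|D\varphi|)d\|V\|$. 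This is slightly weaker than the standard ``bounded generalized mean curvature'' condition, but since the extra $|D\varphi|$ term carries the small factor $\mu$, it behaves only perturbatively in all subsequent steps.

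With hypothesis (b) and the monotonicity formula, the upper density is uniformly close to one throughout $\mathrm{spt}\|V\|\cap B_{\gamma}$. From this point I would run the classical Allard bootstrap: (i) a Caccioppoli-type tilt/height inequality obtained by inserting cutoff vector fields of the form $\zeta^2(X-P(X))$ into the first variation identity, giving
\[
\int_{B_{\rho/2}(X_0)}|P-S|^2\,dV(X,S)\le \frac{C}{\rho^2}\int_{B_\rho(X_0)}\mathrm{dist}^2(X,P)\,d\|V\|+C\mu\rho^n;
\]
(ii) a Lipschitz approximation of $\mathrm{spt}\|V\|\cap B_{1/2}$ by a graph over $P$ outside a ``bad set'' of small measure, using the density upper bound and a Besicovitch covering to control where the tilt fails to be small; (iii) a quantitative harmonic approximation showing that, after normalizing by the excess, the Lipschitz graph is $L^2$-close to the graph of a harmonic function on $P$, because its defining PDE (the minimal-surface equation under $g$) converges, as both the excess and $\mu$ tend to zero, to the Laplace equation; (iv) one-step tilt-excess decay of the form $E^2(X_0,\theta\rho)\le\theta^{2\alpha}E^2(X_0,\rho)+C\mu\rho$ for some $\theta\in(0,1)$ and $\alpha\in(0,1)$; and finally (v) iteration plus a Campanato-type embedding to upgrade to $C^{1,\beta}$, with the additive $\mu$ contribution to $\|u\|_{C^{1,\beta}}^2$ emerging naturally from the geometric sum in the iteration.

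The principal obstacle lies in step (iii): establishing a quantitative harmonic approximation lemma that tolerates \emph{both} perturbations simultaneously. Concretely, along a blow-up sequence $(V_k,g_k)$ with $\mu_k\to 0$ and excess $E_k\to 0$, after rescaling so that $E_k$ is normalized, the associated Lipschitz graphs $u_k/E_k$ must converge strongly to a harmonic function on $P$. In the purely Euclidean/stationary setting this follows from the first variation alone; here, the metric contribution to the first variation must also be shown to vanish in the limit, which uses that at the rescaled scale $g_k\to\delta$ in $C^1$ and the residual defect scales by $\mu_k\rho_k\to 0$. This is precisely the type of perturbation handled by Schoen-Simon \cite{SchoenSimon1981Regularity} in their regularity theory for stable minimal hypersurfaces on Riemannian manifolds, and once the first-variation identity displayed above is established, their arguments transfer to our setting without essential change, yielding the claimed estimate with $C$ and $\beta$ depending only on $n,k,\gamma,\delta$.
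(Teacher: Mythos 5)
Your proposal is correct and follows essentially the same route the paper intends: the paper gives no detailed proof of Theorem \ref{thm_allard}, asserting only that Allard's classical argument adapts to the $C^1$-metric setting in the spirit of Schoen--Simon \cite{SchoenSimon1981Regularity}, with the metric deviation and the stationarity defect treated as perturbations exactly as in your first-variation comparison (cf. Proposition \ref{prop_1stVar}). Your outline of the tilt-excess estimate, Lipschitz and harmonic approximation, excess decay with the additive $C\mu\rho$ error, and iteration is precisely the intended adaptation.
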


\section{Stationary triple-junction networks on half sphere}%
\label{sec:append_stationaryNetwork}

In this section, we prove that the total length of each stationary triple-junction network on a half-sphere is bounded below by $\pi$, and this minimum is attained exclusively when the network forms a half-great circle.

For simplicity, we write $S_+^2:=\partial B^3_1 \cap \mathbb{H}^{3}$ to be the half-sphere in $\mathbb{H}^3$.

We denote $\mathcal{N}$ a collection of smooth regular curves $\{ \gamma_i \}_{i \in \mathcal{I}}$, and $\mathcal{P}$ the set of all possible endpoints of $\gamma_i$.
We say $\mathcal{N}=\{\gamma_i\}_{i \in \mathcal{I}}$ is a \textit{stationary triple-junction network} on $S_+^2$ if it satisfies the following conditions,
\begin{enumerate}[(a)]
	\item Each $\gamma_i$ is an embedded geodesic (spherical arc) in $S_+^2$, and $\gamma_i$ does not lies in $\partial S_+^2$,
		\label{it:statGeodesic}
	\item Intersection between $\gamma_i$ and $\gamma_j$ only occurs at their endpoints for distinct $i, j \in \mathcal{I}$,
		\item At each point $X \in \mathcal{P}$, we have either $X \in \partial \mathbb{H}^3$ or there exists three geodesic arcs $\gamma_{i_1},\gamma_{i_2},\gamma_{i_3}$ such that $X$ is the common endpoint of $\gamma_{i_1},\gamma_{i_2},\gamma_{i_3}$ and the contact angle between $\gamma_{i_p},\gamma_{i_q}$ is $\frac{\pi}{3}$ for $1\le p\neq q\le 3$ at $X$.
			\label{it:statNetwork}
\end{enumerate}

We define the total length of the network as
\[
	L(\mathcal{N}):=\sum_{i \in \mathcal{I}} L(\gamma_i).
\]
where $L(\gamma_i)$ is the length of $\gamma_i$.
From the definition of stationary triple-junction network, we know $\mathcal{P} \cap \{ x_1>0 \}$ is discrete and hence at most countable by \ref{it:statGeodesic}.
This indicates that $\mathcal{I}$ is at most countable, ensuring that $L(\mathcal{N})$ is well-defined.
\begin{theorem}
	
	Suppose $\mathcal{N}=\{\gamma_i\}_{i \in \mathcal{I}}$ is a stationary triple-junction network on $S_+^2$
	Then, we have 
	\[
		L(\mathcal{N})\ge \pi
	\]
	and equality holds precisely when if $\mathcal{N}=\left\{ \gamma \right\}$ where $\gamma$ is a half-great circle.
\end{theorem}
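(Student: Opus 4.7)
My plan is to prove the bound by reducing to the corresponding statement on the full sphere via reflection. The first step is to upgrade the definition: I will argue that at every boundary endpoint $X \in \partial\mathbb{H}^3 \cap \mathcal{P}$, the unique arc $\gamma$ of $\mathcal{N}$ ending at $X$ meets $\partial\mathbb{H}^3$ orthogonally. This is the standard free-boundary consequence of stationarity — testing the first variation of length against a $C^1$ vector field tangent to $\partial\mathbb{H}^3$ that slides $X$ along the boundary forces $T_\gamma(X)$ to be normal to $\partial\mathbb{H}^3$. With orthogonality in hand, I reflect $\mathcal{N}$ across $\partial\mathbb{H}^3$ to produce a closed stationary triple-junction network $\tilde{\mathcal{N}}$ on $S^2$: each pair (arc, reflected arc) glues into a smooth geodesic at the equator, interior triple junctions are doubled, and by condition \ref{it:statNetwork} no triple junction ever sits on $\partial\mathbb{H}^3$ to spoil the doubling. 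Hence $L(\tilde{\mathcal{N}}) = 2\,L(\mathcal{N})$.

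The second and main step is to show that any closed stationary triple-junction network $\tilde{\mathcal{N}}$ on $S^2$ has $L(\tilde{\mathcal{N}}) \geq 2\pi$, with equality precisely when $\tilde{\mathcal{N}}$ is a single great circle. If $\tilde{\mathcal{N}}$ has no triple junctions it is a disjoint union of closed geodesics, each of which must be a great circle of length $2\pi$, giving the bound with the desired equality case. Otherwise, analyzing each connected component separately, assume $\tilde{\mathcal{N}}$ is connected with $V \geq 2$ trivalent vertices, $E$ arcs, and $F$ simply-connected faces. Trivalence yields $2E = 3V$, so Euler's formula gives $F = 2 + V/2 \geq 3$. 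Gauss-Bonnet on each face $R_i$ (zero geodesic curvature on the arcs, interior angle $2\pi/3$ at each corner) gives $A_i = (6-n_i)\pi/3$, where $n_i$ is the corner count of $R_i$, forcing $n_i \in \{1,\dots,5\}$. The spherical isoperimetric inequality then yields $P_i \geq \sqrt{A_i(4\pi - A_i)} = (\pi/3)\sqrt{36 - n_i^2}$, and summing (each arc bounds two faces):
\[
2\,L(\tilde{\mathcal{N}}) \;=\; \sum_i P_i \;\geq\; \frac{\pi}{3} \sum_i \sqrt{36 - n_i^2}.
\]
Using $\sum_i n_i = 2E = 3V = 6F - 12$ together with a secant lower bound for the concave function $n \mapsto \sqrt{36 - n^2}$ on $[1,5]$ reduces the required inequality $\sum_i \sqrt{36 - n_i^2} > 12$ to the elementary estimate $(6 - \tfrac{6-\sqrt{11}}{5})\cdot F \;-\; \tfrac{6-\sqrt{11}}{5}(6F - 12 - F) > 12$, which holds strictly for every $F \geq 3$. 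Therefore $L(\tilde{\mathcal{N}}) > 2\pi$ whenever triple junctions are present.

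Combining the two steps yields $L(\mathcal{N}) = L(\tilde{\mathcal{N}})/2 \geq \pi$, with equality iff $\tilde{\mathcal{N}}$ is a single great circle invariant under reflection across $\partial\mathbb{H}^3$. Such great circles are exactly those in planes containing $e_1$ (the equator $\partial\mathbb{H}^3$ itself is excluded by condition \ref{it:statGeodesic}, since no arc of $\mathcal{N}$ may lie in $\partial\mathbb{H}^3$), and their intersection with $S_+^2$ is a half great circle of length $\pi$. The main obstacle will be the combinatorial/isoperimetric estimate in Step 2: once the constraints $n_i \in \{1,\dots,5\}$ and $\sum n_i = 6F - 12$ are in place the inequality is elementary, but one has to be careful with disconnected $\tilde{\mathcal{N}}$ (each component is handled on its own, and any component with vertices already exceeds $2\pi$, while any vertex-free component is a great circle contributing exactly $2\pi$) and with the applicability of the spherical isoperimetric inequality to possibly non-convex or monogon faces. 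The orthogonality and reflection construction of Step 1 are routine.
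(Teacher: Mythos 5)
There is a genuine gap at the very first step, and it is fatal to the whole strategy. The definition of a stationary triple-junction network in the appendix imposes \emph{no} condition whatsoever at boundary endpoints: condition (c) says that a point of $\mathcal{P}$ is either a $120^\circ$ triple junction or simply lies on $\partial\mathbb{H}^3$, with no angle constraint in the latter case. ``Stationary'' here is only a name for the interior triple-junction structure; the hypotheses are the purely geometric conditions (a)--(c), so you cannot ``test the first variation against a vector field sliding the endpoint along the boundary'' --- no such variational hypothesis is available. Worse, in the application that this appendix serves (the proof of Theorem \ref{thm_densityG} with $n=2$), the network comes from a capillary varifold whose arcs meet the equator at the contact angle $\theta\in[\Lambda_0,\pi-\Lambda_0]$, which is generally not $\frac{\pi}{2}$; so orthogonality is not merely unproved, it is false for the class of networks the theorem must cover. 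A concrete counterexample to your Step 1 is the upper half of a great circle tilted at, say, $45^\circ$ to the equator: it satisfies (a)--(c), has both endpoints on $\partial\mathbb{H}^3$, has length exactly $\pi$, and meets the boundary non-orthogonally. Reflecting it produces a closed curve with two corners, not a great circle, so your doubling step yields an object that is not a geodesic triple-junction network on $S^2$; the Gauss--Bonnet face computation (all corner angles $\frac{2\pi}{3}$), the Euler-formula bookkeeping, and hence the $2\pi$ lower bound of Step 2 all fail to apply. The same example shows your equality characterization (reflection-invariant great circles, i.e.\ orthogonal half-great circles) is too narrow: the theorem's equality case is \emph{any} half-great circle, including tilted ones.

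Your Step 2, taken on its own as a statement about genuinely closed geodesic triple-junction networks on the full $S^2$, is a reasonable and essentially correct computation (the secant bound $\sqrt{36-n^2}\ge 6-\frac{6-\sqrt{11}}{5}n$ together with $\sum_i n_i=6F-12$ does give strict excess over $2\pi$ once $F\ge 3$), but it is answering a different question than the one posed. The paper's proof works directly on the half-sphere precisely to avoid any boundary-angle assumption: it first shows (Gauss--Bonnet plus the spherical isoperimetric inequality, much as in your Step 2) that if $L(\mathcal{N})\le\pi$ then every face of the network must reach $\partial S_+^2$, and then runs a distance argument from the center of the disk to the nearest arc $\gamma$, comparing $\partial U$ with the full half-great circle $\tilde\gamma$ containing $\gamma$ to force $L\ge\pi$ with equality only for a single half-great circle. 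If you want to salvage a reflection-type argument you would have to build the contact angle into the doubled object (e.g.\ reflect and accept corners, then prove a length bound for networks with finitely many corner points), which is substantially harder than the paper's direct route.
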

\begin{proof}
	For simplicity, we identify $S_+^2$ with the unit disk $\mathbb{D}:=\left\{ X=(x_1,x_2):|X|\le 1 \right\}$ via stereographic projection.
	We denote $g$ the metric on $\mathbb{D}$ induced by this projection.
Accordingly, each $\gamma_i$ is viewed as a smooth regular curve in $\mathbb{D}$.
	Let $N=\cup_{i \in \mathcal{I}}\gamma_i$ be the union of all $\gamma_i$ and denote $\mathbb{\mathring D}$ the interior of $\mathbb{D}$.
	It is easy to see that $\mathbb{\mathring D}\backslash N$ is a disjoint union of open sets.
	Defining $\mathcal{O}$ as the set of the connected component of $\mathbb{\mathring D}\backslash N$, it is straightforward to observe that each $U \subset \mathcal{O}$ is geodesic convex.
	\begin{lemma}
		If $L(\mathcal{N})\le \pi$, then for any $U \in \mathcal{O}$, we have $\overline{U}\cap \partial \mathbb{D}\neq \emptyset$.
		\label{lem_appendNonEmptyBoundary}
	\end{lemma}
	\begin{proof}
		We argue by contradiction.
		Suppose $L(\mathcal{N})\le \pi$ and $\overline{U}\cap \partial \mathbb{D}=\emptyset$.
		By the definition of stationary triple-junction network, we know $U$ is a spherical $m$-polygon for some $m\ge 3$ and its interior angle at each vertex is $\frac{2\pi}{3}$.
		Gauss-Bonnet formula tells us $m\le 5$, and it implies $|U|_g\ge \frac{\pi}{3}$, where $|U|_g$ is the area of $U$ under metric $g$.
		Invoking the isoperimetric inequality on sphere yields
		\[
			L_g(\partial U)\ge \frac{\pi \sqrt{11}}{3}>\pi.
		\]
		where $L_g(\partial U)$ refers to the length of $\partial U$ under metric $g$.
		This conclusion contradicts with $L(\mathcal{N})\le \pi$.
	\end{proof}

	Now, we select $U \subset \mathcal{O}$ such that $0 \in \overline{U}$ and denote $X_0$ to be the point on $\partial U$ such that $\mathrm{dist}_g(0,\partial U)=\mathrm{dist}_g(0,X_0)$.
	Next, we select $\gamma \in \mathcal{N}$ such that $X_0 \in \gamma$.
	Notably, if $0 \notin \partial U$, we know $X_0$ is in the interior of $\gamma$ by \ref{it:statNetwork} for the definition of stationary triple-junction network.
	
	Let $P,P'$ represent two endpoints of $\gamma$.
	If $P,P' \in \partial\mathbb{D}$, the proof is trivial as $L_g(\gamma)=\pi$.

	Now, we assume $P \in \mathbb{\mathring D}$.
	We denote $\tilde{\gamma}$ the great half-circle containing $\gamma$, $Q,Q'$ the two endpoints as its endpoints, arranged with $Q,P,X_0,P',Q'$ in consecutive order.
	See Figure \ref{fig:choice-of-u} for an illustration of the selection for $U$ and $\gamma$.
\begin{figure}[ht]
    \centering
	\begingroup
	\fontsize{7pt}{12pt}
	\def\svgwidth{0.8\columnwidth}
	\import{./figures/}{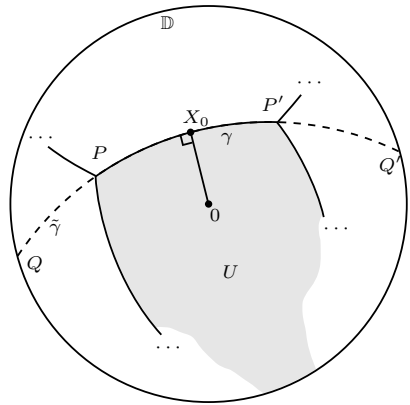}
	\endgroup

    \caption{Choice of $U$ and $\gamma$}
    \label{fig:choice-of-u}
\end{figure}

	It is easy to see that
	\begin{equation}
		\mathrm{dist}_g(0,\tilde{\gamma})=\mathrm{dist}_g(0,\gamma)=\mathrm{dist}_g(0,X_0).
		\label{eq:pfDistDisk}
	\end{equation}
	Let $\tilde{U}$ be the connected component of $\mathbb{\mathring D}\backslash \tilde{\gamma}$ such that $U\cap \tilde{U}\neq \emptyset$, where it is evident that $U \subset \tilde{U}$ due to the geodesic convexity of $U$ and the fact that $0$ is in the closure of $\tilde{U}$.

Consequently, it can be verified that
	\begin{equation}
		\mathrm{dist}_g(P,X)\ge \mathrm{dist}_g(P,Q),\quad 
	\mathrm{dist}_g(P',X)\ge \mathrm{dist}_g(P',Q'),\quad \forall X \in \partial \tilde{U}\cap \partial\mathbb{D},
	\end{equation}
	by \eqref{eq:pfDistDisk} and the fact $X_0 \in \gamma$.

	We write $A$ to be the connected component of $\partial U \cap \mathbb{\mathring D}\backslash \gamma$ whose closure contains $X_0$.
	By Lemma \ref{lem_appendNonEmptyBoundary}, there exists a point $X_1\in \partial \tilde{U}\cap \partial\mathbb{D}$ in the closure of $A$ as $U \in \tilde{U}$.
	Therefore, we have $L_g(A)\ge \mathrm{dist}_g(P,X_1)\ge \mathrm{dist}_g(P,Q)$.

	Similarly, if $P'\neq Q'$, we can find another component $A'$ of $\partial U \backslash \mathbb{\mathring D}$ with $L_g(A')\ge \mathrm{dist}_g(P',Q')$.

	This implies $L_g(\partial U)\ge L_g(\tilde{\gamma})=\pi$ and equality holds if and only if $\gamma=\tilde{\gamma}$ and $\mathcal{N}=\left\{ \gamma \right\}$.
	\end{proof}
	\begin{remark}
		Even when $\overline{U} \cap \partial \mathbb{D} \neq \emptyset$, it is straightforward to deduce that $U$ is a spherical $m$-polygon for some $3\le m\le 6$ as well.
	\end{remark}
\bibliographystyle{alpha}
\bibliography{../references.bib}

\end{document}